\numberwithin{equation}{section}
\newtheorem{theorem}{Theorem}
\newtheorem{lemma}{Lemma}[section]
\newtheorem{proposition}[lemma]{Proposition}
\theoremstyle{remark}
\newtheorem{remark}{Remark}[section]
\newcommand\MScN[1]{\href{http://www.ams.org/mathscinet-getitem?mr=#1}{\nolinkurl{(#1)}}}
\newcommand\DOI[1]{\href{http://dx.doi.org/#1}{(doi: \nolinkurl{#1})}}
\newcommand\LINK[1]{\href{#1}{(link: \nolinkurl{#1})}}
\newcommand\R{\mathbb{R}}
\newcommand\C{\mathbb{C}}
\newcommand\N{\mathbb{N}}
\newcommand\Z{\mathbb{Z}}
\newcommand\Loc{{\mathrm{loc}}}
\DeclareMathOperator\SPAN{span}
\DeclareMathOperator\SIGN{sign}
\title[Blow-up profiles for 1D critical NLS]
{Non--flat conformal blow--up profiles for the 1D critical nonlinear Schr\"odinger equation$^*$}
\author[Y. Martel]{Yvan Martel}
\address{Université Paris-Saclay, UVSQ, CNRS, Laboratoire de mathématiques de Versailles, 78035 Versailles, France}
\email{\href{mailto:yvan.martel@uvsq.fr}{yvan.martel@uvsq.fr}}
\author[I. Naumkin]{Ivan Naumkin}
\address{Departamento de Física Matemática, Instituto de Investigaciones
en Matemáticas Aplicadas y en Sistemas. Universidad Nacional Autónoma
de México, Apartado Postal 20-126, Ciudad de México, 01000, México.}
\email{\href{mailto:ivan.naumkin@iimas.unam.mx}{ivan.naumkin@iimas.unam.mx}}
\thanks{
$^*$This article is a product of a collaboration with Thierry Cazenave to whom 
the authors are deeply indebted.}
\subjclass[2010]{35Q41; 35B44; 37K40}
\begin{document}
\begin{abstract}
For the critical one-dimensional nonlinear Schr\"odinger equation, we construct blow-up solutions 
that concentrate a soliton at the origin at the conformal blow-up rate, with a non-flat blow-up profile.
More precisely, we obtain a blow-up profile that equals $|x|+i\kappa x^2$ near the origin,
where $\kappa$ is a universal real constant.
Such profile differs from the flat profiles obtained in the same context by Bourgain and Wang \cite{BourgainW}.
\end{abstract}

\maketitle

\section{Introduction}
We consider the mass critical Schrödinger equation in one dimension
\begin{equation}\label{nls}
i \partial_t u + \partial_x^2 u + |u|^4 u = 0 \quad (t,x)\in\R\times\R.
\end{equation}
Recall that the mass $M(u)$, the momentum $J(u)$ and the energy $E(u)$ of a solution $u$ of \eqref{nls} are formally conserved
\begin{align*}
M(u)&=\int_{\R} |u|^2 \, dx \\
J(u)&=\Im \int_{\R} (\partial_x u) \overline u \, dx\\
E(u)&=\frac12 \int_{\R} |\partial_xu|^2 \, dx-\frac16 \int_{\R} |u|^6 \, dx .
\end{align*}
The Cauchy problem for~\eqref{nls} is locally well-posed in $L^2(\R)$:
for any $u_0 \in L^2(\R)$, there exists a unique maximal solution $u$ of~\eqref{nls} in $\mathcal C([0,T), L^2(\R))$ satisfying $u(0,\cdot)=u_0$ (see e.g. \cite{CLN10} for details).
If in addition $u_0 \in H^1(\R)$, then the solution belongs to $\mathcal C([0,T), H^1(\R))$;
moreover, if the maximal time of existence $T>0$ is finite then 
\[
\liminf_{t \uparrow T} (T-t)^{\frac 12}\|\partial_x u(t)\|_{L^2}>0.
\]

Let $Q$ be the positive, even, $H^1$ solution of $- Q''+ Q= Q^5$ on $\R$ given by
\begin{equation}\label{def:Q}
Q(x) = 3^{ \frac{1}{4} }
 (\cosh ({2 } x ) )^{-\frac12}.
\end{equation}
It is well-known \cite{Weinstein83} that if the $H^1$ solution $u$ satisfies
 $\|u(t)\|_{L^2} < \|Q\|_{L^2}$, then it is global and bounded in $H^1$.

The pseudo-conformal symmetry of the equation implies that if $u(t,x)$ is a solution of \eqref{nls} then
\[
v(t,x)=\frac 1{|t|^\frac12} u\left( -\frac 1t,\frac{x}{|t|}\right) e^{i \frac{|x|^2}{4t}}
\]
is also a solution of \eqref{nls}.
By this symmetry, the solitary wave solution $u(t,x) = e^{it} Q(x)$ yields the
explicit blow-up solution
\begin{equation}\label{def:S}
S(t,x) = \frac 1{|t|^\frac12} Q\left( \frac{x}t \right) e^{-\frac{i x^2}{4t}} e^{\frac it}
\end{equation}
defined on $(-\infty,0)\times\R$.
This solution is called the minimal mass solution since
\[
\|S(t)\|_{L^2} = \|Q\|_{L^2},\quad \lim_{t\uparrow 0} \|\partial_x S(t)\|_{L^2}=+\infty.
\]
Moreover, for $t<0$ close to $0$,
\[
\|\partial_x S(t) \|_{L^2} \sim \frac C{|t|}
\]
and $|t|^{-1}$ is thus called the conformal blow-up rate.
In addition, it was proved in~\cite{Me93} that $S$ is the unique (up to the invariances of \eqref{nls}) blow-up solution with the minimal mass.

The general blow-up problem for \eqref{nls} is still largely open, but a lot of information is known for blow-up solutions close to the
ground state $Q$, more precisely under the following assumption on the mass of the initial data
\begin{equation}\label{eq:data}
\int Q^2 < \int u_0^2 < \int Q^2+\delta,
\end{equation}
where $\delta>0$ is small.
We recapitulate the main known results, in the context of the \emph{one-dimensional space variable} with \emph{even symmetry} on the initial data.
Suppose that $u_0\in H^1(\R)$ is even, satisfies \eqref{eq:data} and that \emph{the corresponding solution $u$ blows up in finite time $0<T<+\infty$}, then the following properties hold.
\begin{description}
\item[Orbital stability \cite{Weinstein83}]
There exist $\gamma(t)$ and $\lambda(t)$ such that
\[
\sup_{[0,T)}\|e^{-i\gamma(t)} \lambda^\frac12(t) u(t,\lambda(t) x) - Q\|_{H^1} \quad \mbox{is small as $\delta$ is small.}
\]
\item[Asymptotic stability \cite{MerleR1, MerleR2}]
There exist sequences $(\gamma_n)$, $(\lambda_n)$ and $(t_n)$ with $t_n\uparrow T$, such that
\[
\lim_{n\to +\infty} e^{-i\gamma_n} \lambda_n^\frac12 u(t_n,\lambda_n x) \to Q \quad \mbox{in $\dot H^1$ as $n\to +\infty$.}
\]

\item[Dichotomy and gap for the blow-up rate \cite{Raphael}] The solution $u$
\begin{itemize}
\item either blows up with the \emph{loglog rate}
\begin{equation}\label{loglog}
\lim_{t\uparrow T}
\sqrt{\frac {T-t}{\log|\log (T-t)|}} \left\|\partial_x u(t)\right\|_{L^2}  = \frac{\|Q'\|_{L^2}}{\sqrt{2\pi}},
\end{equation}
\item or blows up faster than the \emph{conformal rate}
\begin{equation}\label{conformal}
\liminf_{t\uparrow T} \left(T-t\right) \left\|\partial_x u(t)\right\|_{L^2}  >0.
\end{equation}
\end{itemize}
\item[Blow-up profile \cite{MerleR1}]
There exist $\gamma(t)$, $\lambda(t)$ and $r_*\in L^2(\R)$ such that
\[
u(t) - \frac{e^{i\gamma(t)}}{\lambda^\frac12(t)}Q\left(\frac{x}{\lambda(t)}\right) 
\to r_* \quad \hbox{in $L^2(\R)$ as $t\uparrow T$.}
\]
Moreover,
\begin{equation}\label{equivalent}
  \mbox{($r_*\in H^1(\R)$ and $r_*(0)=0$)}  \iff \mbox{ \eqref{conformal} holds}
\end{equation}
\item[Generic log-log blow-up rate \cite{MerleR2}]
The set of initial data in $H^1$ satisfying \eqref{eq:data} and such that the solution blows up 
with the loglog blow-up rate \eqref{loglog} is open in $H^1$. It contains all the initial data with negative energy
such that \eqref{eq:data} holds.
\item[Existence of non-generic blow-up \cite{BourgainW,KriegerS,MerleRS}] For $A>0$ large, if $r_*\in H^A(\R)$ satisfies
\begin{equation}\label{AAA}
(1+|x|^2)^\frac12 r_*\in L^2(\R) \quad \mbox{and}\quad r_*^{(k)}(0)=0 \quad \mbox{for any $0\leq k\leq A$,}
\end{equation}
then, there exist $\tau>0$ and a solution $u$ of \eqref{nls} on $[-\tau,0)$ such that
\[
\lim_{t\uparrow 0} u(t) -S(t) = r_* \quad \mbox{in $H^1(\R)$}.
\]
\end{description}

The minimal mass solution \eqref{def:S} and the non-generic blow-up solutions constructed in \cite{BourgainW}
have the conformal blow-up rate $\|\partial_x u(t)\|_{L^2} \sim C (T-t)^{-1}$ for $t\sim T$.
The  existence of solutions blowing up \emph{strictly faster} than the   conformal rate 
is an open problem, which is equivalent, by   the pseudo--conformal symmetry, to the existence of global solutions blowing up in infinite time. The only known example \cite{MartelR} contains at least two bubbles, which means that \eqref{eq:data} is not satisfied.

The $\log \log$ rate is physically more relevant than the conformal rate since it was proved to be stable under the assumption~\eqref{eq:data} (the set of blow-up solutions exhibited in \cite{MerleR2} is open in $H^1$). However, it is interesting mathematically to further investigate the conformal blow-up case. For example, the conformal blow-up rate is seen as a frontier separating solutions with the loglog rate
and scattering, as discussed in \cite{BourgainW,KriegerS,MerleRS}.

In this paper, we focus on the blow-up profile $r_*$ as defined in \cite{MerleR1}, in the conformal blow-up case.

\begin{theorem} \label{theorem1} 
There exist $\kappa\in \R$, $\delta_0>0$ and $C>0$ with the following property.
For any $\delta\in (0,\delta_0)$, there exist $\tau>0$ and a solution $u\in \mathcal C([-\tau,0),H^1(\R))$ of~\eqref{nls} which blows up at time $0$ with the asymptotics
\begin{align}
&\lim_{t\uparrow 0} |t| \, \|\partial_x u(t)\|_{L^2} = \|Q'\|_{L^2},\label{eq:thm0}\\
&\lim_{t\uparrow 0} \left\{u(t)-S(t)\right\}=
\lim_{t\uparrow 0} \biggl\{u(t)-\frac{e^{\frac it}}{|t|^{\frac12}} Q\left(\frac x t\right)\biggr\}=
 r_* \quad \mbox{in $L^2(\R)$},\label{eq:thm1}
\end{align}
where the function $r_*\in H^1(\R)$ satisfies $\|r_*\|_{L^2} \leq C \delta^\frac32$ and
\begin{equation}\label{eq:thm1bis}
r_*(x) = |x|+i \kappa x^2
\quad \mbox{for all $x\in (-\delta,\delta).$}
\end{equation}
\end{theorem}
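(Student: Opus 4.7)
My plan is to follow the standard framework for constructing non-generic conformal blow-up solutions: first, build an approximate solution $\tilde u$ on $(-\tau,0)$ whose leading order is $\tilde u(t)\approx S(t)+r_*$; second, close the construction by solving the remainder equation backward from $t=0$ via a compactness argument, as pioneered in \cite{BourgainW} and refined in \cite{MerleR1,MerleRS}.

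For the first step, I will seek $\tilde u$ in the form $\tilde u(t,x)=S(t,x)+\psi(t,x)$, where $\psi(t,\cdot)\to r_*$ in $H^1$ as $t\uparrow 0$ and $\psi$ is constructed iteratively by adding corrections $t^{j} b_j(x)$ (and possibly $|t|^j\log|t|$ terms) that successively cancel the error $E(t):=i\partial_t\tilde u+\partial_x^2\tilde u+|\tilde u|^4\tilde u$. Substituting this ansatz into \eqref{nls}, distinguishing the region $|x|\lesssim |t|$ (where $S$ is concentrated with width $|t|$) from $|x|\gg |t|$, and expanding the cross-nonlinearity $|S+\psi|^4(S+\psi)-|S|^4S$ around the soliton, one is led to a hierarchy of linear equations for the $b_j$ with source terms determined by the Taylor expansion of $\psi$ near the origin. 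The universal constant $\kappa$ will be fixed by a Fredholm solvability condition at one of these orders: the $|x|$-kink of $r_*$ at the origin generates, through its interaction with the concentrated $|S|^4$, a source that lies outside the range of the relevant linearized operator unless the specific quadratic correction $i\kappa x^2$ is included; setting the corresponding obstruction to zero pins down $\kappa$ as an explicit universal integral involving $Q$ and its derivatives. For $\delta$ small, a cut-off profile of $|x|+i\kappa x^2$ supported in $(-\delta,\delta)$ has $\|r_*\|_{L^2}\lesssim \delta^{3/2}$, opening room for the perturbative argument.

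Once $\tilde u$ is built with $\|E(t)\|_{H^1}\lesssim |t|^N$ for $N$ sufficiently large, I set $u=\tilde u+w$ and solve the equation for $w$ backward on $[-\tau,t_n]$ with $w(t_n)=0$, for a sequence $t_n\uparrow 0$. Modulating with the scaling, phase, translation, and Galilean parameters to track the soliton part of $u$, and invoking a coercive energy estimate for the linearization of \eqref{nls} around $Q$ (modulo the four associated orthogonality directions), I expect a uniform bound $\|w_n(t)\|_{H^1}\to 0$ as $t\uparrow 0$, independently of $n$, on a common interval $[-\tau,0)$. A standard weak-$H^1$ compactness argument then yields a limit $w_\star$ such that $u:=\tilde u+w_\star$ is a genuine $H^1$ solution of \eqref{nls}, and the claimed asymptotics \eqref{eq:thm0}--\eqref{eq:thm1bis} follow directly from the structure of $\tilde u$.

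The main obstacle will be the profile construction, and in particular the derivation of $\kappa$. The non-flatness of $r_*$ at the origin creates a singular coupling between the concentrated soliton and $\psi$ that is absent in the flat Bourgain--Wang regime \cite{BourgainW}; only the specific universal value of $\kappa$ annihilates the leading obstruction, and the expansion must then be iterated sufficiently far to reach an error $E(t)$ small enough for the backward integration to close. By contrast, the remainder analysis itself is comparatively routine once a sufficiently accurate profile has been produced.
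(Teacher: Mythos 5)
Your high-level scheme (approximate solution plus backward integration plus compactness) matches the paper's, but there is a concrete gap in the profile-construction step that would cause the iteration to break down. You propose to write $\tilde u = S + \psi$ and to build $\psi$ as a hierarchy of corrections $t^j b_j(x)$ (possibly with $|t|^j\log|t|$ terms). This Bourgain--Wang-type ansatz is precisely what the paper argues cannot work once $r_*$ is non-flat: the soliton has phase $e^{i/t}$, so the cross-nonlinearity $|S+\psi|^4(S+\psi)-|S|^4 S - |\psi|^4\psi$ contains terms proportional to $S^2\bar\psi$, $S^3\bar\psi^2$, etc., whose sources carry rapidly oscillating factors $e^{ik/t}$ with $k\neq 0$. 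A Taylor expansion of the correction in powers of $t$ alone cannot absorb these sources, and no choice of $b_j$ independent of such oscillations will make the residual error decay to high order. The paper resolves this by building the approximate solution in the rescaled frame $(s,y)$ and inserting explicitly oscillatory terms $\lambda^{3/2}(\varphi_1\cos\gamma + i\psi_1\sin\gamma)$ (with $\gamma(s)\sim s \sim 1/|t|$) into $V$; the pair $(\varphi_1,\psi_1)$ solves the homogeneous system $\psi_1 + L_+\varphi_1 = 0$, $\varphi_1 + L_-\psi_1 = 0$, which is exactly the resonant sector for this $e^{\pm i\gamma}$ forcing. The construction of $V$ is the main new ingredient of the paper, and it is not reproducible by a straight time-power-series correction of $S + r_*$.

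The determination of $\kappa$ is also not quite what you describe. You expect $\kappa$ to arise as a Fredholm solvability condition — a choice that kills an obstruction so that some inhomogeneous equation can be solved. In the paper, $\kappa$ is instead an \emph{output} of the linear ODE analysis: the system above has (by Perelman's non-resonance result and Lemma~\ref{agmon2}) a unique even solution growing like $\pm|y|$, and its asymptotics is $\varphi_1(y) = |y| + c_1 + O(e^{-|y|})$, $\psi_1(y) = -|y| - c_1 + O(e^{-|y|})$, with $c_1$ a universal constant forced by the requirement that the exponentially growing mode in the $L_+$ direction be absent. Then $\kappa = -c_1/2$, and $i\kappa x^2$ appears in the profile because the second-order functions $(\varphi_2,\psi_2)$ solving the inhomogeneous system \eqref{ellip2} (whose sources inherit the constant $c_1$ via $\Lambda_3\psi_1$) necessarily behave like $\pm\kappa x^2$ at infinity. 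In short, the quadratic term is not tuned to satisfy a solvability condition; it is the universal far-field signature of the linear resonance structure, and one would not find it by scanning for obstructions in a power-series hierarchy in $t$. You would also need the further oscillatory modulation quantities $\Omega$ and $\mathcal J$ (introduced in the paper to integrate by parts in time and gain cancellations in the parameter ODEs) to close the bootstrap on the modulation parameters; these have no analogue in the flat Bourgain--Wang setting.
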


\begin{remark}
This example illustrates the fact that a blow-up profile in the conformal case need not 
be flat at a blow-up point as one could have expected from previous constructions
of conformal blow-up solutions in \cite{BourgainW,KriegerS,MerleRS}; see \eqref{AAA}.
\end{remark}

\begin{remark}
The constant $\kappa$ is defined in \eqref{def:kappa} ; see also Remark~\ref{rk:kappa}.
\end{remark}

\begin{remark}
We expect that the proof of Theorem~\ref{theorem1} can be extended to more general blow-up profiles of the form
\begin{equation}\label{eq:profile}
r_*(x) = z_1 (|x|+i \kappa x^2) + z_2 |x|^3 + w_*(x)
\end{equation}
in the neighborhood of $0$ where
$z_1,z_2\in \C$ and $w_*$ is a term of order $x^4$ at $x=0$, with suitable regularity properties.
We do not pursue this issue here for the sake of simplicity.

It is an open problem to determine all possible blow-up profiles, in terms of regularity and behavior at the blow-up point.
On the one hand, in the one-dimensional case, from \cite{MerleR1} , the condition $r_*(0)=0$ is necessary for $r_*$ to be a blow-up profile
(see  \eqref{equivalent}).
On the other hand, we do not know whether the implicit relation $r_*''(0)=2i\kappa r_*'(0)$ in \eqref{eq:profile} is  necessary or is only
a technical assumption due to our approach. 
\end{remark}

\begin{remark}
The question of the blow-up rate and the blow-up profile has been studied for other nonlinear dispersive or wave PDEs ; see for example the following articles:
\cite{Jendrej,Jendrej2016,JendrejLR,KimK,KimK2, Kim,MartelMR1,MartelMR3,MartelP,RaphaelS}.
Analogies between the present work and \cite{MartelP}, devoted to the critical generalized Korteweg-de Vries equation, are discussed in the sketch of the proof.
\end{remark}

\subsection*{Sketch of the proof}

To analyse the blow-up phenomenon, we introduce the rescaled variable 
\begin{equation}\label{cov}
u (t,x)=\lambda^{-\frac 12}(s) e^{i\gamma(s)} U (s, y ),
\quad y = \frac {x}{\lambda},\quad ds = \frac {dt}{\lambda^2},
\end{equation}
where $y\in \R$ and $s\gg 1$ (which means that $t$ is close to the blow up time) and where
the scaling and phase parameters $\lambda>0$ and $\gamma$ are functions to be determined.
By direct computation, 
\begin{equation} \label{eq:resc1}
i \partial_t u + \partial_x^2 u +|u|^4 u =\lambda^{-\frac 52} e^{i\gamma} \mathcal E(U),
\end{equation}
where
\begin{equation}\label{def:EE}
\mathcal E(U) :=i \partial_s U + \partial_y^2 U - U+ |U|^4 U
- i \frac{\lambda_s}{\lambda} \Lambda U- (\gamma_s - 1) U .
\end{equation}
It is easily checked using $Q''+Q^5=Q$ that the function
$U(s,y) = e^{-ib(s) \frac {y^2}4} Q(y)$ satisfies $\mathcal E(U)=0$ for the special choice
of parameters
\begin{equation}\label{eq:pS}
\gamma(s) = s,\quad \lambda(s) = \frac 1s,\quad b(s)= \frac 1s,
\end{equation}
which corresponds to the minimal mass solution $S(t)$ defined in \eqref{def:S}, and thus
to $r_*=0$ in \eqref{eq:thm1}.
Any residual profile $r_*$ satisfying a strong flatness condition such as \eqref{AAA} will decouple from the soliton and in such case it is enough to consider an approximate solution containing only a rescaled soliton, as in~\cite{BourgainW,MerleRS}.
The main new ingredient of the proof of Theorem~\ref{theorem1} consists in constructing a suitable approximate solution $V$ of $\mathcal E(V)=0$.
 In contrast with the flat case, to generate the specific asymptotic behavior \eqref{eq:thm1}--\eqref{eq:thm1bis}, where the residual profile is expected to  interact strongly with the bubble, the approximate solution $V$ has to include additional terms.
We set
\begin{equation}\label{def:Vintro:2}
V(s,y) =  e^{-ib \frac {y^2}4} Q(y) + \theta(s,y) \lambda^\frac 32 \left( \varphi_1(y) \cos \gamma + i \psi_1(y)\sin \gamma\right),
\end{equation}
with a suitable cut-off function $\theta$ ($\theta\equiv 1$ close to the soliton) and 
where
\[
\gamma(s) \sim s,\quad \lambda(s) \sim \frac 1s,\quad b(s) \sim \frac 1s.
\]
We now discuss the choice of the pair of functions $(\varphi_1,\psi_1)$;
the form of $V$ in \eqref{def:Vintro:2} and the choice of the cut-off function $\theta$ are justified later.
By linearization, we obtain formally
\[
\mathcal E(V) \sim -\lambda^\frac32 \left[ (\psi_1 + L_+\varphi_1)\cos \gamma 
+ i (\varphi_1 + L_- \psi_1) \sin \gamma \right],
\]
where the operators $L_\pm$ are defined in \eqref{def:Lpm} and where we have discarded higher order terms.
It is thus natural to consider the system
\begin{equation}\label{eq:syst}
\begin{pmatrix} L_+ & 1 \\ 1 & L_- \end{pmatrix} \begin{pmatrix} \varphi \\ \psi\end{pmatrix}= 0.
\end{equation}
From \cite[Proposition~2.1.4]{Perelman}, the system \eqref{eq:syst} does not have any non zero bounded solution
$(\varphi,\psi)$.
However, it follows from ODE arguments (see Appendix \ref{appendix}) that this system has 
an even solution $(\varphi_1,\psi_1)$ with the following asymptotic behavior as $|y|\to +\infty$,
\begin{equation}\label{eq:asympp}
\varphi_1(y) \sim |y|  ,\quad
\psi_1 (y) \sim -|y| .
\end{equation}
(See Lemma \ref{lem1bis} for a precise statement.)
We now justify that the asymptotic behavior \eqref{eq:asympp} of $(\varphi_1,\psi_1)$ as $|y|\to +\infty$, the multiplicative factor $\lambda^\frac32$ in \eqref{def:Vintro:2} and the choice of a suitable cut-off function 
$\theta$ generate the first term of the profile $r_*$  in the original variables $(t,x)$. Indeed, from \eqref{def:Vintro:2},
the asymptotic behavior of $V(s,y)$ for $|y|$ large corresponds to the function $R_*(s,y)=\lambda^\frac 32 e^{-i \gamma}|y|$ and after the change of variable \eqref{cov}, we recognize the first term $|x|$ in the profile $r_*$ defined by \eqref{eq:thm1bis}.
For a different multiplicative factor $\lambda^\alpha$, $\alpha\neq \frac 32$, the ansatz would  be  either too large  (the approximate solution is not close to $Q$ in $L^2$) or too small (the solution is simply $S(t,x)$).
The second term $i \kappa x^2$ in \eqref{eq:thm1bis} is due to an additional correction term in the definition of $V$ (see Section \ref{S:3:2}) which is needed in our method to obtain manageable error terms.

Note that the idea of using a pair $(\varphi_1,\psi_1)$ solution of \eqref{eq:syst}
and the choice of a cut-off function $\theta$ related to the original variable (see Section \ref{S:3:2})
was inspired by the construction technique introduced in \cite{MartelP}
(see the definition of $A_1$ and formula (1.16) in \cite{MartelP}), devoted to the existence of a new blow-up rate for the critical generalized KdV equation. We also refer to \cite{MartelMR1,MartelMR3} and references therein for more blow-up results concerning that equation.

Once the approximate solution $V$ is constructed in Section~\ref{S:3} (see Lemma \ref{le:V} and Proposition \ref{PR:1} for its main properties), the proof scheme follows the ones in \cite{Me90,RaphaelS,MartelR}. 
For a sequence of solutions of \eqref{nls} constructed backwards in time close to the approximate solution, we establish uniform estimates for the parameters (by modulation and ODE techniques, see Sections \ref{S:4} and \ref{S:6}) and for the error term (by energy arguments inspired by \cite{RaphaelS}, see Section \ref{S:5}). In Section~\ref{S:7}, passing to the strong limit in $L^2$, we obtain a solution of \eqref{nls} as stated in Theorem~\ref{theorem1}. 

Compared to previous works on blowup for the nonlinear Schrödinger equation, a key difficulty comes from oscillations in time through the phase parameter $\gamma$, due to that fact that the solitonic bubble has a phase $\gamma\to +\infty$ at the blow-up time, while the profile $r_*(x)$ is time independent. 
Several time dependent oscillatory quantities  (like \eqref{eq:O} and \eqref{fdfnJ}) are introduced to use cancellations in mean value in time while estimating the parameters.
\subsection*{Notation}
Set
\begin{equation*} 
f(u) = |u|^4 u, \quad
F(u) = \frac {1} {6} |u|^6 .
\end{equation*} 
We denote 
\begin{equation*} 
\langle g , h \rangle = \Re \int g \overline{h}.
\end{equation*}
Set
\[
\Sigma=\{ g \in H^1(\R) ; xg\in L^2(\R)\}.
\]
Let $\Lambda$ be the generator of $L^2$-scaling in 1D:
\[ \Lambda g = \frac 12 g+x \partial_x g. \]
In particular,
\begin{equation} \label{fortho3b} 
\langle g_1, \Lambda g_2 \rangle + \langle g_2, \Lambda g_1 \rangle = 0,
\end{equation} 
for all $g_1, g_2\in \Sigma$, hence
\begin{equation} \label{fortho3} 
\langle g, \Lambda g \rangle = 0,
\end{equation} 
for all $g\in \Sigma$. 
In addition,
\begin{equation} \label{fortho3c} 
\langle \partial_x^2 g , \Lambda g \rangle = - \| \partial _x g \| _{ L^2 }^2,
\end{equation} 
for all $g \in H^2 (\R) $ with $\partial_x g \in\Sigma $. 
More generally, for $k\in \Z$, we let
\begin{equation} \label{fdfnlj} 
 \Lambda_k g=\frac {1-k}2 g+x \partial_x g,
\end{equation} 
so that $\Lambda = \Lambda_0$. 

We define the vector space
\begin{multline*}
\mathcal Y = \Bigl\{ g\in \mathcal C^\infty (\R, \R); g \text{ is even and } \\
\exists q\in \N, \forall p\in \N, \exists C>0, 
\forall x\in \R, | g^{(p)} (x) |\le C \langle x\rangle ^q e^{- |x|} \Bigr\}
\end{multline*} 
where $\langle x \rangle = (1+x^2)^{\frac 12}$.

Moreover, we fix an even function $\chi \in \mathcal C^\infty (\R, \R)$ such that $\chi \ge 0$, $\chi (x)= 0$ for $ |x|\le 1/2$ and $\chi (x)= 1$ for $ |x|\ge 1$, and we define the functions $\mu_k$, $k\ge 0$, by
\begin{equation} \label{fdfnmuk} 
\mu_k (x)= |x|^k \chi (x). 
\end{equation} 
We let
\begin{equation} \label{fdefzk} 
\mathcal Z_k = \mathcal Y + \SPAN\{ \mu_0, \ldots, \mu_k \} . 
\end{equation} 
Using the operators $\Lambda_j$ defined by~\eqref{fdfnlj}, it is not difficult to see that
for any $j, k\in \N$,
\begin{align} 
\Lambda_j \mathcal Z_k &\subset \mathcal Z_k \label{fcomm1} \\
\Lambda_{2k+3}  \mathcal Z_{k+1} &\subset \mathcal Z_{k} \label{fcomm2} \\
\Lambda_1  \mathcal Z_0 &\subset \mathcal Y \label{fcomm3} 
\end{align} 

\subsection*{Acknowledgements.}
The second author is Fellow of Sistema Nacional de Investigadores. His research was partially supported by the projects CONACYT, FORDECYT-PRONACES 429825/2020 and PAPIIT-DGAPA UNAM IA100422.

\section{Linearization around the soliton}

\subsection{Linearized operator}\label{S:2:1}

The linearization of~\eqref{nls} around the special solution $u(t,x)=e^{it}Q(x)$ involves the following Schr\"odinger operators:
\begin{equation}\label{def:Lpm}
L_+:=-\partial_x^2+1-5Q^4,\qquad
L_-:=-\partial_x^2+1-Q^4.
\end{equation}
Denote by $\rho\in \mathcal Y$ the unique radial $H^1$ solution of 
\[
L_+\rho =\frac{x^2}4Q.
\]
See~\cite[Appendix~B]{Weinstein}.
We recall the \emph{generalized null space relations} \cite[Appendix~B]{Weinstein}
\begin{equation} \label{fortho1} 
L_-Q=0,\quad L_+(\Lambda Q)=-2Q,\quad L_-(x^2Q)=-4\Lambda Q,\quad L_+\rho =\frac{x^2}4Q. 
\end{equation}
We check that
\begin{equation} \label{eq:rho}
\langle \rho, Q\rangle
=-\frac 12 \langle L_+ \rho, \Lambda Q\rangle
=-\frac 18 \langle x^2 Q, \Lambda Q\rangle = \frac18 \int x^2Q^2 >0.
\end{equation} 
For any $a\in \R$, we set
\[
Q_a = Q +a \rho
\]
so that
\begin{equation} \label{eq:Qa} 
Q_a'' - Q_a + Q_a ^5 + a \frac{x^2}{4} Q = Q_a^5-Q^5-5aQ^4\rho.
\end{equation}
We will use the following well-known coercivity properties.

\begin{lemma} \label{lem0}
There exists a constant $\zeta _0>0$ such that 
\begin{equation*} 
\langle L_+ g,g\rangle \ge \zeta _0 \|g \| _{ H^1 }^2 - \frac {1} {\zeta _0} \Bigl( \langle g, Q\rangle ^2 + \langle g, y^2 Q\rangle ^2 \Bigr)
\end{equation*} 
and
\begin{equation*} 
\langle L_- g,g\rangle \ge \zeta _0 \|g\| _{ H^1 }^2 - \frac {1} {\zeta _0} \Bigl( \langle g, \rho \rangle ^2 + \langle g, \Lambda Q\rangle ^2 \Bigr)
\end{equation*} 
for any real-valued function $g\in H^1 (\R) $.
\end{lemma}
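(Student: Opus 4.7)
My plan follows the classical Weinstein scheme: first establish the spectral picture of $L_\pm$, then derive a clean coercivity on the complement of the finite-dimensional ``bad'' subspace of each operator, and finally replace those canonical orthogonality conditions by the ones appearing in the lemma. Throughout I work in the even sector (consistent with the paper's standing evenness convention on the initial data); otherwise the odd kernel direction $Q'$ of $L_+$ would force an additional $\langle g,Q'\rangle^2$ term, since $\langle Q',Q\rangle=\langle Q',y^2Q\rangle=0$ by parity.

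The spectral input, which is classical \cite{Weinstein83,Weinstein}, is the following. $L_-$ is non-negative with simple kernel $\mathrm{span}(Q)$ and spectral gap $c_1>0$ above. $L_+$ has exactly one negative eigenvalue $-\nu_0$ with an even, positive ground state $\eta_0$, kernel $\mathrm{span}(Q')$, and on the even orthogonal complement of $\eta_0$ satisfies $\langle L_+g,g\rangle\ge c_1\|g\|_{L^2}^2$. The identity $\langle L_\pm g,g\rangle=\|g\|_{H^1}^2-\int V_\pm g^2$ with $V_\pm\in\{Q^4,5Q^4\}$ bounded upgrades each of these $L^2$ coercivities to $H^1$ coercivities on the same subspace, by absorbing $\int V_\pm g^2\le C\|g\|_{L^2}^2$ into the $L^2$ lower bound.

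To obtain the stated form, I decompose $g=\alpha Q+h$ with $\langle h,Q\rangle=0$ (for $L_-$), respectively $g=\alpha\eta_0+h$ with $\langle h,\eta_0\rangle=0$ (for $L_+$), and control $\alpha$. For $L_-$, identity~\eqref{eq:rho} gives $\langle\rho,Q\rangle\neq 0$, hence $|\alpha|\lesssim|\langle g,\rho\rangle|+\|h\|_{L^2}$; combining this with $\langle L_-g,g\rangle=\langle L_-h,h\rangle\ge c\|h\|_{H^1}^2$ and $\|g\|_{H^1}^2\lesssim\alpha^2+\|h\|_{H^1}^2$ yields the claim, and the $\langle g,\Lambda Q\rangle^2$ term is carried along for free (it only weakens the inequality). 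For $L_+$, positivity of both $\eta_0$ and $Q$ gives $\langle\eta_0,Q\rangle>0$, so $|\alpha|\lesssim|\langle g,Q\rangle|+\|h\|_{L^2}$; since $\langle L_+g,g\rangle=-\nu_0\alpha^2+\langle L_+h,h\rangle$, a short arithmetic with the $H^1$-coercivity on $\{\eta_0\}^\perp$ concludes, and $\langle g,y^2Q\rangle^2$ is again a harmless redundancy.

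The main obstacle is the spectral input for $L_+$, specifically the facts that its negative subspace is exactly one-dimensional with an even positive ground state, and that $\ker L_+|_{\text{even}}=\{0\}$. For the explicit $Q$ in~\eqref{def:Q} these are classical Sturm--Liouville statements, or one can simply cite Weinstein's analysis \cite{Weinstein}; once they are in hand, everything else is bookkeeping with the generalized null-space identities in~\eqref{fortho1}.
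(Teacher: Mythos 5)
Your overall route — redo Weinstein's spectral argument from scratch, decompose against the ground state of each operator, and upgrade $L^2$-coercivity to $H^1$-coercivity — differs from the paper, which simply cites Propositions~2.9 and~2.10 of~\cite{Weinstein} and supplies no argument. Your treatment of $L_-$ is correct: $L_-\ge0$ with kernel spanned by $Q$, and since $\langle Q,\rho\rangle\ne 0$ by~\eqref{eq:rho}, orthogonality to $\rho$ alone already kills the null direction, so the $\langle g,\Lambda Q\rangle^2$ term is indeed superfluous. You are also right to flag the evenness issue: the displayed inequalities fail for $g=Q'\in\ker L_+$ (both inner products vanish by parity), so the lemma is implicitly about even functions, consistent with the paper's standing convention.

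The $L_+$ part, however, has a genuine gap, in two places. First, your claim that ``$\langle g,y^2Q\rangle^2$ is again a harmless redundancy'' is false. Take $g=\Lambda Q$: it is even, $\langle \Lambda Q, Q\rangle =0$ by~\eqref{fortho3}, and by~\eqref{fortho1}
\[
\langle L_+\Lambda Q,\Lambda Q\rangle = -2\langle Q,\Lambda Q\rangle = 0,
\]
while $\|\Lambda Q\|_{H^1}>0$ and $\langle \Lambda Q, y^2 Q\rangle = -\int y^2 Q^2 \ne 0$. Dropping the $y^2Q$ term would thus force $0\ge \zeta_0\|\Lambda Q\|_{H^1}^2$, which is absurd. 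The $y^2Q$ penalty is exactly what removes the generalized-null direction $\Lambda Q$ from the even sector, and cannot be discarded. Second, the ``short arithmetic'' you invoke does not close. After writing $g=\alpha\eta_0+h$ with $\langle h,\eta_0\rangle=0$ you get $|\alpha|\lesssim|\langle g,Q\rangle|+\|h\|_{L^2}$ and then
\[
\langle L_+g,g\rangle = -\nu_0\alpha^2 + \langle L_+h,h\rangle \ge -C\nu_0\bigl(\langle g,Q\rangle^2 + \|h\|_{L^2}^2\bigr) + c_2\|h\|_{H^1}^2 .
\]
Absorbing the $-C\nu_0\|h\|_{L^2}^2$ into $c_2\|h\|_{H^1}^2$ requires $C\nu_0<c_2$, which there is no reason to expect; the inequality may well turn negative. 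Resolving this is exactly where the content of Weinstein's Proposition~2.10 lies: one exploits the specific algebraic relations — $L_+\Lambda Q=-2Q$, $\langle L_+\Lambda Q,\Lambda Q\rangle=0$, $\langle\Lambda Q,\eta_0\rangle\ne0$ — in a constrained variational argument (or the abstract ``positivity lemma'' for operators with a single negative eigenvalue) to conclude $\langle L_+g,g\rangle\ge 0$ on $\{Q\}^\perp$, and then orthogonality to $y^2Q$ (equivalently $\Lambda Q$) upgrades this to strict coercivity. Without this input, the decomposition bookkeeping alone cannot produce a positive $\zeta_0$.
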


\begin{proof} 
The estimates are immediate consequences of Propositions~2.9 and~2.10 in~\cite{Weinstein}.
\end{proof} 

\subsection{Linearized system}
For the construction of the approximate blow-up solution, we will use systems involving the 
operators $L_+$ and $L_-$, as introduced below.

\begin{lemma}\label{lem1bis}
The following properties hold. 

\begin{enumerate}[{\rm (i)}] 

\item \label{lem1bis:1} 
There exists a unique pair of even functions $( \varphi_1, \psi_1 ) \in \mathcal Z_1^2 $ 
such that
\begin{equation} \label{ellip0bis} 
\begin{cases} 
\psi_1 + L_+ \varphi_1 = 0\\
\varphi_1 + L_- \psi_1 = 0 
\end{cases} 
\end{equation} 
and satisfying
\[
\varphi_1(x) = \mu_1(x) + c_1 + v_1(x),\quad
\psi_1(x) = - \mu_1(x) - c_1 + w_1(x)
\]
where $c_1\in\R$ and $v_1,w_1\in \mathcal Y$.

\item \label{lem1bis:2} 
Given a pair of even functions $(g,h)\in \mathcal Y^2$, there exists a unique pair of even functions $(\varphi,\psi) \in \mathcal Z_0^2$ solution of the system
\begin{equation} \label{ellipbis} 
\begin{cases} 
\psi + L_+ \varphi = g \\
\varphi + L_- \psi = h 
\end{cases} 
\end{equation}
and satisfying
\[
\varphi(x) = c + v(x),\quad
\psi(x) = - c + w(x)
\]
where $c\in\R$ and $v,w\in \mathcal Y$.
\end{enumerate} 
\end{lemma}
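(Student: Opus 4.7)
The plan is to prove part (ii) first by ODE methods and then deduce part (i) by reduction to (ii). Uniqueness in both parts is immediate from the Perelman nonexistence result quoted in the sketch of the proof: the difference of two candidate solutions lies in $\mathcal Y^2$ once the $\mu_1$ and constant pieces cancel, hence is bounded, hence vanishes.

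For existence in (ii), view \eqref{ellipbis} as a linear $4$-dimensional first-order ODE in $X=(\varphi,\varphi',\psi,\psi')^t$, whose coefficient matrix converges exponentially to the constant-coefficient matrix associated with $\varphi''=\varphi+\psi$, $\psi''=\psi+\varphi$. The latter constant system admits four explicit fundamental modes: $e^{\pm\sqrt{2}\,x}(1,1)$, $(1,-1)$ and $(x,-x)$. A standard Levinson-type perturbation argument produces four independent solutions $e_1,\dots,e_4$ of the full homogeneous system on $(0,\infty)$ with these asymptotic behaviors modulo $\mathcal Y$-remainders. Variation of parameters against $(e_1,\dots,e_4)$, using the exponential decay of $g,h\in\mathcal Y$, yields a polynomially-bounded particular solution $(\varphi_p,\psi_p)$ on $(0,\infty)$. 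Killing its growing-exponential and linear components via homogeneous additions leaves a two-parameter family $\alpha\, e_1+ c\,e_3+(\varphi_p,\psi_p)$ of solutions on $(0,\infty)$ of the desired asymptotic form $(c+v(x),-c+w(x))$ with $v,w\in\mathcal Y$. Even extension to all of $\R$ requires $\varphi'(0)=\psi'(0)=0$, a $2\times 2$ linear system in $(\alpha,c)$. Invertibility of this $2\times 2$ system follows again from Perelman: a nonzero element of its kernel would produce a nonzero even bounded homogeneous solution.

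For part (i), substitute $(\varphi_1,\psi_1)=(\mu_1+c_1+v_1,-\mu_1-c_1+w_1)$ with $v_1,w_1\in\mathcal Y$ into \eqref{ellip0bis}. Because $\chi$ vanishes near the origin, $\mu_1$ is smooth with $\partial_x^2\mu_1$ compactly supported, so both $\mu_1-L_+\mu_1$ and $L_-\mu_1-\mu_1$ lie in $\mathcal Y$. The system then reduces to an instance of \eqref{ellipbis} for $(v_1,w_1)$ with data in $\mathcal Y^2$ depending affinely on $c_1$. Part (ii) produces a unique solution $(v_1,w_1)=(c(c_1)+\tilde v,-c(c_1)+\tilde w)$ with $\tilde v,\tilde w\in\mathcal Y$ and $c(c_1)$ affine in $c_1$. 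The constraint $v_1,w_1\in\mathcal Y$ becomes the scalar equation $c(c_1)=0$; an elementary verification that the pair $(v,w)=(-1,1)$ solves \eqref{ellipbis} with right-hand side $(g,h)=(5Q^4,-Q^4)$ shows that the coefficient of $c_1$ in $c(c_1)$ equals $-1\ne 0$, yielding a unique $c_1\in\R$. The main obstacle is the ODE construction of the four fundamental solutions, especially the borderline polynomial modes $(1,-1)$ and $(x,-x)$, where exponential weights fail and one must propagate from $+\infty$ using polynomial weights that exploit the $e^{-4|x|}$ decay of $Q^4$; this, together with the Perelman-based invertibility of the $2\times 2$ shooting matrix, is the technical heart of the argument.
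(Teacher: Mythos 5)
Your proposal is correct in substance, but it takes a genuinely different route from the paper's appendix in two places. First, you work throughout in the original variables $(\varphi,\psi)$, whereas the paper symmetrizes to $(\widetilde\varphi,\widetilde\psi)=(\varphi+\psi,\varphi-\psi)$ before doing any analysis. That change of variables brings the system to the form \eqref{ellip2bis}, where the massive and massless blocks are already separated, so the paper's Lemma~\ref{agmon2} can be stated and proved with a clean $m^2 u$ / $-v''$ split and no Jordan block; in your unsymmetrized frame the limiting constant matrix has eigenvalues $\pm\sqrt2$ and a rank-one Jordan block at $0$, which is exactly the "technical heart" you flag at the end, and it is what the paper sidesteps by symmetrizing first. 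Second, and more structurally, the paper proves part~\eqref{lem1bis:2} not by ODE shooting but by a Fredholm argument: the second equation is integrated twice into a Volterra-type identity, producing compact operators $T$ and $T_1$ on $L^2$, and invertibility of $I-T_1$ is obtained by showing $\ker(I-T_1^\star)=0$ (which again reduces to Perelman's nonexistence result). Your ODE-shooting version of~\eqref{lem1bis:2} (four Levinson modes, variation of parameters, kill $e_2,e_4$, impose $\varphi'(0)=\psi'(0)=0$, invert a $2\times 2$ matrix) is an honest alternative that avoids compact operators altogether and keeps the whole appendix at the level of scalar ODE theory. Finally, your derivation of part~\eqref{lem1bis:1} from part~\eqref{lem1bis:2} by inserting the ansatz $\varphi_1=\mu_1+c_1+v_1$, $\psi_1=-\mu_1-c_1+w_1$ and identifying the affine dependence $c(c_1)=c(0)-c_1$ via the explicit check that $(-1,1)$ solves \eqref{ellipbis} with data $(5Q^4,-Q^4)$ is a nice, self-contained alternative to the paper's direct shooting construction of $(\widetilde\varphi_1,\widetilde\psi_1)$; both ultimately need the same Perelman nondegeneracy to conclude $c_1$ is well-defined and unique. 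In short: same key ingredients (exponentially perturbed constant-coefficient ODE asymptotics plus Perelman's non-resonance), but you replace the Fredholm step by shooting and you recycle (ii) to obtain (i) instead of arguing for (i) directly, at the cost of handling the Jordan-block Levinson analysis yourself.
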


The proof of Lemma~\ref{lem1bis} is given in Appendix \ref{appendix}.
The key ingredient is a result from \cite{Perelman} concerning the non existence of a resonance pair for \eqref{ellip0bis}, i.e. the non existence of a pair of non zero bounded functions satisfying
\eqref{ellip0bis}.

We define
$\alpha_1\in \R$ by
\begin{equation} \label{def:a1}
\langle \Lambda_3 \varphi_1 - x^2 Q^4 \varphi_1+ \alpha_1 \rho,Q\rangle =0
\end{equation}
and $\kappa\in \R$ by
\begin{equation} \label{def:kappa}
\kappa= - \frac {c_1}2 .
\end{equation}
\begin{remark}\label{rk:kappa}
The constant $\kappa$ appears in the statement of Theorem~\ref{theorem1}.
From the definition of $c_1$ in part (i) of Lemma~\ref{lem1bis}, it does not seem clear how to determine $\kappa$ explicitly.
\end{remark}

Now, we define another pair of functions that will be useful in the construction of the
approximate rescaled blow-up solution.

\begin{lemma} \label{lem2}
There exists a unique pair of even functions 
$(\varphi_2,\psi_2)\in \mathcal Z_2^2$ solution of the system
\begin{equation} \label{ellip2} 
\begin{cases} 
\psi_2 + L_+ \varphi _2 = -\Lambda_3 \psi_1- x^2 Q^4 \psi_1 \\
\varphi _2 + L_- \psi_2 = -\Lambda_3 \varphi_1 + x^2 Q^4 \varphi_1- \alpha_1 \rho
\end{cases}
\end{equation}
and satisfying
\begin{equation*}
\varphi_2(x) =\kappa x^2 + c_2 + v_2(x),\quad
\psi_2(x) = -\kappa x^2 - c_2 + w_2(x)
\end{equation*}
where $c_2 \in \R$ and $v_2, w_2\in \mathcal Y$.
\end{lemma}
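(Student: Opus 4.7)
The plan is to reduce system \eqref{ellip2} to the framework of Lemma~\ref{lem1bis}(ii) by subtracting off the leading quadratic growth. I introduce
\[
\tilde\varphi_2 := \varphi_2 - \kappa \mu_2, \qquad \tilde\psi_2 := \psi_2 + \kappa \mu_2,
\]
so that the desired representation for $(\varphi_2, \psi_2)$ becomes the requirement that $(\tilde\varphi_2, \tilde\psi_2) \in \mathcal Z_0^2$ be of the form $c_2 + v$, $-c_2 + w$ with $v, w \in \mathcal Y$, which is precisely the class addressed by Lemma~\ref{lem1bis}(ii). Substituting into~\eqref{ellip2} yields that $(\tilde\varphi_2, \tilde\psi_2)$ must solve system~\eqref{ellipbis} with right-hand sides
\[
G := -\Lambda_3 \psi_1 - x^2 Q^4 \psi_1 - \kappa (L_+ \mu_2 - \mu_2),
\]
\[
H := -\Lambda_3 \varphi_1 + x^2 Q^4 \varphi_1 - \alpha_1 \rho + \kappa (L_- \mu_2 - \mu_2).
\]

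The key step, and in my view the main obstacle, is to verify $(G, H) \in \mathcal Y^2$. By the commutation relation~\eqref{fcomm2} applied with $k=0$, one knows $\Lambda_3 \psi_1, \Lambda_3 \varphi_1 \in \mathcal Z_0 = \mathcal Y + \SPAN\{\mu_0\}$. To locate the $\mu_0$-coefficients with $\mathcal Y$-precision, I would combine the explicit form of $\varphi_1, \psi_1$ from Lemma~\ref{lem1bis}(i) with the elementary identities $\Lambda_3 \mu_1 \in \mathcal Y$, $\Lambda_3 \mu_0 + \mu_0 \in \mathcal Y$ and $\mu_2'' - 2\mu_0 \in \mathcal Y$ (all immediate from the support properties of $\chi$). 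The exponentially decaying contributions $x^2 Q^4 \varphi_1$, $x^2 Q^4 \psi_1$, $\alpha_1 \rho$ and $Q^4 \mu_2$ belong to $\mathcal Y$ directly. Extracting the $\mu_0$-coefficients of $G$ and $H$ modulo $\mathcal Y$ then reduces each to an explicit linear combination of $c_1$ and $\kappa$, which vanishes precisely because of the definition~\eqref{def:kappa}.

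The constant $\alpha_1$ from~\eqref{def:a1} plays no role in this cancellation (since $\alpha_1 \rho \in \mathcal Y$); instead it is chosen so that pairing the second equation with $Q$ and exploiting $L_- Q = 0$ yields a clean orthogonality identity for $\langle \tilde\varphi_2, Q\rangle$ needed downstream in the paper. Once $(G, H) \in \mathcal Y^2$ is established, Lemma~\ref{lem1bis}(ii) furnishes a unique $(\tilde\varphi_2, \tilde\psi_2) \in \mathcal Z_0^2$ of the required form. Reversing the change of unknowns, I set $\varphi_2 := \tilde\varphi_2 + \kappa \mu_2$ and $\psi_2 := \tilde\psi_2 - \kappa \mu_2$, and absorb the compactly supported smooth corrections $\kappa(\mu_2 - x^2) = -\kappa x^2(1-\chi)$ into the $\mathcal Y$-remainders $v_2, w_2$; this produces $(\varphi_2, \psi_2) \in \mathcal Z_2^2$ with the claimed asymptotic representation. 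Uniqueness in the prescribed class is inherited from the uniqueness clause in Lemma~\ref{lem1bis}(ii).
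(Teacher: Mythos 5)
Your proof takes essentially the same route as the paper: you reduce \eqref{ellip2} to the framework of Lemma~\ref{lem1bis}(ii) by subtracting off the leading quadratic growth and then verifying that the new right-hand side lies in $\mathcal Y^2$, which is exactly the paper's argument. The only cosmetic difference is that you subtract $\kappa\mu_2$ (with the cut-off $\chi$) whereas the paper subtracts $-\tfrac12 c_1 x^2$; since these differ by the compactly supported smooth function $\kappa x^2(1-\chi)$, absorbable into $\mathcal Y$, the two substitutions are equivalent, and the $\mu_0$-coefficient cancellation is the same one the paper uses via the definition \eqref{def:kappa}.
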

\begin{proof}
Note from \eqref{fcomm2} with $k=0$ that $\Lambda_3 \varphi_1,\Lambda_3 \psi_1\in \mathcal Z_0$.
More precisely, from Lemma~\ref{lem1bis}, we have
\begin{align*}
-\Lambda_3 \psi_1- x^2 Q^4 \psi_1 & = c_1 + g_2,\\
-\Lambda_3 \varphi_1 + x^2 Q^4 \varphi_1- \alpha_1 \rho & = -c_1 + h_2 
\end{align*}
where $g_2$, $h_2\in \mathcal Y$.
In particular, $(\varphi_2,\psi_2)$ solves \eqref{ellip2} if and only if setting
$\check\varphi_2 = \varphi_2+\frac 12 c_1 x^2$, $\check \psi _2 = \psi _2 - \frac 12 c_1 x^2$, the pair
$(\check \varphi_2,\check \psi_2)$ satisfies
\begin{equation*} 
\begin{cases} 
\check\psi_2 + L_+ \check\varphi_2 = \check g_2 \\
\check\varphi_2 + L_- \check\psi_2 = \check h_2
\end{cases}
\end{equation*}
where
\[
\check g_2 = g_2 - \frac 52 c_1 x^2 Q^4,\quad 
\check h_2 = h_2 + \frac 12 c_1 x^2 Q^4.
\]
As $\check g_2$, $\check h_2\in \mathcal Y$, by~\eqref{lem1bis:2} of Lemma~\ref{lem1bis} there exists a unique pair of functions $(\check\varphi_2,\check\psi_2)$ solutions in $\mathcal Z_0^2$ of this system, satisfying for a constant $ c_2\in \R$
and $v_2$, $w_2\in \mathcal Y$,
\[
\check\varphi_2 = c_2 + v_2, \quad
\check\psi_2 = - c_2 + w_2.
\]
Thus, setting $\varphi_2 = \check\varphi_2-\frac 12 c_1 x^2$, $\psi _2 = \check \psi _2 + \frac 12 c_1 x^2$, the pair $(\varphi_2,\psi_2)$ solves \eqref{ellip2}.
The uniqueness of the solution $(\varphi_2,\psi_2)$ with the specified asymptotic behavior is deduced from part~\eqref{lem1bis:1} of Lemma~\ref{lem1bis}.
\end{proof}

Let $\alpha_2 \in \R$ be defined by (see \eqref{eq:rho})
\begin{equation} \label{def:a2}
\langle \Lambda_5 \varphi_2 - x^2 Q^4 \varphi_2+ \alpha_2 \rho,Q\rangle =0.
\end{equation}

\section{Approximate blow-up solution in rescaled variables}\label{S:3}

In the rest of this paper, $\delta\in (0,1)$ is a constant.
For the sake of clarity, we also introduce the bootstrap constant
\begin{equation}\label{def:K}
K = \delta^{-\frac 12}.
\end{equation}
This constant will be taken
large to close some bootstrap estimates by a continuity argument
in the proof of Proposition \ref{pr:boot}.

Let $s_0>1$ be a constant to be chosen sufficiently large, possibly depending on $K$.
Let $n\geq s_0$ be an integer.
Throughout this paper, except in Section \ref{S:7}, the notation $a \lesssim b$ means that $a \le C b$, where the constant $C>0$ is independent of $K$ and $n$.

\subsection{Rescaled variables}
Define
\begin{equation}\label{def:Tn}
T_n=-\frac 1n,\quad S_n = n.
\end{equation}
We introduce the change of variables
$(s,y)\mapsto (t,x)$ for $s\in [s_0,S_n]$,
\begin{equation} \label{chngvar} 
x = \lambda(s) y,\quad dt = \lambda^2(s) ds, \quad T_n -t = \int^{S_n}_s \lambda^2( \tau ) d\tau ,
\end{equation} 
where $(\gamma, \lambda,b,a)$ are $\mathcal C^1$ functions of $s\in [s_0,S_n]$ 
satisfying the following bootstrap estimates, for any $s\in[s_0,S_n]$
\begin{equation} \label{eq:BS1}
\left\{\begin{aligned}
|\gamma(s) -s |&\leq 2 s^{-\frac 74},\\
|b(s) - s^{-1}|&\leq 2 s^{-\frac 74},\\
\left|\frac{b(s)}{\lambda(s)} - 1\right|&\leq 2 K s^{-1},\\
|a(s)|&\leq (1+|\alpha_1|)  s^{-\frac 52},
\end{aligned}\right.
\end{equation}
where $\alpha _1$ is given by~\eqref{def:a1}.
It follows from \eqref{eq:BS1} that
\begin{equation}\label{eq:lambda}
| \lambda(s) - s^{-1} | \leq 3 s^{-\frac 74},
\end{equation}
for $s_0$ large. 
We also introduce the multiplier
\begin{equation*} 
M_b (s, y) = e^{ i b (s) \frac {y^2} {4} } .
\end{equation*} 
For future use, we also introduce the notation, for $k=1,2,3$,
\begin{equation}\label{def:j}
j_k(s) = \int_{s}^{S_n} \lambda^{k+1}( \tau ) d\tau.
\end{equation}
Using \eqref{eq:lambda}, we estimate, for $s$ large,
\begin{align*}
j_k(s) & = \int_{s}^{S_n} \lambda^{k+1}( \tau ) d\tau\\
&=\int_s^{S_n} \tau^{-k-1} d\tau + \int_s^{S_n} O(\tau^{-\frac{4k+7}4}) d\tau\\
&=\frac 1k \left( s^{-k} - n^{-k}\right)  + (s^{-1} - n^{-1}) O(s^{-\frac {4k-1}4}).
\end{align*}
Thus,
\begin{equation}\label{eq:jk}
\frac1{2k} ( s^{-k} - n^{-k} )  \leq j_k(s) \leq \frac 2k\left( s^{-k} - n^{-k}\right) .
\end{equation}
Note that $ T_n -t(s) = j_1(s)$ and thus
\[
T_n-t(s) = (s^{-1} - n^{-1}) + (s^{-1}-n^{-1}) O(s^{-\frac34}),
\]
so that using $T_n=-n^{-1}$,
\begin{equation}\label{eq:t}
|t(s) + s^{-1} | \lesssim s^{-\frac 74},\quad
s^{-1} \lesssim |t(s)| \lesssim s^{-1}.
\end{equation}

Now, we introduce bootstrap estimates on the time derivative of the parameters.
In order to determine $a$ at a sufficient order of precision, we define the function $\Omega=\Omega(s)$ by
\begin{equation} \label{eq:O}
\Omega=
\alpha_1 b \lambda^\frac32 \cos \gamma + \alpha_2 b\lambda^\frac52\sin \gamma
+ \alpha_3 \lambda^3 \sin 2 \gamma +\alpha_4 b^2 \lambda^\frac32\sin \gamma
\end{equation}
where $\alpha_1$ and $\alpha_2$ are defined by \eqref{def:a1}, \eqref{def:a2} and
\begin{equation}\label{def:a3a4}
\alpha_3 \int \rho Q = -2 \int Q^4 \varphi_1\psi_1,\quad
\alpha_4 \int \rho Q = -\frac 14 \int y^4 Q^5 \psi_1.
\end{equation}
Let
\begin{equation} \label{def:m} 
m_\gamma = \gamma_s - 1,\quad m_\lambda = \frac{\lambda_s}{\lambda}+b,
\quad m_b = b_s+b^2-a,\quad
m_a = a_s - \Omega,
\end{equation}
and
\begin{equation}\label{def:vecm}
\vec m = \begin{pmatrix} m_\gamma  \\ m_\lambda \\ m_b \\ m_a\end{pmatrix}.
\end{equation}
We impose the following bootstrap estimates
\begin{equation} \label{eq:BS2}
|\vec m(s)| \leq K s^{-3}.
\end{equation}

\begin{remark}\label{rk:omega}
The relations $m_\gamma=0$, $m_\lambda=0$ and $m_b=0$ give the standard
approximate equations for the parameters $\gamma$, $\lambda$ and $b$ for the conformal blow-up.
For example, for the minimal mass blow-up solution \eqref{def:S}, one has
\[
\gamma(s) = s,\quad \lambda(s) = \frac 1s,\quad b(s) = \frac 1s,\quad a(s)=0.
\]
 The function $\Omega$
 allows us to keep track of some oscillatory terms in the behavior of $a_s$ that cannot be estimated in absolute value, but that can be easily integrated in time.
This leads to a refined estimate of $a$ (see the last line of \eqref{eq:BS1}), 
which we use in the equation of $b$ (see the definition of $m_b$).
\end{remark}

\subsection{Definition of an approximate rescaled blow-up solution}\label{S:3:2}

First, let $\Theta_0:\R\to [0,1]$ be an even smooth function with compact support such that
\begin{equation}\label{def:Theta0}
\Theta_0(x)= \begin{cases}
1 & \mbox{if $|x|<1$}\\
0 & \mbox{if $|x|>2$}
\end{cases}
\end{equation}
and $C_0=\|\Theta_0\|_{L^2}^2>0$.
Define
\[
\Theta(x) = \Theta_0(x/\delta)
\]
so that
\begin{equation*}
\Theta(x)= \begin{cases}
 1 & \mbox{if $|x|<\delta$}\\
 0 & \mbox{if $|x|>2\delta$}
\end{cases}
\end{equation*}
and
\begin{equation*}
\|\Theta\|_{L^2}^2 = C_0 \delta .
\end{equation*}

We define 
\begin{equation} \label{eq:theta0}
\theta[\Gamma](y)= \Theta ( \lambda y).
\end{equation}
where
\[
\Gamma=(\gamma,\lambda,b,a,j_1,j_2,j_3)
\in \R^7 \cap \{\lambda >0\}.\]
When $\Gamma$ is a function of $s$, we will write by abuse of notation
\begin{equation} \label{eq:theta}
\theta(s,y)= \theta[\Gamma(s)](y)= \Theta ( \lambda (s) y).
\end{equation}

Now, we introduce
\begin{equation*}
W[\Gamma](y) = M_{ -b } Q_{a }(y) + \theta[\Gamma](y) (A[\Gamma](y)+ i B[\Gamma](y)) 
\end{equation*}
where
\begin{align*}
A[\Gamma](y) &= A_1[\Gamma](y)+A_2[\Gamma](y)= \lambda^\frac32 \varphi_1(y)\cos\gamma 
+ \lambda^\frac52  \varphi_2(y)\sin\gamma, \\
B[\Gamma](y) &= B_1[\Gamma](y)+B_2[\Gamma](y)=\lambda^\frac32 \psi_1(y) \sin\gamma 
- \lambda^\frac52  \psi_2(y) \cos\gamma.
\end{align*} 
Recall that the functions $\varphi_1, \psi_1\in \mathcal Z_1$ are defined in Lemma~\ref{lem1bis} and the functions
$\varphi_2,\psi_2\in \mathcal Z_2$ are defined in Lemma~\ref{lem2}.

When $\Gamma$ is a function of $s$, we write by abuse of notation
\begin{equation*}
W(s,y) = W[\Gamma(s)](y) =M_{ -b(s)} Q_{a(s)}(y) + \theta(s,y) (A(s,y)+ i B(s,y)) 
\end{equation*}
where
\begin{align*}
A(s,y) &= A_1(s,y)+A_2(s,y)= \lambda^\frac32(s) \varphi_1(y)\cos\gamma(s) 
+ \lambda^\frac52(s) \varphi_2(y)\sin\gamma(s), \\
B(s,y) &= B_1(s,y)+B_2(s,y)=\lambda^\frac32(s) \psi_1(y) \sin\gamma (s)
- \lambda^\frac52(s) \psi_2(y) \cos\gamma(s).
\end{align*} 

\begin{remark}
The function $W$ is the main part of the approximate solution. It contains the modified bubble $Q_a$, multiplied by the quadratic phase $M_{-b}$. It also contains the term $\theta (A+iB)$ which will generate the expected blow-up profile at $x=0$. That term is not multiplied by $M_{-b} $ which is consistent with the result of convergence of the residual profile in \cite{MerleR1}.
\end{remark}

We also introduce
\begin{align*} 
Z[\Gamma](y) = \lambda^\frac12 e^{-i\gamma } &\Bigl[ - i j_1 (\nu_1+\nu_2)(\lambda y) 
- \frac {1} {2} j_1^2 (\nu_1'' +\nu_2'' )(\lambda  y) \\
&\quad - ij_2 c_1 \Theta '' (\lambda  y) 
+ j_3 c_2 \Theta '' (\lambda y) \Bigr]
\end{align*} 
where the functions $\nu_1$ and $\nu_2$ are explicitly defined as
\begin{align*}
\nu_1(x) &= \Theta''(x) (|x| + i \kappa x^2) +2 \Theta'(x) (\SIGN(x) + 2i\kappa x),\\
\nu_2(x) &= f( \Theta (x)(|x|+ i \kappa x^2) ).
\end{align*}
Again, when $\Gamma$ is a function of $s$, we write by abuse of notation
\begin{equation*}
Z(s,y) = Z[\Gamma(s)](y).
\end{equation*}

\begin{remark}
The function $Z$ is a correction term in the approximate solution, to take into account error terms generated by $W$, as the nonlinear term $\nu_2$ for example.

Note that the expression of this correction term is simpler in the variable $(t,x)$. Setting
\[
z(t,x) = \lambda^{-\frac 12}(s) e^{i\gamma(s)} Z (s, y ) ,
\]
one has
\begin{equation} \label{def:z}
\begin{aligned} 
z(t,x) &= - i (T_n-t) ( \nu_1(x) + \nu_2(x)) - \frac {1} {2} (T_n-t)^2 (\nu_1 '' (x) + \nu_2 '' (x) ) \\
& \quad- i \Bigl( \int _{s(t)} ^{S_n} \lambda ^3 \Bigr) c_1 \Theta '' ( x ) + \Bigl( \int _{s(t)} ^{S_n} \lambda ^4 \Bigr) c_2 \Theta '' ( x ),
\end{aligned} 
\end{equation} 
where following \eqref{chngvar}, $s$ is the function of $t$ defined by
\begin{equation*} 
T_n-t = \int _{ s(t) }^{S_n} \lambda ^2 = j_1(s(t))
\end{equation*} 
(so that $s'(t) \lambda ^2(s(t))= 1$).
\end{remark}

Let $V$ be the approximate rescaled blow-up solution defined by
\begin{equation} \label{eq:blowup}
V[\Gamma](y) = W[\Gamma](y)+Z[\Gamma](y).
\end{equation}
It is immediate by the definitions of $W$ and $Z$ that $V$ is a locally Lipschitz function of $\Gamma$ with values in $L^2_y$.
The exact expressions of $W$ and $Z$ will be justified by the computations in the proof of Proposition~\ref{PR:1}.
As above, the notation 
\begin{equation*}
V(s,y) = W(s,y)+Z(s,y)
\end{equation*}
will be used when $\Gamma$ is a function of $s$.

We denote 
\begin{equation} \label{interval} 
I(s) = [-2\delta\lambda^{-1},2\delta\lambda^{-1}],\quad
J(s) = [-2\delta\lambda^{-1},-\delta\lambda^{-1}]\cup[\delta\lambda^{-1},2\delta\lambda^{-1}].
\end{equation} 
We now establish some pointwise bounds on $V$, $W$ and $Z$.

\begin{lemma}\label{le:V}
Under the bootstrap estimates \eqref{eq:BS1},
\begin{align}
| A|+ | B| & \lesssim s^{-\frac 32} (1+|y|) + s^{-\frac 52} (1+y^2),
\label{festW00}\\
|\theta A|+ |\theta B| & \lesssim s^{-\frac 32} (1+|y|) \mathbf{1}_{I(s)}
\lesssim \delta s^{-\frac 12} \mathbf{1}_{I(s)},
\label{festW0}\\
|\partial_y (\theta A)|+ |\partial_y(\theta B)| & \lesssim s^{-\frac 32}  \mathbf{1}_{I(s)},
\label{festdW0}\\
| W | + (1 + |y|) | \partial _y W| &\lesssim e^{- \frac { |y|} {2}} + \delta s^{-\frac 12} \mathbf{1}_{I(s)} ,
\label{festW1}\\
|Z| + (1 + |y|) |\partial _y Z| &\lesssim \delta^{-1} s^{-\frac 32} \mathbf{1}_{J(s)} + s^{-\frac{9}2} (|y|^3 + 1) \mathbf{1}_{I(s)},\label{festZ5} \\
| V-M_{-b} Q|+|V-Q|+||V|-Q| & \lesssim \delta s^{-\frac 12},\label{fxye1e2b}\\
|V| + (1 + |y|) |\partial _y V| &\lesssim e^{- \frac { |y|} {2}} + \delta s^{-\frac 12} \mathbf{1}_{I(s)} ,
\label{festV1}
\end{align}
Assuming in addition \eqref{eq:BS2}, it holds
\begin{equation}
 | \partial _s V | \lesssim s^{- \frac {3} {2}} (1+ |y| {\mathbf 1}_{I(s)} ) + \delta^{-1} s^{-\frac 32} \mathbf{1}_{J(s)} .
\label{dsV}  
\end{equation}
\end{lemma}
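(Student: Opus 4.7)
\medskip
\noindent\emph{Proof proposal.} The plan is to reduce every bound to three ingredients: the explicit asymptotic structure of $\varphi_i,\psi_i$ given by Lemma~\ref{lem1bis} and Lemma~\ref{lem2}; the bootstrap estimates \eqref{eq:BS1}, which yield $\lambda(s)\sim s^{-1}$, $b(s)\sim s^{-1}$, $|a(s)|\lesssim s^{-5/2}$ and $|\gamma_s|\lesssim 1$; and the scaled support properties of $\theta$ and of $\Theta',\Theta''$.

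First, from $\varphi_1,\psi_1\in\mathcal Z_1$ and $\varphi_2,\psi_2\in\mathcal Z_2$ together with $\rho\in\mathcal Y$, I would record the pointwise bounds $|\varphi_1|+|\psi_1|\lesssim 1+|y|$, $|\varphi_1'|+|\psi_1'|\lesssim 1$, $|\varphi_2|+|\psi_2|\lesssim 1+y^2$, $|\varphi_2'|+|\psi_2'|\lesssim 1+|y|$, and exponential decay for $Q,Q',\rho,\rho'$. Inserting $\lambda\sim s^{-1}$ into the definitions of $A,B$ gives \eqref{festW00} directly. On $\mathrm{supp}\,\theta\subset I(s)$ one has $|y|\lesssim \delta\lambda^{-1}\lesssim \delta s$, so the $\lambda^{5/2}y^2$ contribution is absorbed into $s^{-3/2}(1+|y|)$; multiplying by $\mathbf 1_{I(s)}$ and using $1+|y|\lesssim\delta s$ there produces \eqref{festW0}. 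For \eqref{festdW0}, I would differentiate $\theta(A+iB)=\Theta(\lambda y)(A+iB)$, control $\partial_y\theta=\lambda\Theta'(\lambda y)=O(\lambda\delta^{-1}\mathbf 1_{J(s)})$ and combine with the bounds on $\partial_y A,\partial_y B$ obtained as above.

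Next, for \eqref{festW1}, I would split $W=M_{-b}Q_a+\theta(A+iB)$: the first piece is bounded by $(|Q|+|a||\rho|)\lesssim e^{-|y|/2}$ since $|a|\lesssim s^{-5/2}$, and $(1+|y|)|\partial_y(M_{-b}Q_a)|\lesssim (1+|y|)(|b||y||Q_a|+|Q_a'|)\lesssim e^{-|y|/2}$ using $|b||y|\le |b|\cdot(1+|y|)\lesssim s^{-1}(1+|y|)$ against exponential decay; the second piece is handled by \eqref{festW0}--\eqref{festdW0}. The estimate \eqref{festZ5} is obtained term by term from the definition of $Z$ using $\lambda^{1/2}\lesssim s^{-1/2}$ and $j_k\lesssim s^{-k}$: the contributions of $\nu_1$, $\Theta''$ are supported in $J(s)$ and contribute $\delta^{-1}s^{-3/2}\mathbf 1_{J(s)}$ after absorbing negative powers of $\delta$ into $s_0$ large enough; the nonlinear contribution from $\nu_2(\lambda y)=O(|\lambda y|^5)$ supported in $I(s)$ gives $\lambda^{1/2}j_1|\lambda y|^5\lesssim s^{-13/2}|y|^5$, which on $I(s)$ satisfies $|y|^5\lesssim (\delta s)^2|y|^3$ and fits in $s^{-9/2}(|y|^3+1)\mathbf 1_{I(s)}$. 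Derivatives of $Z$ gain a factor $\lambda\sim s^{-1}$, compensated by $1+|y|\lesssim\delta s$ on the relevant supports.

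Then \eqref{fxye1e2b} follows by writing $V-M_{-b}Q=aM_{-b}\rho+\theta(A+iB)+Z$, bounding each piece and checking that $s^{-5/2}$, $\delta^{-1}s^{-3/2}$ and $s^{-9/2}(1+|y|^3)$ on $I(s)$ are all $\lesssim\delta s^{-1/2}$ for $s\ge s_0$ large depending on $\delta$; then $|V-Q|\le|V-M_{-b}Q|+|M_{-b}-1|Q\lesssim\delta s^{-1/2}+s^{-1}y^2e^{-|y|}$, and $||V|-Q|\le|V-Q|$. Estimate \eqref{festV1} is the combination of \eqref{festW1} and \eqref{festZ5}.

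The main obstacle is \eqref{dsV}, for which one must bring in the second bootstrap \eqref{eq:BS2}. Using \eqref{def:m}, $m_\gamma=0$ would give $\gamma_s=1$, so \eqref{eq:BS2} yields $|\gamma_s|\lesssim 1$, $|\lambda_s|\lesssim\lambda b+|m_\lambda|\lambda\lesssim s^{-2}$, $|b_s|\lesssim b^2+|a|+|m_b|\lesssim s^{-2}$, $|a_s|\lesssim|\Omega|+|m_a|\lesssim s^{-5/2}$, and $\partial_s j_k=-\lambda^{k+1}$. The differentiation of $V$ in $s$ produces: a term $-i(b_s y^2/4)M_{-b}Q_a+M_{-b}a_s\rho$ bounded by $s^{-3/2}$ through exponential decay; a term $\partial_s[\theta(A+iB)]$ where $\partial_s\theta=\lambda_s y\Theta'(\lambda y)$ is $O(s^{-1}\mathbf 1_{J(s)})$, and $\partial_s(A+iB)$ is dominated by the differentiation of $\cos\gamma,\sin\gamma$, giving $O(s^{-3/2}(1+|y|))$ on $I(s)$; finally $\partial_s Z$ is bounded as $Z$ itself. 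The hard part is not the analytic content of any single estimate but the careful bookkeeping: one must combine \eqref{eq:BS1}--\eqref{eq:BS2}, exploit the scaling $|y|\lesssim\delta\lambda^{-1}$ on $I(s)$ to convert polynomial-in-$y$ weights into $\delta$-small gains, and repeatedly absorb powers $\delta^{-p}$ by taking $s_0=s_0(K,\delta)$ large.
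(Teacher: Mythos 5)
Your proposal is correct and follows essentially the same route as the paper's proof: the same splitting $V=W+Z$, the same use of the asymptotics from Lemmas~\ref{lem1bis}--\ref{lem2} together with the support properties of $\theta$ and the bootstrap bounds on $\lambda,b,a,\vec m$, the same conversion of polynomial weights in $y$ into powers of $\delta$ on $I(s)$, and the same direct differentiation in $s$ for \eqref{dsV}. The only cosmetic difference is that the paper estimates $Z$ by first bounding $z$ in the $(t,x)$ variables (via \eqref{fEstz1}--\eqref{fEstz4}) and then rescaling, and splits $\partial_s Z=-i\gamma_s Z+\widetilde Z$ explicitly, whereas you argue directly in $(s,y)$ and summarize the $\partial_s Z$ bound loosely; both of these are harmless presentational choices.
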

\begin{proof}
Since $\Theta (x) = \Theta _0 (x/ \delta )$, we see that 
\begin{equation} \label{fEstThera1} 
 |\Theta (x) | \lesssim \mathbf{1} _{ \{ |x|< 2\delta \} }, \quad x\in \R, 
\end{equation} 
and, for $ j\ge 1 $,
\begin{equation} \label{fEstThera2}
 |\Theta ^{(j)} (x) | \lesssim \delta ^{- j} \mathbf{1} _{ \{ \delta < |x|< 2\delta \} }, \quad x\in \R. 
\end{equation}
Thus, for $ j\ge 1 $, by \eqref{eq:BS1},
\[
|\theta|\lesssim \mathbf{1}_{I(s)},\quad
|\partial_y^j \theta|\lesssim \delta^{-j}s^{-j}\mathbf{1}_{J(s)}.
\]
Moreover, we observe for future reference that
\begin{equation} \label{festMbT} 
Q | M_b \theta -1| \lesssim Q |1- \theta | + Q | M_b -1| \lesssim e^{- \frac {1} {2} \frac {\delta } {\lambda }} + |b| \lesssim s^{-1}. 
\end{equation} 
Estimate \eqref{festW00} follows directly from the bootstrap assumptions \eqref{eq:BS1} and the properties of the functions
$(\varphi_1,\psi_1)$, $(\varphi_2,\psi_2)$ given in Lemmas \ref{lem1bis}, \ref{lem2}.
Thus,
\begin{align*}
|\theta A|+|\theta B| 
&\lesssim \left[ s^{-\frac 32} (1+|y|) + s^{-\frac 52} (1+y^2) \right] \mathbf{1}_{I(s)}\\
& \lesssim s^{-\frac 32} (1+|y|) \mathbf{1}_{I(s)}
\lesssim \delta s^{-\frac 12} \mathbf{1}_{I(s)},
\end{align*}
which is \eqref{festW0}.
We also check that
\[
|\partial_y(\theta A)|+|\partial_y(\theta B)|
\lesssim \left[ s^{-\frac 32} + \delta^{-1}s^{-\frac 52} (1+|y|)
+ \delta^{-1}s^{-\frac 72} (1+y^2)\right] \mathbf{1}_{I(s)}
\lesssim s^{-\frac 32}\mathbf{1}_{I(s)},
\]
which is \eqref{festdW0}.
Now, the pointwise estimate \eqref{festW1} for $W$ follows directly from the expression of $W$, the definition of $Q$ in \eqref{def:Q} and \eqref{festW0}. Besides,  \eqref{festdW0} implies the pointwise estimate \eqref{festW1} for $(1+|y|) |\partial_y W|$.

Using~\eqref{fEstThera1}-\eqref{fEstThera2}, it is not difficult to estimate from \eqref{def:z}
and \eqref{eq:BS1},
\begin{equation} \label{fEstz1} 
\begin{aligned} 
 | z(t,x) | \lesssim & (T_n-t) [ \delta ^{-1} \mathbf{1} _{ \{ \delta < |x|< 2\delta \} }+ |x| ^5 \mathbf{1} _{ \{ |x|< 2\delta \} } ] \\
 & + (T_n-t)^2 [ \delta ^{-3} \mathbf{1} _{ \{ \delta < |x|< 2\delta \} }+ |x| ^3 \mathbf{1} _{ \{ |x|< 2\delta \} } ] ,
\end{aligned} 
\end{equation} 
and
\begin{equation} \label{fEstz2} 
\begin{aligned} 
 | \partial _x z(t,x) | \lesssim & (T_n-t) [ \delta ^{-2} \mathbf{1} _{ \{ \delta < |x|< 2\delta \} }+ |x| ^4 \mathbf{1} _{ \{ |x|< 2\delta \} } ] \\
 & + (T_n-t)^2 [ \delta ^{-4} \mathbf{1} _{ \{ \delta < |x|< 2\delta \} }+ |x| ^2 \mathbf{1} _{ \{ |x|< 2\delta \} } ] .
\end{aligned} 
\end{equation} 
In particular,
\begin{equation} \label{fEstz3} 
\begin{aligned} 
 | z(t,x) | \lesssim & (T_n-t) [ \delta ^{-1} \mathbf{1} _{ \{ \delta < |x|< 2\delta \} }+ \delta ^5 \mathbf{1} _{ \{ |x|< 2\delta \} } ] \\
 & + (T_n-t)^2 [ \delta ^{-3} \mathbf{1} _{ \{ \delta < |x|< 2\delta \} }+ \delta ^3 \mathbf{1} _{ \{ |x|< 2\delta \} } ] ,
\end{aligned} 
\end{equation} 
and
\begin{equation} \label{fEstz4} 
\begin{aligned} 
 | \partial _x z(t,x) | \lesssim & (T_n-t) [ \delta ^{-2} \mathbf{1} _{ \{ \delta < |x|< 2\delta \} }+ \delta ^4 \mathbf{1} _{ \{ |x|< 2\delta \} } ] \\
 & + (T_n-t)^2 [ \delta ^{-4} \mathbf{1} _{ \{ \delta < |x|< 2\delta \} }+ \delta ^2 \mathbf{1} _{ \{ |x|< 2\delta \} } ] .
\end{aligned} 
\end{equation} 
Using~\eqref{fEstz1}-\eqref{fEstz2}, we obtain
\begin{align*} 
 | Z(s,y) | & \lesssim s^{- \frac {3} {2}} [ \delta ^{-1} \mathbf{1} _{J(s) }+ s^{-5} |y| ^5 \mathbf{1} _{I(s) } ] + s^{- \frac {5} {2}} [ \delta ^{-3} \mathbf{1} _{ J(s) }+ s^{-3} |y| ^3 \mathbf{1} _{ I(s)} ] \\
 & \lesssim s^{- \frac {3} {2}} \delta ^{-1} \mathbf{1} _{J(s) } + s^{-\frac {9} {2}} |y| ^3 \mathbf{1} _{I(s) } ,
\end{align*} 
and
\begin{equation*} 
 \begin{aligned} 
 | \partial _yZ(s,y) | &\lesssim s^{- \frac {5} {2}} [ \delta ^{-2} \mathbf{1} _{J(s) }+ s^{-4} |y| ^4 \mathbf{1} _{I(s) } ] + s^{- \frac {7} {2}} [ \delta ^{-4} \mathbf{1} _{ J(s) }+ s^{-2} |y| ^2 \mathbf{1} _{ I(s)} ] \\
 &\lesssim s^{- \frac {5} {2}} \delta ^{-2} \mathbf{1} _{J(s) }+ s^{-\frac {9} {2}} |y| ^2 \mathbf{1} _{I(s) } .
 \end{aligned} 
\end{equation*} 
This proves \eqref{festZ5} for $s$ large.
We obtain from~\eqref{festW0} and~\eqref{festZ5} that
\[| V-M_{-b} Q_a|\leq \delta s^{-\frac12},\]
and \eqref{fxye1e2b} follows from
\[
|Q_a- Q|+ |M_{-b} Q -  Q|\lesssim |a|+|b|\lesssim s^{-1}.
\]
By~\eqref{festW1} and~\eqref{festZ5}, we obtain \eqref{festV1}.

Finally, we prove \eqref{dsV}. By a direct computation,
\begin{equation*} 
 \partial_s W= Q_a \partial _s M _{ -b }+ a_s M _{ -b }\rho + (A+iB) \partial _s \theta + \theta (\partial _s A + i \partial _s B). 
\end{equation*} 
Using~\eqref{fusef1}, it follows from elementary calculations and the estimates $ |a_s |\lesssim s^{-\frac {5} {2}}$, $ |b_s| \lesssim s^{-2}$, $ |\frac {\lambda _s} {\lambda }| \lesssim s^{-1}$ 
and $|\partial_s \theta|\lesssim s^{-1}\mathbf{1}_{I(s)}$ (by assumptions~\eqref{eq:BS1} and \eqref{eq:BS2}) that 
\begin{equation*} 
 | Q_a \partial _s M _{ -b }+ a_s M _{ -b }\rho + (A+iB) \partial _s \theta | \lesssim s^{-\frac {3} {2}}. 
\end{equation*} 
The terms in the expansion of $\theta (\partial _s A + i \partial _s B)$ can all be estimated by similar arguments, so we give the details for one term only, $\theta \partial _s A_1$. We have
\begin{equation*} 
\theta \partial _s A_1 = \theta \Bigl( \frac {3} {2} \frac {\lambda _s} {\lambda } A_1 - \gamma _s \lambda ^{\frac {3} {2}} \varphi _1 \sin \gamma \Bigr) .
\end{equation*} 
Since $ |\theta A_1| \lesssim s^{-\frac {1} {2}}$, we see that 
\begin{equation*} 
 \Bigl| \frac {\lambda _s} {\lambda } A_1 \Bigr| \lesssim s^{- \frac {3} {2}}.
\end{equation*} 
Moreover, $ |\gamma _s| \lesssim 1$ and $ |\theta \varphi _1| \lesssim (1+ |y|) {\mathbf 1}_{I(s)}$, so that
\begin{equation*} 
 | \gamma _s \lambda ^{\frac {3} {2}} \theta \varphi _1 \sin \gamma |\lesssim s^{- \frac {3} {2}} (1+ |y|) {\mathbf 1}_{I(s)} ;
\end{equation*} 
and so
\begin{equation*} 
 | \partial _s W| \lesssim s^{- \frac {3} {2}} (1+ |y| {\mathbf 1}_{I(s)} ). 
\end{equation*} 
We now estimate $\partial _s Z$. Writing $\partial _s Z= - i \gamma _s Z + \widetilde{Z} $, where
\begin{align*}
\widetilde{Z} & = \frac{\lambda_s}{\lambda} \Lambda Z 
+  \lambda^\frac12 e^{-i\gamma } \Bigl[ i  \lambda^2  (\nu_1+\nu_2)(\lambda   y) \\
& +  \left( \int_s^{S_n} \lambda^2 \right) \lambda^2 (\nu_1'' +\nu_2'' )(\lambda   y) 
 + i \lambda ^3   c_1 \Theta '' (\lambda y) 
+   \lambda ^4  c_2 \Theta '' (\lambda y) \Bigr] .
\end{align*}
By~\eqref{festZ5}, 
\begin{equation*} 
 | \gamma _s Z | \lesssim |Z| \lesssim \delta^{-1} s^{-\frac 32} \mathbf{1}_{J(s)} + s^{-\frac 32}.
\end{equation*} 
Moreover, similarly as in the proof of \eqref{festZ5}, we see that $ | \widetilde{Z} |\lesssim  \delta ^{-1} s^{-\frac {5} {2}} \lesssim s^{-\frac {3} {2}}$. 
Thus,
\[
|\partial_s Z|\lesssim  \delta^{-1} s^{-\frac 32} \mathbf{1}_{J(s)} + s^{-\frac 32}.
\]
By summing the estimates of $\partial_s W$ and $\partial_s Z$, we obtain \eqref{dsV}.
\end{proof}

\subsection{Equation of the approximate rescaled blow-up solution}

The rest of this section is devoted to the proof of the following result.
Recall that the notation $\mathcal E$ is defined by \eqref{def:EE}.

\begin{proposition} \label{PR:1}
Under the bootstrap estimates \eqref{eq:BS1},
\begin{equation} \label{eq:V}
\mathcal E( V )
= \mathcal S_0 +\mathcal R
\end{equation}
where 
\begin{equation*} 
\mathcal S_0=\vec m \cdot \vec \rho_0, \quad    \vec \rho_0 =  M_{ -b }\begin{pmatrix}- Q_a \\ - i \Lambda Q_a - \tfrac b2{y^2} Q_a\\
\tfrac14 {y^2} Q_a \\ i \rho 
\end{pmatrix}
\end{equation*} 
and
\[
\mathcal R =  \vec m \cdot \vec \rho_m + \mathcal R_\Gamma,
\]
where
$\vec \rho_m=\vec \rho_m[\Gamma] \in (L^2)^4$ and $\mathcal R_\Gamma=\mathcal R_\Gamma[\Gamma] \in L^2$ are locally Lipschitz functions of $\Gamma$ with values in $L^2_y$ and 
\begin{equation}\label{eq:etoile}
\|\vec\rho_m[\Gamma]\|_{L^2} + \| \mathcal R_\Gamma[\Gamma]\|_{L^2} \to 0 \quad \mbox{as $(\lambda,b,a,j_1,j_2,j_3) \to 0$.}
\end{equation}
Moreover, assuming further \eqref{eq:BS2}, 
the error term $\mathcal R[\Gamma(s)](y)=\mathcal R(s,y)$ satisfies
\begin{gather}
\|\mathcal R(s)\|_{H^1} \lesssim s^{-3}, \label{eq:R1}\\
\| y \mathcal R(s)\|_{L^2} \lesssim s^{-2}, \label{eq:R1b}\\
| \langle M_{b(s)} \mathcal R(s) , iQ \rangle|\lesssim s^{-4}. \label{eq:R2}
\end{gather}
\end{proposition}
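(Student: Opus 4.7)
The plan is to substitute $V = M_{-b}Q_a + \theta(A+iB) + Z$ into $\mathcal E$, expand the linear part of $\mathcal E$ by linearity, and Taylor-expand the nonlinearity $|V|^4 V$ around the bubble $M_{-b}Q_a$. Using the modulation quantities $\vec m$ of \eqref{def:m}, one replaces $\gamma_s, \tfrac{\lambda_s}{\lambda}, b_s, a_s$ everywhere in favour of $1, -b, -b^2+a, \Omega$ plus a linear-in-$\vec m$ remainder; the linear-in-$\vec m$ contributions are then collected into $\vec m\cdot(\vec\rho_0+\vec\rho_m)$ and the $\vec m$-independent residuals into $\mathcal R_\Gamma$. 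The local Lipschitz dependence on $\Gamma$ is automatic from the smoothness of $W,Z$ in $\Gamma$, and \eqref{eq:etoile} follows because every term in $\vec\rho_m$ and $\mathcal R_\Gamma$ contains an explicit positive power of $(\lambda,b,a,j_1,j_2,j_3)$.

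Step one is the bubble contribution. Direct computation of $i\partial_s(M_{-b}Q_a)$, $\partial_y^2(M_{-b}Q_a)$ and $\Lambda(M_{-b}Q_a)$, combined with \eqref{eq:Qa} to re-express $Q_a''-Q_a+Q_a^5$, yields
\[
\mathcal E(M_{-b}Q_a) = \mathcal S_0 + M_{-b}\Bigl[i\Omega\rho + (Q_a^5-Q^5-5aQ^4\rho) + \tfrac{a^2 y^2}{4}\rho\Bigr].
\]
The two quadratic-in-$a$ pieces are $O(s^{-5})$ in $L^2$. The oscillatory piece $i\Omega\rho M_{-b}$, of size $s^{-5/2}$, is too large to be absorbed into $\mathcal R$ on its own, and must match corresponding oscillatory contributions from the other layers; this is precisely the motivation for inserting $\Omega$ into $m_a$.

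Step two is the linear correction $\theta(A+iB)$. Splitting $\mathcal E_{\mathrm{lin}}\bigl(\theta(A_j+iB_j)\bigr)$ into an interior ($\theta$-multiplied) part and a boundary ($\theta', \theta''$-multiplied) part, the interior $O(\lambda^{3/2})$ contributions cancel by \eqref{ellip0bis} on $(\varphi_1,\psi_1)$. The sub-leading $O(\lambda^{5/2})$ interior contributions, produced by $i\partial_s$, by $-i\tfrac{\lambda_s}{\lambda}\Lambda$ (yielding $\Lambda_3\varphi_1,\Lambda_3\psi_1$), and by the nonlinear coupling with $5Q_a^4$ (yielding $y^2 Q^4\varphi_1, y^2 Q^4\psi_1$), are matched by $(\varphi_2,\psi_2)$ through \eqref{ellip2}, up to an irreducible $\alpha_1\rho$ source dictated by \eqref{def:a1}. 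The boundary contributions from $\theta',\theta''$, together with the nonlinear profile $f(\theta(|x|+i\kappa x^2))$ that emerges from the tail behaviour $\varphi_1\sim |y|+c_1$, $\psi_1\sim -|y|-c_1$ of Lemma~\ref{lem1bis}, produce exactly the source terms $-i\nu_1-i\nu_2$ and the $c_1\Theta''$, $c_2\Theta''$ pieces that $i\partial_s Z$ is engineered to cancel via the change of variable \eqref{def:z}.

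Step three, the main obstacle, is the orthogonality \eqref{eq:R2}. Pairing $\mathcal R$ with $iM_b Q$ annihilates the principal $Q_a$ and $\Lambda Q_a$ projections (since $\langle Q_a,iQ\rangle=0$) and leaves four oscillatory residuals of size $s^{-5/2}$: a $b\lambda^{3/2}\cos\gamma$ piece from $\Lambda_3\varphi_1 - y^2 Q^4\varphi_1$, a $b\lambda^{5/2}\sin\gamma$ piece from the corresponding $(\varphi_2,\psi_2)$ residual, a $\lambda^3\sin 2\gamma$ piece from the nonlinear cross-term $10 Q^4 A_1 B_1$, and a $b^2\lambda^{3/2}\sin\gamma$ piece from the quadratic-in-$b$ phase interaction with $B_1$. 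The definitions \eqref{def:a1}, \eqref{def:a2}, \eqref{def:a3a4} of $\alpha_1,\alpha_2,\alpha_3,\alpha_4$ are calibrated so that each residual, balanced against the corresponding component of $i\Omega\rho M_{-b}$ through the pairing with $iQ$ and using $\langle\rho,Q\rangle>0$ from \eqref{eq:rho}, cancels exactly; the remainder then drops to $O(s^{-4})$. The estimates \eqref{eq:R1}--\eqref{eq:R1b} follow from Lemma~\ref{le:V} and \eqref{eq:BS1}--\eqref{eq:BS2}, noting that boundary contributions supported on $J(s)$, where $|\Theta^{(k)}|\lesssim\delta^{-k}$ and $|J(s)|\sim\delta s^{-1}$, contribute $L^2$ mass $\lesssim \delta^{-k+\frac12}s^{-k+\frac12}$ times the relevant time power, giving the required $s^{-3}$ and $s^{-2}$ after summation. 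The technical heart of the argument is the careful bookkeeping of the four oscillatory modes in step three and the verification, one by one, that \eqref{def:a1}--\eqref{def:a3a4} produce the exact cancellation on the $iQ$ projection.
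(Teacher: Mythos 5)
Your proposal follows the same decomposition as the paper: compute $\mathcal E(W)$ and $\mathcal E(Z)$ separately, use the linear systems for $(\varphi_1,\psi_1)$ and $(\varphi_2,\psi_2)$ to cancel the interior $O(\lambda^{3/2})$ and $O(\lambda^{5/2})$ contributions, design $Z$ to absorb the boundary terms from $\theta',\theta''$ together with the leading nonlinear profile $f(\theta(|x|+i\kappa x^2))$, and calibrate $\alpha_1,\ldots,\alpha_4$ through $\Omega$ to kill the $O(s^{-5/2})$ and $O(s^{-3})$ oscillatory residuals on the $iQ$ projection. This is precisely the paper's plan (there, organized as $\mathcal R=\sum_{k=1}^8\mathcal R_k$ and estimated term by term), so the route is the same.

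One factual slip in step three: the $\lambda^3\sin 2\gamma$ residual does not come from a ``$10Q^4 A_1 B_1$'' cross-term. In the quintic expansion \eqref{eq:QXY} the imaginary cross-term is $4iQ_a^3 XY$; with $X\approx A_1$, $Y\approx B_1$ this gives $4iQ^3 A_1 B_1=2i\lambda^3\sin 2\gamma\,Q^3\varphi_1\psi_1$, which is exactly what \eqref{def:a3a4} ($\alpha_3\int\rho Q=-2\int Q^4\varphi_1\psi_1$) is built to cancel after pairing with $iQ$. Your identification of the $\alpha_4$ source as the $\tfrac{b^2}{4}y^4 Q^4 B_1$ piece from the second-order Taylor expansion of $M_{-2b}$ is correct. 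Also note that in the paper the $\alpha_1 b\lambda^{3/2}\cos\gamma$ and $\alpha_2 b\lambda^{5/2}\sin\gamma$ parts of $\Omega$ are routed into $H$ (hence $\mathcal R_3$), where they combine with the residuals of the $(\varphi_1,\psi_1)$ and $(\varphi_2,\psi_2)$ equations into the clean expressions $\lambda^{5/2}(\tfrac b\lambda-1)(\Lambda_3\varphi_1-y^2Q^4\varphi_1+\alpha_1\rho)\cos\gamma$ and $\lambda^{5/2}b(\Lambda_5\varphi_2-y^2Q^4\varphi_2+\alpha_2\rho)\sin\gamma$, which vanish on $Q$ by \eqref{def:a1}, \eqref{def:a2}; only $\alpha_3,\alpha_4$ go into $\mathcal S_2$, cancelling on $iQ$ against the two nonlinear/phase pieces above. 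Apart from this bookkeeping detail, your outline is sound, though to reach \eqref{eq:R1}--\eqref{eq:R2} you would still need to carry out the explicit term-by-term estimates as done in \S\ref{S:3.7}.
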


\subsection{Contribution of the main part of the approximate solution}

Using the equation \eqref{eq:Qa} of $Q_a$ and the definition of $\Omega$ in \eqref{eq:O}, we 
decompose $\mathcal E(W)$ as
\begin{equation} \label{eq:W}
\mathcal E(W) 
= \mathcal S_0 +\mathcal S_1 + \mathcal S_2 + \mathcal R_3 + \mathcal R_4+ \mathcal R_5.
\end{equation}
where
\begin{align*} 
\mathcal S_0 & =M_{ -b } \left( - m_\gamma Q_a + (m_b -2bm_\lambda)\tfrac14{y^2} Q_a + i m_a \rho  - i m_\lambda \Lambda Q_a \right),\\
\mathcal S_1 & = \theta_{ yy } A + 2 \theta_y A_y 
+i \bigl( \theta_{ yy } B + 2 \theta_y B_y \bigr), \\
\mathcal S_2 & = f(W)-f(M_{-b}Q_a) - 5 Q^4 \theta A +by^2 Q^4 \theta B - i Q^4 \theta B +iby^2 Q^4 \theta A\\
&\quad +i \theta \big( \alpha_3 \lambda^3 \sin 2 \gamma +\alpha_4 b^2 \lambda^\frac32\sin \gamma\big) \rho,\\
{\mathcal R}_3& =\theta (G+iH),\\
{\mathcal R}_4 & = M_{-b} \Bigl(Q_a^5-Q^5-5aQ^4\rho + a^2 \frac {y^2} {4} \rho \Bigr),\\
{\mathcal R}_5 & = i(M_{-b}-\theta)\Omega \rho,
\end{align*}
and 
\begin{align*}
G & = - \partial_s B -L_+ A
-b(\Lambda B + y^2 Q^4 B)- m_\gamma A +m_\lambda \Lambda B , \\
H & = \partial_s A - L_- B +b (\Lambda A -y^2 Q^4 A)-m_\gamma B -m_\lambda \Lambda A\\
&\quad +\bigl(\alpha_1 b \lambda^\frac32 \cos \gamma + \alpha_2 b\lambda^\frac52\sin \gamma\bigr)\rho.
\end{align*}
We refer to Appendix~\ref{appendixB} for the proof of \eqref{eq:W}.

\begin{remark}
In the expression \eqref{eq:W}, 
errors terms designated by $\mathcal R_k$ will be estimated directly in \S\ref{S:3.7}, while
errors terms designated by $\mathcal S_k$ will be corrected by some terms from the equation of $Z$,
before being estimated.

Note also that the contribution of $i\Omega \rho$ from the definition of $m_a$ has been split between $\mathcal S_2$ and $\mathcal R_3$ (with an error term $\mathcal R_5$ due to the quadratic phase and the localization $\theta$).
\end{remark}

\subsection{Contribution of the correction part of the approximate solution}
To write the equation of $Z$, it is simpler to derive first the equation of $z$ in the original variables $(t,x)$.
Indeed, by \eqref{def:z}, we have
\begin{equation*} 
\begin{aligned} 
i \partial_t z + \partial_x^2 z = & -\nu_1-\nu_2 - \frac {1} {2} (T_n-t)^2 (\nu_1^{(4)}+ \nu_2^{(4)}) \\
& - \lambda (s(t)) c_1 \Theta '' - i \Bigl( \int _{s(t)} ^{S_n} \lambda ^3 \Bigr) c_1 \Theta ^{(4)} \\
& -i \lambda ^2 (s(t)) c_2 \Theta '' + \Bigl( \int _{s(t)} ^{S_n} \lambda ^4 \Bigr) c_2 \Theta ^{(4)}.
\end{aligned} 
\end{equation*} 
Thus, by \eqref{eq:resc1}, one has
\begin{equation}\label{eq:Z}
\mathcal E(Z) = \mathcal T_1+\mathcal T_2+ \mathcal R_6 + \mathcal R_7,
\end{equation}
where
\begin{align*}
\mathcal T_1 &= -\lambda^\frac52 e^{-i\gamma} [ \nu_1(\lambda y) + \lambda c_1 \Theta '' (\lambda y) 
+ i \lambda ^2 c_2 \Theta '' ( \lambda y) ] , \\
\mathcal T_2 &= -\lambda^\frac52 e^{-i\gamma} \nu_2(\lambda y), \\
\mathcal R_6 &= \lambda^\frac52 e^{-i\gamma} \Bigl[ - \frac {1} {2} \Bigl( \int_s^{S_n} \lambda^2 \Bigr)^2 (\nu_1^{(4)} (\lambda y)+\nu_2^{(4)} (\lambda y)) \\
& \qquad+ \Bigl( \int _s ^{S_n} -i c_1 \lambda ^3 + c_2 \lambda ^4 \Bigr) \Theta ^{(4)} (\lambda y) \Bigr] ,\\
\mathcal R_7 &= f(Z).
\end{align*}

\subsection{Equation of the approximate rescaled blow-up solution}
We set 
\[
 \mathcal R_8 = f(W+Z)-f(W)-f(Z),
\]
so that, since $V= W+ Z$,
\[
\mathcal E(V) = \mathcal E(W)+\mathcal E(Z)+ \mathcal R_8.
\]
Setting
\[
\mathcal R_1=\mathcal S_1 + \mathcal T_1,\quad
\mathcal R_2=\mathcal S_2 + \mathcal T_2
\]
and
\[
\mathcal R= \sum_{k=1}^8 \mathcal R_k,
\]
from \eqref{eq:W} and \eqref{eq:Z}, we have obtained the expected form \eqref{eq:V} for $\mathcal E(V)$.
To complete the proof of Proposition~\ref{PR:1}, it remains to prove the estimates~\eqref{eq:R1}, \eqref{eq:R1b} and~\eqref{eq:R2} on each of the error term
$\mathcal R_k$, $k=1,\ldots, 8$. We also need to verify the structure and the estimates in \eqref{eq:etoile}.
We state and prove the following technical estimates to be used in the next subsection.

\begin{lemma} \label{LE:3.2}
With $\theta $ defined by~\eqref{eq:theta}, $\mathcal Z_k$ defined by~\eqref{fdefzk} and under the assumption~\eqref{eq:BS1}, the following properties hold. 
\begin{enumerate}[{\rm (i)}] 

\item \label{LE:3.2:1} 
For $R\in \mathcal Y$ and $s$ large,
\begin{align*}
\| \theta R \|_{L^2} & \lesssim 1,\\
\| (\partial_y^j \theta) R\|_{L^2} & \lesssim e^{- \sqrt s} \quad \mbox{if $j\geq 1$.}
\end{align*}

\item \label{LE:3.2:2} 
Let $j,k\in \N$. For any $R \in \mathcal Z_k$, for $s$ large,
\begin{align*}
\| (\partial_y^j \theta) R \|_{L^2} &\lesssim \delta^{\frac 12+k-j} s^{\frac12+k-j},\\
\| \partial_y( (\partial_y^j \theta) R) \|_{L^2} &\lesssim \delta^{-\frac 12+k - j} s^{-\frac12+k-j} \quad \mbox{if $j+k\geq 1$,}\\
\| \partial_y(\theta R) \|_{L^2} &\lesssim 1 \quad \mbox{if $k=0$.}
\end{align*}
In particular,
\begin{equation*} 
\| (\partial_y^j \theta) R \|_{H^1} \lesssim \delta^{\frac 12+k-j} s^{\frac12+k-j},
\end{equation*} 
for all $j, k \ge 0$ and $R \in \mathcal Z_k$.
\end{enumerate} 
\end{lemma}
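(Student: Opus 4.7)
The argument rests on the dilation identity $\partial_y^j \theta(s,y) = \lambda^j \Theta^{(j)}(\lambda y)$, which combined with the pointwise bounds \eqref{fEstThera1}--\eqref{fEstThera2} and the estimate $\lambda(s) \sim s^{-1}$ from \eqref{eq:lambda} gives $|\theta| \lesssim \mathbf{1}_{I(s)}$ and $|\partial_y^j \theta| \lesssim \lambda^j \delta^{-j} \mathbf{1}_{J(s)} \lesssim (\delta s)^{-j} \mathbf{1}_{J(s)}$ for $j\ge 1$. The key geometric facts used throughout are that $|J(s)|\sim \delta/\lambda \sim \delta s$ with $|y|\sim \delta s$ on $J(s)$, and $|I(s)| \sim \delta s$ with $|y|\lesssim \delta s$ on $I(s)$. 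A recurring trick is that $\delta s\gtrsim 1$ once $s_0$ has been taken large enough relative to $K$, so factors $(\delta s)^\alpha$ with $\alpha\le 0$ are absorbed.

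For part~\eqref{LE:3.2:1}, the bound $\|\theta R\|_{L^2}\le \|R\|_{L^2}\lesssim 1$ is immediate. For $j\ge 1$, the definition of $\mathcal Y$ provides $|R(y)|\lesssim \langle y\rangle^q e^{-|y|}$ for some $q=q(R)$; restricting to $J(s)$ where $|y|\ge \delta s/2$ produces an overall factor $e^{-\delta s/2}$ that absorbs the algebraic losses $(\delta s)^{q-j+1/2}$ and is in particular $\lesssim e^{-\sqrt s}$ once $\delta\sqrt s\ge 2$.

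For part~\eqref{LE:3.2:2}, I would decompose $R = R_0 + \sum_{i=0}^k c_i\,\mu_i$ with $R_0\in \mathcal Y$, handle $R_0$ via part~\eqref{LE:3.2:1}, and estimate each $\mu_i$-piece pointwise. On $I(s)$ one has $|\mu_i(y)|\le |y|^i\lesssim (\delta s)^i$, while on $J(s)$ one has $|\mu_i(y)|\sim (\delta s)^i$; combining with $|\partial_y^j\theta|\lesssim (\delta s)^{-j}$ and the measure $|J(s)|\sim \delta s$ (respectively $|I(s)|\sim \delta s$ for $j=0$) gives $\|(\partial_y^j\theta)\mu_i\|_{L^2}\lesssim (\delta s)^{i-j+1/2}$. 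Since $\delta s\gtrsim 1$, the dominant term is $i=k$, yielding the target $\delta^{1/2+k-j}\, s^{1/2+k-j}$.

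For the derivative estimate I expand $\partial_y\bigl((\partial_y^j\theta)R\bigr) = (\partial_y^{j+1}\theta)R + (\partial_y^j\theta)R'$. The first piece is bounded by applying the $L^2$ estimate just proved with $j+1$ in place of $j$, producing $\delta^{-1/2+k-j}s^{-1/2+k-j}$. For the second, differentiating $\mu_i=|y|^i\chi$ gives $\mu_i'=i|y|^{i-1}\mathrm{sign}(y)\,\chi + |y|^i\chi'$, where the second summand is compactly supported away from both the origin and $J(s)$ for $s$ large, and the first has the same pointwise size as $\mu_{i-1}$ on the support of $\theta$. Thus $R'$ obeys the same pointwise bounds as an element of $\mathcal Z_{k-1}$, and rerunning the preceding calculation with $k-1$ in place of $k$ yields the same target $\delta^{-1/2+k-j}s^{-1/2+k-j}$. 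The case $k=0$ is special: then $R'\in \mathcal Y$, so part~\eqref{LE:3.2:1} produces the refined bound $\|\partial_y(\theta R)\|_{L^2}\lesssim 1$. The $H^1$ conclusion follows by summing the two bounds and observing the $L^2$ piece dominates by a factor $\delta s\gtrsim 1$. The only real obstacle is the bookkeeping of the opposite monotonicities of $\delta$ (small) and $s$ (large), so that $\delta s$ consistently plays the role of a large parameter.
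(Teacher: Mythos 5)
Your proof is correct and follows essentially the same route as the paper: the $L^2$ estimates come from the support/pointwise bounds $|\theta|\lesssim\mathbf 1_{I(s)}$, $|\partial_y^j\theta|\lesssim(\delta s)^{-j}\mathbf 1_{J(s)}$ together with $|y|\sim\delta s$ on $J(s)$, and the paper simply declares that the derivative bounds ``follow easily from the first one and from part (i)'' -- your expansion of $\partial_y\bigl((\partial_y^j\theta)R\bigr)$ and the observation that $R'$ behaves pointwise like an element of $\mathcal Z_{k-1}$ (exponential for $k=0$) is exactly the intended argument. The only cosmetic difference is that you decompose $R=R_0+\sum c_i\mu_i$ and treat each $\mu_i$ separately, whereas the paper works directly with the global pointwise bound $|R|\lesssim\langle y\rangle^k$; both give the same outcome.
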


\begin{proof}
Proof of~\eqref{LE:3.2:1}. 
Let $R\in \mathcal Y$, in particular $R^2(y) \lesssim e^{- \frac {3} {2} |y|}$. The first inequality is immediate since $0\le \theta \le 1$. If $j\geq 1$, then with the notation~\eqref{interval} 
\begin{equation*}
\int |\partial_y^j \theta|^2 R^2(y) dy 
\lesssim \lambda^{2j} \delta ^{- 2j} \int_{J(s) } e^{- \frac {3} {2} |y|} dy 
\lesssim e^{-\frac 32 \frac\delta\lambda} \lesssim e^{- 2 \sqrt s},
\end{equation*}
where we used $\lambda \le \delta $ for $s$ large, and $\lambda (s) \le \frac {3 \delta } {4 \sqrt s} $ for $s$ large (by~\eqref{eq:lambda}).

Proof of~\eqref{LE:3.2:2}. 
Let $j, k \ge 0$ and $R\in \mathcal Z_k$, in particular $R^2(y) \lesssim \langle y\rangle ^{2k}$. 
We calculate
\begin{equation*}
\int |\partial_y^j \theta|^2 R^2(y) dy \lesssim \lambda^{2j} \delta ^{- 2j} \int_{I(s) } \langle y\rangle ^{2k} dy 
\lesssim \lambda^{2j} \delta ^{- 2j} \Bigl( \frac {\delta } {\lambda } \Bigr)^{1+ 2k } \int_{-2 }^2 \langle x\rangle ^{2k} dx 
\end{equation*}
where we have used the change of variable $x= \frac {\lambda } {\delta } y$ and $\frac \lambda\delta\leq 1$ for $s$ large.
The first estimate then follows from~\eqref{eq:BS1}.
The remaining two estimates follow easily from the first one and from part~\eqref{LE:3.2:1}. 
\end{proof}

\subsection{Estimates of the error terms}\label{S:3.7}

\emph{Estimates of $\mathcal R_1$.}
We develop using the expression of $\nu_1$
\begin{align*}
\mathcal T_1 
&= -\lambda^\frac52 e^{-i\gamma} [ \nu_1(\lambda y) + \lambda c_1 \Theta '' (\lambda y) 
+ i \lambda ^2 c_2 \Theta '' ( \lambda y) ] \\
&= - \theta_{yy} \Bigl[ \lambda^\frac32 |y| \cos \gamma + \kappa \lambda^\frac52 y^2 \sin\gamma
- i \lambda^\frac32 |y| \sin \gamma + i \kappa \lambda^\frac52 y^2 \cos\gamma \\
& \qquad + \lambda^\frac32 c_1 \cos \gamma + \lambda^\frac52 c_2 \sin \gamma 
- i \lambda^\frac32 c_1 \sin \gamma +i \lambda^\frac52 c_2 \cos \gamma \Bigr]\\
&\quad - 2 \theta_y \left[\lambda^\frac32 (\SIGN y) \cos\gamma + 2 \kappa \lambda^\frac52 y\sin\gamma
- i \lambda^\frac32(\SIGN y)\sin\gamma + 2i\kappa\lambda^\frac52 y\cos\gamma\right].
\end{align*}
Thus,
\begin{align*}
\mathcal R_1
 = \mathcal S_1 + \mathcal T_1
& =\theta_{yy} \left[ \lambda^\frac32 (\varphi_1-|y| - c_1 ) \cos \gamma + \lambda^\frac52 (\varphi_2-\kappa y^2 - c_2) \sin\gamma\right]\\
&\quad +i\theta_{yy}\left[\lambda^\frac32 (\psi_1+|y| + c_1 ) \sin \gamma - \lambda^\frac52 (\psi_2+\kappa y^2 + c_2) \cos\gamma\right]\\
&\quad + 2 \theta_y \left[\lambda^\frac32(\varphi_1'-\SIGN y)\cos\gamma + \lambda^\frac52(\varphi_2'-2\kappa y)\sin\gamma\right]\\
&\quad + 2i \theta_y\left[\lambda^\frac32(\psi_1'+\SIGN y)\sin\gamma -\lambda^\frac52 (\psi_2'+2\kappa y)\cos\gamma\right].
\end{align*}
Observe that $\theta_y$ and $\theta_{yy}$ vanish outside $J (s) $, hence on $[-1, 1]$ for $s$ large.
In particular, $ \mathcal R_1 = \chi ( y) \mathcal R_1$. 
From the asymptotics in Lemmas~\ref{lem1bis} and~\ref{lem2}, it follows that the functions
\begin{gather*}
\chi (\varphi_1-|y| -c_1) ,\quad \chi (\psi_1+|y| + c_1) ,\quad \chi ( \varphi_2-\kappa y^2- c_2) ,\quad \chi (\psi_2+\kappa y^2+ c_2),\\
\chi (\varphi_1'-\SIGN y) , \quad \chi (\psi_1'+\SIGN y), \quad \chi (\varphi_2'-2\kappa y) ,\quad \chi (\psi_2'+2\kappa y) 
\end{gather*}
all belong to $\mathcal Y$. Therefore, it follows from Lemma~\ref{LE:3.2}~\eqref{LE:3.2:1} that 
\[
\|\mathcal R_1\|_{H^1}+\| y \mathcal R_1 \| _{ L^2 } +|\langle M_b \mathcal R_1,iQ\rangle|\lesssim e^{- \sqrt s},
\]
for $s$ large. Besides, it is immediate by its expression above that $\mathcal R_1[\Gamma] \in L^2$ is a locally Lipschitz function of $\Gamma$ with values in $L^2_y$ and 
$\| \mathcal R_1[\Gamma]\|_{L^2} \to 0$ as $\lambda\to 0$.

\emph{Estimates of ${\mathcal R}_2$.}
We decompose $\mathcal R_2 =\mathcal S_2 + \mathcal T_2=\mathcal R_{2,1}+\mathcal R_{2,2}$,
where
\begin{align*}
\mathcal R_{2,1} &= \mathcal S_2 - f(\theta (A+iB)\\
\mathcal R_{2,2} &= f(\theta (A+iB) + \mathcal T_2.
\end{align*}
Given $X, Y\in \R$, we expand $f(Q_a+X+iY) - f(Q_a)$. Since
\begin{align*}
|Q_a+ X+ iY |^4 & = \left[ (Q_a+X )^2 + Y^2 \right]^2 \\
& = (Q_a+X)^4 + 2 (Q_a+X)^2 Y^2 + Y^4\\
& = 4 Q_a^3 X + 6 Q_a^2 X^2 + 4 Q_a X^3 + 2 Q_a^2 Y^2 + 4 Q_a XY^2 \\
&\quad + Q_a^4 + |X+iY|^4,
\end{align*}
we have the general identity
\begin{equation} \label{eq:QXY}
\begin{aligned} 
f(Q_a& +X+iY) = 5 Q_a^4 X + i Q_a^4 Y \\ 
&\quad+ 10 Q_a^3 X^2 + 2 Q_a^3 Y^2 + 4i Q_a^3 XY \\
&\quad+ 10 Q_a^2 X^3 + 2i Q_a^2 Y^3 + 6i Q_a^2 X^2Y + 6 Q_a^2 XY^2 \\
&\quad+5 Q_a X^4 + 4i Q_a X^3Y + 6 Q_a X^2Y^2 + 4i Q_a XY^3 + Q_a Y^4 \\
&\quad+f(Q_a) + f(X+iY).
\end{aligned}
\end{equation} 
We apply the above identity to the real-valued functions $X,Y$ such that
\[
M_b \theta(A+iB) = X+iY,
\]
i.e.
\begin{equation}\label{eq:XY}
X = \Re (M_b\theta(A+iB)),\quad Y = \Im(M_b\theta(A+iB)),
\end{equation}
and so,
\begin{equation} \label{fmbw} 
M_b W= Q_a + X + i Y.
\end{equation} 
Thus, 
\begin{align*}
f(W)
& = M_{-b} \left[ f(Q_a+X+iY) - f(Q_a) \right]+ f(M_{-b} Q_a)\\
& =M_{-b} (5Q_a^4 X+i Q_a^4 Y) \\ 
&\quad + M_{-b} \left(10 Q_a^3 X^2 + 2 Q_a^3 Y^2 + 4i Q_a^3 XY \right)\\
&\quad + M_{-b} \left( 10 Q_a^2 X^3 + 2i Q_a^2 Y^3 + 6i Q_a^2 X^2Y + 6 Q_a^2 XY^2 \right)\\
&\quad + M_{-b} \left(5 Q_a X^4 + 4i Q_a X^3Y + 6 Q_a X^2Y^2 + 4i Q_a XY^3 + Q_a Y^4 \right)\\
&\quad + f(M_{-b} Q_a)+ f(\theta (A+iB)).
\end{align*}
We compute
\begin{align*}
{\mathcal R}_{2,1} 
&=f(W)-f(M_{-b}Q_a)-f(\theta (A+iB)) \\
&\quad - 5 Q^4 \theta A +by^2 Q^4 \theta B - i Q^4 \theta B+iby^2 Q^4 \theta A\\
&\quad+i \theta\big( \alpha_3 \lambda^3 \sin 2 \gamma +\alpha_4 b^2 \lambda^\frac32\sin \gamma\big) \rho\\
& =M_{-b} (5Q_a^4 X+i Q_a^4 Y) - 5 Q^4 \theta A +by^2 Q^4 \theta B - i Q^4 \theta B+iby^2 Q^4 \theta A\\
&\quad + M_{-b} \left(10 Q_a^3 X^2 + 2 Q_a^3 Y^2 + 4i Q_a^3 XY \right)\\
&\quad + M_{-b} \left( 10 Q_a^2 X^3 + 2i Q_a^2 Y^3 + 6i Q_a^2 X^2Y + 6 Q_a^2 XY^2 \right)\\
&\quad + M_{-b} \left(5 Q_a X^4 + 4i Q_a X^3Y + 6 Q_a X^2Y^2 + 4i Q_a XY^3 + Q_a Y^4 \right)\\
&\quad+i \theta\big( \alpha_3 \lambda^3 \sin 2 \gamma +\alpha_4 b^2 \lambda^\frac32\sin \gamma\big) \rho.
\end{align*}
We observe that
\begin{align*}
M_{-b} (5Q^4 X+i Q^4 Y)
& = Q^4 M_{-b} \left[ 3 M_b \theta(A+iB) + 2 \overline{M_b \theta(A+iB)}\right]\\
& = 3 Q^4 \theta(A+iB) + 2M_{-2b} Q^4 \theta(A-iB),
\end{align*}
thus
\begin{align*}
&M_{-b} (5Q_a^4 X+i Q_a^4 Y) - 5 Q^4 \theta A +by^2 Q^4 \theta B - i Q^4 \theta B+iby^2 Q^4 \theta A\\
&\quad = 2 \left(M_{-2b}-1+\tfrac i{2}b y^2\right) Q^4 \theta(A-iB) + M_{-b} (Q_a^4-Q^4) (5X+iY);
\end{align*}
and so,
\begin{align*}
{\mathcal R}_{2,1} 
& = 2 \left(M_{-2b}-1+\tfrac i{2}b y^2\right) Q^4 \theta(A-iB) + M_{-b} (Q_a^4-Q^4) (5X+iY)\\
&\quad + M_{-b} \left(10 Q_a^3 X^2 + 2 Q_a^3 Y^2 + 4i Q_a^3 XY \right)\\
&\quad + M_{-b} \left( 10 Q_a^2 X^3 + 2i Q_a^2 Y^3 + 6i Q_a^2 X^2Y + 6 Q_a^2 XY^2 \right)\\
&\quad + M_{-b} \left(5 Q_a X^4 + 4i Q_a X^3Y + 6 Q_a X^2Y^2 + 4i Q_a XY^3 + Q_a Y^4 \right)\\
&\quad+i \theta\big( \alpha_3 \lambda^3 \sin 2 \gamma +\alpha_4 b^2 \lambda^\frac32\sin \gamma\big) \rho.
\end{align*}
Therefore, by Lemma~\ref{le:V}, it is easy to estimate $\mathcal R_{2,1}$ in $H^1$ and $ y \mathcal R_{2,1}$ in $L^2 $
\[
\| \mathcal R_{2,1}\|_{H^1} + \| y \mathcal R_{2,1}\|_{L^2} \lesssim s^{-3}.
\]
Now, we write
\begin{align*}
M_b \mathcal R_{2,1} 
& =2 M_{b}\left(M_{-2b}-1+\tfrac i{2}b y^2 + \tfrac {b^2} {8} y^4\right) Q^4 \theta(A-iB) \\
&\quad + (Q_a^4-Q^4) (5X+iY) + 4i ( Q_a^3 - Q^3) XY \\
&\quad + 10 Q_a^2 X^3 + 2i Q_a^2 Y^3 + 6i Q_a^2 X^2Y + 6 Q_a^2 XY^2 \\
&\quad + 5 Q_a X^4 + 4i Q_a X^3Y + 6 Q_a X^2Y^2 + 4i Q_a XY^3 + Q_a Y^4\\
&\quad +i [ M_b\theta -1] \big( \alpha_3 \lambda^3 \sin 2 \gamma +\alpha_4 b^2 \lambda^\frac32\sin \gamma\big) \rho \\
&\quad - (M_{b} \theta - 1) \tfrac {b^2} {4} y^4 Q^4 (A-iB) +i \tfrac {b^2} {4} y^4 Q^4 B_2 \\
&\quad + 4i Q ^3 (XY -A_1 B_1 ) \\ 
&\quad + 10 Q_a^3 X^2 + 2 Q_a^3 Y^2 - \tfrac {b^2} {4} y^4 Q^4 A \\
& \quad + i \big( \alpha_3 \lambda^3 \sin 2 \gamma +\alpha_4 b^2 \lambda^\frac32\sin \gamma\big) \rho 
+ 4i Q ^3 A_1 B_1 +i \tfrac {b^2} {4} y^4 Q^4 B_1. 
\end{align*} 
We note that, by the bootstrap assumption, the first six lines are easily seen to be $O(s^{-4}) $ in $L^2$. 
For the seventh line, we first observe that
\begin{align*}
X = \{\Re [(M_b\theta-1)(A+iB)] + A_2 \}+ A_1 ,\\
Y = \{\Im[(M_b\theta-1) (A+iB)] + B_2 \} + B_1.
\end{align*}
Thus,
\begin{align*}
& XY -A_1 B_1  = \{\Re [(M_b\theta-1)(A+iB)] + A_2 \} \{\Im[(M_b\theta-1) (A+iB)] + B_2 \}\\
&\quad + A_1  \{\Im[(M_b\theta-1) (A+iB)] + B_2 \} + B_1 \{\Re [(M_b\theta-1)(A+iB)] + A_2 \},
\end{align*}
and from \eqref{festMbT}, one obtains
\[
|Q ^3 (XY -A_1 B_1 )| \lesssim  s^{-4}.
\]
The next-to-last line only contains real-valued terms, so its projection on $iQ$ vanishes. Last, the projection of the last line on $iQ$ vanishes because of the choice~\eqref{def:a3a4} of $\alpha _3$ and $\alpha _4$. 
Therefore,
\[
\langle M_b \mathcal R_{2,1} ,iQ\rangle = O(s^{-4})
\]
Now, we estimate $\mathcal R_{2,2}$.
We introduce a notation
\[
V_*(s,y)= e^{-i\gamma} \theta \left(\lambda^\frac32 |y| + i\kappa \lambda^\frac52 y^2\right)
= A_*+iB_*,
\]
where
\begin{align*}
A_* & = \lambda^\frac32 |y| \cos \gamma + \kappa \lambda^\frac52 y^2 \sin\gamma,\\
B_* & =-\lambda^\frac32 |y| \sin \gamma + \kappa \lambda^\frac52 y^2 \cos \gamma,
\end{align*}
so that
\[
\mathcal T_2 = -f( V_*)= -f(\theta (A_*+iB_*)).
\]
Using this notation, we estimate
\begin{align*}
|\mathcal R_{2,2}| 
&= |f(\theta (A+iB) - f (\theta (A_*+iB_*))|\\
&\lesssim \theta^5 \left(|A-A_*| +|B-B_*|\right) \left(A^4+B^4+A_*^4+B_*^4\right).
\end{align*}
By the asymptotics in Lemmas \ref{lem1bis} and \ref{lem2}, and $\theta(s,y)=0$ for $\lambda |y|\geq 2 \delta$,
we have
\[
|A-A_*| +|B-B_*|\lesssim \lambda^\frac32,\quad
\theta\left(A^4+B^4+A_*^4+B_*^4\right) \lesssim \lambda^6 (y^4+1).
\]
This implies the following pointwise estimate
\[
|\mathcal R_{2,2}| \lesssim \theta \lambda^\frac{15}2(y^4+1).
\]
Similarly, we have
\[
|\partial_y \mathcal R_{2,2}| \lesssim \theta \lambda^\frac{15}2(y^4+1).
\]
Thus, we obtain
\begin{align*} 
\| \mathcal R_{2,2}\|_{H^1} &\lesssim \delta^\frac12 s^{-3}, \\
\| y\mathcal R_{2,2}\|_{H^1} &\lesssim \delta^\frac32 s^{-2}, 
\end{align*} 
and
\[
|\langle M_b \mathcal R_{2,2} ,iQ\rangle|\lesssim s^{-\frac{15}2}.
\]
Besides, it is immediate  that all the terms in the expressions of $\mathcal S_2[\Gamma]$ and
$\mathcal T_2[\Gamma]$ are locally Lipschitz functions of $\Gamma$ with values in $L^2_y$.
We also observe that $f(W)-f(M_{-b} Q_a)$, as well as all the other terms in $\mathcal S_2[\Gamma]$ and
$\mathcal T_2[\Gamma]$ converge to $0$  in $L^2$ as $(\lambda,b,a)\to 0$.

\emph{Estimates of ${\mathcal R}_3$.}
We compute $G$ and $H$.
It follows from direct computations that
\begin{align*} 
&-\partial_s B_1 - L_+ A_1 - b (\Lambda B_1 + y^2 Q^4 B_1) - m_\gamma A_1 +m_\lambda \Lambda B_1\\
& \quad = 
- \lambda^\frac32 b (\Lambda_3 \psi_1 + y^2 Q^4 \psi_1) \sin \gamma 
 - m_\gamma \lambda^\frac32 (\varphi_1+\psi_1) \cos\gamma+ m_\lambda \lambda^\frac32 (\Lambda_3 \psi_1) \sin \gamma
\end{align*} 
and 
\begin{align*} 
&\partial_s A_1 - L_- B_1 + b ( \Lambda A_1 -y^2 Q^4 A_1 ) -m_\gamma B_1 -m_\lambda \Lambda A_1\\
& \quad= 
\lambda^\frac32 b (\Lambda_3 \varphi_1 - y^2 Q^4 \varphi_1 ) \cos \gamma 
- m_\gamma \lambda^\frac32(\varphi_1+\psi_1) \sin \gamma
-m_\lambda \lambda^\frac32 (\Lambda_3 \varphi_1) \cos \gamma.
\end{align*}
Similarly, using the definition of $(\varphi_2,\psi_2)$, it follows that 
\begin{align*} 
&-\partial_s B_2 - L_+ A_2 - b (\Lambda B_2 + y^2 Q^4 B_2) - m_\gamma A_2 +m_\lambda \Lambda B_2 \\
& \quad = \lambda^\frac52 (\Lambda_3 \psi_1+ y^2 Q^4 \psi_1) \sin \gamma 
+ \lambda^\frac52 b(\Lambda_5 \psi_2 + y^2 Q^4 \psi_2 ) \cos \gamma \\
&\qquad 
- m_\gamma \lambda^\frac52 (\varphi_2+\psi_2) \sin \gamma- m_\lambda \lambda^\frac52(\Lambda_5 \psi_2) \cos \gamma
\end{align*} 
and 
\begin{align*} 
&\partial_s A_2 - L_- B_2 + b ( \Lambda A_2 -y^2 Q^4 A_2 ) -m_\gamma B_2 -m_\lambda \Lambda A_2 \\
& \quad = -\lambda^\frac52 (\Lambda_3 \varphi_1- y^2 Q^4 \varphi_1+\alpha_1 \rho ) \cos \gamma 
 + \lambda^\frac52 b(\Lambda_5 \varphi_2 - y^2 Q^4 \varphi_2 ) \sin \gamma \\
& \qquad 
+ m_\gamma \lambda^\frac52(\varphi_2+\psi_2) \cos \gamma
-m_\lambda \lambda^\frac52 (\Lambda_5\varphi_2) \sin \gamma.
\end{align*} 
Summing up the previous identities, we obtain
\begin{equation}\label{eq:G}
\begin{aligned}
G & = 
- \lambda^\frac52 \Bigl(\frac b\lambda -1 \Bigr) ( \Lambda_3 \psi_1 + y^2 Q^4 \psi_1) \sin\gamma\\
&\quad + \lambda^\frac52 b ( \Lambda_5 \psi_2 + y^2 Q^4 \psi_2) \cos \gamma \\
&\quad - m_\gamma\lambda^\frac32 \left[ (\varphi_1+\psi_1)\cos \gamma
+ \lambda (\varphi_2+\psi_2) \sin \gamma \right]\\
&\quad + m_\lambda \lambda^\frac32 \left[(\Lambda_3 \psi_1)\sin \gamma
- \lambda (\Lambda_5 \psi_2)\cos \gamma \right]
\end{aligned}
\end{equation}
and
\begin{equation} \label{eq:H} \begin{aligned}
H & = 
\lambda^\frac52 \Bigl(\frac b\lambda -1 \Bigr) (\Lambda_3 \varphi_1 - y^2 Q^4 \varphi_1+\alpha_1\rho) \cos \gamma\\
&\quad + \lambda^\frac52b(\Lambda_5 \varphi_2 - y^2Q^4 \varphi_2+\alpha_2 \rho) \sin\gamma\\
&\quad - m_\gamma\lambda^\frac32 \left[ (\varphi_1+\psi_1)\sin \gamma
-\lambda (\varphi_2+\psi_2) \cos \gamma \right]\\
&\quad - m_\lambda \lambda^\frac32 \left[(\Lambda_3 \varphi_1)\cos \gamma
+ \lambda (\Lambda_5 \varphi_2) \sin \gamma \right].
\end{aligned}\end{equation}
We estimate $\theta H$ in $H^1$.
First, by \eqref{eq:BS1}, we have $|\frac b\lambda -1 |\leq Ks^{-1}$.
Moreover, by \eqref{fcomm2} with $k=0$, $\Lambda_3 \varphi_1 - y^2 Q^4 \varphi_1+\alpha_1\rho\in \mathcal Z_0$.
Thus, using Lemma~\ref{LE:3.2}, we have
\[
\lambda^\frac52 \Bigl|\frac b\lambda -1 \Bigr| \|\theta(\Lambda_3 \varphi_1 - y^2 Q^4 \varphi_1+\alpha_1\rho)\|_{H^1}
\lesssim \delta^\frac12 K s^{-3}.
\]
For the second term in $H$, we observe that by the asymptotics of $\varphi_2$ in Lemma~\ref{lem2},
$\Lambda_5 \varphi_2 - y^2 Q^4 \varphi_2 + \alpha_2\rho \in \mathcal Z_0$ and so using Lemma~\ref{LE:3.2},
\[
\lambda^\frac52 |b| \, \|\theta (\Lambda_5 \varphi_2 - y^2Q^4 \varphi_2+\alpha_2 \rho)\|_{H^1}
\lesssim \delta^{\frac 12} s^{-3} .
\]
By the asymptotics in Lemmas~\ref{lem1bis} and \ref{lem2}, and by \eqref{fcomm2}, we have
\[
\varphi_1+\psi_1 \in \mathcal Y,\quad \varphi_2+\psi_2\in \mathcal Y,\quad
\Lambda_3 \varphi_1 \in \mathcal Z_0,\quad \Lambda_5 \varphi_2\in \mathcal Z_0.
\]
Thus, by Lemma~\ref{LE:3.2}, using also the estimates
$|m_\gamma|+|m_\lambda|\leq K s^{-3}$ from \eqref{eq:BS2}, we have
\begin{equation} \label{eq:llH} \begin{aligned}
|m_\gamma| \lambda^\frac32 \left( \|\theta (\varphi_1+\psi_1)\|_{H^1}
+ \lambda \|\theta(\varphi_2+\psi_2)\|_{H^1} \right) &\lesssim K s^{-\frac92}\\
|m_\lambda| \lambda^\frac32 \left( \|\theta(\Lambda_3 \varphi_1)\|_{H^1} 
+ \lambda \|\theta(\Lambda_5 \varphi_2\|_{H^1}\right) &\lesssim \delta^\frac12 K s^{-4}.
\end{aligned}\end{equation}
Therefore, we have proved that $\|\theta H\|_{H^1} \lesssim \delta^\frac12 K s^{-3}$ for $s$ large, and the analogous estimate $\|\theta G \|_{H^1} \lesssim \delta^\frac12 K s^{-3}$ for $G$ is proved similarly.
Since $ \mathcal R_3 = \theta (G + i H) $, we obtain
\[
\|\mathcal R_3\|_{H^1}\lesssim \delta^\frac12 K s^{-3}.
\]
Now, we observe that by the definitions of $\alpha_1$ and $\alpha_2$ (see \eqref{def:a1}, \eqref{def:a2})
the terms in the first two lines in \eqref{eq:H} are orthogonal to $Q$. 
Thus, by \eqref{eq:BS1} and \eqref{eq:BS2} we have
\begin{equation*} 
| \langle H, Q \rangle| \lesssim  K s^{-\frac92} ,
\end{equation*} 
for $s$ large, so that using also the intermediate calculations in \eqref{festMbT} 
\begin{equation*} 
|\langle \mathcal R_3,iQ \rangle| = | \langle \theta H, Q \rangle|
= | \langle H, Q \rangle | + | \langle H, (1-\theta ) Q \rangle| \lesssim K s^{-\frac92} .
\end{equation*} 
Therefore,
\[
| \langle M_b \mathcal R_3,iQ \rangle |\le 
| \langle \mathcal R_3,iQ \rangle | + | \langle (M_b-1 ) \mathcal R_3,iQ \rangle |
\lesssim \delta^\frac12 K s^{-4}.
\]
Moreover,
\begin{equation*} 
 \| y \mathcal R_3 \| _{ L^2 }= \| y \mathcal R_3 \| _{ L^2 (I(s) )}\le 2\delta \lambda ^{-1} \| \mathcal R_3 \| _{ L^2 } \lesssim \delta^\frac32 K s^{-2} .
\end{equation*} 
Besides, we see from $\mathcal R_3=\theta (G+iH)$ and the expressions of $G$ and $H$ in
\eqref{eq:G}--\eqref{eq:H} that
$\mathcal R_3=\vec m \cdot \vec\rho_3 + \mathcal R_{\Gamma,3}$, where
the functions $\vec\rho_3[\Gamma]$ and
$\mathcal R_{\Gamma,3}[\Gamma]$ are locally Lipschitz functions of $\Gamma$ with values in $L^2_y$
and converge to $0$  in $L^2$ as $(\lambda,b)\to 0$.

\emph{Estimates of ${\mathcal R}_4$.}
Since $Q,\rho\in \mathcal Y$, using the definition of $\mathcal R_4$, and~\eqref{eq:BS1}, it follows that
\[
|\mathcal R_4|+|\partial_y \mathcal R_4|\lesssim a^2 e^{-\frac12|y|}\lesssim s^{-5} e^{-\frac12|y|}.
\]
Thus,
\[
\| {\mathcal R}_4 \|_{H^1} + \| y {\mathcal R}_4 \|_{L^2} \lesssim s^{-5}
\]
and
\[
|\langle M_b{\mathcal R}_4 , iQ \rangle |\lesssim s^{-5}.
\]
Besides, it is immediate by its definition that $\mathcal R_4[\Gamma]$ is a locally Lipschitz function of $\Gamma$ with values in $L^2_y$ and 
$\| \mathcal R_4[\Gamma]\|_{L^2} \to 0$ as $a\to 0$.

\emph{Estimates of ${\mathcal R}_5$.}
Since $\rho\in \mathcal Y$, using $|\Omega|\lesssim s^{-\frac 52}$ and~\eqref{eq:BS1} we have
\begin{align*}
|\mathcal R_5| + | \partial _y \mathcal R_5| & \lesssim s^{-\frac 52} e^{-\frac12|y|} \left( |1-\theta| + |\theta _y| + b\right)\\
&\lesssim s^{-\frac 52} e^{-\frac14|y|}e^{-\frac14\frac{\delta}\lambda} 
+ s^{-\frac 72} e^{-\frac12|y|}.
\end{align*}
Thus, 
\[
\| {\mathcal R}_5(s) \|_{H^1} + \| y {\mathcal R}_5(s) \|_{L^2} \lesssim s^{-\frac 72}.
\]
Next, we see that
\begin{align*}
|\Im (\mathcal R_5(s))|
&\lesssim s^{-\frac 52} e^{-\frac12|y|} \left( |1-\theta| + b^2\right)\\
&\lesssim s^{-\frac 52} e^{-\frac14|y|}e^{-\frac14\frac{\delta}\lambda} 
+ s^{-\frac 92} e^{-\frac12|y|}.
\end{align*}
Thus,
\[
|\langle M_b{\mathcal R}_5(s), iQ \rangle |\lesssim s^{-\frac 92}.
\]
Also, it is immediate by its definition that $\mathcal R_5[\Gamma]$ is a locally Lipschitz function of $\Gamma$ with values in $L^2_y$ and 
$\| \mathcal R_5[\Gamma]\|_{L^2} \to 0$ as $(\lambda,b)\to 0$.

\emph{Estimates of ${\mathcal R}_6$.}
It follows by easy calculations that
\begin{align*} 
 |\nu_1 ^{(4)} (x) | & \lesssim \delta ^{-5} \mathbf{1} _{ \{ \delta < |x|< 2\delta \} }, \\
 |\nu_1 ^{(5)} (x) | & \lesssim \delta ^{-6 } \mathbf{1} _{ \{ \delta < |x|< 2\delta \} }, \\
 |\nu_2 ^{(4)} (x) | & \lesssim \delta \mathbf{1} _{ \{ |x|< 2\delta \} }, \\
 |\nu_2 ^{(5)} (x) | & \lesssim \mathbf{1} _{ \{ |x|< 2\delta \} }. 
\end{align*} 
Next, by~\eqref{eq:BS1} 
\begin{equation*} 
s^2 \Bigl( \int _s ^{S_n} \lambda ^2 \Bigr)^2 + s^2 \int _s^{S_n} \lambda ^3 + s^3 \int _s ^{S_n} \lambda ^4 \lesssim 1. 
\end{equation*} 
Thus, using also \eqref{fEstThera1}, \eqref{fEstThera2} we see that
\begin{equation*} 
 |{\mathcal R}_6 (s,y) | \lesssim s^{- \frac {9} {2}} \delta ^{-5} \mathbf{1} _{ J(s) } + s^{- \frac {9} {2}} \delta \mathbf{1} _{ I(s) } , 
\end{equation*} 
and
\begin{equation*} 
 |\partial _y {\mathcal R}_6 (s,y) | \lesssim s^{- \frac {11} {2}} \delta ^{-6} \mathbf{1} _{ J(s) } + s^{- \frac {11} {2}} \mathbf{1} _{ I(s) } . 
\end{equation*} 
Therefore the error term $\mathcal R_6$ satisfies
\[
\| {\mathcal R}_6(s) \|_{H^1} \lesssim \delta^{-\frac{11}2} s^{-4},
\]
and
\[
|\langle M_b{\mathcal R}_6(s), iQ \rangle |\lesssim \delta s^{-\frac92},
\]
for $s$ large. 
Moreover,
\begin{equation*} 
 \| y \mathcal R_6 \| _{ L^2 }= \| y \mathcal R_6 \| _{ L^2 (I(s) )}\le 2\delta \lambda ^{-1} \| \mathcal R_6 \| _{ L^2 } \lesssim \delta^{-\frac{9}2} s^{-3} .
\end{equation*} 
Also, we rewrite $\mathcal R_6$ as
\[
\mathcal R_6[\Gamma] = \lambda^\frac52 e^{-i\gamma} \Bigl[ - \frac {1} {2} j_1 (\nu_1^{(4)} (\lambda y)+\nu_2^{(4)} (\lambda y)) 
+ (-i c_1  j_2  + c_2 j_3 ) \Theta ^{(4)} (\lambda y) \Bigr].
\]
Thus, it is immediate that $\mathcal R_6[\Gamma]$ is a locally Lipschitz function of $\Gamma$ with values in $L^2_y$ and 
$\| \mathcal R_6[\Gamma]\|_{L^2} \to 0$ as $(\lambda,j_1,j_2,j_3)\to 0$.

\emph{Estimates of ${\mathcal R}_7$.}
By change of variable, $x= \lambda y$,
the error term $\mathcal R_7$ satisfies
\begin{equation*} 
\|\mathcal R_7\|_{L^2} = \lambda^2 \|z^5\|_{L^2}, \quad \|\mathcal \partial _y R_7\|_{L^2} = \lambda^3 \|\partial _x (z^5)\|_{L^2}. 
\end{equation*} 
From \eqref{fEstz3} and \eqref{fEstz4},
\begin{equation*} 
 \| z^5 \| _{ L^2 } \lesssim (T_n-t)^5 \delta ^{- \frac {9} {2}} + (T_n-t)^{10} \delta ^{- \frac {29} {2}},
\end{equation*} 
and
\begin{equation*} 
 \| \partial _x (z^5) \| _{ L^2 } \lesssim (T_n-t)^5 \delta ^{- \frac {11} {2}} + (T_n-t)^{10} \delta ^{- \frac {31} {2}}.
\end{equation*} 
Since $(T_n-t) \lesssim (s(t))^{-1}$, it follows that
\[
\| {\mathcal R}_7 \|_{H^1} \lesssim \delta ^{- \frac {9} {2}} s^{-7} \lesssim s^{-6} ,
\]
for $s$ large, and so
\[
|\langle M_b {\mathcal R}_7(s), iQ \rangle |\lesssim s^{-6}
\]
for $s$ large. 
Moreover, since $\mathcal R_7= f(Z)$ and $Z$ is supported in $I(s)$,
\begin{equation*} 
 \| y \mathcal R_7 \| _{ L^2 }= \| y \mathcal R_7 \| _{ L^2 (I(s) )}\le 2\delta \lambda ^{-1} \| \mathcal R_7 \| _{ L^2 } \lesssim s^{-5} .
\end{equation*} 

\emph{Estimates of ${\mathcal R}_8$.}
Using \eqref{festW1}, \eqref{festZ5} and $e^{- \frac { |y|} {2}} \mathbf{1}_{J(s)} \lesssim e^{- \delta \frac {s} {4}}e^{- \frac { |y|} {4}}$, it follows that
\begin{equation*} 
( | W | + | \partial _y W| ) ( |Z| + |\partial _y Z| ) \lesssim s^{-\frac{9}2} e^{- \frac { |y|} {4}} + s^{-2 } \mathbf{1}_{J(s)} 
+ s^{- 5 } (|y|^3 + 1) \mathbf{1}_{I(s)}. 
\end{equation*} 
Moreover,
\begin{align*} 
[ | W | + | \partial _y W| + |Z| + |\partial _y Z| ]^3 
& \lesssim [ e^{- \frac { |y|} {2}} + s^{-\frac 12} \mathbf{1}_{I(s)} ]^3 \\
& \lesssim e^{- \frac {3 |y|} {2}} + s^{-\frac 32} .
\end{align*} 
Thus, by the definition of $\mathcal R_8$, it holds
\begin{align*}
|\mathcal R_8| + | \partial _y\mathcal R_8| & \lesssim ( | W | + | \partial _y W| ) ( |Z| + |\partial _y Z| ) [ | W | + | \partial _y W| + |Z| + |\partial _y Z| ]^3 \\
& \lesssim s^{-\frac{9}2} e^{- \frac { |y|} {4}} + s^{-\frac{7}2} \mathbf{1}_{J(s)} + s^{-\frac{13}2} (|y|^3 + 1) \mathbf{1}_{I(s)}. 
\end{align*}
Therefore,
\begin{align*} 
\| {\mathcal R}_8(s) \|_{H^1} & \lesssim \delta ^{\frac {1} {2}} s^{-3} ,\\
\| y {\mathcal R}_8(s) \|_{L^2} & \lesssim \delta ^{\frac {3} {2}} s^{-2} ,
\end{align*} 
and 
\[
|\langle M_b{\mathcal R}_8(s), iQ \rangle |\lesssim s^{-\frac {9}2},
\]
for $s$ large. 

Finally, we rewrite $\mathcal R_7+\mathcal R_8$ as
\[
(\mathcal R_7 +\mathcal R_8)[\Gamma]
= f(W+Z)-f(W)
\]
and it follows that $(\mathcal R_7+\mathcal R_8)[\Gamma]$ is a locally Lipschitz function of $\Gamma$ with values in $L^2_y$ and converges to $0$ as $\lambda\to 0$.

Therefore, Proposition~\ref{PR:1} is proved. 

\section{Modulation around the approximate blow-up solution}\label{S:4}

\subsection{Decomposition by modulation of the parameters}\label{S:4:1}

We recall that given $T\in \R$ and an initial data $u_T\in H^3\cap \Sigma$ such that $\partial_x u_T\in \Sigma$,
there exists a solution $u$ of \eqref{nls} defined on some interval $I\ni T$ with $u(T)=u_T$, in the same regularity class, i.e. $u\in \mathcal C(I,H^3\cap \Sigma)$, $\partial_x u\in \mathcal C(I,\Sigma)$,
$u\in \mathcal C^1(I,H^1)$.
This follows from standard arguments, see e.g. \cite[Proof of Lemma 5.6.2]{CLN10}.

We look for a decomposition of $u$ on $I$ of the form
\begin{equation}\label{def:u}\begin{aligned}
&u(t,x) =\lambda^{-\frac 12}(s) e^{i\gamma(s)} ( V[\Gamma(s)] (y)+ \varepsilon (s,y)),\\
&x = \lambda(s) y,\quad T -t = j_1(s)=\int^S_s \lambda^2
\end{aligned}
\end{equation}
where 
$V$ is defined in \eqref{eq:blowup},
$S>0$ and $(\gamma,\lambda,b,a)$ are $\mathcal C^1$ parameters such that 
\begin{equation} \label{fortho2} 
\left\{\begin{aligned} 
\langle \varepsilon, M_{ -b } Q \rangle & =0 \\
\langle \varepsilon, M_{ -b } y^2 Q \rangle & =0 \\
\langle \varepsilon, i M_{ -b } \Lambda Q \rangle & =0 \\
\langle \varepsilon, i M_{ -b } \rho \rangle & =0 
\end{aligned}\right. 
\end{equation} 

\begin{remark}\label{rk:ortho}
The choice of orthogonality relations \eqref{fortho2} is standard and corresponds to observations in \cite{Weinstein}
and \cite{RaphaelS}.
\end{remark}

Using \eqref{eq:resc1} and \eqref{eq:V} with $\vec m$ defined in \eqref{def:vecm},
we see that $\varepsilon$ and $(\gamma,\lambda,b,a)$ satisfy
\begin{equation} \label{eq:e}
\begin{aligned}
0 &= \lambda^{\frac 52} e^{-i\gamma} \left[ i\partial_t u + \partial_x^2 u +f(u)\right]= \mathcal E(V+\varepsilon) \\
& =\mathcal E(\varepsilon) + f(V+\varepsilon) - f(V) - f(\varepsilon)+ \mathcal E(V)\\
& = i \partial_s \varepsilon + \partial_y^2 \varepsilon - \varepsilon + f( V+\varepsilon) - f(V) 
+ ib\Lambda \varepsilon - i m_\lambda \Lambda \varepsilon- m_\gamma \varepsilon 
+\mathcal S_0 +\mathcal R.
\end{aligned}
\end{equation}
From its definition, it is easy to check that
$V\in \mathcal C(H^3\cap \Sigma)$, $\partial_x V\in \mathcal C(\Sigma)$, $V\in \mathcal C^1(H^1)$.
Thus, by formula~\eqref{def:u}, $\varepsilon \in \mathcal C(H^3\cap \Sigma)$, $\partial_x \varepsilon\in \mathcal C(\Sigma)$, $\varepsilon\in \mathcal C^1(H^1)$, and the above equation makes sense in $L^2$
and in $H^1_\Loc$.

\subsection{Bootstrap assumption}

The estimates of Section~\ref{S:3} on the approximate solution $V$,  given  in Lemma~\ref{le:V} and Proposition~\ref{PR:1}, were established assuming the bootstrap estimates~\eqref{eq:BS1} and \eqref{eq:BS2}.
Bootstrap estimates on the function $\varepsilon$ will also be necessary.
Below, we recapitulate all the bootstrap estimates to be used later

\begin{equation} \label{eq:BS}
\left\{\begin{aligned}
|\gamma(s) -s |&\leq s^{-\frac 74},\\
|b(s) - s^{-1} + \alpha_1 s^{-\frac52}\cos \gamma(s)|&\leq s^{-\frac 74},\\
\left|\frac{b(s)}{\lambda(s)} - 1\right|&\leq K s^{-1},\\
|a(s) - \alpha_1 s^{-\frac 52} \sin \gamma(s) |&\leq K s^{-3},\\
|\vec m(s)| & \leq K s^{-3},\\
\|\varepsilon(s)\|_{H^1}& \leq K^\frac14 s^{-2}, \\
 \| y \varepsilon(s) \| _{ L^2 } & \leq K^{\frac {1} {4}} s^{-1}.
\end{aligned}\right.
\end{equation}
Note that \eqref{eq:BS} implies \eqref{eq:BS1} and \eqref{eq:BS2} for $s$ large. The more refined estimates for $b$ and $a$ are needed to close the estimates of those parameters in Section~\ref{S:6}.

\subsection{Modulation equations}
For $g\in H^1$, we will use the notation
\[
d(g;\Gamma)=\bigl \| e^{-i\gamma}  \lambda^\frac12 g(\lambda y) - V[\Gamma](y)\bigr\|_{H^1}.
\]
We consider $u$ as in Section \ref{S:4:1}.
Note that 
\[u(t,x)=u(T-j_1,\lambda y)=u[\Gamma](\lambda y)\]
can be seen as a function of $\Gamma$ and $y$.
Moreover, $\varepsilon$ writes
\begin{equation}\label{eq:epseps}
\varepsilon(s,y)=\varepsilon[\Gamma](y)
=e^{-i\gamma} \lambda^\frac12 u[\Gamma](\lambda y)-V[\Gamma](y) .
\end{equation}
In particular, $\|\varepsilon\|_{H^1} = d(u[\Gamma];\Gamma)$.
We define
\begin{align*}
V &= V_1 + i V_2 \quad \mbox{where $V_1,V_2\in\R$}\\
\varepsilon &= \varepsilon_1 + i \varepsilon_2 \quad \mbox{where $\varepsilon_1,\varepsilon_2\in\R$}.
\end{align*}

\begin{lemma}\label{le:lipschitz}
Let $k\in \mathcal Y$.
For any nonnegative integers $p_1,p_2,q_1,q_2$, the function
\[
\Gamma \mapsto  \int \varepsilon_1^{p_1} \varepsilon_2^{p_2} 
V_1^{q_1}  V_2^{q_2}  k 
\]
is locally Lipschitz.
\end{lemma}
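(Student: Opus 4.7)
The plan is to reduce the claim to the locally Lipschitz dependence of $V[\Gamma]$ and $\varepsilon[\Gamma]$ on $\Gamma$ as $H^1(\R)$-valued maps, and then exploit the algebra structure of $H^1(\R)$ in dimension one together with the rapid decay of $k$.

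First, from the explicit formulas for $W[\Gamma]$ and $Z[\Gamma]$ given in Section~\ref{S:3:2}, the map $\Gamma \mapsto V[\Gamma]$ is smooth from $\R^7 \cap \{\lambda>0\}$ into $H^1(\R)$: each term is a product of a polynomial expression in $a,b,\lambda,j_1,j_2,j_3$, of trigonometric functions of $\gamma$, and of a fixed $y$-function lying in $H^1(\R)$ (such as $Q_a$, $M_{-b} Q_a$, $\theta \varphi_j$, $\theta \psi_j$, or the explicit components of $Z$). Next, using the regularity recalled at the beginning of Section~\ref{S:4:1}, together with the formula $\varepsilon[\Gamma](y)=e^{-i\gamma}\lambda^{1/2}u(T-j_1,\lambda y)-V[\Gamma](y)$ from \eqref{eq:epseps}, one checks that $\Gamma\mapsto \varepsilon[\Gamma]$ is also locally Lipschitz into $H^1(\R)$: the $\gamma$-dependence is through a unitary multiplier, the $j_1$-dependence is Lipschitz since $u\in \mathcal C^1(I,H^1)$, and the $\lambda$-dependence is handled by the elementary bound
\[
\bigl\|\partial_\lambda\bigl(\lambda^{1/2} u(t,\lambda\cdot)\bigr)\bigr\|_{H^1_y} \lesssim \lambda^{-1}\bigl(\|u(t)\|_{L^2}+\|x\partial_x u(t)\|_{L^2}\bigr)+\|\partial_x u(t)\|_{L^2}+\|x\partial_x^2 u(t)\|_{L^2},
\]
which is finite and locally bounded in $\lambda$ and $t$ under the assumed regularity (note that $x\partial_x^2 u\in L^2$ follows by differentiating $x\partial_x u\in H^1$).

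To conclude, fix a compact neighborhood $\mathcal N$ of a given $\Gamma_0$. Combining the above with the Sobolev embedding $H^1(\R)\hookrightarrow L^\infty(\R)$, we obtain uniform $L^\infty$-bounds on $\varepsilon_j[\Gamma]$ and $V_j[\Gamma]$ for $\Gamma\in\mathcal N$. Using the identity $a^p-b^p=(a-b)\sum_{j=0}^{p-1} a^j b^{p-1-j}$, we write the difference
\[
\varepsilon_1^{p_1}\varepsilon_2^{p_2}V_1^{q_1}V_2^{q_2}-\tilde\varepsilon_1^{p_1}\tilde\varepsilon_2^{p_2}\tilde V_1^{q_1}\tilde V_2^{q_2}
\]
(where tildes refer to $\tilde\Gamma\in\mathcal N$) as a telescoping sum of four contributions, each containing exactly one difference factor among $\varepsilon_j-\tilde\varepsilon_j$, $V_j-\tilde V_j$, multiplied by a polynomial in the remaining real-valued functions that is uniformly bounded in $L^\infty$ on $\mathcal N$. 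Integrating against $k$ and applying Cauchy--Schwarz,
\[
\Bigl|\int(\varepsilon_j-\tilde\varepsilon_j)\,\Pi\,k\,dy\Bigr|\le \|\varepsilon_j-\tilde\varepsilon_j\|_{L^2}\,\|\Pi\|_{L^\infty}\,\|k\|_{L^2},
\]
and using $k\in L^2(\R)$ (which is immediate since $k\in\mathcal Y$), yields the claimed local Lipschitz estimate.

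The main technical point is the $\lambda$-Lipschitz continuity of the rescaled solution, which relies on the full strength of the weighted regularity $\partial_x u\in \Sigma$ from the hypotheses; once this is in place, the rest of the proof is purely algebraic bookkeeping.
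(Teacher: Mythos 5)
Your approach is genuinely different from the paper's, but as written it contains a gap. You aim to prove that $\Gamma\mapsto V[\Gamma]$ and $\Gamma\mapsto\varepsilon[\Gamma]$ are locally Lipschitz into $H^1(\R)$ and then use the Banach algebra structure of $H^1(\R)$. The paper instead changes variables to the fixed variable $x=\lambda y$ and treats the $j_1$-dependence of the factor $u$ in a weak sense: since $u\in\mathcal C^1_t(H^{-1}_x)$, the pairing $j_1\mapsto\int u[j_1]\Phi\,dx$ is Lipschitz for fixed $\Phi\in H^1$, and the $\lambda$-dependence is handled using only the $L^\infty$ bounds on $V$ and $\partial_y V$ from \eqref{festV1}. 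That route never needs any quantitative Lipschitz control on $\varepsilon$ in $H^1$.

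The gap in your argument lies in the asserted $H^1_y$-Lipschitz bound for $\lambda\mapsto\lambda^{1/2}u(t,\lambda\cdot)$. Its $\dot H^1_y$ part requires $\|x\partial_x^2 u(t)\|_{L^2}$, and you justify this by ``differentiating $x\partial_x u\in H^1$.'' But the paper's regularity hypotheses in Section~\ref{S:4:1} give $\partial_x u\in\Sigma=\{g\in H^1: xg\in L^2\}$, which yields only $x\partial_x u\in L^2$, not $x\partial_x u\in H^1$, so $x\partial_x^2 u\in L^2$ does not follow. The argument can be repaired, however: the final Cauchy--Schwarz step only uses $\|\varepsilon_j-\tilde\varepsilon_j\|_{L^2}$ and the $L^\infty$ bound $\|\Pi\|_{L^\infty}$, so it suffices to prove that $\Gamma\mapsto\varepsilon[\Gamma]$ is locally Lipschitz into $L^2$ together with local boundedness in $H^1$ (the latter giving $L^\infty$ bounds by Sobolev embedding); the $L^2$-Lipschitz claim avoids the problematic $x\partial_x^2 u$ term entirely, since $\partial_\lambda(\lambda^{1/2}u(t,\lambda\cdot))$ in $L^2_y$ is controlled by $\lambda^{-1}(\|u(t)\|_{L^2}+\|x\partial_x u(t)\|_{L^2})$, both finite under the stated regularity. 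With that weakening, your approach goes through and is an acceptable alternative to the paper's duality-based argument, at the cost of having to track the $\lambda$-rescaling explicitly on $\varepsilon$ rather than working in the fixed $x$-variable.
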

\begin{proof}
For any nonnegative integers $p_1',p_2',q_1',q_2'$, let
\[
\Gamma \mapsto \mathcal M [\Gamma] = \int u_1^{p_1'}[j_1](\lambda y) u_2^{p_2'}[j_1](\lambda y)
V_1^{q_1'}  V_2^{q_2'}  k ,
\]
where $u= u_1 + i u_2$, $u_1,u_2\in\R$ happens to depend only on $j_1$.
By the change of variable $x=\lambda y$, we have
\[
\mathcal M [\Gamma] =\frac 1\lambda \int u_1^{p_1'}[j_1](x) u_2^{p_2'}[j_1](x)
V_1^{q_1'}\left( \frac x\lambda\right)  V_2^{q_2'}\left( \frac x\lambda\right)  k\left( \frac x\lambda\right) dx.
\]
First, we observe that since $u\in \mathcal C^1_t(H_x^{-1})$, for any $\Phi \in H^1$,
the map $j_1 \mapsto \int  u [j_1](x) \Phi(x) dx $ is locally Lipschitz.
Second, since $V$ and $\partial_y V$ are bounded (see \eqref{festV1}), for any $\Phi \in H^1$,
the map $\lambda \mapsto \int \Phi(x)  V_1[\Gamma]\left( \frac x\lambda\right) k\left( \frac x\lambda\right)$ is locally Lipschitz and similarly for $V_2$.
It follows that $\mathcal M$ is locally Lipschitz.
Since $p_1',p_2',q_1',q_2'$ are arbitrary, the result follows by using \eqref{eq:epseps}.
\end{proof}

Let $g,h\in \mathcal Y$ be time-independent functions and let
\[
 {\mathcal I}[g,h](s) = \langle \varepsilon(s) , M_{-b(s)} (g+ih)\rangle
 = \langle \varepsilon[\Gamma(s)] , M_{-b(s)} (g+ih)\rangle.
\]
We give a general estimate on $\frac {d\mathcal I}{ds}$ that will be used to establish the modulation equations.
\begin{lemma} \label{le:dI}
On its interval of definition, the function $\mathcal I$ is $\mathcal C^1$ and satisfies
\begin{equation} \label{fmodul2} 
\begin{aligned}
\frac{d{\mathcal I[g,h]}}{ds} & = \langle \varepsilon, i M_{-b} (L_- g + i L_+ h)\rangle \\
&\quad - m_\gamma \langle Q,h\rangle + \tfrac14 (m_b -2bm_\lambda) \langle {y^2} Q,h\rangle
 - m_a \langle \rho, g\rangle + m_\lambda \langle \Lambda Q,g\rangle\\
&\quad +\vec m \cdot \vec \rho_{\mathcal I} +\mathcal R_{\mathcal I},
\end{aligned}
\end{equation} 
where $\vec\rho_{\mathcal I}(s)=\rho_{\mathcal I}[\Gamma(s)]$, $\mathcal R_{\mathcal I}(s)=\mathcal R_{\mathcal I}[\Gamma(s)]$ are Lipschitz functions of $\Gamma$ converging to $0$
as $(\lambda, b,a,j_1,j_2,j_3)\to 0$ and $d(u[\Gamma];\Gamma)\to 0$.

Moreover, assuming \eqref{eq:BS}, 
\begin{equation}\label{fmodul3}
|\vec \rho_{\mathcal I}|\lesssim s^{-1},\quad 
|\mathcal R_{\mathcal I}(s)|\lesssim s^{-3},
\end{equation}
and 
\begin{equation} \label{fmodul1} 
\begin{aligned}
\frac{d{\mathcal I}}{ds} & = \langle \varepsilon, i M_{-b} (L_- g + i L_+ h)\rangle \\
&\quad +4 \lambda^\frac32 \left[ \langle \varepsilon_1, 5 Q^3\varphi_1h\rangle \cos \gamma
+ \langle \varepsilon_2,Q^3\psi_1h\rangle \sin \gamma\right]\\
&\quad - 4 \lambda^\frac32\left[ \langle \varepsilon_1 ,Q^3\psi_1g\rangle\sin\gamma
+ \langle \varepsilon_2, Q^3\varphi_1g\rangle\cos\gamma \right]\\
&\quad - m_\gamma \langle Q,h\rangle + \tfrac14 (m_b -2bm_\lambda) \langle {y^2} Q,h\rangle
 - m_a \langle \rho, g\rangle + m_\lambda \langle \Lambda Q,g\rangle\\
&\quad + \langle M_b \mathcal R, h-ig\rangle +O(K^\frac12 s^{-4}).
\end{aligned}
\end{equation} 
\end{lemma}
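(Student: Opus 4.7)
The plan is to differentiate $\mathcal I$ in $s$ and systematically substitute the equation \eqref{eq:e} for $\partial_s\varepsilon$, keeping track of how the quadratic phase $M_{-b}$ interacts with the linear Schr\"odinger operator and with the nonlinearity. Since $\varepsilon\in\mathcal C^1(I,H^1)$ and $\Gamma\in\mathcal C^1$ (from Section~\ref{S:4:1}) and $g,h\in\mathcal Y$ decay fast, $\mathcal I$ is $\mathcal C^1$ and the product rule yields
\begin{equation*}
\frac{d\mathcal I}{ds}=\langle\partial_s\varepsilon,M_{-b}(g+ih)\rangle+\langle\varepsilon,\partial_sM_{-b}\cdot(g+ih)\rangle,
\end{equation*}
with $\partial_sM_{-b}=-ib_s\tfrac{y^2}{4}M_{-b}$.

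To handle the linear Schr\"odinger contribution, I would solve \eqref{eq:e} for $\partial_s\varepsilon$ and integrate by parts against $M_{-b}\phi$ (with $\phi=g+ih$), using the identity $(-\partial_y^2+1)(M_{-b}\phi)=M_{-b}[(-\partial_y^2+1)\phi+ib\Lambda\phi+\tfrac{b^2y^2}{4}\phi]$. The $ib\Lambda\phi$ cross-term is absorbed by the $ib\Lambda\varepsilon$ term of \eqref{eq:e} after using \eqref{fortho3b}, leaving only the main linear piece $\langle\varepsilon,iM_{-b}(-\partial_y^2+1)\phi\rangle$ together with an $O(b^2)$ quadratic-phase correction in $y^2$. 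This correction combines with the $-\tfrac{b_s}{4}\langle\varepsilon,iy^2M_{-b}\phi\rangle$ coming from $\partial_sM_{-b}$ via the relation $b_s=m_b-b^2+a$; after also using $\lambda_s/\lambda=-b+m_\lambda$, this cancellation is what produces the coefficient $\tfrac14(m_b-2bm_\lambda)$ in \eqref{fmodul2}.

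For the nonlinear term, I would linearize $f(V+\varepsilon)-f(V)$ around $Q$ as $Df(Q)\cdot\varepsilon+[Df(V)-Df(Q)]\cdot\varepsilon+N(V,\varepsilon)$ with $Df(Q)\cdot\varepsilon=5Q^4\varepsilon_1+iQ^4\varepsilon_2$. The main piece combines with the surviving Schr\"odinger term to produce precisely $\langle\varepsilon,iM_{-b}(L_-g+iL_+h)\rangle$. The difference $Df(V)-Df(Q)$ comes at leading order from $V-Q\approx\theta\lambda^{3/2}(\varphi_1\cos\gamma+i\psi_1\sin\gamma)$: expanding $|V|^4-Q^4\approx 4Q^3(V_1-Q)$ and $V^2|V|^2-Q^4\approx 2Q^3(V_1-Q)+2iQ^3V_2+\cdots$ and using \eqref{festMbT} to replace $M_b\theta$ by $1$ modulo $O(s^{-1})$, this yields exactly the four cubic pairings in \eqref{fmodul1} with coefficients $\pm\cos\gamma$, $\pm\sin\gamma$ and constants $5$ or $1$. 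The higher-order remainder $N(V,\varepsilon)$ is quadratic in $\varepsilon$ with $\|\varepsilon\|_{H^1}\lesssim K^{1/4}s^{-2}$, hence $O(K^{1/2}s^{-4})$; the $\lambda^{5/2}$ contributions of $\varphi_2,\psi_2$ and the $Q_a-Q=a\rho$ corrections are of the same order or better.

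Finally, the $\mathcal S_0=\vec m\cdot\vec\rho_0$ piece of \eqref{eq:e} contributes $\langle i\mathcal S_0,M_{-b}\phi\rangle$; pairing with the four explicit components of $\vec\rho_0$ and replacing $Q_a$ by $Q$ modulo contributions of order $a$ produces the four displayed modulation pairings $-m_\gamma\langle Q,h\rangle$, $\tfrac14(m_b-2bm_\lambda)\langle y^2Q,h\rangle$, $-m_a\langle\rho,g\rangle$, $m_\lambda\langle\Lambda Q,g\rangle$, the remaining lower-order pieces going into $\vec m\cdot\vec\rho_{\mathcal I}$. The $\mathcal R$ term contributes $\langle M_b\mathcal R,h-ig\rangle$, which via Proposition~\ref{PR:1} splits as $\vec m\cdot\vec\rho_m$-related pieces (absorbed into $\vec\rho_{\mathcal I}$) and a genuine error $\mathcal R_\Gamma$ of size $\|\mathcal R_\Gamma\|_{H^1}\lesssim s^{-3}$, giving \eqref{fmodul3}. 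The locally Lipschitz dependence on $\Gamma$ and the vanishing as $(\lambda,b,a,j_1,j_2,j_3)\to 0$ and $d(u[\Gamma];\Gamma)\to 0$ follow term-by-term from Lemma~\ref{le:lipschitz}. The main obstacle is Step~3: isolating the exact cubic coefficients in front of $Q^3\varphi_1$ and $Q^3\psi_1$ and verifying that every lower-order piece ($\lambda^{5/2}$ terms from $\varphi_2,\psi_2$, interactions of $\varepsilon$ with $Z$, the $M_b\theta-1$ remainders, and $Q_a-Q$ terms) fits inside the declared $O(K^{1/2}s^{-4})$ remainder.
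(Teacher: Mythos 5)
Your proposal follows essentially the same route as the paper: differentiate $\mathcal I$, substitute the modulated equation \eqref{eq:e}, integrate by parts the linear piece against $M_{-b}(g+ih)$, Taylor-expand the nonlinearity to second order in $\varepsilon$ and in $V-Q$, and collect the $\mathcal S_0$ and $\mathcal R$ contributions; your expansion $Df(Q)\varepsilon + [Df(V)-Df(Q)]\varepsilon + N(V,\varepsilon)$ is equivalent, up to $O(a)$ corrections, to the paper's use of the explicit polynomial identity~\eqref{eq:QXY} in the rotated variables $E_1+iE_2=M_b\varepsilon$, $\widetilde X+i\widetilde Y=M_bV-Q_a$.

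One internal inconsistency should be flagged. In your second paragraph you assert that the cancellation between the quadratic-phase corrections $-\tfrac14 b^2\langle\varepsilon, iy^2 M_{-b}\phi\rangle$ (from integration by parts) and $-\tfrac14 b_s\langle\varepsilon, iy^2M_{-b}\phi\rangle$ (from $\partial_s M_{-b}$) is ``what produces the coefficient $\tfrac14(m_b-2bm_\lambda)$.'' It is not: those two terms pair $y^2$ against $\varepsilon$, not against $Q$, and since $b_s+b^2=m_b+a$ their sum is $-\tfrac14(m_b+a)\langle\varepsilon, iy^2M_{-b}\phi\rangle = O(K^{1/4}s^{-9/2})$, hence pure remainder. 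The explicit term $\tfrac14(m_b-2bm_\lambda)\langle y^2Q,h\rangle$ in \eqref{fmodul2} arises solely from pairing $\mathcal S_0 = \vec m\cdot\vec\rho_0$ against $iM_{-b}(g+ih)$ and replacing $Q_a$ by $Q$ --- which you do state correctly in your final paragraph. So the conclusion stands, but the second-paragraph attribution is wrong and would mislead a reader trying to reconstruct the computation.
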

\begin{proof}
Denote ${\mathcal I}(s) = {\mathcal I}[g,h](s)$.
We differentiate ${\mathcal I}$ using \eqref{eq:e}.
First,
\begin{equation}\label{page21}
\begin{aligned}
\frac {d{\mathcal I}}{ds} & = \langle \partial_s \varepsilon , M_{-b} (g+ih)\rangle
- \frac 14 b_s \langle \varepsilon , i y^2 M_{-b} (g+ih)\rangle\\
& = \langle -\partial_y^2 \varepsilon +\varepsilon -f(V+\varepsilon)+ f(V), i M_{-b} (g+ih) \rangle\\
&\quad - b \langle\Lambda \varepsilon, M_{-b} (g+ih)\rangle - \frac 14 b_s \langle \varepsilon , i y^2 M_{-b} (g+ih)\rangle
 \\
&\quad 
+ \langle m_\lambda \Lambda \varepsilon - i m_\gamma \varepsilon, M_{-b} (g+ih)\rangle\\
&\quad - \langle \mathcal S_0,i M_{-b} (g+ih) \rangle - \langle \mathcal R,i M_{-b} (g+ih)\rangle.
\end{aligned}
\end{equation}
Integrating by parts and using~\eqref{fusef1}, we have
\begin{align*}
\langle -\partial_y^2 \varepsilon, i M_{-b} (g+ih) \rangle & =
- \langle \varepsilon, i M_{-b} (g''+ih'') \rangle
- b \langle \varepsilon, y M_{-b} (g'+ih') \rangle\\
& \quad - \frac 12 b \langle \varepsilon, M_{-b} (g+ih) \rangle + \frac 14 b^2\langle \varepsilon, i y^2 M_{-b} (g+ih) \rangle
\end{align*}
and
\begin{align*}
- b \langle\Lambda \varepsilon, M_{-b} (g+ih)\rangle &=
\frac 12 b \langle \varepsilon, M_{-b} (g+ih)\rangle 
+ b \langle \varepsilon, y M_{-b} (g'+ih') \rangle\\
&\quad - \frac 12 b^2 \langle \varepsilon, iy^2M_{-b} (g+ih)\rangle .
\end{align*}
Thus, for $s$ large,
\begin{align}
& \langle -\partial_y^2 \varepsilon , i M_{-b} (g+ih) \rangle - b \langle\Lambda \varepsilon, M_{-b} (g+ih)\rangle - \frac 14 b_s \langle \varepsilon , i y^2 M_{-b} (g+ih)\rangle\nonumber\\
& \quad =
- \langle \varepsilon, i M_{-b} (g''+ih'') \rangle
- \frac 14 (b_s+b^2) \langle \varepsilon , i y^2 M_{-b} (g+ih)\rangle\nonumber\\
& \quad =
- \langle \varepsilon, i M_{-b} (g''+ih'') \rangle
- \frac 14 a \langle \varepsilon , i y^2 M_{-b} (g+ih)\rangle
- \frac 14 m_b \langle \varepsilon , i y^2 M_{-b} (g+ih)\rangle \nonumber\\
& \quad = - \langle \varepsilon, i M_{-b} (g''+ih'') \rangle + O(K^\frac14 s^{-\frac 92})
\label{ond2g}
\end{align}
where we have estimated, using \eqref{eq:BS}, 
\begin{align*}
|a \langle \varepsilon , i y^2 M_{-b} (g+ih)\rangle| \lesssim |a| \, \| \varepsilon \| _{ L^2 } \lesssim K^\frac14 s^{-\frac 92} ,\\
|m_b \langle \varepsilon , i y^2 M_{-b} (g+ih)\rangle| \lesssim |m_b| \, \| \varepsilon \| _{ L^2 } \lesssim K^\frac54 s^{-5}.
\end{align*}
Now, we decompose the term
\begin{equation*}
- \langle f(V+\varepsilon)- f(V), i M_{-b} (g+ih) \rangle
= \langle f(M_bV+M_b\varepsilon)- f(M_b V), h - ig \rangle .
\end{equation*}
We extend the notation~\eqref{eq:XY} by setting
\begin{equation} \label{fdfnxt} 
\begin{aligned} 
 &\widetilde{X} = X+ \Re (M_b Z) = \Re (M_b V - Q_a)\\ 
 &\widetilde{Y} = Y+ \Im (M_b Z) =\Im (M_b V - Q_a) \\
 &E_1 = \Re(M_b \varepsilon)  ,\quad E_2 = \Im(M_b \varepsilon),
\end{aligned} 
\end{equation} 
so that
\begin{equation} \label{fdfnxt2} 
\begin{aligned} 
M_b V &= Q_a + \widetilde{X} + i \widetilde{Y} ,\\
M_b V + M_b \varepsilon & = Q_a + \widetilde{X} + E_1 + i (\widetilde{Y} + E_2).
\end{aligned} 
\end{equation} 
Using the identity~\eqref{eq:QXY} twice (once with $( \widetilde{X} +E_1, \widetilde{Y} +E_2)$ and once with $( \widetilde{X} , \widetilde{Y} )$), we obtain
\begin{equation}\label{eq:NL}
\begin{aligned}
f(M_bV+ & M_b\varepsilon)- f(M_b V)\\ 
& = f(Q_a+ \widetilde{X} +E_1+i \widetilde{Y} +iE_2) - f(Q_a+ \widetilde{X} +i \widetilde{Y} )\\
&= 5 Q_a^4 E_1 + i Q_a^4 E_2 \\ 
&\quad+ 10 Q_a^3 (( \widetilde{X} +E_1)^2- \widetilde{X} ^2) + 2 Q_a^3 (( \widetilde{Y} +E_2)^2- \widetilde{Y} ^2)\\
&\quad + 4i Q_a^3 (( \widetilde{X} +E_1)( \widetilde{Y} +E_2)- \widetilde{X} \widetilde{Y} ) + \widetilde{\mathcal R}_1,
\end{aligned}
\end{equation}
where 
\begin{equation*}
\begin{aligned}
& \widetilde{\mathcal R}_1 = 10 Q_a^2 (( \widetilde{X} +E_1)^3- \widetilde{X} ^3) + 2i Q_a^2 (( \widetilde{Y} +E_2)^3- \widetilde{Y} ^3) \\
&\quad + 6i Q_a^2 (( \widetilde{X} +E_1)^2( \widetilde{Y} +E_2)- \widetilde{X} ^2 \widetilde{Y} ) + 6 Q_a^2 (( \widetilde{X} +E_1)( \widetilde{Y} +E_2)^2- \widetilde{X} \widetilde{Y} ^2) \\
&\quad +5 Q_a (( \widetilde{X} +E_1)^4- \widetilde{X} ^4)+ 4i Q_a (( \widetilde{X} +E_1)^3( \widetilde{Y} +E_2)- \widetilde{X} ^3 \widetilde{Y} ) \\
&\quad + 6 Q_a (( \widetilde{X} +E_1)^2( \widetilde{Y} +E_2)^2- \widetilde{X} ^2 \widetilde{Y} ^2)\\
&\quad + 4i Q_a (( \widetilde{X} +E_1)( \widetilde{Y} +E_2)^3- \widetilde{X} \widetilde{Y} ^3) + Q_a (( \widetilde{Y} +E_2)^4- \widetilde{Y} ^4) \\
&\quad+f( \widetilde{X} +E_1+i \widetilde{Y} +iE_2)-f( \widetilde{X} +i \widetilde{Y} ).
\end{aligned}
\end{equation*}
Recall that by \eqref{eq:BS}, $ |E_1| + |E_2| \lesssim \|\varepsilon \| _{ L^\infty } \lesssim K^{\frac {1} {4}} s^{-2}$
and thus by~\eqref{festW0}, \eqref{festZ5}, we have
\begin{equation} \label{fxye1e2} 
\begin{aligned} 
 | \widetilde{X} |+ | \widetilde{Y} | + |E_1| + |E_2| 
 & \lesssim s^{-\frac {3} {2}} + s^{-\frac 32} ( |y| + s^{-1}  |y|^2 )\mathbf{1}_{I(s)}+\delta^{-1} s^{-\frac 32} \mathbf{1}_{J(s)}\\
 & \lesssim s^{-\frac {3} {2}} (1+ |y|^2) +\delta^{-1} s^{-\frac 32} \mathbf{1}_{J(s)}.
\end{aligned} 
\end{equation} 
Therefore, 
\begin{equation*} 
\begin{aligned} 
 | \widetilde{\mathcal R}_1 | & \lesssim ( | \widetilde{X} |+ | \widetilde{Y} | + |E_1| + |E_2| )^3 + ( | \widetilde{X} |+ | \widetilde{Y} | + |E_1| + |E_2| )^5 \\
 & \lesssim   (s^{-\frac {3} {2}} (1+ |y|^2) )^3 +  (s^{-\frac {3} {2}} (1+ |y|^2) )^5
 + (\delta^{-3} s^{-\frac 92}+\delta^{-5} s^{-\frac{15}2}) \mathbf{1}_{J(s)}\\
 & \lesssim  s^{-\frac {9} {2}} (1+ |y|^{10}) +  \delta^{-3} s^{-\frac 92}\mathbf{1}_{J(s)}.
\end{aligned} 
\end{equation*} 
Since $g,h\in \mathcal Y $, it follows that for $s$ large
\begin{equation*} 
 |\langle \widetilde{\mathcal R}_1 , h - ig \rangle | \lesssim s^{- \frac {9} {2}} .
\end{equation*} 
For the first line in the last identity in~\eqref{eq:NL} (recalling  $ |Q_a - Q| \lesssim |a\rho|\lesssim s^{-\frac 52}|\rho|$),
we have
\[
|\langle 5 Q_a^4 E_1 + i Q_a^4 E_2,h-ig\rangle
- \langle 5 Q^4 E_1 + i Q^4 E_2,h-ig\rangle|\lesssim |a| \|\varepsilon\|_{L^2} \lesssim K^\frac14 s^{-\frac92} ;
\]
and (using the definition of $E_1, E_2$)
\begin{align*}
\langle 5 Q^4 E_1 + i Q^4 E_2,h-ig\rangle
&= \langle Q^4 E_1 , 5h \rangle - \langle Q^4 E_2, g\rangle = \langle Q^4 M_b \varepsilon, 5h-ig\rangle \\
&= \langle \varepsilon, M_{-b}(5Q^4 h - i Q^4g)\rangle = - \langle \varepsilon, i M_{-b}( Q^4g + 5 i Q^4 h )\rangle.
\end{align*}
We now estimate the first quadratic term in the last identity in~\eqref{eq:NL}. We first estimate
\begin{equation*} 
 | (Q_a^3- Q^3 ) (( \widetilde{X} +E_1)^2- \widetilde{X} ^2) | \lesssim |a\rho| ( \widetilde{X} ^2 + E_1^2) \lesssim s^{-\frac {11} {2}} 
\end{equation*} 
by~\eqref{fxye1e2}. Next, we write
\begin{equation*} 
10 Q^3 (( \widetilde{X} +E_1)^2- \widetilde{X} ^2) =  \widetilde{\mathcal R}_2+20 Q^3 A_1 \varepsilon _1 ,
\end{equation*} 
where
\begin{equation*} 
\widetilde{\mathcal R}_2 = 20 Q^3 ( \widetilde{X} E_1 - A_1 \varepsilon _1) + 10 Q^3 E_1^2.
\end{equation*} 
On the one hand, using  \eqref{festW00} and \eqref{festMbT}
\begin{equation*} 
Q^2 |M_b \theta (A+iB ) - (A+iB ) | \lesssim s^{-1} Q | A+iB| \lesssim s^{- \frac {5} {2}} ,
\end{equation*} 
and in particular 
\begin{equation*} 
Q^2 | {X} -A | \lesssim s^{- \frac {5} {2}} . 
\end{equation*} 
Hence, using \eqref{festZ5}
\begin{equation*} 
Q^2 | \widetilde{X} -A_1 | \lesssim s^{- \frac {5} {2}}+ Q^2 |A_2|+Q^2|Z| \lesssim s^{- \frac {5} {2}} . 
\end{equation*} 
Moreover, by \eqref{eq:BS} and \eqref{festMbT} 
\begin{equation}\label{festE1}
|E_1|\lesssim K^\frac 14 s^{-2},\quad Q |E_1-\varepsilon_1|
=Q|\Re((M_b-1) \varepsilon)|\lesssim s^{-1} \|\varepsilon\|_{L^\infty}\lesssim K^\frac14 s^{-3}.
\end{equation}
Therefore, using also \eqref{festW00}, we obtain
\begin{equation*} 
Q^3 | \widetilde{X} E_1 - A_1 \varepsilon _1| \lesssim Q^3 |( \widetilde{X} -A_1) E_1 | + Q^3 |A_1 (E_1 - \varepsilon _1) | \lesssim K^{\frac {1} {4}} s^{- \frac {9} {2}} .
\end{equation*} 
Since also
\begin{equation*} 
Q^3 E_1^2 \lesssim K^{\frac {1} {2}} s^{-4} 
\end{equation*} 
we estimate
\begin{equation*} 
 | \langle \widetilde{\mathcal R}_2 , h-ig \rangle | \lesssim K^{\frac {1} {4}} s^{- \frac {9} {2}} + K^{\frac {1} {2}} s^{-4} \lesssim K^{\frac {1} {2}} s^{-4} .
\end{equation*} 
On the other hand,
\[
\langle 20 Q^3 A_1 \varepsilon_1,h-ig\rangle
=\langle 20 Q^3 A_1 \varepsilon_1,h\rangle = 20 \lambda^\frac32 \langle \varepsilon_1, Q^3\varphi_1h\rangle \cos \gamma .
\]
Combining the above two estimates, we obtain
\[
\langle 10 Q_a^3 (( \widetilde{X} +E_1)^2- \widetilde{X} ^2),h-ig\rangle = 20 \lambda^\frac32 \langle \varepsilon_1, Q^3\varphi_1h\rangle \cos \gamma
+O(K^\frac12 s^{-4} ).
\]
Similarly, for the other two quadratic terms in the last identity in~\eqref{eq:NL}, we obtain 
\begin{align*}
 \langle 2 Q_a^3 (( \widetilde{Y} +E_2)^2- \widetilde{Y} ^2),h-ig\rangle& = 4 \lambda^\frac32 \langle \varepsilon_2,Q^3\psi_1h\rangle \sin \gamma+O(K^\frac12 s^{-4} ) , \\
\langle 4i Q_a^3 ( \widetilde{X} +E_1)( \widetilde{Y} +E_2)- \widetilde{X} \widetilde{Y} ),h-ig\rangle & = - 4 \lambda^\frac32 \langle \varepsilon_1 ,Q^3\psi_1g\rangle\sin\gamma
\\&\quad -4 \lambda^\frac32 \langle \varepsilon_2, Q^3\varphi_1g\rangle\cos\gamma 
+O(K^\frac12 s^{-4} ) .
\end{align*}
Summing up the above calculations, we see that
\begin{align*}
 \langle  - f(V+\varepsilon) + f(V),&  i M_{-b} (g+ih)\rangle 
 = - \langle \varepsilon, i M_{-b} (Q^4 g + 5 i Q^4 h)\rangle\\
& \quad  +4 \lambda^\frac32 \left[ \langle \varepsilon_1, 5 Q^3\varphi_1h\rangle \cos \gamma
+ \langle \varepsilon_2,Q^3\psi_1h\rangle \sin \gamma\right]\\
& \quad - 4 \lambda^\frac32\left[ \langle \varepsilon_1 ,Q^3\psi_1g\rangle\sin\gamma
+ \langle \varepsilon_2, Q^3\varphi_1g\rangle\cos\gamma \right] +O(K^\frac12 s^{-4}).
\end{align*}
Therefore, combining this estimate with \eqref{ond2g},
we obtain for the first two lines of the right-hand side of \eqref{page21}
\begin{equation} \label{finter12} 
\begin{aligned}
 \langle -\partial_y^2& \varepsilon +\varepsilon -f(V+\varepsilon)+ f(V), i M_{-b} (g+ih) \rangle\\
&\quad - b \langle\Lambda \varepsilon, M_{-b} (g+ih)\rangle - \frac 14 b_s \langle \varepsilon , i y^2 M_{-b} (g+ih)\rangle
 \\
 & = \langle \varepsilon, i M_{-b} (L_- g + i L_+ h)\rangle \\ 
 & \quad+4 \lambda^\frac32 \left[ \langle \varepsilon_1, 5 Q^3\varphi_1h\rangle \cos \gamma
+ \langle \varepsilon_2,Q^3\psi_1h\rangle \sin \gamma\right]\\
& \quad - 4 \lambda^\frac32\left[ \langle \varepsilon_1 ,Q^3\psi_1g\rangle\sin\gamma
+ \langle \varepsilon_2, Q^3\varphi_1g\rangle\cos\gamma \right] +O(K^\frac12 s^{-4}).
\end{aligned}
\end{equation} 

For the next term in the right-hand side of \eqref{page21}, we use \eqref{eq:BS},
\[
| \langle m_\lambda \Lambda \varepsilon - i m_\gamma \varepsilon, M_{-b} (g+ih)\rangle |
\lesssim ( |m_\lambda | + |m_\gamma |) \| \varepsilon \| _{ H^1 }
\lesssim K^\frac54 s^{-5}.
\]
Last, using the expression of $\mathcal S_0$, we estimate using \eqref{eq:BS} 
and $|a|\lesssim s^{-\frac52}$,
\begin{align*}
- \langle \mathcal S_0,i M_{-b} (g+ih) \rangle
& = - m_\gamma \langle Q_a,h\rangle + \tfrac14 (m_b -2bm_\lambda) \langle {y^2} Q_a,h\rangle\\
&\quad - m_a \langle \rho, g\rangle + m_\lambda \langle \Lambda Q_a,g\rangle\\
& = - m_\gamma \langle Q,h\rangle + \tfrac14 (m_b -2bm_\lambda) \langle {y^2} Q,h\rangle\\
&\quad - m_a \langle \rho, g\rangle + m_\lambda \langle \Lambda Q,g\rangle
+O(Ks^{-\frac{11}2}).
\end{align*}
As a first consequence of these calculations,
we see that $\frac {d\mathcal I}{ds}$ has the form \eqref{fmodul2} where
$\vec \rho_{\mathcal I}$ and $\mathcal R_{\mathcal I}$ 
converge to $0$ as $(\lambda, b,a,j_1,j_2,j_3)\to 0$ and $\|\varepsilon\|_{H^1}=d(u[\Gamma];\Gamma)\to 0$.
Moreover, we obtain \eqref{fmodul1}  for $s$ large,
and using
$\lambda ^{\frac {3} {2}} \| \varepsilon \| _{ L^2 }\lesssim K^{\frac {1} {4}} s^{- \frac {7} {2}}$ and~\eqref{eq:R1}, we check that $\vec \rho_{\mathcal I}$ and $\mathcal R_{\mathcal I}$ defined in \eqref{fmodul2}  satisfy the estimates \eqref{fmodul3}.

Finally, the Lipschitz property of $\vec \rho_{\mathcal I}$ and $\mathcal R_{\mathcal I}$ follows from
Lemma~\ref{le:lipschitz} through a tedious but elementary examination of all the error terms.
As a first example, we observe that
\[
m_b \langle \varepsilon , i y^2 M_{-b} (g+ih)\rangle
=\vec m \cdot \vec \rho
\]
where 
\[
\vec \rho = \begin{pmatrix} 0\\ 0 \\ \langle \varepsilon , i y^2 M_{-b} (g+ih)\rangle  \\0  \end{pmatrix} 
\]
and $\Gamma\mapsto \langle \varepsilon , i y^2 M_{-b} (g+ih)\rangle$ is clearly locally Lipschitz by
Lemma~\ref{le:lipschitz}.
To conclude, we treat a typical nonlinear term from \eqref{eq:NL}
\[
\langle Q_a^3 (( \widetilde{X} +E_1)^2- \widetilde{X} ^2) , g+ih \rangle
=\langle Q_a^3 E_1^2 , g+ih \rangle
+ 2 \langle Q_a^3 \widetilde{X} E_1 , g+ih \rangle.
\]
The first term on the right-hand side is immediate using Lemma~\ref{le:lipschitz}.
For the second term, we use \eqref{fdfnxt} to replace $\widetilde X$ and then apply again
Lemma~\ref{le:lipschitz}.
\end{proof}

We are now in a position to actually prove the existence of a decomposition \eqref{def:u}
on some time interval for initial data close $V[\Gamma]$.
 
\begin{lemma}\label{le:CL}
There exists $\omega_0>0$ such that for any $S>0$, $\omega\in (0,\omega_0)$
and $(u^{in},\Gamma^{in})\in H^1 \times (\R^6\cap \{\lambda >0\})$, if 
\[
|(\lambda^{in},b^{in},a^{in},j_1^{in},j_2^{in},j_3^{in})|+|d(u^{in};\Gamma^{in})| < \omega,
\]
and $u$ is the solution of \eqref{nls} with $u(j_1^{in})=u^{in}$,
then there exist $\bar s\in (-\infty,S)$ and  $\Gamma\in \mathcal C^1([\bar s,S])$
such that $\Gamma(S) =  \Gamma^{in}$,
\[
|(\lambda ,b ,a ,j_1 ,j_2 ,j_3 )|+|d(u(j_1^{in}-j_1) ;\Gamma )| < \omega
\]
and which satisfies the following system of ODEs
\begin{equation} \label{fortho33} 
\begin{cases} 
\partial_s\langle \varepsilon, i M_{ -b } \rho \rangle =0 \\
\partial_s\langle \varepsilon, M_{ -b } y^2 Q \rangle =0 \\
\partial_s\langle \varepsilon, i M_{ -b } \Lambda Q \rangle =0 \\
\partial_s\langle \varepsilon, M_{ -b } Q \rangle =0 \\
\partial_s j_1 = -\lambda^2\\
\partial_s j_2 = -\lambda^3\\
\partial_s j_3 = -\lambda^4
\end{cases} 
\end{equation} 
where $\varepsilon=\varepsilon[\Gamma]$ is defined in \eqref{eq:epseps}.
\end{lemma}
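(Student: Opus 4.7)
The plan is to reformulate \eqref{fortho33} as a closed Cauchy--Lipschitz ODE system for $\Gamma = (\gamma, \lambda, b, a, j_1, j_2, j_3)$ and apply standard ODE theory to construct a solution backward in $s$ from the terminal data $\Gamma(S) = \Gamma^{in}$. The last three equations $\partial_s j_k = -\lambda^{k+1}$ are already in explicit form. The main task is to convert the first four equations, which prescribe vanishing of $\partial_s \mathcal I$ for the four orthogonality functionals in \eqref{fortho2}, into explicit equations for $(\gamma_s, \lambda_s, b_s, a_s)$.

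To this end, I would apply Lemma~\ref{le:dI} with the four choices $(g,h) = (Q,0)$, $(y^2 Q, 0)$, $(0, \Lambda Q)$, $(0, \rho)$. Imposing $d\mathcal I / ds = 0$ gives a $4 \times 4$ linear system $A(\Gamma, u) \vec m = F(\Gamma, u)$ for $\vec m = (m_\gamma, m_\lambda, m_b, m_a)$. At the reference point $\varepsilon = 0$, $b = 0$, a direct computation using \eqref{fortho3} together with the identity $\langle \Lambda Q, y^2 Q\rangle = -\int y^2 Q^2$ (via integration by parts) yields the leading-order matrix
\begin{equation*}
A_0 = \begin{pmatrix}
0 & 0 & 0 & -\langle \rho, Q \rangle \\
0 & -\int y^2 Q^2 & 0 & -\langle \rho, y^2 Q \rangle \\
0 & 0 & -\tfrac14 \int y^2 Q^2 & 0 \\
-\tfrac18 \int y^2 Q^2 & 0 & \tfrac14 \langle y^2 Q, \rho \rangle & 0
\end{pmatrix},
\end{equation*}
whose determinant equals $-\tfrac{1}{32} \langle \rho, Q\rangle \left( \int y^2 Q^2 \right)^3 \neq 0$ by \eqref{eq:rho}.

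Taking $\omega_0$ small enough, $A(\Gamma, u)$ remains invertible on the $\omega_0$-neighborhood, so $\vec m$ may be solved as an explicit function of $(\Gamma, u)$. Combined with the definitions \eqref{def:m}, this yields a closed first-order system $\partial_s \Gamma = \Phi(\Gamma, u)$ whose right-hand side, by Lemma~\ref{le:lipschitz} together with the continuity $u \in \mathcal C(I, H^1)$ (which controls the coupling through $j_1$), is locally Lipschitz in $\Gamma$. The Cauchy--Lipschitz theorem then provides a unique $\mathcal C^1$ solution on a maximal interval containing $S$, and continuity of the flow guarantees that the smallness bound $|(\lambda, b, a, j_1, j_2, j_3)| + d(u(j_1^{in}-j_1); \Gamma) < \omega$ and the positivity $\lambda > 0$ persist on some subinterval $[\bar s, S]$.

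The principal obstacle is the explicit identification of $A_0$ and the verification of its invertibility, which hinges on the generalized null-space relations \eqref{fortho1} together with the positivity $\langle \rho, Q\rangle > 0$ from \eqref{eq:rho}. Once this is secured, the remainder of the argument reduces to standard local existence for an ODE with Lipschitz right-hand side, supplemented by a continuity argument to propagate both the closeness $d(u; \Gamma) < \omega$ and the smallness of the scalar parameters on $[\bar s, S]$.
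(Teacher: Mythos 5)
Your proposal is correct and follows essentially the same route as the paper: both apply Lemma~\ref{le:dI} to the four orthogonality functionals, reduce~\eqref{fortho33} to a quasilinear ODE system for $\Gamma$ whose leading coefficient matrix is invertible thanks to~\eqref{fortho3} and~\eqref{eq:rho}, verify Lipschitz dependence via Lemma~\ref{le:lipschitz}, and invoke the Cauchy--Lipschitz theorem backward from $s=S$. The only cosmetic differences are that the paper writes the full $7\times 7$ system $\mathbf{D}[\Gamma]\,\partial_s\Gamma = \mathbf{F}[\Gamma]$ directly (so its $\mathbf{D}_0$ differs from your $A_0$ by the reordering of rows, a sign, and the change of variables from $\vec m$ to $\partial_s\Gamma$), whereas you first solve the $4\times 4$ system for $\vec m$ and also make the determinant computation explicit.
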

\begin{proof}
It follows from Lemma \ref{le:dI} applied with the following choice of $(g,h)$
\[
(0,\rho),\quad  (y^2 Q,0),\quad (0,\Lambda Q),\quad (Q,0), 
\]
that the system \eqref{fortho33} is equivalent to
\[
\mathbf{D} [\Gamma] \partial_s \Gamma = \mathbf{F}[\Gamma]
\]
where
\[
\mathbf{D} = \mathbf{D}_0 + \mathbf{D}_\Gamma,
\]
\[
\mathbf{D}_0 = \begin{pmatrix} 

\langle Q,\rho\rangle & 0 & -\frac 14 \langle y^2 Q,\rho\rangle & 0 &0 &0 &0  \\
0 & - \langle \Lambda Q,y^2 Q\rangle & 0 & \langle \rho,y^2 Q\rangle & 0&0&0\\
4 \langle Q, \Lambda Q\rangle & 0 & -\langle y^2 Q, \Lambda Q\rangle & 0  & 0&0&0\\
0 & - \langle \Lambda Q,  Q\rangle & 0 & \langle \rho,Q\rangle   & 0&0&0\\
0 & 0 & 0 & 0 & 1 & 0 & 0 \\
0 & 0 & 0 & 0 & 0 & 1 & 0 \\
0 & 0 & 0 & 0 & 0 & 0 & 1 \\
\end{pmatrix}
\]
and
\[
\mathbf{D}_\Gamma \to 0 \quad \mbox{and}\quad \mathbf{F}[\Gamma]\to 0
\]
as 
$|(\lambda ,b ,a ,j_1 ,j_2 ,j_3 )|+\|\varepsilon\|_{H^1}\to 0$.
By \eqref{fortho3} and \eqref{eq:rho}, the matrix $\mathbf{D}_0$ is invertible.
Moreover, using also Lemma \ref{le:lipschitz}, ${\mathbf D}_\Gamma$ and ${\mathbf F}$ are locally Lipschitz. For small
$|(\lambda ,b ,a ,j_1 ,j_2 ,j_3 )|$ and $\|\varepsilon\|_{H^1}$, the matrix 
$\mathbf{D}$ is therefore invertible and 
the result follows from the Cauchy-Lipschitz theorem.
\end{proof}

The estimates in Lemma \ref{le:dI} allow us to 
improve the estimates in \eqref{eq:BS} related to the equations satisfied by the parameters. Recall that
 $m_\gamma$, $m_\lambda$, $m_b$ and $m_a$ are defined in \eqref{def:m}.
 We also prove a refined estimate on $m_a=a_s-\Omega$ (see Remarks~\ref{rk:omega} and~\ref{rk:Omegab}).
\begin{lemma} \label{eestm1} 
Under the assumption~\eqref{eq:BS}, for $s$ large
\begin{equation} \label{festm1}
 |\vec m| \lesssim s^{-3}.
\end{equation} 
In particular,
\begin{equation} \label{eq:S0} 
 \| \mathcal S_0 \| _{ H^1 } + \| y\mathcal S_0 \| _{ L^2 } \lesssim s^{-3} . 
\end{equation} 
Moreover,
\begin{equation} \label{festm2} 
m_a = - \alpha _5 \lambda^\frac32\left[ \langle \varepsilon_1 ,Q^4\psi_1\rangle\sin\gamma
+ \langle \varepsilon_2, Q^4\varphi_1 \rangle\cos\gamma \right]
+ O( K^\frac12 s^{-4})
\end{equation} 
for $s$ large, where $\alpha _5= 32 \bigl( \int y^2 Q^2 \bigr)^{-1}$.
\end{lemma}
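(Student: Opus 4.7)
The strategy is to exploit the four orthogonality conditions \eqref{fortho2}, which say exactly that $\mathcal I[g,h](s) \equiv 0$ for the four choices
\[
(g,h) = (Q,0),\quad (y^2 Q,0),\quad (0,\Lambda Q),\quad (0,\rho).
\]
Differentiating each identity in $s$ and applying \eqref{fmodul1} of Lemma~\ref{le:dI} will produce a $4\times 4$ linear system for $\vec m$, which can be inverted. The crucial observation is that, for each of these four choices, the leading \emph{linear-in-$\varepsilon$} term $\langle \varepsilon, iM_{-b}(L_- g + iL_+ h)\rangle$ on the right-hand side of \eqref{fmodul1} vanishes identically, thanks to the null-space relations \eqref{fortho1} combined with the orthogonality conditions \eqref{fortho2}. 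Indeed, $L_- Q = 0$ handles $(Q,0)$; $L_-(y^2 Q) = -4\Lambda Q$ is orthogonal to $\varepsilon$ via the third relation of \eqref{fortho2}; $L_+(\Lambda Q) = -2Q$ pairs with the first relation; and $L_+\rho = \tfrac{1}{4} y^2 Q$ pairs with the second.

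With that cancellation in hand, the four equations take the schematic form
\[
0 \;=\; \vec m \cdot \vec v[g,h] \;+\; N[g,h] \;+\; \langle M_b \mathcal R, h-ig\rangle \;+\; O(K^{1/2} s^{-4}),
\]
where $N[g,h]$ stands for the $\lambda^{3/2}$ quadratic-in-$\varepsilon$ pairings appearing in the second and third lines of \eqref{fmodul1}. These nonlinear contributions are bounded by $\lambda^{3/2} \|\varepsilon\|_{L^2} \lesssim K^{1/4} s^{-7/2}$, and $|\langle M_b\mathcal R, h-ig\rangle|\lesssim \|\mathcal R\|_{L^2} \lesssim s^{-3}$ by \eqref{eq:R1}. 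Hence each right-hand side is $O(s^{-3})$ for $s$ large (absorbing $K^{1/4} s^{-7/2}$ into $s^{-3}$ since $K$ is fixed and $s\gg K^{1/2}$). The leading coefficient matrix is, up to lower order, the matrix $\mathbf D_0$ from the proof of Lemma~\ref{le:CL}, which is invertible by \eqref{fortho3} and \eqref{eq:rho}. Inverting gives \eqref{festm1}. Then \eqref{eq:S0} follows at once, since $\mathcal S_0 = \vec m\cdot \vec\rho_0$ and each component of $\vec\rho_0$ is, up to the bounded factor $M_{-b}$, in $\mathcal Y + y^2 \mathcal Y$, so is controlled in $H^1$ and after multiplication by $y$ in $L^2$.

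To obtain the refined formula \eqref{festm2}, I isolate the equation coming from $(g,h)=(Q,0)$. Here $L_- Q = 0$, $\langle Q,h\rangle = 0$, $\langle y^2 Q,h\rangle = 0$, and crucially $\langle \Lambda Q, Q\rangle = 0$ by \eqref{fortho3}; so the only parameter that survives on the right-hand side of \eqref{fmodul1} is $m_a$, with coefficient $-\langle \rho, Q\rangle$. The remaining nonlinear terms reduce to
\[
-4\lambda^{3/2}\bigl[\langle \varepsilon_1, Q^4\psi_1\rangle \sin\gamma + \langle \varepsilon_2, Q^4\varphi_1\rangle \cos\gamma\bigr],
\]
and the error from $\mathcal R$ is exactly $\langle M_b\mathcal R, -iQ\rangle$, controlled by \eqref{eq:R2} as $O(s^{-4})$. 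Dividing by $\langle \rho, Q\rangle = \tfrac{1}{8}\int y^2 Q^2$ (see \eqref{eq:rho}) produces the constant $\alpha_5 = 32\bigl(\int y^2 Q^2\bigr)^{-1}$ and yields \eqref{festm2}.

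The only mild obstacle is bookkeeping: one must check carefully which $\vec m$-coefficients survive for each of the four test pairs $(g,h)$, confirm that the leading matrix is a nonsingular perturbation of $\mathbf D_0$, and verify that every discarded contribution is $O(s^{-3})$ (resp.\ $O(K^{1/2} s^{-4})$ for the refined estimate), using systematically \eqref{eq:BS}, \eqref{eq:R1}, \eqref{eq:R2}, and the $K^{1/4} s^{-7/2} \ll s^{-3}$ bound valid for $s$ large.
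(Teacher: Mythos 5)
Your proposal is correct and follows essentially the same route as the paper: plug the four test pairs $(Q,0)$, $(y^2Q,0)$, $(0,\Lambda Q)$, $(0,\rho)$ into the derivative formula for $\mathcal I$, use the generalized null-space relations \eqref{fortho1} together with the orthogonality conditions \eqref{fortho2} to kill the linear-in-$\varepsilon$ term, and extract $\vec m$ from the resulting algebraic identities, with \eqref{eq:R2} supplying the $O(s^{-4})$ gain needed for the refined $m_a$ formula. The only presentational difference is that the paper solves the system triangularly (first $m_a$ from $(Q,0)$, then $m_\lambda$, $m_b$, $m_\gamma$ in turn), using \eqref{fmodul2} for the crude bound and \eqref{fmodul1} only for \eqref{festm2}, whereas you package the four scalar relations as one matrix equation approximating $\mathbf D_0$ and invert; both rest on exactly the same nondegeneracy coming from \eqref{fortho3} and $\langle \rho, Q\rangle \neq 0$, so the distinction is cosmetic.
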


\begin{proof}
All the estimates in the proof hold for $s$ large (possibly depending on $K$). 
By the orthogonality relations~\eqref{fortho2}, we have
\begin{equation} \label{fortho4} 
{\mathcal I} [Q, 0] = {\mathcal I}[y^2 Q, 0] = {\mathcal I}[ 0, \Lambda Q] = {\mathcal I} [0,\rho ] = 0.
\end{equation}
We use the simplified estimate \eqref{fmodul2} to derive the estimates \eqref{festm1} on $m_a$, $m_\lambda$, $m_b$ and $m_\gamma$.
Using first~\eqref{fmodul2} for ${\mathcal I}[Q, 0] = 0$, i.e. $g=Q$ and $h=0$, together with the identities $L_- Q=0$, $\langle Q, \Lambda Q\rangle =0$ and $\langle \rho ,Q\rangle = \frac {1} {8} \int y^2Q^2 \not = 0$ (by~\eqref{fortho1}, \eqref{fortho3} and~\eqref{eq:rho}) we obtain
\begin{equation} \label{festma} 
 |m_a | \lesssim  s^{-3}. 
\end{equation} 
Next, using\eqref{fmodul2} for ${\mathcal I}[y^2Q, 0] = 0$, together with the identities $L_- (y^2Q)=-4 \Lambda Q$, $\langle \varepsilon, i M_{ -b } \Lambda Q \rangle =0 $, $\langle \Lambda Q ,y^2Q\rangle = - \int y^2Q^2 \not = 0$ (by~\eqref{fortho1}, \eqref{fortho2} and~\eqref{eq:rho}), and~\eqref{festma} we obtain
\begin{equation} \label{festml} 
 |m_\lambda | \lesssim  s^{-3}. 
\end{equation} 
Similarly, using\eqref{fmodul2} for ${\mathcal I} [0, \Lambda Q] = 0$, together with the identities $L_+ (\Lambda Q)=-2 Q$, $\langle \varepsilon, M_{ -b } Q \rangle =0 $, $\langle \Lambda Q ,y^2Q\rangle = - \int y^2Q^2 \not = 0$, $\langle Q, \Lambda Q\rangle =0$ (by~\eqref{fortho1}, \eqref{fortho2}, \eqref{eq:rho} and~\eqref{fortho3}), and~\eqref{festml} we obtain
\begin{equation} \label{festmb} 
 |m_b | \lesssim s^{-3}. 
\end{equation}
Last, using \eqref{fmodul2} for ${\mathcal I}[0, \rho ] = 0$, the identities $L_+ \rho = \frac {y^2} {4} Q$, $\langle \varepsilon, M_{ -b } y^2 Q \rangle =0 $ and $\langle \rho ,Q\rangle = \frac {1} {8} \int y^2Q^2 \not = 0$ (by~\eqref{fortho1}, \eqref{fortho2} and~\eqref{eq:rho}), and~\eqref{festml}-\eqref{festmb} we obtain
\begin{equation} \label{festmg} 
 |m_\gamma | \lesssim  s^{-3}. 
\end{equation}
Thus we have proved~\eqref{festm1}. 

Finally, using~\eqref{fmodul1} for ${\mathcal I}[Q, 0] = 0$, together with the identities $L_- Q=0$, $\langle Q, \Lambda Q\rangle =0$ and $\langle \rho ,Q\rangle = \frac {1} {8} \int y^2Q^2 \not = 0$ (by~\eqref{fortho1}, \eqref{fortho3} and~\eqref{eq:rho}) and with the estimate~\eqref{eq:R2}, we obtain~\eqref{festm2}. 
\end{proof}

\begin{remark}\label{rk:Omegab}
The terms in $\varepsilon_1$ and $\varepsilon_2$ in the right-hand side of \eqref{festm2}
are of size $O(K^{\frac14} s^{-\frac 72})$, in particular time integration does not
 improve the estimate of $a$ in \eqref{eq:BS}.
 Therefore, we need to introduce another functional $\mathcal J$ in order to control these terms
 at this order of $s^{-1}$.
 \end{remark}

We introduce some notation. 
Since $Q^4 \varphi _1\in \mathcal Y$ and $Q^4 \psi _1\in \mathcal Y$, it follows from Lemma~\ref{lem1bis}~\eqref{lem1bis:2} that
there exists a unique pair of even functions $(\varphi _0,\psi _0) \in \mathcal Z_0^2$ such that
\begin{equation} \label{fedfpsiz} 
\begin{cases} 
\psi _0 + L_+ \varphi _0 = - Q^4 \psi _1 \\
\varphi _0 + L_- \psi _0 = Q^4 \varphi _1 
\end{cases} 
\end{equation}
and satisfying
\[
\varphi_0 (x) = c + v(x),\quad
\psi _0(x) = -c + w(x)
\]
where $c\in \R$ and $v, w\in \mathcal Y$.
We define 
\begin{equation} \label{fdfnJ} 
\mathcal J = \cos \gamma \int \varepsilon_1 \psi _0 \Theta_0 ( y{\sqrt{\lambda}} )
+ \sin \gamma \int \varepsilon_2 \varphi _0 \Theta_0 ( y{\sqrt{\lambda}} ).
\end{equation}
Since $\varphi_0$ and $\psi_0$ need not be in $L^2$, we introduce the cut-off function $\Theta_0 ( y{\sqrt{\lambda}})$ in the definition of $\mathcal J$.
(Recall that the function $\Theta_0$ is defined in \eqref{def:Theta0}.)

\begin{lemma} \label{eestJ} 
Under the assumption~\eqref{eq:BS}, for $s$ large
\begin{equation} \label{eq:bdJ}
|\mathcal J| \lesssim K^\frac14 s^{-\frac {7}4}
\end{equation}
and
\begin{equation} \label{eq:bdJs}
\mathcal J_s = \langle \varepsilon_1 ,Q^4\psi_1\rangle\sin\gamma
+ \langle \varepsilon_2, Q^4\varphi_1 \rangle\cos\gamma + O( K^\frac14 s^{-\frac 52}).
\end{equation}
\end{lemma}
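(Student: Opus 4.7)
The plan is to set $\Phi(s,y) = \Theta_0(y\sqrt{\lambda(s)})(\psi_0(y)\cos\gamma(s) + i\varphi_0(y)\sin\gamma(s))$, so that $\mathcal J = \langle\varepsilon,\Phi\rangle$. For~\eqref{eq:bdJ}, I would apply Cauchy--Schwarz: since $\varphi_0,\psi_0\in\mathcal Z_0$ are bounded (a constant plus a function in $\mathcal Y$) and the cut-off $\Theta_0(y\sqrt\lambda)$ localizes to $|y|\le 2/\sqrt\lambda$, a change of variables yields $\|\Phi\|_{L^2}\lesssim \lambda^{-1/4}\lesssim s^{1/4}$. Combined with $\|\varepsilon\|_{L^2}\le K^{1/4}s^{-2}$ from~\eqref{eq:BS}, this gives $|\mathcal J|\lesssim K^{1/4}s^{-7/4}$.

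For~\eqref{eq:bdJs}, I would write $\mathcal J_s = \langle\partial_s\varepsilon,\Phi\rangle + \langle\varepsilon,\partial_s\Phi\rangle$ and treat each piece in the spirit of Lemma~\ref{le:dI}. Using $\gamma_s = 1 + m_\gamma$, the leading contribution of $\langle\varepsilon,\partial_s\Phi\rangle$ is
\[
-\sin\gamma\,\langle\varepsilon_1,\psi_0\,\Theta_0(y\sqrt\lambda)\rangle + \cos\gamma\,\langle\varepsilon_2,\varphi_0\,\Theta_0(y\sqrt\lambda)\rangle ,
\]
while the $m_\gamma$-part is $O(K^{5/4}s^{-19/4})$ by~\eqref{eq:bdJ}, and the $\partial_s\Theta_0(y\sqrt\lambda)$-part is $O(K^{1/4}s^{-11/4})$ since $|\partial_s\Theta_0(y\sqrt\lambda)|\lesssim s^{-1}$ on a $y$-annulus of length $\sim\sqrt s$.

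For $\langle\partial_s\varepsilon,\Phi\rangle$, I would substitute equation~\eqref{eq:e}, integrate $\partial_y^2\varepsilon$ by parts twice, and linearize $f(V+\varepsilon)-f(V)$ about $Q$ as in the proof of Lemma~\ref{le:dI}. The principal part that survives is
\[
\bigl\langle\varepsilon,\,i\,\Theta_0(y\sqrt\lambda)\,(L_-\psi_0\cdot\cos\gamma + iL_+\varphi_0\cdot\sin\gamma)\bigr\rangle .
\]
The defining system~\eqref{fedfpsiz} then converts $L_+\varphi_0 = -Q^4\psi_1-\psi_0$ and $L_-\psi_0 = Q^4\varphi_1-\varphi_0$, and a short computation rewrites the previous display, up to cut-off, as
\[
\sin\gamma\,\langle\varepsilon_1,Q^4\psi_1+\psi_0\rangle + \cos\gamma\,\langle\varepsilon_2,Q^4\varphi_1-\varphi_0\rangle .
\]
Summing with the leading part of $\langle\varepsilon,\partial_s\Phi\rangle$, the contributions $+\sin\gamma\,\langle\varepsilon_1,\psi_0\rangle$ and $-\cos\gamma\,\langle\varepsilon_2,\varphi_0\rangle$ cancel exactly; since $Q^4\varphi_1$ and $Q^4\psi_1$ decay exponentially, the cut-off can be removed on these terms up to an $O(e^{-c\sqrt s})$ error, yielding the announced principal part.

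The remaining task is a careful accounting of error terms. The commutators $[\partial_y^2,\Theta_0(y\sqrt\lambda)]$ acting on $\psi_0,\varphi_0$ are supported on $|y|\sim 1/\sqrt\lambda\sim\sqrt s$ and absorbed through the weighted bound $\|y\varepsilon\|_{L^2}\le K^{1/4}s^{-1}$; the correction $f(V+\varepsilon)-f(V)-f'(Q)\varepsilon$ splits into $(f'(V)-f'(Q))\varepsilon$ controlled by~\eqref{fxye1e2b} and quadratic-or-higher terms in $\varepsilon$ controlled by $\|\varepsilon\|_{L^\infty}\lesssim K^{1/4}s^{-2}$; the terms $-b\Lambda\varepsilon + m_\lambda\Lambda\varepsilon - im_\gamma\varepsilon$ together with $\mathcal S_0$ and $\mathcal R$ are bounded via~\eqref{eq:BS},~\eqref{eq:S0} and~\eqref{eq:R1} (for the $\Lambda$-terms, one integrates by parts against $\Phi$ and uses $\|y\partial_y\Phi\|_{L^2}\lesssim s^{1/4}$). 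Each contribution turns out to be at most $O(K^{1/4}s^{-11/4})$, absorbed in the announced $O(K^{1/4}s^{-5/2})$. The main obstacle is precisely the exact matching, modulo cut-off, between the $\gamma_s=1$ part of $\partial_s\Phi$ and the $L_\pm$-contributions coming from $\partial_s\varepsilon$: the nondecaying asymptotics $\psi_0\to -c$ and $\varphi_0\to c$ at infinity would otherwise produce errors of the same order as $\mathcal J$ itself, and the system~\eqref{fedfpsiz} is designed precisely to engineer the needed cancellation, while the width $1/\sqrt\lambda$ of the cut-off $\Theta_0(y\sqrt\lambda)$ is tuned so that the residual commutator terms are compatible with the $\|y\varepsilon\|_{L^2}$-bound from~\eqref{eq:BS}.
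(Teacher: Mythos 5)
The proposal is correct and follows essentially the same route as the paper: Cauchy--Schwarz and $\|\Theta_0(\cdot\sqrt\lambda)\|_{L^2}\lesssim \lambda^{-1/4}$ for~\eqref{eq:bdJ}, and for~\eqref{eq:bdJs} differentiating $\mathcal J=\langle\varepsilon,\Phi\rangle$, substituting~\eqref{eq:e} for $\partial_s\varepsilon$, integrating by parts, and using the defining system~\eqref{fedfpsiz} so that the non-decaying parts of $\varphi_0,\psi_0$ produced by $L_\pm$ cancel exactly against the $\gamma_s=1$ part of $\partial_s\Phi$ --- this is precisely the mechanism in the paper's proof (which merely organizes the computation as $\mathcal J_s=\cos\gamma\,\mathcal J_s^1+\sin\gamma\,\mathcal J_s^2$).

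One detail in your error accounting is off and worth fixing. You say the commutator terms $[\partial_y^2,\Theta_0(y\sqrt\lambda)]$ acting on $\psi_0,\varphi_0$ are ``absorbed through the weighted bound $\|y\varepsilon\|_{L^2}\lesssim K^{1/4}s^{-1}$''. But on the support annulus $|y|\sim 1/\sqrt\lambda$ this strategy replaces $\|\varepsilon\|_{L^2}\lesssim K^{1/4}s^{-2}$ by $\sqrt\lambda\,\|y\varepsilon\|_{L^2}\lesssim K^{1/4}s^{-3/2}$, which is a \emph{worse} bound, and the resulting errors would be of order $K^{1/4}s^{-2}$ and $K^{1/4}s^{-9/4}$ --- larger than the target $O(K^{1/4}s^{-5/2})$. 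The weight does not help here. The correct (and simpler) bounds are the ones the paper uses: $\big|\int\varepsilon_2\,\partial_y\psi_0\,\partial_y\widetilde\Theta_1\big|\lesssim \|\varepsilon\|_{L^2}\|\partial_y\psi_0\|_{L^2}\|\partial_y\widetilde\Theta_1\|_{L^\infty}\lesssim \sqrt\lambda\,\|\varepsilon\|_{L^2}\lesssim K^{1/4}s^{-5/2}$ and $\big|\int\varepsilon_2\,\psi_0\,\partial_y^2\widetilde\Theta_1\big|\lesssim \|\varepsilon\|_{L^2}\|\psi_0\|_{L^\infty}\|\partial_y^2\widetilde\Theta_1\|_{L^2}\lesssim \lambda^{3/4}\|\varepsilon\|_{L^2}\lesssim K^{1/4}s^{-11/4}$, where one simply uses $\partial_y\psi_0\in\mathcal Y$ and $\psi_0$ bounded. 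This is a repair of a subsidiary step, not a change of approach.
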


\begin{proof}
By~\eqref{eq:BS}, 
\begin{equation*} 
 |\mathcal J|  \lesssim \|\Theta_0 ( \cdot{\sqrt{\lambda}} )\|_{L^2} \| \varepsilon \| _{ L^2 }
 \lesssim \lambda ^{- \frac {1} {4}} \| \varepsilon \| _{ L^2 } \lesssim K^\frac14 s^{-\frac {7}4} ,
\end{equation*} 
which proves the first statement.

Next, setting $ \widetilde{ \Theta }_1 (s, y)= \Theta_0 ( y{\sqrt{\lambda}} ) $ and $ \widetilde{ \Theta }_2 (s, y)= y{\sqrt{\lambda}} \Theta_0 '( y{\sqrt{\lambda}} ) $ (so that it holds $y\partial _y \widetilde{ \Theta }_1 (y) = \widetilde{ \Theta }_2 (y)$),
we obtain by direct differentiation
\begin{equation*} 
\begin{aligned} 
\mathcal J _s = & \cos \gamma \int \Bigl( \partial _s \varepsilon_1 \psi _0 \widetilde{ \Theta }_1 + \frac {1} {2} \varepsilon_1 \frac {\lambda _s} {\lambda } \psi _0 \widetilde{ \Theta }_2 + \gamma _s \varepsilon_2 \varphi _0 \widetilde{ \Theta }_1 \Bigr) \\
& + \sin \gamma \int \Bigl( \partial _s \varepsilon_2 \varphi _0 \widetilde{ \Theta }_1 + \frac {1} {2} \varepsilon_2 \frac {\lambda _s} {\lambda } \varphi _0 \widetilde{ \Theta }_2 - \gamma _s \varepsilon_1 \psi _0 \widetilde{ \Theta }_1 \Bigr) \\
= & \cos \gamma \mathcal J _s^1 + \sin \gamma \mathcal J _s^2 .
\end{aligned} 
\end{equation*} 
Using~\eqref{eq:e}, \eqref{fortho3b}, the identity $\Lambda ( \psi _0 \widetilde{ \Theta }_1 )= (\Lambda \psi _0 ) \widetilde{ \Theta }_1 + \psi _0 \widetilde{ \Theta }_2$ and integration by parts, we deduce that 
\begin{equation*} 
\begin{aligned} 
\mathcal J _s^1 = & - \int \varepsilon_2 ( \partial _y^2 \psi _0 \widetilde{ \Theta }_1 - \psi _0 \widetilde{ \Theta }_1 + 2 \partial _y \psi _0 \partial _y \widetilde{ \Theta }_1 + \psi _0 \partial _y^2 \widetilde{ \Theta }_1 )\\ 
& - \int \Im ( f( V+\varepsilon) - f(V) ) \psi _0 \widetilde{ \Theta }_1 \\
& + \int (b-m_\lambda ) \varepsilon _1 [ (\Lambda \psi _0 ) \widetilde{ \Theta }_1 + \psi _0 \widetilde{ \Theta }_2 ] + \int m_\gamma \varepsilon_2 \psi _0 \widetilde{ \Theta }_1 \\
& - \int \Im S_0 \psi _0 \widetilde{ \Theta }_1 - \int \Im \mathcal R \psi _0 \widetilde{ \Theta }_1 + \frac {1} {2} \int \varepsilon_1 \frac {\lambda _s} {\lambda } \psi _0 \widetilde{ \Theta }_2 + \int \gamma _s \varepsilon_2 \varphi _0 \widetilde{ \Theta }_1 .
\end{aligned} 
\end{equation*} 
Using~\eqref{fedfpsiz}, $b-m_\lambda + \frac {\lambda _s} {\lambda } =0$, and $\gamma _s -1 = m_\gamma $,
we obtain
\begin{equation*} 
\begin{aligned} 
\mathcal J _s^1 = & \int \varepsilon_2 Q^4 \varphi _1 \widetilde{ \Theta }_1 - \int \Bigl( \Im ( f( V+\varepsilon) - f(V) )- Q^4 \varepsilon _2 \Bigr) \psi _0 \widetilde{ \Theta }_1\\ 
& - \int \varepsilon _2 ( 2 \partial _y \psi _0 \partial _y \widetilde{ \Theta }_1 + \psi _0 \partial _y^2 \widetilde{ \Theta }_1 ) \\
& + \int (b-m_\lambda ) \varepsilon _1 \Bigl( (\Lambda \psi _0 ) \widetilde{ \Theta }_1 + \frac {1} {2} \psi_0 \widetilde{ \Theta }_2 \Bigr) + \int m_\gamma \varepsilon_2 \psi _0 \widetilde{ \Theta }_1 \\
& - \int \Im \mathcal S_0 \psi _0 \widetilde{ \Theta }_1 - \int \Im \mathcal R \psi _0 \widetilde{ \Theta }_1 + \int m_\gamma \varepsilon_2 \varphi _0 \widetilde{ \Theta }_1 .
\end{aligned} 
\end{equation*}
We estimate, using \eqref{eq:BS} and $\varphi _0, \psi _0\in \mathcal Z_0$,
\[
\left|\int \varepsilon _2 \partial _y \psi _0 \partial _y \widetilde{ \Theta }_1 \right|
\lesssim \|\varepsilon\|_{L^2}\|\partial _y \psi _0\|_{L^2}\|\partial _y \widetilde{ \Theta }_1\|_{L^\infty}\lesssim \sqrt{\lambda} \|\varepsilon\|_{L^2}\lesssim K^\frac14 s^{-\frac52},
\]
and
\[
\left|\int\varepsilon _2   \psi _0 \partial _y^2 \widetilde{ \Theta }_1 \right|
\lesssim \|\varepsilon\|_{L^2}\|\psi _0\|_{L^\infty}\|\partial _y^2 \widetilde{ \Theta }_1\|_{L^2}
\lesssim \lambda^\frac34 \|\varepsilon\|_{L^2}\lesssim K^\frac14 s^{-\frac{11}4}.
\]
By similar estimates,
\[
\left|\int (b-m_\lambda ) \varepsilon _1 \Bigl( (\Lambda \psi _0 ) \widetilde{ \Theta }_1 + \frac {1} {2} \psi_0 \widetilde{ \Theta }_2 \Bigr) +\int m_\gamma \varepsilon_2 \psi _0 \widetilde{ \Theta }_1 +\int m_\gamma \varepsilon_2 \varphi _0 \widetilde{ \Theta }_1\right|
\lesssim K^{\frac {1} {4}} s^{- \frac {11} {4}}.
\]
Last, by~\eqref{eq:BS}, \eqref{eq:S0} and \eqref{eq:R1}
\[
\left| \int \Im S_0 \psi _0 \widetilde{ \Theta }_1\right| +\left| \int \Im \mathcal R \psi _0 \widetilde{ \Theta }_1 \right|
\lesssim (\|\mathcal S_0\|_{L^2}+ \|\mathcal R\|_{L^2}) \|\psi _0\|_{L^\infty}\|\widetilde{ \Theta }_1\|_{L^2}\lesssim s^{-\frac {11}4}.
\]
Thus,
\begin{equation*} 
\mathcal J _s^1 = \int \varepsilon_2 Q^4 \varphi _1 \widetilde{ \Theta }_1 - \int \Bigl( \Im ( f( V+\varepsilon) - f(V) )- Q^4 \varepsilon _2 \Bigr) \psi _0 \widetilde{ \Theta }_1 + O (K^{\frac {1} {4}} s^{- \frac 52}). 
\end{equation*} 
Similarly, 
\begin{equation*} 
\mathcal J _s^2 = \int \varepsilon_1 Q^4 \psi _1 \widetilde{ \Theta }_1 + \int \Bigl( \Re ( f( V+\varepsilon) - f(V) ) - 5Q^4 \varepsilon _1 \Bigr) \varphi _0 \widetilde{ \Theta }_1 + O (K^{\frac {1} {4}} s^{- \frac 52}). 
\end{equation*} 
We claim that
\begin{equation*} 
\int | f(V+\varepsilon ) - f(V) - [5 Q^4 \varepsilon _1 + i Q^4 \varepsilon _2] | \widetilde{ \Theta }_1 \lesssim K^{\frac {1} {4}} s^{-3},
\end{equation*} 
and the conclusion follows. The claim is an immediate consequence of the definition of $ \widetilde{ \Theta }_1 $ and the pointwise estimate
\begin{equation} \label{fTaylor1} \begin{aligned}
| f(V+\varepsilon ) - f(V) - [5 Q^4 \varepsilon _1 + i Q^4 \varepsilon _2] | 
&\lesssim K^{\frac {1} {4}} s^{-3} e^{- \frac { |y|} {2}} \\
&\quad + s^{-\frac {15} {2}} (\delta^{-5} + |y|^5 + s^{-5} |y|^{10} ) .
\end{aligned}\end{equation} 
Since $f(V+\varepsilon ) -f(V)= M _{ -b } [f(M_b V + M_b \varepsilon )- f(M_b V) ]$, it follows from formula~\eqref{eq:NL} that
\begin{equation*} 
\begin{aligned} 
f(V+\varepsilon ) - f(V) - [5 Q^4 \varepsilon _1 + i Q^4 \varepsilon _2] = & 5Q^4 [M _{ -b }E_1 - \varepsilon _1] + i Q^4 [M _{ -b }E_2 - \varepsilon _2 ] \\ & + M _{ -b } (Q_a^4 - Q^4 ) (5E_1 + i E_2) + \widetilde{\mathcal R}_3, 
\end{aligned} 
\end{equation*} 
where, from~\eqref{eq:NL},
\begin{align*}
M_b \widetilde{\mathcal R}_3 & = 10 Q_a^3 (( \widetilde{X} +E_1)^2- \widetilde{X} ^2) + 2 Q_a^3 (( \widetilde{Y} +E_2)^2- \widetilde{Y} ^2)\\
&\quad + 4i Q_a^3 (( \widetilde{X} +E_1)( \widetilde{Y} +E_2)- \widetilde{X} \widetilde{Y} ) + \widetilde{\mathcal R}_1
\end{align*}
and thus, using the expression of $\widetilde{\mathcal R}_1$,
\begin{equation*} 
 | \widetilde{\mathcal R}_3 | \lesssim e^{- |y|} ( | \widetilde{X} | + | \widetilde{Y} | + |E_1| + |E_2|)^2 + ( | \widetilde{X} | + | \widetilde{Y} | + |E_1| + |E_2|)^5. 
\end{equation*} 
Using the first estimate in~\eqref{fxye1e2}, we deduce that
\begin{equation*} 
 | \widetilde{\mathcal R}_3 | \lesssim s^{-3} e^{- \frac { |y|} {2}} + s^{-\frac {15} {2}} (\delta^{-5} + |y|^5 + s^{-5} |y|^{10} ) .
\end{equation*} 
Moreover, using~\eqref{festMbT} and~\eqref{festE1}, we see that
\begin{equation*} 
Q^4[M _{ -b }E_1 - \varepsilon _1] + Q^4[M _{ -b }E_2 - \varepsilon _2 | \lesssim s^{-1} \|\varepsilon \| _{ L^\infty } e^{- |y|} \lesssim K^{\frac {1} {4}} s^{-3} e^{-  |y| } .
\end{equation*} 
Finally, $ | Q_a^4 - Q^4 |\lesssim |a| e^{- |y|} \lesssim s^{-\frac {5} {2}} e^{- |y|} $, so that
\begin{equation*} 
| (Q_a^4 - Q^4 ) (5E_1 + i E_2)| \lesssim s^{-\frac {5} {2}} e^{- |y|} \|\varepsilon \| _{ L^\infty } \lesssim K^{\frac {1} {4}} s^{-\frac {9} {2}} e^{-  |y| }.
\end{equation*} 
This proves the estimate~\eqref{fTaylor1} and completes the proof.
\end{proof}

\begin{lemma} \label{eestma} 
Under the assumption~\eqref{eq:BS}, for $s$ large
\begin{equation} \label{eq:new}
 \Bigl| \frac {d} {ds} [ a + \alpha _5 \lambda ^{\frac {3} {2}} \mathcal J] -\Omega \Bigr| \lesssim  K^\frac12 s^{-4}.
\end{equation} 
\end{lemma}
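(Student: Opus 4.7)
The plan is to differentiate $a + \alpha_5 \lambda^{3/2}\mathcal J$ by the product rule and substitute the formulas for $a_s$ and $\mathcal J_s$ provided by Lemma~\ref{eestm1} and Lemma~\ref{eestJ}. These formulas have been engineered precisely so that their leading error terms cancel: the pair $(\varphi_0,\psi_0)$ in \eqref{fedfpsiz} was chosen so that $\mathcal J_s$ reproduces the obstruction $\langle \varepsilon_1, Q^4\psi_1\rangle \sin\gamma + \langle \varepsilon_2, Q^4\varphi_1\rangle \cos\gamma$ in \eqref{festm2} up to acceptable errors.

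First, by the definition of $m_a$ in \eqref{def:m} and the refined estimate \eqref{festm2},
\[
a_s \;=\; \Omega + m_a \;=\; \Omega - \alpha_5 \lambda^{3/2}\bigl[\langle \varepsilon_1, Q^4\psi_1\rangle \sin\gamma + \langle \varepsilon_2, Q^4\varphi_1\rangle \cos\gamma\bigr] + O(K^{1/2} s^{-4}).
\]
On the other hand, the product rule and \eqref{eq:bdJs} give
\[
\frac{d}{ds}\bigl(\alpha_5 \lambda^{3/2}\mathcal J\bigr) = \tfrac{3}{2}\alpha_5 \lambda^{1/2} \lambda_s \mathcal J + \alpha_5 \lambda^{3/2}\bigl[\langle \varepsilon_1, Q^4\psi_1\rangle \sin\gamma + \langle \varepsilon_2, Q^4\varphi_1\rangle \cos\gamma\bigr] + \alpha_5 \lambda^{3/2} \cdot O(K^{1/4} s^{-5/2}).
\]
Adding these two identities, the inner-product terms cancel exactly, and since $\lambda \lesssim s^{-1}$ by \eqref{eq:BS}, the remainder $\alpha_5 \lambda^{3/2}\cdot O(K^{1/4}s^{-5/2})$ is of size $O(K^{1/4} s^{-4})$, which is absorbed into $O(K^{1/2} s^{-4})$.

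It therefore remains to control the term $\frac{3}{2}\alpha_5 \lambda^{1/2}\lambda_s\mathcal J$. Using the definition of $m_\lambda$ in \eqref{def:m} one has $\lambda_s = -b\lambda + m_\lambda \lambda$; together with the bootstrap estimates $b \lesssim s^{-1}$, $|m_\lambda| \lesssim K s^{-3}$, $\lambda \lesssim s^{-1}$ from \eqref{eq:BS} and the bound $|\mathcal J|\lesssim K^{1/4}s^{-7/4}$ from \eqref{eq:bdJ}, this yields
\[
\bigl|\lambda^{1/2}\lambda_s \mathcal J\bigr| \;\lesssim\; s^{-1/2}\cdot s^{-2}\cdot K^{1/4} s^{-7/4} \;=\; K^{1/4} s^{-17/4},
\]
which is $o(K^{1/2} s^{-4})$ for $s$ large. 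Combining everything gives \eqref{eq:new}.

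There is no genuine obstacle here beyond accounting; the core work has already been done in the construction of $\mathcal J$ via the elliptic system \eqref{fedfpsiz} and in the derivation of \eqref{festm2} and \eqref{eq:bdJs}. The only minor point worth verifying is that the power $\lambda^{3/2}$ multiplying $\mathcal J$ is the correct one to make the cancellation dimensionally exact, but this is precisely what \eqref{festm2} and \eqref{eq:bdJs} dictate.
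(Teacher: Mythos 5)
Your proof is correct and follows essentially the same route as the paper: differentiate by the product rule, substitute $a_s=\Omega+m_a$ with the refined estimate \eqref{festm2} and $\mathcal J_s$ from \eqref{eq:bdJs} so the $\lambda^{3/2}\langle\varepsilon,\cdot\rangle$ terms cancel, then bound the leftover term $\tfrac32\alpha_5\frac{\lambda_s}{\lambda}\lambda^{3/2}\mathcal J$ using $|\lambda_s/\lambda|\lesssim s^{-1}$ and \eqref{eq:bdJ}. The paper states this more tersely but the accounting is identical.
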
 

\begin{proof} 
We have
\begin{equation*} 
 \frac {d} {ds} [ a + \alpha _5 \lambda ^{\frac {3} {2}} \mathcal J ] = \Omega + m_a + \frac {3} {2} \alpha _5 \frac {\lambda _s} {\lambda } \lambda ^{\frac {3} {2}} \mathcal J + \alpha _5 \lambda ^{\frac {3} {2}} \mathcal J_s. 
\end{equation*} 
Since $ |\frac {\lambda _s} {\lambda }| \lesssim |b|+ |m_\lambda | \lesssim s^{-1}$, the result follows from Lemmas~\ref{eestm1} and~\ref{eestJ}. 
\end{proof} 

\section{Energy estimates}\label{S:5}
Consider a solution $u$ of \eqref{nls} as in Section \ref{S:4:1}, with $T=T_n=-\frac 1n$ as defined in \eqref{def:Tn} and
$S=S_n=n$. Assume 
that $u$ can be decomposed as in Section~\ref{S:4} and that
the bootstrap estimates \eqref{eq:BS} hold for $s\leq S_n$ close to $S_n$.
In particular \eqref{eq:BS1} and \eqref{eq:BS2} also hold, hence the estimates deduced in Lemma~\ref{le:V} and Proposition~\ref{PR:1} apply to the solution $u$. 
In this section, we derive energy estimates in the original variables $(t,x)$ rather than the rescaled variables
$(s,y)$. The main reason is the simplicity of equation~\eqref{eq:eta} below, compared to \eqref{eq:e}
which involves the operator $\Lambda$.
However, we will sometimes reintroduce the notation $\varepsilon$ to take advantage of the spectral properties of the operators $L_\pm$ and of the related orthogonality conditions \eqref{fortho2}; see Lemma~\ref{lem0}.

We let
\begin{align} 
v (t,x)&=\lambda^{-\frac 12}(s) V (s, y )\label{def:v}\\
\eta (t,x)&=\lambda^{-\frac 12}(s) \varepsilon (s, y ). \label{def:eta} 
\end{align} 
Note that
\begin{equation} \label{def:uveta} 
u(t,x)= e^{i \gamma (s(t))}(v(t,x) + \eta (t,x)). 
\end{equation} 
The equation for $\eta$ is now 
\begin{equation} \label{eq:eta} 
0 = i \partial _t \eta +\partial _{ xx } \eta - \frac {1} {\lambda ^2 (s(t))} \eta + f(v+ \eta ) - f(v) - \frac {m_\gamma (s(t))} {\lambda ^2 (s(t))} \eta + \mathcal Q_0 + \mathcal P,
\end{equation} 
where
\begin{equation} \label{def:PQ} 
\mathcal P (t,x)= \lambda ^{-\frac {5} {2}} \mathcal R (s,y) ,\quad \mathcal Q_0 (t,x) = \lambda ^{-\frac {5} {2}} \mathcal S_0 (s,y) .
\end{equation} 
Since $\eta\in \mathcal C(H^3)\cap \mathcal C^1(H^1)$, the above equation makes sense in $H^1$.

It follows from the change of variable~\eqref{def:eta}, 
the estimate on $t(s)$ written in \eqref{eq:t} and
the bootstrap assumption~\eqref{eq:BS} on $\varepsilon$ that
\begin{equation} \label{eq:BS3} 
\begin{aligned} 
 \| \eta \| _{ L^2 } & \lesssim K^{\frac {1} {4}} |t|^2 , \\
 \| \partial _x \eta \| _{ L^2 } & \lesssim K^{\frac {1} {4}} |t| , \\
 \| x \eta \| _{ L^2 } & \lesssim K^{\frac {1} {4}} |t|^2 . 
\end{aligned} 
\end{equation} 
In addition, it follows from~\eqref{def:v} and~\eqref{festV1} that
\begin{equation} \label{festv2} 
\begin{aligned} 
 |v| + |x| \, |\partial _x v| & \lesssim \lambda ^{-\frac {1} {2}}e^{- \frac { |x|} {2 \lambda }} + \delta \mathbf{1}_{ \{ |x| \le 2\delta \} } , \\
 |\partial _x v| & \lesssim \lambda ^{-\frac {3} {2}}e^{- \frac { |x|} {2 \lambda }} + \delta \lambda ^{-1 } \mathbf{1}_{ \{ |x| \le 2\delta \} } .
\end{aligned} 
\end{equation} 
In particular,
\begin{equation*} 
\lambda ^{\frac {1} {2}} \|v\| _{ L^\infty } +\lambda ^{\frac {1} {2}} \|x \partial_x v\| _{ L^\infty } + \| \, |x|^{\frac {1} {2}} v \| _{ L^\infty } + \|v\| _{ L^2 } + \| x \partial _x v\| _{ L^2 } \lesssim 1.
\end{equation*} 
Moreover, \eqref{def:PQ} and~\eqref{eq:R1}-\eqref{eq:R1b} yield
\begin{equation} \label{eq:RR} 
\begin{aligned} 
 \| \mathcal P\| _{ L^2 } + \| x\mathcal P\| _{ L^2 } &\lesssim |t| ,\\
 \| \partial _x \mathcal P\| _{ L^2 } &\lesssim 1 ,
\end{aligned} 
\end{equation} 
while~\eqref{def:PQ} and~\eqref{eq:S0} yield
\begin{equation} \label{eq:QQ} 
\begin{aligned} 
 \| \mathcal Q_0\| _{ L^2 } &\lesssim |t| ,\\
 \| x\mathcal Q_0\| _{ L^2 } &\lesssim |t|^2 ,\\
 \| \partial _x \mathcal Q_0\| _{ L^2 } &\lesssim 1 .
\end{aligned} 
\end{equation} 
We define
\begin{align*} 
\mathcal N &= \int x^2 |\eta| ^2 \\
\mathcal H & = \int \left\{ |\partial_x \eta |^2 +  {\lambda ^{-2}} |\eta |^2 
- 2 \left[ F(v+\eta )-F(v) - \Re (f(v) \overline{\eta } ) \right]\right\}\\
\mathcal K & = \Im \int x (\partial_x \eta )\bar \eta
\end{align*} 
and
\[
\mathcal G = \lambda ^2 \mathcal H + b \mathcal K + \frac {1} {4} \frac{b^2}{\lambda^2} \mathcal N.
\]
First, we provide direct upper bounds and coercivity estimates on the quantities $\mathcal N$,
$\mathcal H$ and $\mathcal K$. Second, we give estimates on the time derivatives of these quantities
using \eqref{eq:eta}.

\begin{lemma}\label{le:coercivity}
Assuming~\eqref{eq:BS},
\begin{equation}\label{eq:bounds}\begin{aligned}
\mathcal N (t) &\lesssim K^\frac12 |t|^4, \\
|\mathcal H (t)| &\lesssim K^\frac12 |t|^2,\\
|\mathcal K (t)| &\lesssim K^\frac12 |t|^3,
\end{aligned}\end{equation}
 for $t<T_n$ close to $T_n$.
Moreover, there exist a constant $\zeta>0$ such that
\begin{align}
 \zeta \left( \| \partial _x \eta(t) \| _{ L^2 }^2
 +\lambda^{-2}(t) \| \eta (t)\| _{ L^2 }^2\right) &\leq \mathcal H (t), \label{eenergy2:1} \\
\zeta \| \eta (t)\| _{ L^2 }^2 &\leq \mathcal G (t), \label{eenergy1:1} 
\end{align} 
 for $t<T_n$ close to $T_n$.
\end{lemma}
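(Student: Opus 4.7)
\medskip
\noindent\textbf{Proof plan.} The plan is first to dispose of the three upper bounds in \eqref{eq:bounds} by direct estimation, and then to reduce both coercivity statements to an application of Lemma~\ref{lem0} after a suitable change of variable. For the upper bounds: $\mathcal{N} = \|x\eta\|_{L^2}^2 \lesssim K^{1/2}|t|^4$ is immediate from the third line of \eqref{eq:BS3}; $|\mathcal{K}| \leq \|x\eta\|_{L^2}\|\partial_x\eta\|_{L^2} \lesssim K^{1/2}|t|^3$ by Cauchy--Schwarz; and for $\mathcal{H}$ the quadratic part $\|\partial_x\eta\|_{L^2}^2 + \lambda^{-2}\|\eta\|_{L^2}^2 \lesssim K^{1/2}|t|^2$ by \eqref{eq:BS3} and $\lambda \sim |t|$, while the nonlinear remainder obeys $|F(v+\eta)-F(v)-\Re(f(v)\bar\eta)| \lesssim v^4\eta^2 + \eta^6$ and is bounded using $\|v\|_{L^\infty}\lesssim \lambda^{-1/2}$ from~\eqref{festv2} together with the 1D Sobolev embedding.

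For the coercivity estimates, I would first rescale to $\varepsilon(s,y) = \lambda^{1/2}\eta(t,\lambda y)$ and introduce the phase-untwisted variable $\tilde\varepsilon := M_b\varepsilon = \tilde\varepsilon_1 + i\tilde\varepsilon_2$. A direct change of variable gives
\[
\lambda^2\mathcal{H} = \int \bigl(|\partial_y\varepsilon|^2 + |\varepsilon|^2\bigr)\,dy - 2\int\bigl[F(V+\varepsilon) - F(V) - \Re(f(V)\bar\varepsilon)\bigr]\,dy,
\]
and similarly $\mathcal{K} = \Im\int y\partial_y\varepsilon\bar\varepsilon\,dy$, $\lambda^{-2}\mathcal{N} = \int y^2|\varepsilon|^2\,dy$. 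Expanding $\partial_y(M_{-b}\tilde\varepsilon) = M_{-b}(\partial_y\tilde\varepsilon - \tfrac{iby}{2}\tilde\varepsilon)$ and integrating, I would establish the algebraic identity
\[
\int|\partial_y\varepsilon|^2\,dy = \int|\partial_y\tilde\varepsilon|^2\,dy - b\mathcal{K} - \tfrac{b^2}{4\lambda^2}\mathcal{N},
\]
which shows that the $b\mathcal{K}$ and $\tfrac{b^2}{4\lambda^2}\mathcal{N}$ terms in the definition of $\mathcal{G}$ exactly compensate the twist-induced cross-terms, so that $\mathcal{G}$ is a pure quadratic form in $\tilde\varepsilon$. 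Writing $\tilde V := M_bV - Q$, the identity $F(V+\varepsilon) - F(V) - \Re(f(V)\bar\varepsilon) = F(Q+\tilde V+\tilde\varepsilon) - F(Q+\tilde V) - \Re(f(Q+\tilde V)\bar{\tilde\varepsilon})$ (using that $F$ depends only on modulus) permits Taylor expansion around $Q$ to second order, giving
\[
\mathcal{G} = \langle L_+\tilde\varepsilon_1,\tilde\varepsilon_1\rangle + \langle L_-\tilde\varepsilon_2,\tilde\varepsilon_2\rangle + O\bigl((\|\tilde V\|_{L^\infty} + \|\tilde\varepsilon\|_{L^\infty})\|\tilde\varepsilon\|_{L^2}^2\bigr),
\]
with the remainder $o(1)\|\tilde\varepsilon\|_{L^2}^2$ as $\delta\to 0$ and $s\to\infty$ thanks to Lemma~\ref{le:V} and the bootstrap \eqref{eq:BS}.

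At this point the orthogonality relations \eqref{fortho2}, read through $\tilde\varepsilon = M_b\varepsilon$, become exactly $\langle\tilde\varepsilon_1,Q\rangle = \langle\tilde\varepsilon_1,y^2Q\rangle = \langle\tilde\varepsilon_2,\Lambda Q\rangle = \langle\tilde\varepsilon_2,\rho\rangle = 0$, the four directions neutralized by Lemma~\ref{lem0}. Applying that lemma then yields $\mathcal{G} \geq \tfrac{\zeta_0}{2}\|\tilde\varepsilon\|_{H^1}^2$ once the error has been absorbed, which gives \eqref{eenergy1:1} since $\|\tilde\varepsilon\|_{L^2} = \|\varepsilon\|_{L^2} = \|\eta\|_{L^2}$. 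For \eqref{eenergy2:1}, I would combine this lower bound with the algebraic identity to get $\lambda^2\mathcal{H} \gtrsim \|\partial_y\varepsilon\|_{L^2}^2 + \|\varepsilon\|_{L^2}^2$, which is equivalent to the claimed bound on $\mathcal{H}$.

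The main obstacle is the exact twist cancellation: one must verify that $b\mathcal{K} + \tfrac{b^2}{4\lambda^2}\mathcal{N}$ is precisely the cross-term generated by conjugating $\partial_y$ with the quadratic phase $M_b$. This is a short but sign-sensitive integration by parts. A secondary subtlety is confirming that the errors from $\tilde V$ enter the Hessian weighted by $Q^3$, so that even the slowly decaying contributions to $\tilde V$ coming from $\theta(A+iB)$ and $Z$ only cost $O(\delta s^{-1/2})\|\tilde\varepsilon\|_{L^2}^2$, comfortably within the tolerance $\zeta_0/2$.
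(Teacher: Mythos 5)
Your upper bounds on $\mathcal N$, $\mathcal K$, $\mathcal H$ and your derivation of~\eqref{eenergy1:1} are correct and essentially reproduce the paper's argument: passing to $\tilde\varepsilon = M_b\varepsilon$, the identity $\int|\partial_y\varepsilon|^2 = \int|\partial_y\tilde\varepsilon|^2 - b\mathcal K - \tfrac{b^2}{4\lambda^2}\mathcal N$ shows that $\mathcal G$ becomes a pure quadratic form in $\tilde\varepsilon$, the orthogonality relations~\eqref{fortho2} are \emph{exact} in the $\tilde\varepsilon$-variable, and Lemma~\ref{lem0} applies after the Taylor expansion around $Q$. The algebraic identity and the reading of~\eqref{fortho2} through $\tilde\varepsilon$ are both correct.

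The step from there to~\eqref{eenergy2:1} has a genuine gap. You propose to combine $\mathcal G \ge \tfrac{\zeta_0}{2}\|\tilde\varepsilon\|_{H^1}^2$ with the algebraic identity, but substituting only gives
\[
\lambda^2\mathcal H = \mathcal G - \bigl(b\mathcal K + \tfrac{b^2}{4\lambda^2}\mathcal N\bigr)
\ge \|\partial_y\varepsilon\|_{L^2}^2 + \tfrac{\zeta_0}{2}\|\varepsilon\|_{L^2}^2 - \bigl(1 - \tfrac{\zeta_0}{2}\bigr)\|\partial_y\tilde\varepsilon\|_{L^2}^2,
\]
and since $\zeta_0<2$ the last term is negative and of the same order as the rest. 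Expanding $\|\partial_y\tilde\varepsilon\|^2 \le (1+\kappa)\|\partial_y\varepsilon\|^2 + C_\kappa\, b^2\|y\varepsilon\|^2$ trades the loss into $b^2\|y\varepsilon\|^2$, which under the bootstrap is $\sim K^{1/2}s^{-4}$, the same size as $\|\varepsilon\|_{H^1}^2$, and is not controlled by $\|\varepsilon\|_{H^1}^2$ alone; so it cannot be absorbed. This is precisely the reason the paper records that $\mathcal G$ does \emph{not} control $\|\partial_x\eta\|_{L^2}^2$. The correct argument establishes~\eqref{eenergy2:1} independently of~\eqref{eenergy1:1}: run the Taylor expansion directly for $\lambda^2\mathcal H$ in the untwisted variable $\varepsilon$, obtaining $\lambda^2\mathcal H = \langle L_+\varepsilon_1,\varepsilon_1\rangle + \langle L_-\varepsilon_2,\varepsilon_2\rangle + O(s^{-1/2}\|\varepsilon\|_{H^1}^2)$, and apply Lemma~\ref{lem0} using the \emph{approximate} orthogonality of $\varepsilon$ itself: from~\eqref{fortho2} and $|M_{-b}-1|\lesssim |b|y^2$ one has $|\langle\varepsilon,Q\rangle| + |\langle\varepsilon,y^2Q\rangle| + |\langle\varepsilon,i\Lambda Q\rangle| + |\langle\varepsilon,i\rho\rangle| \lesssim s^{-1}\|\varepsilon\|_{L^2}$, which enters Lemma~\ref{lem0} quadratically and is absorbed for $s$ large, yielding $\lambda^2\mathcal H \ge \tfrac{\zeta_0}{2}\|\varepsilon\|_{H^1}^2$.
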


\begin{proof}
The estimates \eqref{eq:bounds} for $\mathcal N$ and $\mathcal K$ follow directly from \eqref{eq:BS3}.
Next, we note that
\[
\mathcal H = \lambda^{-2} \int \left\{ |\partial_y \varepsilon |^2 + |\varepsilon |^2 
- 2 \left[ F(V+\varepsilon )-F(V) - \Re (f(V) \overline{\varepsilon } ) \right]\right\}
\]
We claim that 
\begin{equation} \label{fspl1} 
\begin{aligned} 
 |V + \varepsilon |^6 &- |V|^6 - 6 \Re ( |V|^4 V \overline{\varepsilon } ) \\ &= 15 [\Re ( |V| V \overline{\varepsilon } )]^2 
+ 3 [\Im ( |V| V \overline{\varepsilon } )]^2 + O ( |\varepsilon |^3 |V|^3) + O ( |\varepsilon |^6). 
\end{aligned} 
\end{equation} 
This is immediate if $ |V| \le 2 |\varepsilon |$, and follows by developing $ |1 + \frac {\varepsilon } {V}|^6$ if $ |V| > 2\varepsilon $. 
Next, by~\eqref{fxye1e2b},
\begin{equation*} 
\begin{aligned} 
 |V| V \overline{\varepsilon } & = Q^2 \overline{\varepsilon } + |V| (V-Q) \overline{\varepsilon } + Q ( |V|-Q) \overline{\varepsilon } \\
& = Q^2 \overline{\varepsilon } + O(s^{-\frac {1} {2}} |\varepsilon |).
\end{aligned} 
\end{equation*} 
It follows that
\begin{equation*} 
\lambda^2 \mathcal H = \langle L_+\varepsilon _1, \varepsilon _1\rangle + \langle L_-\varepsilon _2, \varepsilon _2\rangle + O(s^{-\frac {1} {2}} \|\varepsilon \| _{ H^1 }^2) + O ( \|\varepsilon \| _{ H^1 }^3).
\end{equation*} 
This proves in particular the estimate~\eqref{eq:bounds} for $\mathcal H$.
Next, by~\eqref{fortho2},
\begin{equation*} 
\langle \varepsilon , Q\rangle = \langle \varepsilon , M _{ -b } Q \rangle + O( |b| \, \|\varepsilon \| _{ L^2 }) = 0 + O(s^{-1} \|\varepsilon \| _{ L^2 }).
\end{equation*} 
Similarly,
\begin{equation*} 
 |\langle \varepsilon , Q\rangle |+ |\langle \varepsilon , y^2 Q\rangle | + |\langle \varepsilon , i\rho \rangle | + |\langle \varepsilon , i\Lambda Q\rangle | \lesssim s^{-1} \|\varepsilon \| _{ L^2 } .
\end{equation*} 
By Lemma~\ref{lem0} and \eqref{eq:BS},
\[
\lambda^2 \mathcal H \geq \zeta_0 \|\varepsilon\|_{H^1}^2 + O (s^{-\frac 12} \|\varepsilon\|_{H^1}^2),
\]
and estimate~\eqref{eenergy2:1} follows for $s$ large, letting $\zeta=\frac 12 \zeta_0$.

We finally prove~\eqref{eenergy1:1}.
We observe first that
\[
\lambda ^2 \int |\partial_x \eta |^2 
+b \Im \int x (\partial_x \eta )\bar \eta + \frac 14 \frac {b^2}{\lambda^2} \int x^2 |\eta| ^2
=\lambda^2 \int \left| \partial_x \Bigl( \eta e^{i \frac{b}{4\lambda^2} x^2} \Bigr)\right|^2.
\]
Thus,
\begin{align*} 
\mathcal G
&= \lambda^2 \mathcal H + b \mathcal K + \frac {1} {4} \frac{b^2}{\lambda^2} \mathcal N\\
&=\lambda^2 \int \left\{\left| \partial_x \left( \eta e^{i \frac{b}{4\lambda^2} x^2} \right)\right|^2 
+ \lambda^{-2}|\eta |^2 - 2 \left[ F(v+\eta )-F(v) - \Re (f(v) \overline{\eta } ) \right]\right\}.
\end{align*} 
We let $\varepsilon _b= M_b \varepsilon $ and $V_b= M_b V$, so that 
\begin{align*} 
v(t,x)&=\lambda^{-\frac 12}(s) V (s, y)= \lambda^{-\frac 12}(s) M_{-b}(s,y) V_b (s, y)\\
\eta (t,x)&=\lambda^{-\frac 12}(s) \varepsilon (s, y )=\lambda^{-\frac 12}(s) M_{-b}(s,y)\varepsilon_b (s, y ), 
\end{align*}
and
\[
\mathcal G
= \int \left\{ | \partial_y \varepsilon_b|^2 
+ |\varepsilon_b |^2 - 2 \left[ F(V_b+\varepsilon_b )-F(V_b) - \Re (f(V_b) \overline{\varepsilon}_b )\right]\right\}.
\]
Since
\begin{equation*} 
 |\langle \varepsilon_b , Q\rangle |+ |\langle \varepsilon _b, y^2 Q\rangle | + |\langle \varepsilon _b, i\rho \rangle | + |\langle \varepsilon_b , i\Lambda Q\rangle | =0 
\end{equation*} 
by~\eqref{fortho2}, we deduce, following the proof of~\eqref{eenergy2:1}, that
\[
\mathcal G \geq \zeta \|\varepsilon_b\|_{H^1}^2 \geq \zeta \|\varepsilon_b\|_{L^2}^2
=\zeta \|\eta\|_{L^2}^2.
\]
This proves~\eqref{eenergy1:1}. (Note that $\mathcal G$ does not control $ \|\partial _x \eta \| _{ L^2 }^2$.)
\end{proof}

\begin{proposition} \label{eenergy1} 
Assuming~\eqref{eq:BS},
\begin{align} 
\Bigl|\frac {d} {dt} \mathcal N(t) \Bigr| & \lesssim \|\partial_x \eta(t)\|_{L^2} \|x\eta(t)\|_{L^2} + K^{\frac {1} {4}}|t|^3,\label{eenergy1:2t}\\
\Bigl| \frac {d} {dt} \mathcal H (t) \Bigr| & \lesssim |t|^{-3} \| \eta(t) \| _{ L^2 }^2 + K^{\frac {1} {4}}|t| ,\label{eenergy1:2} \\
\Bigl| \frac {d} {dt} \mathcal G (t) \Bigr| & \lesssim K^{\frac 14} |t|^3 ,
\label{eenergy1:2b} 
\end{align} 
 for $t<T_n$ close to $T_n$. 
\end{proposition}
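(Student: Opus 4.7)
The plan is to compute each time derivative from \eqref{eq:eta} using integration by parts, the pointwise bounds \eqref{festv2} on $v$, the weighted estimates \eqref{eq:BS3} on $\eta$, the source-term bounds \eqref{eq:RR}--\eqref{eq:QQ}, and the modulation estimate \eqref{festm1}.

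For \eqref{eenergy1:2t}, start from $\frac{d}{dt}\mathcal N = 2\Re\int x^2\bar\eta\,\eta_t$ and substitute $\eta_t$ from \eqref{eq:eta}: the real potential contributions from $\lambda^{-2}\eta$ and $m_\gamma\lambda^{-2}\eta$ vanish under $\Im$; the Laplacian term, after integration by parts, yields $4\mathcal K$ (using $\Im\int x^2|\partial_x\eta|^2 = 0$); the nonlinear expansion $f(v+\eta)-f(v) = 3|v|^4\eta + 2v^2|v|^2\bar\eta + O(|v|^3|\eta|^2)$ reduces the nonlinear integral to a term controlled by $\|\partial_x\eta\|_{L^2}\|x\eta\|_{L^2}$ via the sharp localization of $v$ near $x=0$ from \eqref{festv2}; and the source pairings are bounded by $\|x\eta\|_{L^2}(\|x\mathcal Q_0\|_{L^2}+\|x\mathcal P\|_{L^2}) \lesssim K^{1/4}|t|^3$.

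For \eqref{eenergy1:2}, writing $A = \partial_{xx}\eta - \lambda^{-2}\eta + (f(v+\eta)-f(v))$, the standard energy identity gives
\[
\frac{d}{dt}\mathcal H = -2\Re\int A\,\bar\eta_t - 2\frac{\lambda_t}{\lambda^3}\|\eta\|_{L^2}^2 + R_1,
\]
with $R_1 = -2\Re\int(f(v+\eta)-f(v))\,\bar v_t + 2\Re\int\partial_t f(v)\,\bar\eta$ collecting the explicit time-dependence of $v$. The key algebraic point is that the linear-in-$\eta$ contributions of the two pieces of $R_1$ cancel, because the quadratic expansion of $f$ produces identical terms in both pieces; hence $R_1$ is effectively quadratic in $\eta$, and after integration by parts exploiting the equation $iv_t = -\partial_{xx}v - f(v) + \lambda^{-2}v + \cdots$ for $v$, one obtains $|R_1|\lesssim |t|^{-3}\|\eta\|_{L^2}^2$. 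Substituting $\bar\eta_t = -i\bar A + i(m_\gamma\lambda^{-2}\bar\eta - \overline{\mathcal Q_0} - \overline{\mathcal P})$ from the conjugate equation, the principal pairing $\Re\int A(-i\bar A) = 0$ vanishes, and the remaining contributions are controlled by $K^{1/4}|t|$ through integration by parts combined with \eqref{eq:RR}--\eqref{eq:QQ} and \eqref{festm1}. Since $\lambda_t \sim -1$, the explicit $\lambda_t/\lambda^3$ term accounts for the $|t|^{-3}\|\eta\|_{L^2}^2$ contribution.

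For \eqref{eenergy1:2b}, I compute $\frac{d}{dt}\mathcal K$ by an analogous virial calculation and expand
\[
\frac{d}{dt}\mathcal G = 2\lambda\lambda_t\mathcal H + \lambda^2\,\frac{d}{dt}\mathcal H + b_t\mathcal K + b\,\frac{d}{dt}\mathcal K + \frac{d}{dt}\Bigl(\frac{b^2}{4\lambda^2}\Bigr)\mathcal N + \frac{b^2}{4\lambda^2}\,\frac{d}{dt}\mathcal N.
\]
The crucial structural observation, already inherent in the definition of $\mathcal G$, is that
\[
\mathcal G = \lambda^2\int\Bigl|\partial_x\bigl(\eta\,e^{ibx^2/(4\lambda^2)}\bigr)\Bigr|^2 + \int|\eta|^2 - 2\lambda^2\int\bigl[F(v+\eta)-F(v)-\Re(f(v)\bar\eta)\bigr],
\]
so that $\mathcal G$ is essentially an $H^1$-energy for the gauge-transformed profile $\eta\,e^{ibx^2/(4\lambda^2)}$ at the natural scale, and is designed so that the dominant $\lambda_t/\lambda$ and $b_t$ contributions telescope. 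In particular, the ``bad'' $|t|^{-3}\|\eta\|_{L^2}^2$ piece of $\lambda^2\,\frac{d}{dt}\mathcal H$ combines with the matching pieces of $b_t\mathcal K$, $b\,\frac{d}{dt}\mathcal K$, and $\frac{d}{dt}(b^2/(4\lambda^2))\mathcal N + (b^2/\lambda^2)\mathcal K$, using $|b/\lambda-1|\le K|t|$, the modulation identities $m_\lambda = \lambda_s/\lambda + b$ and $m_b = b_s + b^2 - a$, and the bound $|a|\lesssim |t|^{5/2}$, to produce a residual of size $K^{1/4}|t|^3$ after invoking \eqref{festm1}. The main obstacle will be the careful bookkeeping in this final step: every term naively of size $K^{1/2}|t|^3$ must be paired with a cancelling contribution from one of the three pieces $\lambda^2\mathcal H$, $b\mathcal K$, or $b^2\mathcal N/(4\lambda^2)$, and these cancellations require the refined modulation estimates \eqref{festm1} rather than the cruder bootstrap bounds \eqref{eq:BS}.
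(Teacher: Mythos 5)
Your overall plan --- expand $\frac{d}{dt}\mathcal G$ by the product rule, compute $\frac{d}{dt}\mathcal N$, $\frac{d}{dt}\mathcal H$, $\frac{d}{dt}\mathcal K$ from \eqref{eq:eta} via integration by parts, and rely on exact cancellations between the pieces --- is the same scheme as the paper (Lemmas~\ref{lemma:n}, \ref{eenergy3}, \ref{eenergy4} plus the end of the proof of Proposition~\ref{eenergy1}), and your diagnosis at the end that every contribution naively of size $K^{1/2}|t|^3$ must be matched by a cancelling one is exactly the point. However there is a genuine gap in how you realize these cancellations. For the final step you need not merely upper bounds on $\frac{d}{dt}\mathcal H$ and $\frac{d}{dt}\mathcal K$, but identities that track the leading structure exactly. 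In particular, both $\frac{d}{dt}\mathcal H$ and $\frac{d}{dt}\mathcal K$ contain the cubic interaction term
\[
\bigl\langle f(v+\eta)-f(v)-3|v|^4\eta-2|v|^2v^2\overline{\eta}\,,\,\Lambda v\bigr\rangle,
\]
which appears in $\frac{d}{dt}\mathcal H$ with coefficient $-2b/\lambda^2$ and in $\frac{d}{dt}\mathcal K$ with coefficient $+2$, so that $\lambda^2\cdot\frac{d}{dt}\mathcal H+b\cdot\frac{d}{dt}\mathcal K$ produces the exact cancellation $\lambda^2\cdot(-2b/\lambda^2)+b\cdot 2=0$. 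On its own this term is bounded by $|t|^{-2}\|\eta\|_{L^2}^2\lesssim K^{1/2}|t|^2$, so $b$ times it is $\sim K^{1/2}|t|^3$ --- too big for \eqref{eenergy1:2b} --- unless the cancellation is exploited. Your sketch never isolates this piece; the bound you propose for $R_1$, namely $|R_1|\lesssim |t|^{-3}\|\eta\|_{L^2}^2$, does not exhibit the needed coefficient structure (and is also not the form the paper obtains: the $\partial_t v$--contribution to $\frac{d}{dt}\mathcal H$ splits into an $O(K^{1/4}|t|)$ part coming from $\partial_s V$, plus the $\Lambda v$ part with the explicit $-2b/\lambda^2$ factor). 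Similarly, the cancellation of $2b\int|\partial_x\eta|^2$ and $-4b\Re\int[F(v+\eta)-F(v)-f(v)\bar\eta]$ between $b\frac{d}{dt}\mathcal K$ and $-2b\mathcal H$, and of $\frac{b^2}{\lambda^2}\mathcal K$ against $\frac{b^2}{4\lambda^2}\cdot 4\mathcal K$ from $\frac{d}{dt}\mathcal N$, must be written out explicitly --- these are Lemmas~\ref{lemma:n}--\ref{eenergy4} of the paper. Finally, a minor misattribution in your treatment of $\mathcal N$: the bound $\|\partial_x\eta\|_{L^2}\|x\eta\|_{L^2}$ in \eqref{eenergy1:2t} comes from Cauchy--Schwarz applied to $\mathcal K$ in the identity $\frac{d}{dt}\mathcal N=4\mathcal K+O(K^{1/4}|t|^3)$, not from the nonlinear integral, which is bounded directly by $K^{1/4}|t|^3$ using \eqref{festv2} and \eqref{eq:BS3}.
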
 

To prove Proposition \ref{eenergy1},
we will use the next lemmas.
In the following lemmas, we assume that $\varepsilon $ is sufficiently smooth, namely $\varepsilon \in \mathcal C([S_1, S_2], H^2 (\R ) ) \cap \mathcal C^1 ([S_1, S_2], L^2 (\R ) ) $ and $\varepsilon \in \mathcal C([S_1, S_2], L^2 (\R , |y|^2 dy) ) $, so that the calculations in the proof are valid. Of course, we will apply the lemma to appropriately smooth solutions. 

\begin{lemma} \label{lemma:n} 
Assuming~\eqref{eq:BS}, 
\begin{equation} \label{eenergy3:0}
\frac {d} {dt} \mathcal N = 4 \mathcal K + O (K^{\frac {1} {4}}|t|^3),
\end{equation} 
 for $t<T_n$ close to $T_n$. 
\end{lemma}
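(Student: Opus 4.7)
The plan is to differentiate $\mathcal N$ using~\eqref{eq:eta}, isolate the contribution of $\partial_{xx}\eta$ which reproduces $4\mathcal K$, and estimate everything else by $O(K^{1/4}|t|^3)$. From $\partial_t\eta = i[\partial_{xx}\eta - \lambda^{-2}\eta + f(v+\eta)-f(v) - \lambda^{-2}m_\gamma\eta + \mathcal Q_0 + \mathcal P]$, I would write
\begin{equation*}
\frac{d}{dt}\mathcal N = 2\Re\int x^2\bar\eta\,\partial_t\eta = -2\Im\int x^2\bar\eta\bigl[\partial_{xx}\eta + f(v+\eta)-f(v) + \mathcal Q_0 + \mathcal P\bigr],
\end{equation*}
since the two terms $-\lambda^{-2}\eta$ and $-\lambda^{-2}m_\gamma \eta$ are real multiples of $\eta$ and therefore produce purely real integrands against $x^2\bar\eta$, which drop out after taking the imaginary part.

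Two integrations by parts on the quadratic term give
\begin{equation*}
\int x^2\bar\eta\,\partial_{xx}\eta = -2\int x\bar\eta\,\partial_x\eta - \int x^2|\partial_x\eta|^2,
\end{equation*}
whose second summand is real. Hence $-2\Im\int x^2\bar\eta\,\partial_{xx}\eta = 4\Im\int x\bar\eta\,\partial_x\eta = 4\mathcal K$. For the source terms, Cauchy--Schwarz combined with~\eqref{eq:RR}, \eqref{eq:QQ} and the bootstrap bounds~\eqref{eq:BS3} gives
\begin{equation*}
\Bigl|\int x^2\bar\eta(\mathcal Q_0+\mathcal P)\Bigr|\lesssim \|x\eta\|_{L^2}\bigl(\|x\mathcal Q_0\|_{L^2}+\|x\mathcal P\|_{L^2}\bigr)\lesssim K^{1/4}|t|^2\cdot|t| \lesssim K^{1/4}|t|^3.
\end{equation*}

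The main obstacle is the nonlinear term, which I would handle via the pointwise algebraic identity
\begin{equation*}
\Im\bigl(\bar\eta[f(v+\eta)-f(v)]\bigr) = (|v+\eta|^4-|v|^4)\,\Im(\bar\eta v),
\end{equation*}
a direct consequence of $f(u)=|u|^4 u$ and the observation that $\bar\eta f(v+\eta) = |v+\eta|^4(\bar\eta v + |\eta|^2)$, so only the $\bar\eta v$ piece contributes to the imaginary part. Factoring $|v+\eta|^4-|v|^4 = (|v+\eta|^2+|v|^2)(2\Re(\bar v\eta)+|\eta|^2)$ and expanding produces terms bounded pointwise by a sum of $x^2|v|^4|\eta|^2$, $x^2|v|^3|\eta|^3$, $x^2|v|^2|\eta|^4$ and $x^2|v||\eta|^5$. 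From~\eqref{festv2} one checks that $\|x^2 |v|^4\|_{L^\infty}\lesssim 1$ (the exponential part $\lambda^{-2}e^{-2|x|/\lambda}$ and the part supported in $\{|x|\le 2\delta\}$ both yield $O(1)$ after multiplication by $x^2$), so the dominant term is controlled by $\|\eta\|_{L^2}^2\lesssim K^{1/2}|t|^4$, which is $\le K^{1/4}|t|^3$ in the bootstrap regime where $K^{1/4}|t|\lesssim 1$ (true for $s$ large depending on $K$). The cubic and higher powers of $\eta$ are absorbed using the 1D Sobolev bound $\|\eta\|_{L^\infty}\lesssim\|\eta\|_{L^2}^{1/2}\|\partial_x\eta\|_{L^2}^{1/2}\lesssim K^{1/4}|t|^{3/2}$, which makes them $o(K^{1/4}|t|^3)$. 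Summing all contributions yields~\eqref{eenergy3:0}.
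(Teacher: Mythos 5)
Your proof is correct and follows essentially the same strategy as the paper: differentiate $\mathcal N$ via the $\eta$-equation, integrate by parts on the $\partial_{xx}\eta$ term to produce $4\mathcal K$, and bound the remaining contributions by Cauchy--Schwarz and the bootstrap estimates~\eqref{eq:BS3}, \eqref{eq:RR}, \eqref{eq:QQ}. The only cosmetic difference is that you exploit the identity $\Im\bigl(\bar\eta[f(v+\eta)-f(v)]\bigr)=(|v+\eta|^4-|v|^4)\Im(\bar\eta v)$ to keep a factor of $|v|$ in every nonlinear term, whereas the paper just invokes $|f(v+\eta)-f(v)|\lesssim(|v|^4+|\eta|^4)|\eta|$ and absorbs the pure $|\eta|^6$ piece via $\|\eta\|_{L^\infty}^4\mathcal N$; both routes give the required $O(K^{1/4}|t|^3)$.
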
 

\begin{lemma} \label{eenergy3} 
Assuming~\eqref{eq:BS}, 
\begin{equation} \label{eenergy3:1} 
\begin{aligned} 
\frac {d} {dt} \mathcal H (t) = & 2 \frac {b} {\lambda ^4} \| \eta \| _{ L^2 }^2 - 2 \frac {b} {\lambda ^2} \langle f( v + \eta ) - f(v) - 3 |v|^4 \eta - 2 |v|^2 v^2 \overline{\eta } , \Lambda v \rangle \\ & + O(K^{\frac {1} {4}} |t|),
\end{aligned} 
\end{equation}
 for $t<T_n$ close to $T_n$. 
In particular,
\begin{equation} \label{eenergy3:1b} 
 \Bigl| \frac {d} {dt} \mathcal H (t) \Bigr| \lesssim |t|^{-3} \| \eta \| _{ L^2 }^2 + K^{\frac {1} {4}} |t|.
\end{equation} 
\end{lemma}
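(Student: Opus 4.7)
\emph{Plan.} I will set $N:=f(v+\eta)-f(v)$ and introduce the real-linear map
$L\xi:=3|v|^4\xi+2|v|^2v^2\bar\xi$, which is the linearization of $f$ at $v$, so that
$\widetilde N:=f(v+\eta)-f(v)-3|v|^4\eta-2|v|^2v^2\bar\eta=N-L\eta$ is precisely the object appearing in the statement. Two algebraic facts will drive the computation. First, since $f(w)=w^3\bar w^2$, one has $\partial_tf(v)=L(\partial_tv)$. Second, a direct computation shows that $L$ is self-adjoint with respect to the real pairing $\langle g,h\rangle=\Re\int g\bar h$, i.e.\ $\Re\int(L\xi)\bar\psi=\Re\int(L\psi)\bar\xi$.

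Now I will differentiate $\mathcal H$ in $t$. The $\lambda^{-2}\|\eta\|_{L^2}^2$ piece contributes $-2\lambda^{-3}\lambda_t\|\eta\|^2=\tfrac{2(b-m_\lambda)}{\lambda^4}\|\eta\|^2$ via $\lambda_t=\lambda^{-1}(m_\lambda-b)$. After one integration by parts in $x$ on the kinetic piece, and after using $\Re(z)=\Re(\bar z)$ on the $N\overline{\partial_t\eta}$ term, all contributions containing $\partial_t\eta$ collect into
\[
2\Re\int\overline{\bigl(-\partial_{xx}\eta+\lambda^{-2}\eta-N\bigr)}\,\partial_t\eta.
\]
By~\eqref{eq:eta} the parenthesis equals $i\partial_t\eta-\tfrac{m_\gamma}{\lambda^2}\eta+\mathcal Q_0+\mathcal P$; the $i\partial_t\eta$ piece contributes $2\Re\int(-i|\partial_t\eta|^2)=0$, so this reduces to $-\tfrac{2m_\gamma}{\lambda^2}\Re\int\bar\eta\,\partial_t\eta+2\Re\int\overline{(\mathcal Q_0+\mathcal P)}\,\partial_t\eta$. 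The remaining terms not involving $\partial_t\eta$ are $-2\Re\int N\,\overline{\partial_tv}+2\Re\int(\partial_tf(v))\bar\eta=-2\Re\int N\,\overline{\partial_tv}+2\Re\int(L\partial_tv)\bar\eta$, which by the self-adjointness of $L$ equals $-2\Re\int\widetilde N\,\overline{\partial_tv}$. Computing $\partial_tv$ from $v(t,x)=\lambda^{-1/2}V(s,x/\lambda)$ and using the scaling identity $\Lambda V(y)=\lambda^{1/2}\Lambda v(x)$ yields $\partial_tv=\tfrac{b-m_\lambda}{\lambda^2}\Lambda v+\lambda^{-5/2}(\partial_sV)(s,x/\lambda)$, and substitution produces the target identity \eqref{eenergy3:1} together with an explicit remainder.

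It then remains to bound the remainder by $O(K^{1/4}|t|)$. The $m_\lambda$ and $m_\gamma$ contributions both produce losses of order $Ks^{-3}\cdot\lambda^{-4}\|\eta\|^2\sim K^{3/2}s^{-3}$, absorbed in $K^{1/4}s^{-1}$ once $s$ is large (recall $s_0$ may depend on $K$); to control $\Re\int\bar\eta\,\partial_t\eta$ I will re-substitute \eqref{eq:eta} and note that the self-adjoint real pieces vanish in the imaginary part, leaving $\Im\int\bar\eta(N+\mathcal Q_0+\mathcal P)=O(\lambda^{-2}\|\eta\|^2+|t|\|\eta\|_{L^2})=O(K^{1/2}|t|^2)$. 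For the $\mathcal Q_0$ and $\mathcal P$ terms I will again replace $\partial_t\eta$ by the right-hand side of \eqref{eq:eta} and integrate by parts once on $\partial_{xx}\eta$, invoking $\|\partial_x\mathcal Q_0\|_{L^2}+\|\partial_x\mathcal P\|_{L^2}\lesssim 1$ from \eqref{eq:QQ}, \eqref{eq:RR}. The delicate point will be the $\partial_sV$ remainder: changing variables to $y=x/\lambda$ produces a factor $\lambda^{-4}$, and to absorb it I will split the integral according to the three regions of the pointwise estimate \eqref{dsV}, using the exponential decay of $V$ near the origin, the $O(s^{-1/2})$ size in the bulk of $I(s)$, and the $\delta^{-1}$-localization on $J(s)$, together with the weighted bootstrap bounds $\|\varepsilon\|_{L^2}+\|y\varepsilon\|_{L^2}\lesssim K^{1/4}s^{-1}$; the sharp (rather than crude $L^\infty$) form of \eqref{dsV} is essential here to avoid a spurious $\delta^{-1}$ factor. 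Finally, \eqref{eenergy3:1b} follows from \eqref{eenergy3:1} because $\tfrac{b}{\lambda^4}\sim|t|^{-3}$ and, using $|\widetilde N|\lesssim|v|^3|\eta|^2+|\eta|^5$ together with $\|\Lambda v\|_{L^\infty}\lesssim\lambda^{-1/2}$, one has $\tfrac{b}{\lambda^2}|\int\widetilde N\,\overline{\Lambda v}|\lesssim\tfrac{b}{\lambda^4}\|\eta\|_{L^2}^2+O(K^{1/4}|t|)$.
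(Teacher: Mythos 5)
Your proposal is correct and follows essentially the same route as the paper. You decompose $\tfrac{d}{dt}\mathcal H$ into a $\partial_t v$ piece, a $\partial_t\eta$ piece, and the $\lambda^{-2}$ scaling piece; the paper reaches the identical $\mathcal H_1+\mathcal H_2+\mathcal H_3$ split (your use of the self-adjointness of $L$ to produce $-2\langle\widetilde N,\partial_t v\rangle$ is just a cleaner way of stating the cancellation that the paper performs by writing out $\partial_t(\Re(f(v)\bar\eta))$ directly). The substitution of \eqref{eq:eta} to kill the $\langle\Phi,i\Phi\rangle$ term, the $m_\gamma$ and $\mathcal Q_0+\mathcal P$ bookkeeping, the reduction of $\partial_t v$ to $\tfrac{b-m_\lambda}{\lambda^2}\Lambda v+\lambda^{-5/2}\partial_s V$, and the final swap of $-\lambda_s/\lambda$ for $b$ are all as in the paper. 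One minor remark: your appeal to the weighted bound $\|y\varepsilon\|_{L^2}\lesssim K^{1/4}s^{-1}$ for the $\partial_s V$ remainder is not actually needed --- the paper simply observes that the pointwise product $\bigl[(1+|y|\mathbf 1_{I(s)})+\delta^{-1}\mathbf 1_{J(s)}\bigr](e^{-|y|/2}+s^{-3/2})$ is $O(1)$ and then uses only $\|\varepsilon\|_{L^2}^2\lesssim K^{1/2}s^{-4}$, which already gives $|\mathcal H_4|\lesssim K^{1/2}s^{-3/2}\lesssim |t|$ for $s_0$ large depending on $K$. Your route works too; it is just slightly more than needed.
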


\begin{lemma} \label{eenergy4}
Assuming~\eqref{eq:BS}, 
\begin{equation} \label{eenergy3:2} 
\begin{aligned} 
 \frac {d} {dt} \mathcal K& = 2 \int |\partial _x \eta |^2 -4 \Re \int [F(v+\eta ) - F(v) - f(v) \overline{\eta }] \\
& \quad + 2 \langle f( v + \eta ) - f(v) - 3 |v|^4 \eta - 2 |v|^2 v^2 \overline{\eta } , \Lambda v \rangle \\
& \quad + O ( K^{\frac {1} {4}} |t|^2), 
\end{aligned} 
\end{equation}
 for $t<T_n$ close to $T_n$.
\end{lemma}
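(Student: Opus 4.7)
The plan is to differentiate $\mathcal K = \Im\int x(\partial_x\eta)\bar\eta$ and exploit a Pohozaev-type identity in a perturbative form around $v$. Rewriting equation~\eqref{eq:eta} as $\partial_t\eta = iA$ with
\[
A = \partial_{xx}\eta - (1+m_\gamma)\lambda^{-2}\eta + \phi + \mathcal Q_0 + \mathcal P,\qquad \phi := f(v+\eta) - f(v),
\]
using $\Im(iz) = \Re z$ and integrating by parts once to transfer $\partial_x$ off $A$, one obtains the master identity
\[
\frac{d}{dt}\mathcal K = -\Re\int A\bar\eta - 2\Re\int x\bar A\,\partial_x\eta.
\]

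For the linear pieces of $A$: $\partial_{xx}\eta$ contributes $2\|\partial_x\eta\|_{L^2}^2$ by two successive integrations by parts (producing $\|\partial_x\eta\|_{L^2}^2$ from $-\Re\int\partial_{xx}\eta\,\bar\eta$ and $\|\partial_x\eta\|_{L^2}^2$ from $-2\Re\int x\,\partial_{xx}\bar\eta\,\partial_x\eta$), while the pieces $-(1+m_\gamma)\lambda^{-2}\eta$ cancel identically between the two terms thanks to $\Re\int x\bar\eta\,\partial_x\eta = -\tfrac12\|\eta\|_{L^2}^2$. The source terms $\mathcal P$ and $\mathcal Q_0$ are estimated by Cauchy--Schwarz using~\eqref{eq:RR},~\eqref{eq:QQ} and the bootstrap~\eqref{eq:BS3}; each of the four products $\|\mathcal P\|_{L^2}\|\eta\|_{L^2}$, $\|x\mathcal P\|_{L^2}\|\partial_x\eta\|_{L^2}$, $\|\mathcal Q_0\|_{L^2}\|\eta\|_{L^2}$, $\|x\mathcal Q_0\|_{L^2}\|\partial_x\eta\|_{L^2}$ is $O(K^{1/4}|t|^2)$.

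The heart of the proof is the $\phi$-contribution. The key pointwise identity, valid for any smooth complex $w$, is
\[
\partial_x f(w)\bar w - \bar f(w)\partial_x w = 2|w|^4\partial_x|w|^2 = 4\,\partial_x F(w),
\]
which follows directly from $f(w)=|w|^4w$ and $F(w)=|w|^6/6$ by expansion. Writing $\bar\eta = \overline{v+\eta}-\bar v$ and $\partial_x\eta = \partial_x(v+\eta)-\partial_x v$ and applying this identity with $w = v+\eta$ and with $w=v$, then integrating by parts the leftover cross terms involving $\partial_x f(u)\bar v$ and $\partial_x f(v)\bar u$ (with $u = v+\eta$) and using $x\partial_x w = \Lambda w - w/2$ to convert the residuals into $\Lambda$-inner products, one obtains
\[
\Re\int x[\partial_x\phi\,\bar\eta - \bar\phi\,\partial_x\eta] = -4\int F(u) - 4\int F(v) + 2\langle f(u),\Lambda v\rangle + 2\langle f(v),\Lambda u\rangle.
\]

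Three facts collapse this to the target form: (i) the Pohozaev identity $\langle f(w),\Lambda w\rangle = 2\int F(w)$ (obtained from $\Re\int x|w|^4 w\partial_x\bar w = -\int F(w)$); (ii) the antisymmetry~\eqref{fortho3b}, giving $\langle f(v),\Lambda\eta\rangle = -\langle\Lambda f(v),\eta\rangle$ after using $\Lambda u = \Lambda v + \Lambda\eta$; and (iii) the explicit formula
\[
\Lambda f(v) = -2f(v) + 3|v|^4\Lambda v + 2v^2|v|^2\Lambda\bar v,
\]
obtained by substituting $x\partial_x v = \Lambda v - v/2$ and its conjugate into $\partial_x f(v) = 3|v|^4\partial_x v + 2v^2|v|^2\partial_x\bar v$. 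Combining (i)--(iii), the expression above telescopes precisely to $-4\Re\int[F(v+\eta)-F(v)-f(v)\bar\eta] + 2\langle f(v+\eta)-f(v)-3|v|^4\eta-2|v|^2v^2\bar\eta,\Lambda v\rangle$. The main obstacle is the careful bookkeeping throughout this chain: one must ensure that the spurious terms generated at each integration by parts (such as $\Re\int f(u)\bar v$ and the two $4\int F(v)$ contributions) cancel exactly, leaving only the single $F$-difference and $\Lambda v$-correction asserted by the lemma.
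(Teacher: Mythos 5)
Your argument is correct, and it reaches the same formula as the paper through a genuinely different algebraic route. You are on common ground in deriving the master identity $\frac{d}{dt}\mathcal K=-\Re\int A\bar\eta-2\Re\int x\bar A\,\partial_x\eta$ (equivalently $-2\Re\int\Lambda\bar\eta\,A$), in the treatment of $\partial_{xx}\eta$ and the mass term, and in the Cauchy--Schwarz estimates of $\mathcal P,\mathcal Q_0$ via~\eqref{eq:RR},~\eqref{eq:QQ},~\eqref{eq:BS3}. Where you diverge is in the nonlinear piece: the paper manipulates $-2\Re\int\Lambda\bar\eta\,\phi$ directly through the pointwise identity
\begin{equation*}
\Re\bigl[\partial_x\bigl(F(v+\eta)-F(v)-f(v)\bar\eta\bigr)\bigr]=\Re\bigl(\partial_x\bar\eta\,\phi\bigr)+\Re\bigl(\partial_x\bar v\,[\phi-3|v|^4\eta-2|v|^2v^2\bar\eta]\bigr),
\end{equation*}
multiplies by $x$, integrates by parts and tracks the remainders until the Taylor-remainder form appears. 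You instead symmetrize: you apply your pointwise relation $\partial_x f(w)\bar w-\bar f(w)\,\partial_x w=4\partial_x F(w)$ with $w=u=v+\eta$ and $w=v$, produce the symmetric intermediate
\begin{equation*}
-4\int F(u)-4\int F(v)+2\langle f(u),\Lambda v\rangle+2\langle f(v),\Lambda u\rangle,
\end{equation*}
and then collapse it via $\Lambda u=\Lambda v+\Lambda\eta$, the Pohozaev identity $\langle f(w),\Lambda w\rangle=2\int F(w)$, the antisymmetry~\eqref{fortho3b}, and the explicit formula $\Lambda f(v)=-2f(v)+3|v|^4\Lambda v+2|v|^2v^2\Lambda\bar v$. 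I verified that the telescoping you outline (but do not fully write out) does close to the claimed expression. Your route is slightly longer in bookkeeping but exposes the Pohozaev-invariance more conceptually, while the paper's choice of the Taylor-remainder auxiliary function lands on the target in one integration by parts; both are valid.
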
 

\begin{proof}[Proof of Lemma~$\ref{lemma:n}$]
We give the formal argument, the complete proof requires truncation of $x^2$, see for instance~\cite[Proof of Proposition~6.5.1]{CLN10}.
Multiplying the equation~\eqref{eq:eta} by $x^2 \overline{\eta} $ and taking the imaginary part, we obtain
\begin{equation*} 
\frac {1} {2} \frac {d} {dt} \mathcal N= 2 \mathcal K - \Im \int [f(v+ \eta ) - f(v)] x^2 \overline{\eta } - \Im \int [ \mathcal Q_0 + \mathcal P] x^2 \overline{\eta} ;
\end{equation*} 
and so, since $ |f(v+ \eta ) - f(v)|\lesssim ( |v|^4 + |\eta |^4) |\eta | $,
\begin{equation*} 
 \Bigl| \frac {d} {dt} \mathcal N - 4 \mathcal K \Bigr| \lesssim \| \, |x|^{\frac {1} {2}}v\| _{ L^\infty }^4 \|\eta \| _{ L^2 }^2 + \| \eta \| _{ L^\infty }^4 \mathcal N + ( \| x \mathcal Q_0 \| _{ L^2 } + \| x \mathcal P \| _{ L^2 } ) \sqrt{ \mathcal N } .
\end{equation*} 
Using~\eqref{eq:BS3}--\eqref{eq:QQ}, we deduce 
\begin{equation*} 
 \Bigl| \frac {d} {dt} \mathcal N - 4 \mathcal K \Bigr| \lesssim K^{\frac {1} {2}} |t|^4 + K^\frac32 |t|^8 + K^{\frac {1} {4}} |t|^3,
\end{equation*} 
which completes the proof.
\end{proof}

\begin{proof}[Proof of Lemma~$\ref{eenergy3}$]
Using the identities
\begin{equation} \label{fpreenergy3} 
\begin{aligned} 
 \partial_t F(v) & = \Re ( |v|^4 v \partial _t \overline{v} ) \\
 \partial_t F(v+ \eta ) & = |v+\eta |^4 \Re ( ( v+ \eta ) \partial _t (\overline{v} + \overline{\eta } ) )\\
 \partial_t (\Re (f(v) \overline{\eta } )) &= \Re [ (3 |v|^4 \eta + 2 |v|^2 v^2 \overline{\eta } ) \partial _t \overline{v} + |v|^4 v \partial _t \overline{\eta } ], 
\end{aligned} 
\end{equation} 
it follows from elementary calculations that (recall that $dt = \lambda^2 ds$)
\begin{equation*} 
\begin{aligned} 
\frac {d} {dt} \mathcal H = & - 2 \langle f( v + \eta ) - f(v) - 3 |v|^4 \eta - 2 |v|^2 v^2 \overline{\eta } , \partial _t v \rangle \\ & - 2 \langle \partial _{ xx } \eta - \lambda ^{-2} \eta + f(v+\eta ) - f(v) , \partial _t \eta \rangle \\
& -2 \frac {\lambda _s} {\lambda ^5} \| \eta \| _{ L^2 }^2 \\
& = : \mathcal H_1 + \mathcal H_2 + \mathcal H_3. 
\end{aligned} 
\end{equation*} 
We first estimate $\mathcal H_1$. 
Note that
\begin{equation*} 
 \partial_t v (t,x) = \lambda^{-\frac {5} {2}} \partial_sV (s,y) - \frac {\lambda _s} {\lambda } \lambda ^{-\frac {5} {2}} \Lambda V (s,y).
\end{equation*} 
Since $\Lambda V (s,y)= \lambda ^{\frac {1} {2}} \Lambda v(t,x)$, we deduce that
\begin{equation*} 
 \partial_t v (t,x) = \lambda^{-\frac {5} {2}} \partial _sV (s,y) - \frac {\lambda _s} {\lambda } \lambda ^{-2 } \Lambda v (t,x);
\end{equation*} 
and so (recall that $dx=\lambda dy$)
\begin{equation*} 
\mathcal H_1 = \mathcal H_4 +  2 \frac {\lambda _s} {\lambda } \lambda ^{-2 } \langle f( v + \eta ) - f(v) - 3 |v|^4 \eta - 2 |v|^2 v^2 \overline{\eta } , \Lambda v \rangle ,
\end{equation*} 
where
\begin{equation*} 
\mathcal H_4 = - 2 \lambda ^{-4} \langle f( V + \varepsilon ) - f(V) - 3 |V|^4 \varepsilon - 2 |V|^2 V^2 \overline{\varepsilon } , \partial _s V \rangle .
\end{equation*} 
We have 
\begin{equation} \label{eq:TY1} 
| f(V+\varepsilon ) - f(V) - 3 |V|^4 \varepsilon - 2 |V|^2 V^2 \overline{\varepsilon } | \lesssim (|\varepsilon |^3 + |V|^3) |\varepsilon |^2 ,
\end{equation} 
for all $V, \varepsilon \in \C$. 
(see the proof of~\eqref{fspl1}.) Thus by~\eqref{festV1} and $ \|\varepsilon \| _{ L^\infty }^3\lesssim K^{\frac {3} {4}} s^{-6} \lesssim s^{-\frac {3} {2}}$,
\begin{equation} \label{ftaylor11} 
| f(V+\varepsilon ) - f(V) - 3 |V|^4 \varepsilon - 2 |V|^2 V^2 \overline{\varepsilon } | \lesssim (e^{- \frac { |y|} {2}} + s^{-\frac 32} ) |\varepsilon |^2 .
\end{equation} 
Thus, using \eqref{dsV}, it follows that
\begin{equation*} 
 | \mathcal H_4 | \lesssim s^4 s^{- \frac {3} {2}} \int [ (1+ |y| {\mathbf 1}_I ) + \delta^{-1} \mathbf{1}_{J(s)} ] (e^{- \frac { |y|} {2}} + s^{-\frac 32} ) |\varepsilon |^2 .
\end{equation*}
Since $ | [ (1+ |y| {\mathbf 1}_I ) + \delta^{-1} \mathbf{1}_{J(s)} ] (e^{- \frac { |y|} {2}} + s^{-\frac 32} ) | \lesssim 1$, we deduce that
\begin{equation*} 
 | \mathcal H_4 | \lesssim s^{ \frac {5} {2}} \| \varepsilon \| _{ L^2 }^2 \lesssim K^{\frac {1} {2}} s^{- \frac {3} {2}} \lesssim s^{-1} \lesssim |t|,
\end{equation*} 
where we used~\eqref{eq:BS}. 

Next, using~\eqref{eq:eta} and 
\begin{equation*} 
\langle i [ \partial _{ xx } \eta - \lambda ^{-2} \eta + f(v+\eta ) - f(v)], \partial _{ xx } \eta - \lambda ^{-2} \eta + f(v+\eta ) - f(v) \rangle =0,
\end{equation*}
we obtain 
\begin{equation*} 
\begin{aligned} 
\frac {1} {2} \mathcal H_2 = & \lambda ^{-2} m_\gamma \langle \partial _{ xx } \eta - \lambda ^{-2} \eta + f(v+\eta ) - f( v ) , i \eta \rangle \\
& - \langle \partial _{ xx } \eta - \lambda ^{-2} \eta + f(v+\eta ) - f( v ) , i \mathcal P +i \mathcal Q_0 \rangle \\
= & \mathcal H_5 + \mathcal H_6.
\end{aligned} 
\end{equation*} 
We have
\begin{equation*} 
\mathcal H_5= \lambda ^{-2} m_\gamma \langle f(v + \eta ) - f(v) , i \eta \rangle .
\end{equation*} 
Since 
\begin{equation*} 
 | f(v + \eta ) - f(v) | \lesssim ( | v|^4 + |\eta |^4 ) |\eta | \lesssim \lambda ^{-2} |\eta |
\end{equation*} 
by~\eqref{eq:BS3}, \eqref{festv2} and \eqref{eq:BS}, we see that
\begin{equation*} 
 | \mathcal H_5 | \lesssim \lambda ^{-4} |m_\gamma | \, \| \eta \| _{ L^2 }^2 \lesssim K^{\frac {3} {2}} |t|^3 \lesssim |t|. 
\end{equation*} 
Furthermore, integrating by parts once the term $ \partial _{ xx } \eta $ and using the Cauchy-Schwarz inequality, we obtain
\begin{equation*} 
 | \mathcal H_6 | \lesssim \| \partial _x \eta \| _{ L^2 } (\| \partial _x \mathcal P \| _{L^2}
 +\|\partial_x \mathcal Q_0\|_{ L^2 } )
 + \lambda ^{-2} \| \eta \| _{ L^2 } (\| \mathcal P \| _{ L^2 }+\|\mathcal Q_0\|_{L^2} )\lesssim K^{\frac {1} {4}} |t|,
\end{equation*} 
where we used~\eqref{eq:BS3}, \eqref{eq:RR} and \eqref{eq:QQ} in the last inequality. 
(This term happens to determine the size of the error term in \eqref{eenergy3:1}.)

Putting together the previous identities and estimates, we have proved that
\begin{equation*} 
\begin{aligned} 
\frac {d} {dt} \mathcal H   = & -2 \frac {\lambda _s} {\lambda ^5} \| \eta \| _{ L^2 }^2 + 2 \frac {\lambda _s} {\lambda } \lambda ^{-2 } \langle f( v + \eta ) - f(v) - 3 |v|^4 \eta - 2 |v|^2 v^2 \overline{\eta } , \Lambda v \rangle \\ & + O(K^{\frac {1} {4}} |t| ). 
\end{aligned} 
\end{equation*} 
Since 
\begin{equation} \label{eq:lslb} 
\frac {\lambda _s} {\lambda } + b = O (K |t|^3)
\end{equation} 
by~\eqref{eq:BS}, it follows from~\eqref{eq:BS3} that
\begin{equation*} 
 -2 \frac {\lambda _s} {\lambda ^5} \| \eta \| _{ L^2 }^2 = 2 \frac {b} {\lambda ^4} \| \eta \| _{ L^2 }^2+ O (K^{\frac {3} {2}} |t|^3).
\end{equation*} 
Next, using~\eqref{eq:TY1}, then~\eqref{eq:BS3}-\eqref{festv2}, 
\begin{equation*} 
\begin{aligned} 
 | \langle f( v + \eta ) &- f(v) - 3 |v|^4 \eta - 2 |v|^2 v^2 \overline{\eta } , \Lambda v \rangle |\\ & \lesssim \int |\eta|^2 (|\eta |^3+|v|^3)( |v| + |x| \, |\partial _x v| )
 \\ & \lesssim 
\|\eta\|_{L^2}^2 \left(\|\eta\|_{L^\infty}^3+ \|v\|_{L^\infty}^3\right)\left(\|v\|_{L^\infty} + \|x \partial _x v\|_{L^\infty}\right)
\\ & \lesssim 
|t|^{-2}\|\eta\|_{L^2}^2 \lesssim K^{\frac {1} {2} } |t|^2 .
\end{aligned} 
\end{equation*} 
Using again~\eqref{eq:lslb}, we arrive at~\eqref{eenergy3:1}. 
The estimate~\eqref{eenergy3:1b} also follows. 
\end{proof}

\begin{proof}[Proof of Lemma~$\ref{eenergy4}$]
We first observe that
\begin{equation*} 
 \frac {d} {dt} \mathcal K = - 2 \Im \int \Lambda \overline{\eta } \partial _t \eta .
\end{equation*} 
It now follows from~\eqref{eq:eta} that
\begin{equation*} 
\begin{aligned} 
\frac {d} {dt} \mathcal K & = - 2 \Re \int \Lambda \overline{\eta } \partial _x ^2 \eta 
+2 \lambda ^{-2} \Re \int \Lambda \overline{\eta } \eta   - 2\Re \int \Lambda \overline{\eta }\left[f( v + \eta ) - f(v) \right] \\
&\quad + 2 \lambda ^{-2} m_\gamma \Re \int \Lambda \overline{\eta } \eta - 2 \Re \int \Lambda \overline{\eta } \mathcal Q_0
-2 \Re \int\Lambda \overline{\eta }\mathcal P .
\end{aligned} 
\end{equation*} 
Using \eqref{fortho3} and \eqref{fortho3c}, we deduce that
\begin{equation*} 
\begin{aligned} 
 \frac {d} {dt} \mathcal K & = 2 \int |\partial _x \eta |^2 - 2 \Re \int \Lambda \overline{\eta} \left[f( v + \eta ) - f(v) \right]   - 2 \Re \int \Lambda \overline{\eta} \mathcal Q_0
- 2 \Re \int \Lambda \overline{\eta} \mathcal P .
\end{aligned} 
\end{equation*} 
Using~\eqref{fortho3b} we deduce that 
\begin{equation*} 
 \frac {d} {dt} \mathcal K = 2 \int |\partial _x \eta |^2 - 2 \Re \int \Lambda \overline{\eta} \left[f( v + \eta ) - f(v) \right] 
 + 2 \Re \int \overline{\eta} \Lambda \mathcal Q_0
+ 2 \Re \int \overline{\eta} \Lambda \mathcal P .
\end{equation*} 
We write
\begin{equation*} 
\begin{aligned} 
 - 2 \Re \int \Lambda \overline{\eta} \left[f( v + \eta ) - f(v) \right] & = - \Re \int \overline{\eta} \left[f( v + \eta ) - f(v) \right] \\ & \quad- 2 \Re \int x \partial _x \overline{\eta} \left[f( v + \eta ) - f(v) \right] .
\end{aligned} 
\end{equation*} 
Note that
\begin{multline*} 
\Re \left[\partial _x ( F(v+\eta ) - F(v) - f(v) \overline{\eta } )\right]= \Re (\partial _x \overline{\eta } \left[ f(v+ \eta ) - f(v)\right] ) \\
 + \Re \left( \partial _x \overline{v} \left[ f(v+ \eta ) - f(v) - 3 |v|^4 \eta - 2 |v|^2 v^2 \overline{\eta } \right] \right);
\end{multline*} 
and so, multiplying by $x$ and integrating by parts
\begin{multline*} 
- 2 \Re \int x \partial _x \overline{\eta} \left[f( v + \eta ) - f(v) \right]
= 2 \Re \int \left[F(v+\eta ) - F(v) - f(v) \overline{\eta }\right] \\
 + 2 \Re \int x \partial _x \overline{v} \left[ f(v+ \eta ) - f(v) - 3 |v|^4 \eta - 2 |v|^2 v^2 \overline{\eta } \right].
\end{multline*} 
We rewrite this last equation in the form
\begin{multline*} 
- 2 \Re \int x \partial _x \overline{\eta} \left[f( v + \eta ) - f(v)\right]
= 2 \Re \int \left[F(v+\eta ) - F(v) - f(v) \overline{\eta }\right] \\
 + \Re \int [2 \Lambda \overline{v}- \overline{v} ] \left[ f(v+ \eta ) - f(v) - 3 |v|^4 \eta - 2 |v|^2 v^2 \overline{\eta } \right].
\end{multline*} 
Thus we see that
\begin{equation*} 
\begin{aligned} 
 - 2 \Re \int \Lambda \overline{\eta} \left[f( v + \eta ) - f(v) \right] & = - \Re \int \overline{\eta} \left[f( v + \eta ) - f(v) \right] \\ & \quad + 2 \Re \int \left[F(v+\eta ) - F(v) - f(v) \overline{\eta }\right] \\
& \quad- \Re \int \overline{v} \left[ f(v+ \eta ) - f(v) - 3 |v|^4 \eta - 2 |v|^2 v^2 \overline{\eta } \right] \\
& \quad+ 2 \Re \int \Lambda \overline{v} \left[ f(v+ \eta ) - f(v) - 3 |v|^4 \eta - 2 |v|^2 v^2 \overline{\eta } \right],
\end{aligned} 
\end{equation*}
which we rewrite in the form
\begin{multline*} 
 - 2 \Re \int \Lambda \overline{\eta} \left[f( v + \eta ) - f(v) \right] = -4 \Re \int \left[F(v+\eta ) - F(v) - f(v) \overline{\eta }\right] \\
 + 2 \Re \int \Lambda \overline{v} \left[ f(v+ \eta ) - f(v) - 3 |v|^4 \eta - 2 |v|^2 v^2 \overline{\eta } \right].
\end{multline*}
Thus we arrive at the expression
\begin{equation*} 
\begin{aligned} 
 \frac {d} {dt} \mathcal K & = 2 \int |\partial _x \eta |^2 -4 \Re \int \left[F(v+\eta ) - F(v) - f(v) \overline{\eta }\right] \\
& \quad + 2 \Re \int \Lambda \overline{v} \left[ f(v+ \eta ) - f(v) - 3 |v|^4 \eta - 2 |v|^2 v^2 \overline{\eta } \right] \\
&\quad + 2 \Re \int \overline{\eta} \Lambda \mathcal Q_0
+ 2 \Re \int \overline{\eta} \Lambda \mathcal P .
\end{aligned} 
\end{equation*} 
On the other hand, using~\eqref{eq:BS3}, \eqref{festv2} and~\eqref{eq:RR}, we obtain 
by the Cauchy-Schwarz inequality
\begin{equation*} 
 2 \Re \int \overline{\eta} \Lambda \mathcal Q_0
+ 2 \Re \int \overline{\eta} \Lambda \mathcal P = O (K^{\frac {1} {4}} |t|^2), 
\end{equation*} 
which completes the proof. 
\end{proof}

\begin{proof}[End of the proof of Proposition~$\ref{eenergy1}$]
The estimate \eqref{eenergy1:2t} is a consequence of \eqref{eenergy3:0} and
the estimate \eqref{eenergy1:2} is already proved in \eqref{eenergy3:1b}.

We finally prove \eqref{eenergy1:2b}.
By the expression of $\mathcal G = \lambda ^2 \mathcal H + b \mathcal K + \frac {1} {4} \frac{b^2}{\lambda^2} \mathcal N$,
$dt = \lambda^{2} ds$,
and the definitions of $m_\lambda$ and $m_b$, we have
\begin{align*}
\frac{d\mathcal G}{dt} &= \lambda^2 \frac{d\mathcal H}{dt} + b \frac{d\mathcal K}{dt} + \frac 14\frac{b^2}{\lambda^2} \frac{d\mathcal N}{dt} - 2b \mathcal H - \frac{b^2}{\lambda^2} \mathcal{K} \\
&\quad +2 m_\lambda \mathcal H+\frac{m_b+a}{\lambda^2} \mathcal{K}
+ \frac 12 \frac1{\lambda^2}\Bigl( \frac{a}{\lambda}+ \frac {m_b} {\lambda }- m_\lambda \frac {b} {\lambda } \Bigr) \frac b\lambda \mathcal{N},
\end{align*}
where the last term comes from the identity
\begin{equation} \label{eq:boverl} 
\frac{d}{ds} \left( \frac {b}{\lambda} \right)=
\frac{b_s - \frac{\lambda_s}{\lambda} b}{\lambda} 
= \frac{a}{\lambda}+ \frac {m_b} {\lambda }- m_\lambda \frac {b} {\lambda },
\end{equation} 
see~\eqref{def:m}. 
Combining \eqref{eenergy3:0}, \eqref{eenergy3:1}, \eqref{eenergy3:2}
with the expressions of $\mathcal H$ and $\mathcal K$, 
the resulting cancellations imply that
\[
\lambda^2 \frac{d\mathcal H}{dt} + b \frac{d\mathcal K}{dt} + \frac 14\frac{b^2}{\lambda^2} \frac{d\mathcal N}{dt} - 2b \mathcal H - \frac{b^2}{\lambda^2} \mathcal{K}
= 0+ O(K^\frac14 |t|^3).
\]
By \eqref{eq:bounds} and \eqref{festm1},
\[
\left|m_\lambda\right| \left| \mathcal H\right|
+\left| \frac{m_b}{\lambda^2}\right| \left| \mathcal{K}\right|
\lesssim K^\frac12 |t|^4 ,
\]
and
\begin{equation*} 
\left| \frac {a} {\lambda ^2} \mathcal K \right| 
+\frac 1{\lambda^2} \left| \frac{a}{\lambda}+ \frac {m_b} {\lambda }- m_\lambda \frac {b} {\lambda } \right| \left| \frac b\lambda\right|\left|\mathcal{N}\right|\lesssim K^\frac12 |t|^{\frac {7} {2}}.
\end{equation*} 
Thus,  we obtain~\eqref{eenergy1:2b}.
\end{proof}

\section{Proof of the uniform estimates}\label{S:6}

We consider $n\geq s_0$, where $s_0$ will be chosen sufficiently large later.
Recall from \eqref{def:Tn} that
$T_n=-\frac 1n$ and $S_n = n$.
For $\beta \in (-1, 1)$, we set 
\begin{equation} \label{eq:at:S}
\begin{aligned}
&\varepsilon_n^{in} = 0 , \quad \gamma_n^{in} =n, \quad a_n^{in} = \alpha _1 n^{-\frac {5} {2}} \sin n, \\
& \lambda_n^{in} = b_n^{in} = n^{-1} - \alpha_1 n^{-\frac52}\cos n + \beta n^{- \frac {7} {4}} ,\\
& \Gamma_n^{in} = (\gamma_n^{in},\lambda_n^{in},b^{in},a_n^{in},0,0,0),
\end{aligned}
\end{equation} 
and
\begin{equation}\label{def:un:in}
u_n^{in} (x) = \lambda_n ^{-\frac 12} e^{i\gamma_n}V[\Gamma_n^{in}](y),\quad y = \frac {x}{\lambda_n },
\end{equation}
where $V[\Gamma_n^{in}]$ is the blow-up approximate solution defined in \eqref{eq:blowup}
with the parameter $\Gamma_n^{in}$ defined in \eqref{eq:at:S}
(recall that $V$ depends on the constant $\delta=K^{-2}$).
We consider the solution $u_n$ of \eqref{nls} with initial data $u_n(T_n)=u_n^{in}$.
Using Lemma \ref{le:CL} with $u(t,x)=u_n(T_n+t,x)$ and $S=S_n$, there exist a time $\bar s_n<S_n$ and a 
$\mathcal C^1$ function $\Gamma$ such that $u_n$ decomposes as in \eqref{def:u} 
with \eqref{fortho33}. Since $\varepsilon_n(S_n)=\varepsilon_n^{in} = 0$, integrating the first four equations in \eqref{fortho33}, we obtain the orthogonality relations \eqref{fortho2}.
Note that the bootstrap estimate \eqref{eq:BS} is satisfied with strict inequalities at $s=S_n$ (for $n$ large) and thus by continuity, it is also satisfied on $[\bar s_n,S_n]$ after possibly increasing $\bar s_n<S_n$. Note that there exists $\tau_K$ large depending only on $K$ such that if \eqref{eq:BS} is satisfied on a time interval $[\tau,S_n]$, for $\tau_K< \tau<S_n$, then
the asssumption
\[
|(\lambda(s),b(s),a(s),j_1(s),j_2(s),j_3(s))|+|d(u_n[\Gamma(s)];\Gamma(s))| < \omega,
\]
 of Lemma \ref{le:CL} is satisfied for all $s\in [\tau,S_n]$ so that the decomposition can be extended to $[\tau',S_n]$ for some $\tau_K<\tau'<\tau$. 
Thus, assuming $s_0\geq \tau_K$, we may define 
\[
s_*=s_*(\beta,n) = \inf \{ s\in [s_0,S_n]: \mbox{\eqref{eq:BS} holds for $u_n$ on $[s,S_n]$}\}.
\]
In particular, possibly taking $s_0$ larger, depending on $K$, we may apply the estimates proved in Sections \ref{S:3}, \ref{S:4} and
\ref{S:5} to the solution $u_n$ on the rescaled time interval $[s_*,S_n]$

The next proposition, which is the main result of this section, shows that for $K$ large, 
there exists at least a value of $\beta=\beta_n$ such that 
\eqref{eq:BS} holds on $[s_0,S_n]$ where $s_0$ is independent of $n$.

\begin{proposition}\label{pr:boot}
For all sufficiently large $K$, 
there exists $s_0\geq \tau_K$ such that for all $n>s_0$, it holds
$s_*(\beta_n,n)=s_0$ for some $\beta_n\in (-1,1)$.
\end{proposition}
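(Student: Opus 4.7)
The proof is a bootstrap combined with a one-dimensional shooting argument. For $\beta\in[-1,1]$, let $u_n^\beta$ be the solution of \eqref{nls} with initial data \eqref{def:un:in}, decomposed via Lemma~\ref{le:CL}, and write $s_*(\beta)$ for its bootstrap exit time on $[s_0,S_n]$. The plan is to show that seven of the eight bounds in \eqref{eq:BS} can be improved independently of $\beta$, leaving only the estimate for $\tilde b:=b-s^{-1}+\alpha_1 s^{-5/2}\cos\gamma$ as a one-dimensional unstable mode, and then to invoke connectedness of $[-1,1]$ to produce $\beta_n$ with $s_*(\beta_n)=s_0$.

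First I would close the \emph{soft} bootstrap directions. Lemma~\ref{eestm1} yields $|\vec m|\lesssim s^{-3}$ with a constant independent of $K$, strictly improving $|\vec m|\leq Ks^{-3}$ for $K$ large. Integrating backwards from $t(S_n)$, where $\mathcal G$, $\mathcal H$ and $\mathcal N$ all vanish since $\varepsilon(S_n)=0$, Proposition~\ref{eenergy1} combined with the coercivity bounds \eqref{eenergy2:1}-\eqref{eenergy1:1} successively give $\mathcal G,\ \|\eta\|_{L^2}^2\lesssim K^{1/4}|t|^4$, then $\mathcal H,\ \|\partial_x\eta\|_{L^2}^2\lesssim K^{1/4}|t|^2$ and $\mathcal N,\ \|x\eta\|_{L^2}^2\lesssim K^{1/4}|t|^4$. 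Translated back to the rescaled variables, these are bounds of size $CK^{1/8}s^{-j}$ in place of $K^{1/4}s^{-j}$, hence strict improvements for $K$ large. The estimate $|\gamma-s|\leq s^{-7/4}$ follows from $|m_\gamma|\lesssim s^{-3}$ by direct integration.

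For the parameter $a$, Lemma~\ref{eestma} provides $(a+\alpha_5\lambda^{3/2}\mathcal J)_s=\Omega+O(K^{1/2}s^{-4})$. Integrating from $S_n$ and performing integration by parts on the oscillatory leading term $\alpha_1 b\lambda^{3/2}\cos\gamma$ of $\Omega$ (using $\gamma_s=1+O(s^{-3})$) produces exactly the profile $\alpha_1 s^{-5/2}\sin\gamma$ as a boundary term, with remaining error $O(K^{1/2}s^{-3})$; this strictly improves $|a-\alpha_1 s^{-5/2}\sin\gamma|\leq Ks^{-3}$ for $K$ large. For $|b/\lambda-1|$, identity \eqref{eq:boverl} combined with the refined $a$ bound gives $(b/\lambda)_s=\alpha_1 s^{-3/2}\sin\gamma+O(K^{1/2}s^{-2})$; integration by parts against $\sin\gamma$ eliminates the leading oscillatory term up to an $O(s^{-3/2})$ contribution, and using the vanishing of $b(S_n)/\lambda(S_n)-1$ together with the $O(K^{1/2}s^{-1})$ contribution yields $|b/\lambda-1|\lesssim K^{1/2}s^{-1}$, again improving the bootstrap. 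Inserting the improved estimates into $b_s=m_b-b^2+a$, a direct algebraic expansion shows that the remaining \emph{shooting} direction satisfies
\[
\tilde b_s=-\tfrac{2}{s}\tilde b+O(Ks^{-3}),
\]
an ODE whose integrating factor $s^2$ is backward-unstable.

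To conclude, assume for contradiction that $s_*(\beta)>s_0$ for every $\beta\in[-1,1]$. By the preceding steps the only bound that can saturate at $s_*(\beta)$ is $|\tilde b|\leq s^{-7/4}$, so $\Phi(\beta):=s_*(\beta)^{7/4}\tilde b(s_*(\beta))\in\{-1,+1\}$. Transversality at $s_*$ follows from
\[
\tfrac{d}{ds}\bigl[s^{7/4}\tilde b\bigr]\Big|_{s=s_*}=-\tfrac{1}{4}s_*^{3/4}\tilde b(s_*)+O(Ks_*^{-5/4}),
\]
whose leading term has sign opposite to $\tilde b(s_*)$ provided $s_0$ is taken large enough relative to $K$; hence $\beta\mapsto s_*(\beta)$ and $\beta\mapsto\Phi(\beta)$ depend continuously on $\beta$. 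Since $\tilde b(S_n)=\beta n^{-7/4}$, at $\beta=\pm 1$ the bootstrap is saturated at $s=S_n$ with $\tilde b$ transverse-exiting, so $\Phi(\pm1)=\pm1$. Thus $\Phi:[-1,1]\to\{-1,+1\}$ is a continuous surjection, contradicting connectedness of $[-1,1]$. The hard part is not the topological shooting itself but the precise extraction of oscillatory cancellations in the parameter estimates --- in particular the refined $a$-bound via Lemma~\ref{eestma} and the integration-by-parts closure of $|b/\lambda-1|$ --- without which additional bootstrap directions would become unstable and the one-dimensional shooting would fail.
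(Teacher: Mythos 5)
Your proposal is correct and follows essentially the same route as the paper: improve all bootstrap directions except the unstable one using Lemma~\ref{eestm1}, the energy estimates of Proposition~\ref{eenergy1} with the coercivity Lemma~\ref{le:coercivity}, and the parameter improvements based on Lemma~\ref{eestma} and identity~\eqref{eq:boverl}; then isolate $\tilde b = b - s^{-1} + \alpha_1 s^{-5/2}\cos\gamma$ as the single shooting direction, derive its backward-unstable ODE, and conclude by transversality and connectedness. Your quantity $\Phi(\beta)=s_*(\beta)^{7/4}\tilde b(s_*(\beta))$ is exactly the paper's $g\circ s_*$ where $g(s)=s^{7/4}\tilde b(s)$, and the computation $\tilde b_s = -\tfrac{2}{s}\tilde b + O(Ks^{-3})$ is equivalent to the paper's $g_s = -\tfrac{g}{4s}+O(Ks^{-5/4})$. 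Minor cosmetic differences: the paper parametrizes by $\beta\in(-1,1)$ and uses that $g\circ s_*$ takes both values on the ends of the open interval, whereas you work on $[-1,1]$ and observe $\Phi(\pm1)=\pm1$; also the improved weighted-norm exponent is $K^{3/16}$ rather than $K^{1/8}$, but since $3/16<1/4$ this is still a strict improvement and does not affect the closure. Neither difference is substantive.
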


The rest of this section is devoted to the proof of Proposition~\ref{pr:boot}.
Recall that the estimate of $\vec m$ in \eqref{eq:BS} was strictly improved for $K$ large 
by Lemma~\ref{eestm1}.
In Section \ref{S:6.1}, we improve the bootstrap estimate of $\varepsilon$ in \eqref{eq:BS}.
In Section \ref{S:6.2}, we improve the bootstrap estimates of $\gamma$, $a$ and $\frac b\lambda$ in \eqref{eq:BS}.
Last, in Section \ref{S:6.3}, we prove by a contradiction argument that there exists at least one value of
$\beta\in (-1,1)$ such that $s_*(\beta,n)=s_0$.

\subsection{Closing the bootstrap estimates on the error term}\label{S:6.1}

In this subsection, we prove the following result.
\begin{lemma}\label{le:onEPS}
For all $s\in [s_*,S_n]$,
\begin{equation} \label{eq:BS3improved} 
\begin{aligned} 
\|\varepsilon(s)\|_{H^1}& \lesssim K^\frac18 s^{-2}, \\
 \| y \varepsilon(s) \| _{ L^2 } & \lesssim K^{\frac {3} {16}} s^{-1}.
\end{aligned}
\end{equation} 
\end{lemma}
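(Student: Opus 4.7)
The plan is to close the two bootstrap inequalities by running the energy machinery of Section~\ref{S:5} backward in time from $s=S_n$, where $\varepsilon(S_n)=0$ by the choice of initial data in \eqref{eq:at:S}--\eqref{def:un:in}. Consequently, in the original variables $\eta(T_n)=0$, hence $\mathcal N(T_n)=\mathcal H(T_n)=\mathcal K(T_n)=\mathcal G(T_n)=0$. All estimates below are understood on the time interval corresponding to $s\in [s_*,S_n]$, where \eqref{eq:BS} holds.

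First I would integrate \eqref{eenergy1:2b} backward from $T_n$ to obtain $|\mathcal G(t)| \lesssim K^{1/4}|t|^4$, and then apply the coercivity \eqref{eenergy1:1} to conclude $\|\eta(t)\|_{L^2}^2 \lesssim K^{1/4}|t|^4$. Feeding this back into the right-hand side of \eqref{eenergy1:2} yields $|d\mathcal H/dt| \lesssim K^{1/4}|t|$, which after integration gives $|\mathcal H(t)|\lesssim K^{1/4}|t|^2$. The coercivity \eqref{eenergy2:1} then furnishes
\begin{equation*}
\|\partial_x\eta(t)\|_{L^2}^2 + \lambda^{-2}(t)\|\eta(t)\|_{L^2}^2 \lesssim K^{1/4}|t|^2.
\end{equation*}
Translating back to rescaled variables via \eqref{def:eta}, namely $\|\varepsilon\|_{L^2}=\|\eta\|_{L^2}$ and $\|\partial_y\varepsilon\|_{L^2}=\lambda\|\partial_x\eta\|_{L^2}$, and using $\lambda(s)\sim s^{-1}\sim |t|$ from \eqref{eq:lambda} and \eqref{eq:t}, I obtain $\|\varepsilon(s)\|_{H^1}\lesssim K^{1/8}s^{-2}$, which improves the bootstrap on $\|\varepsilon\|_{H^1}$.

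For the weighted bound, I would insert the improved control $\|\partial_x\eta\|_{L^2}\lesssim K^{1/8}|t|$ into \eqref{eenergy1:2t}, obtaining
\begin{equation*}
\Bigl|\frac{d\mathcal N}{dt}\Bigr| \lesssim K^{1/8}|t|\sqrt{\mathcal N} + K^{1/4}|t|^3.
\end{equation*}
Since $\mathcal N(T_n)=0$, integrating from $t$ to $T_n$ yields
\begin{equation*}
\mathcal N(t) \lesssim K^{1/8}\int_t^{T_n}|\tau|\sqrt{\mathcal N(\tau)}\,d\tau + K^{1/4}|t|^4.
\end{equation*}
A standard continuity/bootstrap step, setting $M=\sup_{t\leq\tau\leq T_n}\mathcal N(\tau)/|\tau|^4$, converts this into the scalar inequality $M \lesssim K^{1/8}\sqrt{M}+K^{1/4}$, whence $M\lesssim K^{1/4}$, i.e. $\mathcal N(t)\lesssim K^{1/4}|t|^4$. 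Since $\|y\varepsilon\|_{L^2}=\lambda^{-1}\|x\eta\|_{L^2}=\lambda^{-1}\sqrt{\mathcal N}$, this gives $\|y\varepsilon(s)\|_{L^2}\lesssim K^{1/8}s^{-1}$, which is sharper than (and in particular implies) the claim with $K^{3/16}$.

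The technical heart of the argument is Section~\ref{S:5}; the only real subtlety here is the Gronwall-type step for $\mathcal N$, where the dissipative-looking term $\|\partial_x\eta\|_{L^2}\|x\eta\|_{L^2}$ couples $\mathcal N$ with the just-obtained $H^1$ bound, and one must be careful that the resulting power of $K$ in front of $|t|^4$ remains strictly less than $K^{1/2}$ so as to improve the original bootstrap $\|y\varepsilon\|_{L^2}\leq K^{1/4}s^{-1}$. No other step poses difficulty beyond bookkeeping of the changes of variable between $(t,x)$ and $(s,y)$.
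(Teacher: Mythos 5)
Your proof is correct and follows the same route as the paper: use $\varepsilon(S_n)=0$ to get $\mathcal G(T_n)=\mathcal H(T_n)=\mathcal N(T_n)=0$, integrate \eqref{eenergy1:2b} backward and apply \eqref{eenergy1:1} for the $L^2$ bound, feed that into \eqref{eenergy1:2} and apply \eqref{eenergy2:1} for the $\dot H^1$ bound, then integrate \eqref{eenergy1:2t} for the weighted bound, translating through $\|\varepsilon\|_{L^2}=\|\eta\|_{L^2}$, $\|\partial_y\varepsilon\|_{L^2}=\lambda\|\partial_x\eta\|_{L^2}$, $\|y\varepsilon\|_{L^2}=\lambda^{-1}\|x\eta\|_{L^2}$ and $\lambda\sim|t|\sim s^{-1}$. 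The only deviation is the last step: the paper simply inserts the bootstrap bound $\|x\eta\|_{L^2}\lesssim K^{1/4}|t|^2$ from \eqref{eq:BS3} into $\|\partial_x\eta\|_{L^2}\|x\eta\|_{L^2}$ to get $\mathcal N\lesssim K^{3/8}|t|^4$ and hence the stated exponent $K^{3/16}$, whereas your Gronwall-type self-improvement yields the slightly sharper $\mathcal N\lesssim K^{1/4}|t|^4$, i.e. $K^{1/8}$; both are strict gains over $K^{1/4}$, so either closes the bootstrap. A small remark: since $M=\sup \mathcal N(\tau)/|\tau|^4$ is already finite a priori by \eqref{eq:bounds}, no continuity argument is needed for your scalar inequality $M\lesssim K^{1/8}\sqrt M+K^{1/4}$; it resolves directly algebraically.
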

\begin{proof}
Define $t_*<0$ such that $T_n-t_*=\int_{s_*}^{S_n} \lambda^2$.
Let $t\in [t_*,T_n]$.
Integrating \eqref{eenergy1:2b} on $[t,T_n]$, using $\mathcal G (T_n)=0$, we obtain
\[
|\mathcal G(t)| \lesssim K^{\frac 14} |t|^4 .
\]
Thus, by \eqref{eenergy1:1},
\[ 
\|\eta(t)\|_{L^2}^2 \lesssim \mathcal G(t) \lesssim K^{\frac 14} |t|^4.
\]
Inserting this estimate in~\eqref{eenergy1:2}, we obtain
\[
\Bigl| \frac {d} {dt} \mathcal H (t) \Bigr|
 \lesssim K^{\frac {1} {4}} |t| .
\]
Integrating the latter estimate on $[t,T_n]$, using $\mathcal H (T_n)=0$, we obtain
\[
|\mathcal H(t)| \lesssim K^{\frac 14} |t|^2.
\]
By \eqref{eenergy2:1}, this gives
\[
\|\partial_x \eta(t)\|_{L^2}^2 \lesssim K^{\frac 14} |t|^2.
\]
Thus, by change of variable $\|\varepsilon(s)\|_{H^1}\lesssim K^\frac18 s^{-2}$
on $[s_*,S_n]$.
We continue using \eqref{eenergy1:2t}, the above estimate and \eqref{eq:BS3} for $\|x\eta\|_{L^2}$,
\[
\left|\frac {d} {dt} \mathcal N \right| 
\lesssim \|\partial_x \eta\|_{L^2} \|x\eta\|_{L^2} + K^{\frac {1} {4}} |t|^3
\lesssim K^{\frac 38} |t|^3.
\]
Integrating this estimate on $[t,T_n]$, using $\mathcal N (T_n)=0$, we obtain
\[
\mathcal N(t)=\|x \eta(t)\|_{L^2}^2  \lesssim K^{\frac 38} |t|^4.
\]
The proof of the lemma is complete.
\end{proof}

\subsection{Closing the parameter estimates}\label{S:6.2}
 
\begin{lemma}\label{le:onPARAM}
For all $s\in [s_*,S_n]$, 
\begin{equation} \label{eq:BSimproved}
\left\{\begin{aligned}
|\gamma(s) -s |&\lesssim s^{-2},\\
|a(s) - \alpha_1 s^{-\frac 52} \sin \gamma(s) |&\lesssim K^\frac12 s^{-3},\\
\left|\frac{b(s)}{\lambda(s)} - 1\right|&\lesssim K^\frac12 s^{-1}.\\
\end{aligned}\right.
\end{equation}
\end{lemma}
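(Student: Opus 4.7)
My plan is to integrate each modulation ODE backward from $s = S_n$, using the explicit data at $s = S_n$ from \eqref{eq:at:S} ($\gamma(S_n) = n$, $a(S_n) = \alpha_1 n^{-5/2}\sin n$, $b(S_n)/\lambda(S_n) = 1$, $\varepsilon(S_n) = 0$) together with the refined modulation estimates of Lemmas \ref{eestm1} and \ref{eestma}, and to harvest cancellations in the resulting oscillatory integrals via integration by parts. The $\gamma$ bound is immediate: from $\gamma_s - 1 = m_\gamma$ and the $K$-independent bound $|m_\gamma| \lesssim s^{-3}$ of Lemma \ref{eestm1}, integration from $s$ to $S_n$ yields $|\gamma(s) - s| \leq \int_s^{S_n}|m_\gamma|\,d\tau \lesssim s^{-2}$.

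For $a$, I will start from Lemma \ref{eestma} and integrate from $s$ to $S_n$. Using $\mathcal{J}(S_n) = 0$ together with the bound $|\alpha_5\lambda^{3/2}(s)\mathcal{J}(s)| \lesssim K^{1/4}s^{-13/4} \lesssim K^{1/2}s^{-3}$ from Lemma \ref{eestJ}, the task reduces to showing
\[
\int_s^{S_n}\Omega(\tau)\,d\tau = \alpha_1 n^{-5/2}\sin n - \alpha_1 s^{-5/2}\sin\gamma(s) + O(K^{1/2}s^{-3}).
\]
The three subleading terms of $\Omega$, namely $\alpha_2 b\lambda^{5/2}\sin\gamma$, $\alpha_3 \lambda^3 \sin 2\gamma$ and $\alpha_4 b^2\lambda^{3/2}\sin\gamma$, all have amplitude $\lesssim \tau^{-3}$ with slowly varying coefficients (derivatives $\lesssim \tau^{-4}$, computed from $b_\tau = -b^2 + a + m_b$ and $\lambda_\tau = -b\lambda + \lambda m_\lambda$), so one integration by parts against the oscillation — using $\gamma_\tau = 1 + O(\tau^{-3})$ to invert the phase — bounds their total contribution by $O(s^{-3})$. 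The dominant term is treated by
\[
\alpha_1\int_s^{S_n}b\lambda^{3/2}\cos\gamma\,d\tau = \alpha_1\left[\frac{b\lambda^{3/2}}{\gamma_\tau}\sin\gamma\right]_s^{S_n} - \alpha_1\int_s^{S_n}\sin\gamma\,\frac{d}{d\tau}\!\left(\frac{b\lambda^{3/2}}{\gamma_\tau}\right)d\tau;
\]
the bootstrap gives $b\lambda^{3/2} = \tau^{-5/2} + O(\tau^{-13/4})$ and $\gamma_\tau = 1 + O(\tau^{-3})$, so the boundary bracket contributes exactly $\alpha_1 n^{-5/2}\sin n - \alpha_1 s^{-5/2}\sin\gamma(s) + O(s^{-13/4})$, within budget.

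With the improved bound on $a$ in hand, I will close the $b/\lambda$ estimate by integrating \eqref{eq:boverl} from $s$ to $S_n$ and using $(b/\lambda)(S_n) = 1$. Splitting $a = \alpha_1 \tau^{-5/2}\sin\gamma + (a - \alpha_1 \tau^{-5/2}\sin\gamma)$, the oscillatory piece contributes $\alpha_1 \int_s^{S_n}\tau^{-3/2}\sin\gamma\,d\tau = O(s^{-3/2})$ via one integration by parts, while the remainder gives $O(K^{1/2}s^{-1})$ after division by $\lambda$ and integration; the $m_b/\lambda$ and $m_\lambda(b/\lambda)$ terms integrate to $O(s^{-1})$ and $O(s^{-2})$ respectively, with $K$-independent constants from Lemma \ref{eestm1}. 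Absorbing $s^{-3/2}$ and $s^{-1}$ into $K^{1/2}s^{-1}$ (valid for $s \geq s_0$ once $K$ is large) closes the bound.

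The hard part will be the bulk integral $\int_s^{S_n}\sin\gamma\,\frac{d}{d\tau}(b\lambda^{3/2}/\gamma_\tau)\,d\tau$ in the $a$ estimate, since differentiating $1/\gamma_\tau = 1/(1+m_\gamma)$ naively would introduce $m_{\gamma,\tau}$, which the bootstrap does not directly control. I plan to circumvent this by writing $b\lambda^{3/2}/\gamma_\tau = b\lambda^{3/2} - b\lambda^{3/2} m_\gamma + O(\tau^{-11/2})$ and performing a second integration by parts on the $m_\gamma$ piece so that the derivative falls on $\sin\gamma$ rather than on $m_\gamma$. What remains is $(b\lambda^{3/2})_\tau = -\tfrac{5}{2} b^2\lambda^{3/2} + O(\tau^{-4})$, of size $\tau^{-7/2}$, which integrated against $\sin\gamma$ yields $O(s^{-7/2}) + O(s^{-3}) \lesssim K^{1/2}s^{-3}$.
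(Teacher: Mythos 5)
Your proposal is correct and follows essentially the same strategy as the paper's proof: integrate each modulation ODE backward from $s=S_n$ using the prescribed data \eqref{eq:at:S}, apply the refined estimates of Lemmas~\ref{eestm1}, \ref{eestJ} and \ref{eestma}, and extract the cancellation in $\int_s^{S_n}\Omega$ and $\int_s^{S_n}\tau^{-3/2}\sin\gamma\,d\tau$ by iterated integration by parts against the oscillating phase. The only cosmetic difference is mechanical: you divide by $\gamma_\tau$ before integrating by parts and then expand $1/\gamma_\tau = 1 - m_\gamma + O(m_\gamma^2)$ to dodge the uncontrolled $m_{\gamma,\tau}$, whereas the paper substitutes $1=\gamma_s - m_\gamma$ \emph{before} the integration by parts, which never introduces $1/\gamma_\tau$ at all — but these are algebraically equivalent and both reach the same remainder bounds.
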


\begin{proof}
Integrating on $[s,S_n]$ the estimate $|\gamma_s-1|\lesssim s^{-3}$ in \eqref{festm1} and using $\gamma_n^{in}=n=S_n$, we obtain
\[
|\gamma(s) - s|\lesssim s^{-2},
\] 
which is the estimate for $\gamma$ in \eqref{eq:BSimproved}.

Now, by \eqref{eq:new}, we have
\[
\frac {d} {ds} [ a + \alpha _5 \lambda ^{\frac {3} {2}} \mathcal J]= \Omega + O( K^\frac12 s^{-4}).
\]
Integrating, using 
$\mathcal J(S_n)=0$, then~\eqref{eq:bdJ},
\begin{equation}\label{triplegal} 
 \Bigl| a({S_n}) - a(s) - \int _s^{S_n} \Omega \Bigr| \lesssim \lambda ^{\frac {3} {2}} | \mathcal J(s) | + K^\frac12 s^{-3} \lesssim K^\frac12 s^{-3}.
\end{equation} 
We now calculate $\int \Omega $ using the definition of $\Omega$ in~\eqref{eq:O}. 
First, by $1=\gamma_s - m_\gamma$
\begin{equation*} 
\begin{aligned} 
\int _s ^{S_n} b \lambda^\frac32 \cos \gamma &= - \int _s ^{S_n} m_\gamma b \lambda^\frac32 \cos \gamma + \int _s ^{S_n} b \lambda^\frac32 \frac {d} {ds} \sin \gamma \\ 
& = O(s^{- \frac {9} {2}} ) + \int _s ^{S_n} b \lambda^\frac32 \frac {d} {ds} \sin \gamma .
\end{aligned} 
\end{equation*} 
Next, 
\begin{equation*} 
\int _s ^{S_n} b \lambda^\frac32 \frac {d} {ds} \sin \gamma = [ b \lambda^\frac32 \sin \gamma] (S_n) - [ b \lambda^\frac32 \sin \gamma] (s) - 
\int _s ^{S_n} \frac {d} {ds} ( b \lambda^\frac32 ) \sin \gamma ;
\end{equation*} 
then, using~\eqref{festm1} and~\eqref{eq:BS} (in particular, $|a|\lesssim s^{-\frac 52}$)
\begin{equation*} 
\begin{aligned} 
\int _s ^{S_n} \frac {d} {ds} ( b \lambda^\frac32 ) \sin \gamma & = - \frac {5} {2} \int _s ^{S_n} b^2 \lambda^\frac32 \sin \gamma 
+ \int _s ^{S_n} \lambda^\frac32 \Bigl( a + m_b + \frac {3} {2} b m_\lambda \Bigr) \sin \gamma \\
& = - \frac {5} {2} \int _s ^{S_n} b^2 \lambda^\frac32 \sin \gamma + O(s^{-3}). 
\end{aligned} 
\end{equation*} 
We integrate by parts again
\begin{equation*} 
\begin{aligned} 
\int _s ^{S_n} b^2 \lambda^\frac32 \sin \gamma &= - \int _s ^{S_n} m_\gamma b^2 \lambda^\frac32 \sin \gamma - \int _s ^{S_n} b^2 \lambda^\frac32 \frac {d} {ds} \cos \gamma \\ 
& = O(s^{- \frac {11} {2}} ) - \int _s ^{S_n} b^2 \lambda^\frac32 \frac {d} {ds} \cos \gamma .
\end{aligned} 
\end{equation*} 
Next, 
\begin{equation*} 
\begin{aligned} 
\int _s ^{S_n} b^2 \lambda^\frac32 \frac {d} {ds} \cos \gamma & = [ b^2 \lambda^\frac32 \cos \gamma] ({S_n}) - [ b^2 \lambda^\frac32 \cos \gamma] (s) - \int _s ^{S_n} \frac {d} {ds} ( b^2 \lambda^\frac32 ) \cos \gamma \\
& = O (s^{- \frac {7} {2}}) - \int _s ^{S_n} \frac {d} {ds} ( b^2 \lambda^\frac32 ) \cos \gamma ;
\end{aligned} 
\end{equation*}
then, using~\eqref{eq:BS} and~~\eqref{festm1} 
\begin{equation*} 
\begin{aligned} 
\int _s ^{S_n} \frac {d} {ds} ( b^2 \lambda^\frac32 ) \cos \gamma & = - \frac {7} {2} \int _s ^{S_n} b^3 \lambda^\frac32 \cos \gamma 
+ \int _s ^{S_n} b \lambda^\frac32 \Bigl( 2 a + 2 m_b + \frac {3} {2} b m_\lambda \Bigr) \cos \gamma \\
& = O(s^{- \frac {7} {2}}). 
\end{aligned} 
\end{equation*} 
Collecting the above identities, we obtain
\begin{equation*} 
\int _s ^{S_n} b \lambda^\frac32 \cos \gamma = [ b \lambda^\frac32 \sin \gamma] ({S_n}) - [ b \lambda^\frac32 \sin \gamma] (s) + O (s^{-3} ). 
\end{equation*} 
The other three terms in~\eqref{eq:O} can also be calculated by integration by parts (once only) and they are all $O(s^{-3})$.
Hence, in the end we obtain
\begin{equation*} 
\begin{aligned} 
\int _s ^{S_n}\Omega &= \alpha _1 [ b \lambda^\frac32 \sin \gamma] ({S_n}) - \alpha _1 [ b \lambda^\frac32 \sin \gamma] (s) + O (s^{-3} )\\
&= \alpha _1 S_n^{ - \frac52 } \sin \gamma ({S_n}) - \alpha _1 s^{ - \frac52 } \sin \gamma (s) + O (s^{-3} ),
\end{aligned} 
\end{equation*} 
where we used~\eqref{eq:BS} in the last line.
Using now
\begin{equation*} 
a({S_n})= \alpha _1 S_n^{-\frac {5} {2}} \sin \gamma ({S_n}),
\end{equation*} 
as specified in \eqref{eq:at:S}, we finally obtain from \eqref{triplegal},
\begin{equation}\label{eq:a:improved}
|a(s) - \alpha_1 s^{-\frac 52} \sin \gamma(s) |\lesssim K^\frac12 s^{-3},
\end{equation}
which is the desired estimate related to $a$ in \eqref{eq:BSimproved}.

Last, we observe that by the calculation~\eqref{eq:boverl} and~\eqref{festm1} 
\[
\frac{d}{ds} \left( \frac {b}{\lambda} \right)= \frac{a}{\lambda}+O(s^{-2}),
\]
so that using~\eqref{eq:a:improved} 
\[
\left| \frac{d}{ds} \left( \frac {b}{\lambda} \right) - \alpha_1 s^{-\frac 32} \sin \gamma\right| \lesssim 
K^\frac12 s^{-2}.
\]
Integrating on $[s,{S_n}]$ and using $\lambda({S_n})= b({S_n})$ from \eqref{eq:at:S}, we obtain
\[
\left| \frac {b(s)}{\lambda(s)} -1 \right| \lesssim K^\frac12 s^{-1} + \Bigl|\int _s^{S_n}  (s')^{-\frac 32} \sin \gamma (s' ) \Bigr|.
\]
We observe that, using $1= \gamma_s - m_\gamma$ and then integrating by parts
\begin{equation*} 
\begin{aligned} 
\int _s^{S_n} (s') ^{-\frac 32} \sin \gamma (s' ) ds'& = - \int _s^{S_n} m_\gamma (s')^{-\frac 32} \sin \gamma (s' ) ds'\\&\quad + \int _s^{S_n} (s')^{-\frac 32} \gamma_s (s' ) \sin \gamma (s' ) ds'\\
& = - \int _s^{S_n} m_\gamma (s')^{-\frac 32} \sin \gamma (s' )ds'
- \frac {3} {2} \int _s^{S_n} (s')^{-\frac 52} \cos \gamma (s' ) ds'\\
&\quad - S_n^{-\frac 32} \cos\gamma({S_n}) + s^{-\frac 32} \cos\gamma(s);
\end{aligned} 
\end{equation*} 
and so, applying~\eqref{festm1} 
\begin{equation} \label{es:osc} 
 \Bigl| \int _s^{S_n}(s')^{-\frac 32} \sin \gamma (s' ) ds'\Bigr| \lesssim s^{-\frac {3} {2}}.
\end{equation} 
Thus, we have proved
\begin{equation*} 
\left| \frac {b(s)}{\lambda(s)} -1 \right| \lesssim K^\frac12 s^{-1} ,
\end{equation*} 
which is the estimate related to $\frac b\lambda$ in \eqref{eq:BSimproved}.
\end{proof}

Using Lemmas~\ref{eestm1}, \ref{le:onEPS} and \ref{le:onPARAM}, 
we see that if $K$ is sufficiently large (independent of $n$), then
on $[s_*,S_n]$,
\begin{equation} \label{eq:BSimproved2}
\left\{\begin{aligned}
|\gamma(s) -s |&\leq \frac 12 s^{-\frac 74},\\
\left|\frac{b(s)}{\lambda(s)} - 1\right|&\leq \frac 12 K  s^{-1},\\
|a(s) - \alpha_1 s^{-\frac 52} \sin \gamma(s) |&\leq \frac 12 K s^{-3},\\
|\vec m(s)| & \leq \frac 12 K s^{-3},\\
\|\varepsilon(s)\|_{H^1}& \leq \frac 12 K^\frac14 s^{-2}, \\
 \| y \varepsilon(s) \| _{ L^2 } & \leq \frac 12 K^{\frac {1} {4}} s^{-1},
\end{aligned}\right.
\end{equation}
which means that
all the estimates in \eqref{eq:BS} are strictly improved except the estimate of $b$.
We now consider a value of $K$ sufficiently large so that \eqref{eq:BSimproved2} holds
on $[s_*,S_n]$.

\subsection{Closing the bootstrap by a contradiction argument}\label{S:6.3}
Closing the 
estimate of $b$ in~\eqref{eq:BS}  requires a specific contradiction argument related to the choice of the free parameter $\beta\in (-1,1)$.

We set
\[
g(s)=g_\beta(s) = s^\frac74 \left( b(s) -s^{-1} + \alpha_1 s^{-\frac52}\cos \gamma(s)\right)
\]
so that
\[
|g(s)|\leq 1 \iff \bigl| b(s) -s^{-1} + \alpha_1 s^{-\frac52}\cos \gamma(s)\bigr| \leq s^{-\frac 74}.
\]
Hence, by \eqref{eq:at:S} and \eqref{eq:BS},
\[
g({S_n}) = \beta \quad \mbox{and}\quad |g(s)|\leq 1.
\]
Using \eqref{festm1} and the estimate on $a$ in \eqref{eq:BS}, we have
\begin{align*}
b_s+ b^2 - \alpha_1 s^{-\frac 52} \sin \gamma 
= m_b + a - \alpha_1 s^{-\frac 52} \sin \gamma= O( K s^{-3}).
\end{align*}
Thus, by direct computation, and \eqref{eq:BS}
\begin{align*}
g_s & = \frac 74 s^{-1} g + s^{\frac 74} \left( -b^2 + s^{-2} - \alpha_1  m_\gamma s^{-\frac 52} \sin\gamma
-\frac 52 \alpha_1 s^{-\frac 72} \cos \gamma + O(Ks^{-3})\right)\\
& = - \frac g {4s}+ O(K s^{- \frac 54}),
\end{align*}
where we have used, by \eqref{eq:BS},
\[
s^\frac74 (-b^2+s^{-2}) = - s^\frac74  (b+s^{-1})(b-s^{-1})= -\frac 2 s g+ O(s^{-\frac 74}).
\]
This estimate implies the following properties, for $s$ large,
\begin{equation}\label{eq:exit}
g(s)=1 \implies g_s(s)<0 \quad \mbox{and}\quad
g(s)=-1 \implies g_s(s)>0.
\end{equation}

Here, $s_0$ large is fixed so that all the previous estimates hold, and we work for any $n>s_0$. For the sake of contradiction,
assume that for any $\beta\in (-1,1)$, 
$s_*(\beta,n)=s_*(\beta)>s_0$.
We claim now that
\[
|g(s_*(\beta))|=1.
\]
Indeed, if $|g(s_*(\beta))|<1$, then by \eqref{eq:BSimproved2}, the bootstrap
estimate \eqref{eq:BS} is satisfied with strict inequalities at $s=s_*$,
which is a contradiction with the definition of $s_*$ by continuity.
By \eqref{eq:exit}, the map $s_*$ is continuous on $(-1,1)$.
We deduce that the map 
\[g\circ s_*:
\beta\in (-1,1) \mapsto g\circ s_*(\beta)\in \{-1,1\}\]
is continuous. Moreover, from \eqref{eq:exit}, for $\beta$ close to $1$, $g(s_*(\beta))=1$ and
for $\beta$ close to $-1$, $g(s_*(\beta))=-1$. Thus,
\[g\circ s_*((-1,1)) =\{-1,1\},\]
which is a contradiction since $(-1,1)$ is connected.
Therefore, for any $n$ large, there exists $\beta_n\in (-1,1)$ such that
for any $s\in [s_0,{S_n}]$, $|g(s)|< 1$, which completes the proof of 
Proposition~\ref{pr:boot}.

\section{Proof of the main result}\label{S:7}

Let 
\begin{equation}\label{def:r:s}
r_* (x) = \Theta(x) \left( |x|+i\kappa |x|^2\right).
\end{equation}
It follows that $r_*(x) = |x|+ i\kappa |x|^2$ on $(-\delta,\delta)$ and $\|r_*\|_{L^2}\leq C \delta^\frac32$, for some constant independent of $\delta$.
In this section, we consider $K$ sufficiently large to satisfy the assumptions of  Proposition~\ref{pr:boot}. This corresponds to choosing $\delta>0$ sufficiently small.

In the rest of this section, the implicit constants in inequalities may depend on $\delta$, but are independent of $n$.

\subsection{Construction of a sequence of solutions}
In the next proposition, we construct a sequence of solutions $(u_n)$ of \eqref{nls}, defined
on $[t_1,T_n]$, where $t_1<0$ is independent of $n$, satisfying uniform estimates related to the 
blow-up behavior expected in Theorem \ref{theorem1}.

\begin{proposition} \label{pr:unif}
 There exists $t_1<0$ such that, for all $n>1$ large,
there exists $\beta_n\in (-1,1)$ with the following property.
If $u_n$ be the solution of \eqref{nls} such that $u_n(T_n)=u_n^{in}$, 
where $u_n^{in}$ is defined in \eqref{eq:at:S}--\eqref{def:un:in},
then $u_n$ exists on $[t_1,T_n]$ and satisfies the following uniform estimates
\begin{align} 
\biggl\|u_n(t)-\frac{e^{\frac it}}{|t|^{\frac12}} Q\left(\frac \cdot t\right) - r_*\biggr\|_{L^2} &\lesssim |t|^{\frac 34}, \label{eq:unif:n}\\
\|u_n(t)\|_{H^1} & \lesssim |t|^{-1}.\label{eq:unif:2}\\
\|xu_n(t)\|_{L^2} & \lesssim 1.\label{eq:unif:3}
\end{align} 
\end{proposition}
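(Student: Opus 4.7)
The plan is to invoke Proposition~\ref{pr:boot} to obtain a uniform-in-$n$ bootstrap regime on an interval of rescaled times independent of $n$, and then transfer the estimates in the rescaled variables $(s,y)$ to estimates in the original variables $(t,x)$ for $u_n$. First I fix $K$ large enough for Proposition~\ref{pr:boot} to apply and let $s_0$ be the associated threshold. For each $n>s_0$, Proposition~\ref{pr:boot} provides $\beta_n \in (-1,1)$ such that the decomposition~\eqref{def:u} of $u_n$ exists on the rescaled-time interval $[s_0,S_n]$ with the bootstrap estimates~\eqref{eq:BS}. Using the quantitative relation~\eqref{eq:t}, the image $[t(s_0),T_n]$ in original time contains $[t_1,T_n]$ for a fixed $t_1 := -1/(2s_0)$ independent of $n$ (once $s_0$ is large), so $u_n$ is defined on $[t_1,T_n]$ and satisfies the bootstrap bounds uniformly there.

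The estimates~\eqref{eq:unif:2} and~\eqref{eq:unif:3} follow by direct change of variables from $u_n = \lambda^{-1/2} e^{i\gamma}(V+\varepsilon)$. Since $\|v\|_{L^2}=\|V\|_{L^2}$ and $\|\partial_x v\|_{L^2}=\lambda^{-1}\|\partial_y V\|_{L^2}$, the $H^1$ bounds on $V$ from Lemma~\ref{le:V} yield $\|v(t)\|_{H^1} \lesssim \lambda^{-1} \sim |t|^{-1}$, while the bootstrap $\|\varepsilon\|_{H^1}\lesssim K^{1/4}s^{-2}$ gives $\|\eta\|_{H^1}\lesssim \lambda^{-1}\|\varepsilon\|_{H^1} \lesssim |t|$, proving~\eqref{eq:unif:2}. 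For~\eqref{eq:unif:3}, $\|xu_n\|_{L^2}=\lambda\|y(V+\varepsilon)\|_{L^2}$; the contribution of $M_{-b}Q_a$ is $\lesssim \lambda$ by exponential decay, the localized contributions from $\theta(A+iB)$ and $Z$ are uniformly bounded using the pointwise estimates~\eqref{festW0} and~\eqref{festZ5} together with the support $I(s)$ of size $\lesssim \delta/\lambda$, and the $\varepsilon$ contribution is $\lesssim \lambda\|y\varepsilon\|_{L^2}\lesssim K^{1/4}s^{-2}$.

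The main estimate~\eqref{eq:unif:n} I would obtain by splitting
\[
u_n(t,x) - \tfrac{e^{i/t}}{|t|^{1/2}} Q(x/t) - r_*(x) = \mathcal D_1 + \mathcal D_2 + \mathcal D_3 + \mathcal D_4,
\]
where $\mathcal D_1$ compares $\lambda^{-1/2} e^{i\gamma} M_{-b} Q_a(y)$ with $|t|^{-1/2}e^{i/t}Q(x/t)$, $\mathcal D_2 = \lambda^{-1/2} e^{i\gamma} \theta(A+iB)(y) - r_*(x)$, and $\mathcal D_3$, $\mathcal D_4$ collect the $Z$ and $\varepsilon$ contributions. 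One has $\|\mathcal D_4\|_{L^2}=\|\varepsilon\|_{L^2}\lesssim |t|^2$ and $\|\mathcal D_3\|_{L^2}\lesssim |t|$ by~\eqref{festZ5}. For $\mathcal D_2$, the asymptotic expansions $\varphi_1(y)=|y|\chi(y)+c_1+v_1(y)$, $\psi_1(y)=-|y|\chi(y)-c_1+w_1(y)$ from Lemma~\ref{lem1bis} and the analogous ones for $(\varphi_2,\psi_2)$ from Lemma~\ref{lem2}, combined with the trigonometric identities $\cos\gamma\mp i\sin\gamma = e^{\mp i\gamma}$ and the identification $\kappa=-c_1/2$, give after multiplication by $\lambda^{-1/2}e^{i\gamma}$ and the substitution $y=x/\lambda$:
\[
\lambda^{-1/2} e^{i\gamma} \theta(A+iB)(y) = \Theta(x)|x|\chi(x/\lambda) + i\kappa x^2 \Theta(x) + c_1\lambda\Theta(x) + \mathcal O_{L^2}(|t|^{3/2}),
\]
which matches $r_*(x) = \Theta(x)(|x|+i\kappa x^2)$ up to the truncation $\Theta(x)|x|(\chi(x/\lambda)-1)$, whose support $\{|x|\lesssim \lambda\}$ yields an $L^2$ contribution $\lesssim \lambda^{3/2} \sim |t|^{3/2}$, plus the $c_1\lambda\Theta$ term which is $O(\lambda)$ in $L^2$.

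The main obstacle is the analysis of $\mathcal D_1$, which controls the rate. From~\eqref{eq:BS} and~\eqref{eq:t} one has $|\lambda-|t|| \lesssim s^{-7/4}$, hence the relative scaling error $|\lambda/|t| - 1| \lesssim s^{-3/4} \sim |t|^{3/4}$; a Lipschitz bound on the $L^2$-scaling $\lambda\mapsto \lambda^{-1/2}Q(\cdot/\lambda)$ then produces precisely $\|\mathcal D_1\|_{L^2} \lesssim |t|^{3/4}$. The correction $Q_a-Q=O(|a|)=O(|t|^{5/2})$ is of lower order, and the effect of the pseudo-conformal phase $M_{-b}-1$ yields $O(b)=O(|t|)$ in $L^2$ against $Q$; the phase alignment $e^{i\gamma}\sim e^{i/t}$ is ensured by the choice of initial data~\eqref{eq:at:S} (in particular $\gamma_n^{in}=n$, making $\gamma(S_n)$ compatible with $1/T_n$ modulo conventions), together with the bootstrap on $m_\gamma = \gamma_s-1$ controlling the evolution. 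Summing the four pieces then gives the desired $|t|^{3/4}$ bound.
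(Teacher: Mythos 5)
Your proposal is correct and follows essentially the same strategy as the paper's own proof: invoke Proposition~\ref{pr:boot} for a uniform bootstrap regime, transfer to the original variables via \eqref{eq:t} to fix $t_1$, and decompose $u_n-\frac{e^{i/t}}{|t|^{1/2}}Q(\cdot/t)-r_*$ into four pieces (soliton bubble, profile $\theta(A+iB)$, correction $Z$, error $\varepsilon$), each bounded exactly as in the paper (with the scaling misalignment $|\lambda/|t|-1|\lesssim|t|^{3/4}$ providing the dominant contribution). Your treatment of the $\theta(A+iB)-r_*$ piece explicitly expands the asymptotics of $(\varphi_j,\psi_j)$, whereas the paper factors through the auxiliary functions $A_*,B_*$ so that $r_*$ appears exactly as $e^{i\gamma}\lambda^{-1/2}\theta(A_*+iB_*)$; this is a cosmetic rather than substantive difference, and both yield the same $O(|t|)$ bound on that piece.
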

\begin{remark}
We observe by change of variable that
\[
\lim_{t\uparrow 0} \biggl\{S(t)-\frac{e^{\frac it}}{|t|^{\frac12}} Q\left(\frac x t\right)\biggr\}=
0\quad \mbox{in $L^2(\R)$},
\]
where $S$ is defined in \eqref{def:S}, 
which justifies the equality of the limits in \eqref{eq:thm1}.
\end{remark}
\begin{proof}
We use Proposition~\ref{pr:boot}, passing from the variable $(s,y)$ to the variable $(t,x)$, and 
using \eqref{eq:BS}. Let $t_0=t_0(n)$ be defined by $T_n-t_0 = j_1(s_0)$. By \eqref{eq:t}, we have
the estimate $|t_0|\gtrsim s_0^{-1}$, and so there exists $t_1$ independent of $n$ such that $t_0(n)\leq t_1<0$.
We now consider any $t\in [t_1,T_n]$.
For the sake of readability, we do not mention the $n$ dependence in the following formulas.
Let
\begin{align*}
w(t,x) &= e^{i\gamma} \lambda^{-\frac 12} W(s,y),\\
z(t,x) &= e^{i\gamma} \lambda^{-\frac 12} Z(s,y).
\end{align*}
We recall from \eqref{def:uveta} and $V=W+Z$ that
\[
u= w+ z +  e^{i\gamma} \eta. 
\]
First, from \eqref{eq:BS3}, we know that
\[
\|\eta(t)\|_{L^2}\lesssim  |t|^2,
\quad \|\partial_x \eta(t)\|_{L^2} \lesssim  |t|,\quad
\|x \eta(t)\|_{L^2}\lesssim   |t|^2.
\]
Second, we estimate $z$. From \eqref{festZ5}, recall that
\[
|Z| \lesssim  s^{-\frac 32} \mathbf{1}_{J(s)} + s^{-\frac{9}2} (|y|^3 + 1) \mathbf{1}_{I(s)}
\lesssim  s^{-\frac 32} \mathbf{1}_{I(s)}
\]
and
\[
 |\partial _y Z| 
\lesssim (1 + |y|)^{-1} \left[  s^{-\frac 32} \mathbf{1}_{J(s)} + s^{-\frac{9}2} (|y|^3 + 1) \mathbf{1}_{I(s)}\right] 
\lesssim  s^{-\frac 52} \mathbf{1}_{I(s)}.
\]
Thus,
\[
\|Z(s)\|_{L^2}\lesssim  s^{-1},\quad
\|\partial_y Z(s)\|_{L^2} \lesssim  s^{-2},\quad
\|yZ(s)\|_{L^2}\lesssim 1.
\]
By change of variable and \eqref{eq:t}, we obtain
\[
\|z(t)\|_{H^1} \lesssim  |t|,\quad
\|x z(t)\|_{L^2} \lesssim |t|.
\]
Last, we  decompose $w$ as
\[
w=q+r,\quad q(t,x)=e^{i\gamma} \lambda^{-\frac 12}M_{-b} Q_a (y),\quad r(t,x)=e^{i\gamma} \lambda^{-\frac12} \theta(s,y) (A+iB)(y).
\]

By the change of variable $z=\frac xt$, we have
\[
\biggl\|q(t)-\frac{e^{\frac it}}{|t|^{\frac12}} Q\left(\frac x t\right)  \biggr\|_{L^2} 
= \biggl\|e^{i\tilde \gamma} \tilde\lambda^{-\frac 12} e^{-ib\tilde \lambda^{-2} \frac{z^2}{4}}
Q_a\left( \frac {z}{\tilde \lambda}\right)- Q (z) \biggr\|_{L^2} 
\]
where
\[
\tilde\gamma = \gamma-\frac 1t,\quad
\tilde \lambda = \frac\lambda{|t|}.
\]
By \eqref{eq:BS} and \eqref{eq:t}, we have 
\[
|\tilde\gamma |+ |\tilde \lambda -1 | + |b| + |a|\lesssim    |t|^{\frac 34}.
\]
Writing
\begin{align*}
\biggl\|e^{i\tilde \gamma} \tilde\lambda^{-\frac 12} e^{-ib\tilde \lambda^{-2} \frac{z^2}{4}}
Q_a\left( \frac {z}{\tilde \lambda}\right)- Q (z) \biggr\|_{L^2} 
&\lesssim \biggl\|e^{i\tilde \gamma} \tilde\lambda^{-\frac 12} e^{-ib\tilde \lambda^{-2} \frac{z^2}{4}}
Q_a\left( \frac {z}{\tilde \lambda}\right)- Q \left( \frac {z}{\tilde \lambda}\right) \biggr\|_{L^2} \\
&\quad +\biggl\| Q\left( \frac {z}{\tilde \lambda}\right)- Q (z) \biggr\|_{L^2} ,
\end{align*}
it follows from usual computations, using the decay properties of $Q$ for 
the second term on the right-hand side, that
\[
\biggl\|q(t)-\frac{e^{\frac it}}{|t|^{\frac12}} Q\left(\frac x t\right)  \biggr\|_{L^2} \lesssim |t|^{\frac 34}.
\]

Let
\begin{align*}
A_*(s,y) & =\lambda^{\frac 32}|y| \cos \gamma + \kappa \lambda^{\frac 52}|y|^2 \sin \gamma, \\
B_*(s,y) & = - \lambda^{\frac 32}|y| \sin \gamma + \kappa \lambda^{\frac 52}|y|^2 \cos \gamma,
\end{align*}
so that
\[
r_*(t,x)=e^{i\gamma} \lambda^{-\frac12} \theta(s,y) (A_*+iB_*)(y).
\]
Now, note that by change of variable $x=\lambda y$,
\[
\|r-r_*\|_{L^2} =
\|\theta ( (A-A_*) + i (B-B_*) ) \|_{L^2}.
\]
 By the asymptotics of the functions $(\varphi_1,\psi_1)$ and $(\varphi_2,\psi_2)$ in Lemmas \ref{lem1bis} and \ref{lem2}, we have
\begin{align*}
\|\theta ( (A-A_*) + i (B-B_*) ) \|_{L^2}
&\leq 
\lambda^{\frac 32} \|\theta (\varphi_1-|y|)\|_{L^2}
+\lambda^{\frac 32}\|\theta (\psi_1+|y|)\|_{L^2}\\
&\quad +\lambda^{\frac 52}\|\theta (\varphi_2-\kappa |y|^2)\|_{L^2}
+\lambda^{\frac 52}\|\theta (\psi_2+\kappa|y|^2)\|_{L^2}\\
&\lesssim \lambda^\frac32 \|\theta\|_{L^2} \lesssim \delta^{\frac 12} |t|.
\end{align*}
Thus
\begin{equation*}
\|r(t)-r_*\|_{L^2} \lesssim \delta^{\frac 12}|t|.
\end{equation*}
This estimate completes the proof of \eqref{eq:unif:n}.

Finally, by \eqref{festW0}, for $s$ large,
\[
\|W(s)\|_{H^1} \lesssim 1,\quad \|y W(s)\|_{L^2} \lesssim \delta^\frac52 s,
\]
and thus, by change of variable,
\[
\|w(t)\|_{H^1}\lesssim |t|^{-1},\quad
\|xw(t)\|_{L^2}\lesssim \delta^\frac52.
\]
which completes the proof of \eqref{eq:unif:2}--\eqref{eq:unif:3}.
\end{proof}

\subsection{Compactness argument}
Finally, passing to the limit as $n\to +\infty$ in the sequence of solutions construted in Proposition \ref{pr:unif}, we construct a solution $u$ of \eqref{nls}
satisfying the conclusions of Theorem \ref{theorem1}.

By the bound \eqref{eq:unif:2}--\eqref{eq:unif:3}, there exists a subsequence $u_{n_k}(t_1)$ of $u_n(t_1)$
and $u_0\in \Sigma$ such that
\begin{align*}
&u_{n_k}(t_1) \rightharpoonup u_0 \quad \mbox{weakly in $H^1$ as $k\to +\infty$,}\\
&u_{n_k} (t_1) \to u_0 \quad \mbox{strongly in $L^2$ as $k\to +\infty$.}
\end{align*}
Let $u$ be the $H^1$ solution of \eqref{nls} such that $u(t_1)=u_0$.
By the local wellposedness of the Cauchy problem in $L^2$ for \eqref{nls} (see e.g. \cite[Theorem 4.7.1]{CLN10}), the solution $u(t)$ exists on $[t_1,0)$ and for any $t\in [t_1,0)$,
\[
u_{n_k} (t) \to u(t) \quad \mbox{strongly in $L^2$ as $k\to +\infty$}.
\]
Passing to the limit in \eqref{eq:unif:n}, we obtain on $[t_1,0)$,
\begin{align*} 
\biggl\|u(t)-\frac{e^{\frac it}}{|t|^{\frac12}} Q\left(\frac x t\right) - r_*\biggr\|_{L^2} &\lesssim |t|^\frac34,\\
\|u(t)\|_{H^1} & \lesssim |t|^{-1}.
\end{align*}
It follows from the first estimate above that $\{u(t); t\in [t_1,0]\}$ is not compact in $L^2$.
Therefore, the maximal time of existence of the $L^2$ solution $u(t)$ is $0$. By persistence of regularity (see e.g. \cite[Theorem 4.7.1]{CLN10}), $u(t)$ blows up in $H^1$ at time $t=0$.
Next, by the Gagliardo-Nirenberg inequality $\|g\|_{L^6}^3 \lesssim \|\partial_x g\|_{L^2}
\|g\|_{L^2}^2$,  we
obtain
\[
\biggl\|u(t)-\frac{e^{\frac it}}{|t|^{\frac12}} Q\left(\frac x t\right) - r_*\biggr\|_{L^6}
\lesssim |t|^{\frac 16}.
\]
By $E(Q)=0$, we have for any $t\neq 0$,
\[
\biggl\|\frac{e^{\frac it}}{|t|^{\frac12}} Q\left(\frac x t\right)\biggr\|_{L^6}^6
= \frac {\|Q'\|_{L^2}^2} {t^2} \]
and thus
\[
\lim_{t\uparrow 0} t^2\|u(t)\|_{L^6}^6
= \|Q'\|_{L^2}^2.
\]
Therefore, by conservation of the energy $E(u(t))$, we obtain \eqref{eq:thm0}.
This completes the proof of Theorem \ref{theorem1}. 

\appendix
\section{}\label{appendix}

In this appendix, we prove Lemma~\ref{lem1bis}.
First, we prove the following ODE result.

\begin{lemma} \label{agmon2} 
Let $m >0$. Let $(P_j)_{ 1\le j\le 4 } \subset \mathcal \mathcal C( \R, \R)$ and suppose there exists $\delta >0$ such that
\begin{equation} \label{ffagmon1} 
 | P_j (x)| \le C e^{- (2m+ \delta ) x}, \quad x\ge 0
\end{equation} 
 If $u, v\in \mathcal C^2 (\R, \R)$ satisfy 
\begin{equation} \label{fagmon2} 
\begin{cases} 
-u '' + m^2 u+ P_1 u + P_2 v =0 \\
- v'' + P_3 u + P_4 v =0
\end{cases} 
\end{equation} 
then there exist constants $c,d_0, d_1 \in \R$ such that 
\begin{enumerate}[{\rm (i)}]
\item for $x\geq 0$, $|u(x) - ce^{mx }| \lesssim e^{-mx}$,
\item for $x\geq 0$, $| v(x) - d_0 - d_1 x | \lesssim e^{-mx }$.
\end{enumerate}
\end{lemma}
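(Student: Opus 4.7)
The plan is to apply variation of parameters around the unperturbed solutions $Ae^{mx}+Be^{-mx}$ for $u$ and $d_0+d_1 x$ for $v$, exploiting the fact that the decay rate $|P_j(x)| \lesssim e^{-(2m+\delta)x}$ of the coefficients is precisely what is needed to make the resulting corrections asymptotically small. As a preliminary step I would bootstrap the a priori growth of the solution: starting from the coarse exponential bound $|u|+|v| \lesssim e^{Mx}$ given by Gronwall applied to the first-order reformulation of~\eqref{fagmon2}, inserting this into the Duhamel integrals for each equation and using $|P_j|\lesssim e^{-(2m+\delta)x}$ yields the improved bounds $|u(x)| \lesssim e^{mx} + e^{(M-2m-\delta)x}$ and $|v(x)| \lesssim 1+|x| + e^{(M-2m-\delta)x}$; iterating this a finite number of times produces the a priori estimates $|u(x)|\lesssim e^{mx}$ and $|v(x)|\lesssim 1+|x|$ on $[0,\infty)$.

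Next I would introduce the first-order quantities
\[
\alpha(x)=e^{-mx}(u'(x)+mu(x)), \qquad \beta(x)=e^{mx}(u'(x)-mu(x)),
\]
so that $u=(\alpha e^{mx}-\beta e^{-mx})/(2m)$, and the first line of~\eqref{fagmon2} rewrites as $\alpha'=e^{-mx}(P_1u+P_2v)$ and $\beta'=e^{mx}(P_1u+P_2v)$. The a priori estimates then give $|\alpha'(x)|\lesssim e^{-(2m+\delta)x}$ and $|\beta'(x)|\lesssim e^{-\delta x}$, both integrable at $+\infty$. Hence $\alpha(x)$ converges to some limit $c_\alpha$ with $|\alpha(x)-c_\alpha|\lesssim e^{-(2m+\delta)x}$, while $\beta$ is bounded on $[0,\infty)$. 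Substituting back yields $|u(x)-(c_\alpha/(2m))e^{mx}|\lesssim e^{-mx}$, which gives~(i) with $c=c_\alpha/(2m)$.

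For the second equation, the improved bound $|u|\lesssim e^{mx}$ together with $|v|\lesssim 1+|x|$ gives $|v''(x)|\le |P_3u|+|P_4v|\lesssim e^{-(m+\delta)x}$, an integrable function of $x$. Therefore $v'$ has a limit $d_1$ with $|v'(x)-d_1|\lesssim e^{-(m+\delta)x}$, and a further integration yields $|v(x)-d_0-d_1x|\lesssim e^{-mx}$ for some $d_0\in\R$, which is~(ii). The main delicate step in the argument is the bootstrap for the a priori bound, because $u$ and $v$ are coupled through $P_2$ and $P_3$ and the presence of a Jordan block at eigenvalue $0$ in the $v$ equation allows polynomial modes; the iteration must be carried out carefully (for instance, with weighted sup-norms that separate the exponential scale of $u$ from the polynomial scale of $v$) to ensure that the slower-growing $v$ does not spoil the exponential bound on $u$ and vice versa.
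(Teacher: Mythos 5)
Your proof is correct and follows the same essential structure as the paper's: an a priori exponential growth bound, then variation of parameters for $u$ (your first-order quantities $\alpha$ and $\beta$ are precisely the variation-of-parameters reformulation of the second-order equation), and two integrations for $v$. The only real divergence is in how the a priori bound is obtained, and here the paper is substantially simpler. Setting $w=u'/m$, $z=v'/m$, the paper differentiates the single quadratic quantity $u^2+w^2+v^2+z^2$; the $P_j$-terms are small for $x$ large, so the derivative is bounded by $2(m+\nu)$ times the quantity itself for every $\nu>0$, and Gronwall gives $|u|+|v|\lesssim e^{(m+\nu)x}$ in one stroke. This weaker bound is already sufficient to close your argument: taking $\nu<\delta$, one still has $|\alpha'|\lesssim e^{-(2m+\delta-\nu)x}$, $|\beta'|\lesssim e^{-(\delta-\nu)x}$ and $|v''|\lesssim e^{-(m+\delta-\nu)x}$, all integrable, and the final estimates $|u-ce^{mx}|\lesssim e^{-mx}$, $|v-d_0-d_1x|\lesssim e^{-mx}$ follow unchanged. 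Thus the sharper bounds $|u|\lesssim e^{mx}$ and $|v|\lesssim 1+|x|$ that you establish by bootstrap are not needed, and the ``delicate step'' you flag at the end --- iterating the coupled system while keeping the exponential scale of $u$ separate from the polynomial scale of $v$ --- is avoided entirely by choosing the right Lyapunov-type quantity for the a priori bound.
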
 

\begin{proof} 
Setting $u'= mw$ and $v'= mz$, we have
\begin{equation*} 
\frac {d} {dx}(u^2 + w^2+ v^2+ z^2) = 4m uw + 2m vz + 2\frac {P_1} {m} uw + 2\frac {P_2} {m} vw + 2\frac {P_3} {m} uz + 2 \frac {P_4} {m} vz .
\end{equation*} 
It follows that for every $\nu >0$, 
\begin{equation*} 
\frac {d} {dx}(u^2 + w^2+ v^2+ z^2) \le 2 (m+ \nu) (u^2 + w^2+ v^2+ z^2)
\end{equation*} 
for all sufficiently large $x$; and so 
\begin{equation*} 
u^2 + w^2+ v^2+ z^2 \lesssim e^{2(m+\nu) x},\quad x\ge 0.
\end{equation*} 
It follows that 
\begin{equation} \label{fagmon2:0} 
 |P_j| ( |u| + |v|)\lesssim e^{ - (m+ \delta -\nu) x}, \quad x\ge 0,
\end{equation} 
for $ j=1,2,3, 4$.
Next, from the variation of the parameter formula, we see that there exist constants $u_1$ and $u_2$ such that
\begin{equation} \label{fagmon2:2} 
u(x)= u_1 e^{mx }+ u_2 e^{-m x} + \frac {1} {2m} \int_0^x ( e^{m (x-s) } - e^{ - m (x-s) } ) (P_1 u + P_2 v )(s) \, ds. 
\end{equation} 
Applying~\eqref{fagmon2:0} with $\nu < \delta $ and setting 
\begin{equation} \label{fagmon2:3} 
c = u_1 + \frac {1} {2m} \int_0^\infty e^{-m s} (P_1 u + P_2 v )(s) \, ds ,
\end{equation} 
we deduce from~\eqref{fagmon2:2} that
\begin{equation} \label{fagmon2:4} 
\begin{aligned} 
u(x)- c e^{mx} & = u_2 e^{-mx} - \frac {1} {2m} \int_0^x e^{-m(x-s) } (P_1 u + P_2 v )(s) \,ds \\ &\quad - \frac {1} {2m} \int_x^\infty e^{ m(x-s) } (P_1 u + P_2 v )(s) \,ds .
\end{aligned} 
\end{equation} 
It follows easily from~\eqref{fagmon2:0} (with $\nu < \delta $) and~\eqref{fagmon2:4} that
\begin{equation} \label{fagmon2:5} 
 |u(x)- c e^{mx}| \lesssim e^{-m x}, \quad x\ge 0 .
\end{equation} 
This proves the first part of the statement.

Set now 
\begin{equation*} 
d_1 = v'(0) + \int_0^\infty (P_3 u + P_4 v) (\tau )\, d\tau ,
\end{equation*} 
which is well defined by~\eqref{fagmon2:0} (with $\nu < \delta $). It follows from the second equation in~\eqref{fagmon2} that 
\begin{equation*} 
v'(s)- d_1= - \int_s^\infty (P_3 u + P_4 v) (\tau )\, d\tau .
\end{equation*} 
Using again~\eqref{fagmon2:0}, we see that
\begin{equation*} 
 \Bigl| \int_s^\infty (P_3 u + P_4 v) (\tau )\, d\tau \Bigr| \lesssim e^{-ms},
\end{equation*} 
and we define
\begin{equation*} 
d_0= v(0) - \int_0^\infty \int_s^\infty (P_3 u + P_4 v) (\tau )\, d\tau ds.
\end{equation*} 
It follows that 
\begin{equation*} 
v(x)- d_0 -d_1 x = \int_x^\infty \int_s^\infty (P_3 u + P_4 v) (\tau )\, d\tau ds ,
\end{equation*} 
so that $ | v(x)- d_0 -d_1 x | \lesssim e^{-mx}$. This completes the proof.
\end{proof} 

We turn to the proof of Lemma~\ref{lem1bis}.
Set
\begin{equation*} 
 \widetilde{\varphi} = \varphi+\psi ,\quad \widetilde{\psi} = \varphi-\psi ,
 \quad \widetilde{g} = g+h ,\quad \widetilde{h} = g-h .
\end{equation*} 
We see by the expressions of $L_+$ and $L_-$ and direct computations that
the system~\eqref{ellipbis} is equivalent to
\begin{equation} 
\begin{cases} \label{ellip2bis} 
- \widetilde{\varphi} '' + 2 \widetilde{\varphi} - 3Q^4 \widetilde{\varphi} - 2 Q^4 \widetilde{\psi} = \widetilde{g} \\
- \widetilde{\psi} '' - 3Q^4 \widetilde{\psi} - 2 Q^4 \widetilde{\varphi} = \widetilde{h} . 
\end{cases} 
\end{equation} 
Recall that we consider only even solutions.

\begin{lemma} \label{lem1}
The following properties hold.

\begin{enumerate}[{\rm (i)}] 

\item \label{lem1:1} 
There exists a unique pair of even functions $( \widetilde{\varphi }_1, \widetilde{\psi }_1 ) \in \mathcal Y \times \mathcal Z_1$ 
such that
\begin{equation} \label{flem1:1} 
\begin{cases} 
- \widetilde{\varphi }_1 '' + 2 \widetilde{\varphi }_1 - 3 Q^4 \widetilde{\varphi }_1 - 2 Q^4 \widetilde{\psi }_1 = 0 \\
- \widetilde{\psi }_1 '' - 3 Q^4 \widetilde{\psi }_1 - 2 Q^4 \widetilde{\varphi }_1 = 0
\end{cases} 
\end{equation} 
and satisfying
\begin{equation}\label{flem1:1bis}
\widetilde \psi_1(x) = \mu_1(x) + c_1 + \widetilde w_1(x)
\end{equation}
where $c_1\in \R$ and $\widetilde w_1\in \mathcal Y$.
\item \label{lem1:2} 
Given $( \widetilde{g}, \widetilde{h} )\in \mathcal Y \times \mathcal Y$, there exists a unique even solution $( \widetilde{\varphi} , \widetilde{\psi} ) \in \mathcal Y \times  \mathcal Z_0$ of~\eqref{ellip2bis}. 
\end{enumerate} 
\end{lemma}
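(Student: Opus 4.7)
\emph{Plan.} I would deduce Lemma~\ref{lem1} from the asymptotic analysis provided by Lemma~\ref{agmon2} applied with mass parameter $m = \sqrt 2$, combined with the non-resonance result from \cite[Proposition~2.1.4]{Perelman}. The choice $m = \sqrt 2$ fits the structure of \eqref{ellip2bis}: the first equation carries the operator $-\partial_x^2 + 2$, while the potentials $Q^4 \lesssim e^{-4|x|}$ comfortably dominate the required bound $e^{-(2\sqrt 2 + \delta)|x|}$ for small $\delta>0$, so hypothesis~\eqref{ffagmon1} is met. Perelman's statement, formulated for \eqref{eq:syst} on $(\varphi,\psi)$, transfers to the homogeneous version \eqref{flem1:1} of \eqref{ellip2bis} through the invertible change $\widetilde\varphi = \varphi+\psi$, $\widetilde\psi = \varphi-\psi$: no non-zero solution of \eqref{flem1:1} is bounded on $\R$.

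\emph{Part (i).} I would work on the $2$-dimensional space $\mathcal S$ of even $C^2$ solutions of \eqref{flem1:1}. By Lemma~\ref{agmon2}, each $(\widetilde\varphi,\widetilde\psi) \in \mathcal S$ admits at $+\infty$ a unique triple $(c,d_0,d_1) \in \R^3$ with
\begin{equation*}
|\widetilde\varphi(x) - ce^{\sqrt 2 x}| \lesssim e^{-\sqrt 2 x}, \qquad |\widetilde\psi(x) - d_0 - d_1 x| \lesssim e^{-\sqrt 2 x}.
\end{equation*}
Define the linear map $\Phi : \mathcal S \to \R^2$, $\Phi(\widetilde\varphi,\widetilde\psi) = (c,d_1)$. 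If $\Phi(\widetilde\varphi,\widetilde\psi) = 0$, both components are bounded at $+\infty$, evenness extends this to $-\infty$, and Perelman forces $(\widetilde\varphi,\widetilde\psi) = 0$. Hence $\Phi$ is an isomorphism, and setting $(\widetilde\varphi_1,\widetilde\psi_1) := \Phi^{-1}(0,1)$ and $c_1 := d_0[\widetilde\psi_1]$ yields the unique even solution with $\widetilde\varphi_1$ exponentially decaying and $\widetilde\psi_1 \sim |x| + c_1$ at infinity. The regularity $\widetilde\varphi_1 \in \mathcal Y$ follows from the exponential bound together with derivative estimates obtained by successively differentiating \eqref{flem1:1}; the residual $\widetilde w_1 := \widetilde\psi_1 - \mu_1 - c_1$ is smooth (since $\mu_1 - |x|$ is smooth and compactly supported) and exponentially decaying, hence $\widetilde w_1 \in \mathcal Y$.

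\emph{Part (ii).} Uniqueness is immediate: any solution in $\mathcal Y \times \mathcal Z_0$ of the homogeneous system is bounded on $\R$, hence zero by Perelman. For existence, I would take any even particular solution $(\widetilde\varphi_p, \widetilde\psi_p)$ of \eqref{ellip2bis} (provided by classical ODE theory for linear systems with smooth coefficients) and extract its asymptotic constants $(c_p, d_{0p}, d_{1p})$ at $+\infty$. Since $\Phi$ is surjective by part (i), there exists an even solution of the homogeneous system with asymptotic constants $(c_p,d_{1p})$; subtracting it from $(\widetilde\varphi_p,\widetilde\psi_p)$ yields an even solution $(\widetilde\varphi,\widetilde\psi)$ of \eqref{ellip2bis} with $c = 0$ and $d_1 = 0$. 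The corresponding asymptotic expansion gives $\widetilde\varphi \in \mathcal Y$ and $\widetilde\psi - d_0 \in \mathcal Y$, so $\widetilde\psi \in d_0\chi + \mathcal Y \subset \mathcal Z_0$.

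\emph{Main obstacle.} The subtle point is transferring the asymptotic conclusions of Lemma~\ref{agmon2} from the homogeneous to the inhomogeneous system. I would handle this by inspecting the variation-of-parameters identity \eqref{fagmon2:2}: the sources $\widetilde g, \widetilde h \in \mathcal Y$ decay exponentially faster than $e^{-\sqrt 2 x}$, so incorporating $-\widetilde g$ and $-\widetilde h$ as additional summands inside the integrands preserves absolute convergence and yields an expansion of exactly the same shape, with $c$ now defined as in \eqref{fagmon2:3} but with the extra convergent contribution $\tfrac{1}{2\sqrt 2}\int_0^\infty e^{-\sqrt 2 s}\widetilde g(s)\,ds$, and analogous corrections for $d_0, d_1$. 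This guarantees that $(c_p,d_{0p},d_{1p})$ is well-defined and closes the surjectivity step above.
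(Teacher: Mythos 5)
For part (i), your argument is essentially the paper's, just packaged more cleanly: the paper explicitly builds a linear combination of two even basis solutions killing the $e^{\sqrt 2 x}$ growth in the first component and then invokes \cite[Proposition~2.1.4]{Perelman} to argue that the resulting linear coefficient $b_1$ is nonzero, whereas you express this as injectivity (hence bijectivity, by dimension count) of the map $\Phi : \mathcal S \to \R^2$. Both rest on Lemma~\ref{agmon2} plus Perelman.

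For part (ii), your route is genuinely different from the paper's and considerably more elementary. The paper integrates the second equation of \eqref{ellip2bis} twice, introduces two compact operators $T, T_1$ on $L^2(\R)$, verifies that $I-T^\star$ and $I-T_1^\star$ have trivial kernels (the second via an ODE reduction plus a second invocation of Perelman), and applies the Fredholm alternative twice to produce a solution, which it then corrects to land in $\mathcal Y\times\mathcal Z_0$. You instead take any even particular solution from the ODE initial-value problem at $x=0$, read off its asymptotic constants $(c_p,d_{0p},d_{1p})$ at $+\infty$ via an inhomogeneous variant of Lemma~\ref{agmon2}, and subtract $\Phi^{-1}(c_p,d_{1p})$, which exists by the surjectivity established in (i). This avoids the functional-analytic machinery entirely; the cost is that you must actually prove the inhomogeneous extension of Lemma~\ref{agmon2}, which the paper never needs.

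Your sketch of that extension contains a real numerical error that is worth flagging. You assert that $\widetilde g,\widetilde h\in\mathcal Y$ ``decay exponentially faster than $e^{-\sqrt 2 x}$,'' but membership in $\mathcal Y$ only gives $|\widetilde g(x)|\lesssim \langle x\rangle^q e^{-|x|}$, which is \emph{slower} than $e^{-\sqrt 2 |x|}$ since $\sqrt 2>1$. This does not break the argument: in the analogue of the residual identity~\eqref{fagmon2:4} the extra source term contributes $\int_x^\infty e^{\sqrt 2(x-s)}\widetilde g(s)\,ds\lesssim \langle x\rangle^q e^{-x}$, and the integral defining $c$ as in~\eqref{fagmon2:3} converges absolutely since $e^{-\sqrt 2 s}\widetilde g(s)\in L^1$. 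But the expansion is therefore \emph{not} ``of exactly the same shape'': the remainder in the first component degrades from $O(e^{-\sqrt 2 x})$ to $O(\langle x\rangle^q e^{-x})$, and similarly for $\widetilde\psi - d_0 - d_1 x$. Since this is still exponential decay, the conclusions $\widetilde\varphi\in\mathcal Y$ and $\widetilde\psi\in\mathcal Z_0$ survive after the usual derivative bootstrap; you should, however, state the weaker remainder rate when you carry out the extension, and carry out (rather than assert) the bootstrap on derivatives, though the paper is equally terse on that last point.
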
 

\begin{proof} 
Proof of part~\eqref{lem1:1}. \quad 
Define the solutions $(f_1, g_1)$ and $(f_2, g_2)$ of system~\eqref{flem1:1} corresponding to the initial data
$f_1 (0)= 1, f'_1(0)= 0$, $g_1(0)=0$, $g_1'(0)=0$ and $f_2 (0)= 0$, $f'_2(0)= 0$, $g_2(0)=1$, $g_2'(0)=0$.
From (i) of Lemma~\ref{agmon2} and standard ODE arguments for the regularity and the decay of the derivatives, 
it follows that there exist $a_1, a_2 \in \R$ and $w_1, w_2\in \mathcal Y$ such that 
\[
f_1(x) = a_1 \mu_0(x) e^{|x| \sqrt 2} + w_1(x) \quad \mbox{and}\quad  f_2(x) = a_2 \mu_0(x) e^{|x| \sqrt 2 } + w_2(x).
\]
Therefore, there exists a linear combination of the pairs $(f_1, g_1)$ and $(f_2, g_2)$, denoted by $(f,g)$, 
such that $f \in \mathcal Y$. Since the solutions $(f_1, g_1)$ and $(f_2, g_2)$ are independent, the pair $(f,g)$ is non zero and satisfies the system~\eqref{flem1:1}.
By (i) of Lemma~\ref{agmon2},  
there exist $b_0, b_1\in \R$ and $z\in \mathcal Y$ such that 
\[ g(x) = b_1 \mu_1(x) +b_0 +  z(x).\]
Recall from~\cite[Proposition~2.1.4]{Perelman} that
there exist no nonzero bounded solution of~\eqref{flem1:1}, which means exactly that $b_1\neq 0$.
Thus, $ ( \widetilde{\varphi }_1, \widetilde{\psi }_1 )= \frac {1} {b_1} (f,g) $
satisfies \eqref{flem1:1} and \eqref{flem1:1bis}.

The uniqueness part of (i) also follows from the result of non existence of a non zero bounded solution
of \eqref{flem1:1} in~\cite[Proposition~2.1.4]{Perelman}.

Proof of part~\eqref{lem1:2}. \quad 
Let $(\widetilde g,\widetilde h)\in \mathcal Y \times \mathcal Y$ be a pair of even functions.
We first construct a solution of~\eqref{ellip2bis} in $ \mathcal Y \times \mathcal Z_1$,
using a reformulation of the problem.

Indeed, integrating twice the second equation in~\eqref{ellip2bis}, and  using that $\widetilde\psi$ is even, we obtain
\begin{equation*} 
 \widetilde{\psi} + 3 \int_0^x \int_0^y Q^4 \widetilde{\psi} = - \int_0^x \int_0^y (2Q^4 \widetilde{\varphi} + \widetilde{h} ) + C 
\end{equation*} 
for some constant $C$.
Now, we will solve the system
\begin{equation} \label{flem1:1b} 
\begin{cases} 
- \widetilde{\varphi} '' + 2 \widetilde{\varphi} - 3Q^4 \widetilde{\varphi} - 2 Q^4 \widetilde{\psi} = \widetilde{g} \\
\displaystyle \widetilde{\psi} + 3 \int_0^x \int_0^y Q^4 \widetilde{\psi} = - \int_0^x \int_0^y (2Q^4 \widetilde{\varphi} + \widetilde{h} ) ,
\end{cases} 
\end{equation} 
where for simplicity
the constant $C$ was chosen equal to $0$.
We rewrite the second equation in~\eqref{flem1:1b} in the equivalent form
\begin{equation}\label{flem1:2} 
Q^2 \widetilde{\psi} - T ( Q^2 \widetilde{\psi} ) = - Q^2 \int_0^x \int_0 ^y (2 Q^4 \widetilde{\varphi} + \widetilde{h} ) ,
\end{equation} 
where we have defined, for any $u\in L^2(\R)$
\begin{equation*} 
T u = - 3 Q^2 \int_0^x \int_0^y Q^2 u.
\end{equation*} 
It is clear that $T\in \mathcal L (L^2 (\R) )$ and that $T $ is a compact operator $L^2 (\R ) \to L^2 (\R ) $. 
We also observe that if the function $u$ is even, then $Tu$ is also even.
Moreover, the adjoint of $T$ is given by (use Fubini's theorem)
\begin{equation} \label{flem1:5b} 
T^\star u= - 3 Q^2 \int_x^\infty \int_y^\infty Q^2 u.
\end{equation} 
We claim that $I- T^\star$ has a trivial kernel. Indeed, suppose $u\in L^2 (\R ) $ and $T^\star u= u$.
By the definition of $T_*$, given $R\ge 0$ and $x>R$, we have by the Cauchy-Schwarz inequality and \eqref{def:Q}
\begin{equation*} 
\begin{aligned} 
 |T^\star u(x)| &\le 3Q^2 \int_x^\infty \| u\|_{ L^2(\{ z > y\}) } \Bigl( \int_y ^\infty Q^4\Bigr)^{\frac {1} {2}} 
\\ & \lesssim e^{-2x} \| u\|_{ L^2(\{ z>R\}) } \int_x^\infty e^{-2y} dy \lesssim e^{-4 x} \| u\|_{ L^2(\{z>R\}) } . 
\end{aligned} 
\end{equation*} 
 By the above estimate and $T^\star u= u$, we deduce that $u(x) =0$ for $x$ large. The same vanishing property holds for $v= Q^{-2} u$, which satisfies the ODE $v'' = - 3Q^4 v$. By uniqueness, $v\equiv 0$ and so $u\equiv 0$ on $\R$. 
By Fredholm's alternative, $I-T$ is invertible on $L^2 (\R ) $. 
Therefore, we may write equation~\eqref{flem1:2} in the form
\begin{equation} \label{flem1:3} 
 \widetilde{\psi} = - Q^{-2} (I - T)^{-1} \Bigl( Q^2 \int_0^x \int_0 ^y (2 Q^4 \widetilde{\varphi} + \widetilde{h} ) \Bigr) .
\end{equation} 
We let 
\begin{equation*} 
v= - \widetilde{\varphi}'' + 2 \widetilde{\varphi} ,
\end{equation*} 
i.e.
\begin{equation} \label{flem1:3c} 
 \widetilde{\varphi} = (-\partial_{ xx } +2 )^{-1} v,
\end{equation} 
where the inverse is in the sense of $L^2 (\R) $. 
The first equation in~\eqref{flem1:1b} becomes
\begin{equation*} 
v - 3 Q^4 (- \partial_{ xx }+ 2 )^{-1} v = \widetilde{g} + 2 Q^4 \widetilde{\psi}. 
\end{equation*} 
Using~\eqref{flem1:3}, we rewrite this last equation in the form
\begin{equation*} 
\begin{aligned} 
v - 3 Q^4 (- \partial_{ xx }+ 2 )^{-1} v = & \widetilde{g} - 2 Q^2 (I-T)^{-1} \Bigl( Q^2 \int_0^x \int_0 ^y \widetilde{h} \Bigr) 
\\ & + \frac {4} {3} Q^2 (I-T)^{-1} T ( Q^2 (- \partial_{ xx }+ 2 )^{-1} v) .
\end{aligned} 
\end{equation*} 
Set
\begin{equation*} 
\begin{aligned} 
T_1 v & = 3Q^4 (-\partial_{ xx }+ 2)^{-1} v + \frac {4} {3} Q^2 (I-T)^{-1} T ( Q^2 (- \partial_{ xx }+ 2 )^{-1} v ) \\
& = \frac {5} {3} Q^4 (-\partial_{ xx }+ 2)^{-1} v + \frac {4} {3} Q^2 (I-T)^{-1} ( Q^2 (- \partial_{ xx }+ 2 )^{-1} v ) \\
& = \frac {1} {3} Q^2 [ 5 + 4 (I-T)^{-1} ] ( Q^2 (- \partial_{ xx }+ 2 )^{-1} v ).
\end{aligned} 
\end{equation*} 
The above equation is equivalent to
\begin{equation} \label{flem1:3b} 
(I - T_1 ) v= \widetilde{g} - 2 Q^2 (I-T)^{-1} \Bigl( Q^2 \int_0^x \int_0 ^y \widetilde{h} \Bigr) .
\end{equation} 
On $L^2 (\R)$, the operator $v\mapsto Q^2 (- \partial_{ xx }+ 2 )^{-1} v$ is compact  and the operator $v \mapsto Q^2 [5 + 4 (I-T)^{-1} ]v$ is continuous. Thus, $T_1$ is a compact operator. 
Moreover,
\begin{equation*} 
T_1^\star u= \frac {1} {3} (- \partial_{ xx }+ 2 )^{-1} [ Q^2 (5 + 4 (I- T^\star)^{-1} ) Q^2 u]. 
\end{equation*} 
We claim that $I- T_1^\star$ has a trivial kernel. Indeed, suppose $u\in L^2 (\R ) $ and $T_1^\star u= u$. 
In particular, $u\in H^2 (\R ) $ and 
\begin{equation*} 
- u'' + 2u = \frac {1} {3} [ Q^2 (5 + 4 (I- T^\star)^{-1} ) Q^2 u] .
\end{equation*} 
Setting 
\begin{equation} \label{flem1:4} 
u_1 = (I- T^\star)^{-1} Q^2 u,
\end{equation} 
we obtain
\begin{equation} \label{flem1:5} 
- u'' + 2u = \frac {4} {3} Q^2 u_1 + \frac {5} {3} Q^4 u .
\end{equation} 
We rewrite equation~\eqref{flem1:4} in the form
\begin{equation*} 
- \frac {T^\star u_1} {Q^2} = u - \frac {u_1} {Q^2}.
\end{equation*} 
Applying~\eqref{flem1:5b} and differentiating twice, we obtain 
\begin{equation*} 
3 Q^2 u_1= \Bigl( u - \frac {u_1} {Q^2} \Bigr) '' .
\end{equation*} 
Last, we set
\begin{equation} \label{flem1:6} 
w= - \frac {2} {3} \Bigl( u- \frac {u_1} {Q^2} \Bigr),
\end{equation} 
so that 
\begin{equation} \label{flem1:7} 
w'' = -2 Q^2 u_1 . 
\end{equation} 
Moreover, \eqref{flem1:6} implies
\begin{equation} \label{flem1:8} 
Q^2 u_1= Q^4 u + \frac {3} {2} Q^4 w. 
\end{equation} 
Substituting~\eqref{flem1:8} in~\eqref{flem1:5} and~\eqref{flem1:7}, we obtain the system
\begin{equation*} 
\begin{cases} 
- u'' + 2 u - 3 Q^4 u - 2 Q^4 w= 0, \\
- w'' - 2 Q^4 u - 3 Q^4 w=0. 
\end{cases} 
\end{equation*} 
This system does not have any non trivial $L^2$ solution by~\cite[Corollary~2.1.3]{Perelman}, so we conclude that $u \equiv 0$. This proves that $I-T_1^\star$ has a trivial kernel as claimed.
By Fredholm's alternative, $I - T_1$ is invertible and 
we define the even function
\begin{equation} \label{flem1:9} 
 v= (I - T_1 )^{-1} \Bigl[ \widetilde{g} - 2 Q^2 (I-T)^{-1} \Bigl( Q^2 \int_0^x \int_0 ^y \widetilde{h} \Bigr) \Bigr],
\end{equation} 
so that equation~\eqref{flem1:3b} is satisfied. 
Let $ \widetilde{\varphi} $ be given by~\eqref{flem1:3c} and let $ \widetilde{\psi} $ be given by~\eqref{flem1:3}. 
Observe that $ \widetilde{\varphi} \in H^2 (\R) $ and $Q^2 \widetilde{\psi} \in L^2 (\R) $. 
Moreover, the pair $ ( \widetilde{\varphi}, \widetilde{\psi} )$ satisfies~\eqref{flem1:1b}. 
By the second equation in~\eqref{flem1:1b}, we see that $ \widetilde{\psi} \in H^2_{\mathrm{loc}} (\R )$ and $ \widetilde{\psi} ''\in L^2 (\R) $. From the above, it follows that $ ( \widetilde{\varphi}, \widetilde{\psi} )$ satisfy~\eqref{ellip2bis}.
Integrating twice the second equation in~\eqref{ellip2bis}, we deduce that 
$\widetilde{\psi}(x) = d_1 \mu_1(x)+ d_0 + w(x)$ with $d_1,d_0\in \R$ and $w\in \mathcal Y$, and it follows easily that $ \widetilde{\varphi} \in \mathcal Y$. 
Therefore, the pair $( \widetilde{\varphi}- d_1 \widetilde{\varphi }_1, \widetilde{\psi}- d_1 \widetilde{\psi }_1 )$ is a solution of~\eqref{ellip2bis} in $ \mathcal Y \times \mathcal Z_0$. 
Finally, uniqueness follows again from~\cite[Proposition~2.1.4]{Perelman}.
\end{proof} 

Let $( \widetilde{\varphi }_1, \widetilde{\psi }_1 )$ given by part (i) of Lemma~\ref{lem1}. It follows that 
that the pair
\[(\varphi _1, \psi_1)= ( \widetilde{\varphi }_1+ \widetilde{\psi }_1 , \widetilde{\varphi }_1- \widetilde{\psi }_1)\]
is an even solution of the system
\begin{equation} \label{fedefphi1psi1} 
\begin{cases} 
\psi_1 + L_+ \varphi _1 = 0 \\
\varphi _1 + L_- \psi_1 = 0 
\end{cases} 
\end{equation} 
such that
\begin{equation} \label{bfp1} 
\varphi _1 - \mu_1 - c_1 \in \mathcal Y, \quad \psi_1 + \mu_1 + c_1 \in \mathcal Y,
\end{equation}
which proves part (i) of Lemma~\ref{lem1bis}.

Part (ii) of Lemma~\ref{lem1bis} follows similarly from part (ii) of Lemma~\ref{lem1}.

\section{}\label{appendixB}

In this appendix, we justify the identity~\eqref{eq:W}.
Recall that
\begin{equation*}
W(s,y) = M_{ -b} Q_{a} + \theta (A + i B )
\end{equation*}
and that
\begin{equation*}
m_\gamma = \gamma_s - 1,\quad m_\lambda = \frac{\lambda_s}{\lambda}+b,
\quad m_b = b_s+b^2-a,\quad
m_a = a_s - \Omega.
\end{equation*}
Some useful calculations
\begin{equation} \label{fusef1} 
\begin{cases} 
 \partial _s M_{ -b }  = - i M_{ -b } b_s \frac {y^2} {4} \\
 \partial _y M_{ -b }  = - i M_{ -b } b \frac {y} {2} \\
 \partial ^2 _y M_{ -b }  = M_{ -b } ( -i \frac {b} {2} - b^2 \frac {y^2} {4} ) \\
 \partial _s Q_a = a_s \rho \\
 \partial ^2 _y Q_a = Q_a - a \frac{y^2}{4} Q -Q^5-5aQ^4\rho \\
 \partial _s \theta  = \lambda _s y \Theta ' (\lambda y) = y \frac {\lambda _s} {\lambda } \partial _y \theta 
\end{cases} 
\end{equation} 
We first rewrite $\mathcal E(W) $. We replace $- W - (\gamma_s - 1) W$ by $- \gamma _s W$, so
\begin{equation*} 
 \mathcal E(W) - f( W) = i \partial_s W + \partial_y^2 W 
- i \frac{\lambda_s}{\lambda} \Lambda W- \gamma_s W 
\end{equation*} 
Next, using $W = M_{ -b} Q_{a} + \theta (A + i B )$, we obtain
\begin{equation*} 
\begin{aligned} 
& \mathcal E(W) - f( W) = i \partial_s (M_{ -b} Q_{a} ) + \partial_y^2 (M_{ -b} Q_{a} ) 
- i \frac{\lambda_s}{\lambda} \Lambda (M_{ -b} Q_{a} )- \gamma_s M_{ -b} Q_{a} \\ 
& + i \partial_s (\theta (A + i B )) + \partial_y^2 (\theta (A + i B )) 
- i \frac{\lambda_s}{\lambda} \Lambda (\theta (A + i B ))- \gamma_s (\theta (A + i B )) 
\end{aligned} 
\end{equation*} 
We next use $\partial _s (uv) = u \partial _s v + \partial _s u v$, $\partial _{ yy } (uv) = u \partial _{ yy }v + 2 \partial _y u \partial _y v + \partial _{ yy } u v $ and $\Lambda (uv) = u \Lambda v + y v \partial _y u$, and we obtain
\begin{equation*} 
\begin{aligned} 
  \mathcal E(W) - f( W) &= i ( \partial_s M_{ -b} ) Q_{a} + i M_{ -b} \partial _s Q_a \\
& \quad+ \partial _{ yy } M_{ -b} Q_{a} + 2 \partial _y M _{ -b } \partial _y Q_a + M _{ -b } \partial _{ yy } Q_a \\
& \quad- i \frac{\lambda_s}{\lambda} M_{ -b} \Lambda Q_{a} - i \frac{\lambda_s}{\lambda} y \partial _y M_{ -b} Q_{a} - \gamma_s M_{ -b} Q_{a} \\ 
& \quad+ i \partial_s \theta (A + i B ) + i \theta (\partial _s A + i \partial _s B) \\
&\quad+ \theta _{ yy }(A + i B ) + 2 \theta _y (A_y + i B_y ) + \theta (A _{ yy } + i B _{ yy }) \\
&\quad - i \frac{\lambda_s}{\lambda} \theta ( \Lambda A + i \Lambda B ) - i \frac{\lambda_s}{\lambda} y \theta _y (A + i B ) - \gamma_s \theta (A + i B ) 
\end{aligned} 
\end{equation*} 
Using now the calculations~\eqref{fusef1}: 
\begin{equation*} 
\begin{aligned} 
 \mathcal E(W) - f( W) & = M_{ -b } b_s \frac {y^2} {4} Q_{a} + i M_{ -b} a_s \rho \\
&\quad + M_{ -b } \Bigl( -i \frac {b} {2} - b^2 \frac {y^2} {4} \Bigr) Q_{a} -2 i M_{ -b } b \frac {y} {2} \partial _y Q_a \\
&\quad + M _{ -b } (Q_a - a \frac{y^2}{4} Q -Q^5-5aQ^4\rho ) \\
& \quad- i \frac{\lambda_s}{\lambda} M_{ -b} \Lambda Q_{a} - \frac{\lambda_s}{\lambda} M_{ -b } b \frac {y^2} {2} Q_{a} - \gamma_s M_{ -b} Q_{a} \\ 
&\quad + i \frac {\lambda _s} {\lambda } y \theta _y (A + i B ) + i \theta (\partial _s A + i \partial _s B) \\
&\quad+ \theta _{ yy }(A + i B ) + 2 \theta _y (A_y + i B_y ) + \theta (A _{ yy } + i B _{ yy }) \\
&\quad - i \frac{\lambda_s}{\lambda} \theta ( \Lambda A + i \Lambda B )) - i \frac{\lambda_s}{\lambda} y \theta _y (A + i B )) - \gamma_s (\theta (A + i B )) 
\end{aligned} 
\end{equation*} 
We cancel the terms $ i \frac {\lambda _s} {\lambda } y \theta _y (A + i B ) - i \frac {\lambda _s} {\lambda } y \theta _y (A + i B )$ and factorize $M _{ -b }$:
\begin{equation*} 
\begin{aligned} 
& \mathcal E(W) - f( W) = M_{ -b } \Bigl( b_s \frac {y^2} {4} Q_{a} + i a_s \rho \\
& -i \frac {b} {2} Q_a - b^2 \frac {y^2} {4} Q_{a} - i b y \partial _y Q_a + Q_a - a \frac{y^2}{4} Q -Q^5-5aQ^4\rho \\
& - i \frac{\lambda_s}{\lambda} \Lambda Q_{a} - \frac{\lambda_s}{\lambda} b \frac {y^2} {2} Q_{a} - \gamma_s Q_{a} \Bigr) \\ 
& + i \theta (\partial _s A + i \partial _s B) + \theta _{ yy }(A + i B ) + 2 \theta _y (A_y + i B_y ) + \theta (A _{ yy } + i B _{ yy }) \\
& - i \frac{\lambda_s}{\lambda} \theta ( \Lambda A + i \Lambda B ) - \gamma_s (\theta (A + i B ) 
\end{aligned} 
\end{equation*} 
We use $ Q_a - \gamma_s Q_{a} = - m_\gamma Q_a$; $-i \frac {b} {2} Q_a - i b y \partial _y Q_a = -i b \Lambda Q_a$; $b + \frac {\lambda _s} {\lambda }= m_\lambda $ and $- a \frac{y^2}{4} Q= - a \frac{y^2}{4} Q_a + a^2 \frac{y^2}{4} \rho $: 
\begin{equation*} 
\begin{aligned} 
& \mathcal E(W) - f( W) = M_{ -b } \Bigl( b_s \frac {y^2} {4} Q_{a} + i a_s \rho \\
& -i m_\lambda \Lambda Q_a - b^2 \frac {y^2} {4} Q_{a} - a \frac{y^2}{4} Q_a -Q^5-5aQ^4\rho + a^2 \frac{y^2}{4} \rho - \frac{\lambda_s}{\lambda} b \frac {y^2} {2} Q_{a} - m_\gamma Q_{a} \Bigr) \\ 
& + i \theta (\partial _s A + i \partial _s B) + \theta _{ yy }(A + i B ) + 2 \theta _y (A_y + i B_y ) + \theta (A _{ yy } + i B _{ yy }) \\
& - i \frac{\lambda_s}{\lambda} \theta ( \Lambda A + i \Lambda B ) - \gamma_s (\theta (A + i B ) 
\end{aligned} 
\end{equation*} 
We reorder the terms in $y^2 Q_a$ and use $b_s - b^2 -a - 2b \frac {\lambda _s} {\lambda } = m_b - 2b m_\lambda $: 
\begin{equation*} 
\begin{aligned} 
& \mathcal E(W) = M_{ -b } \Bigl( (m_b - 2b m_\lambda ) \frac {y^2} {4} Q_{a} + i a_s \rho \\
& -i m_\lambda \Lambda Q_a -Q^5-5aQ^4\rho + a^2 \frac{y^2}{4} \rho - m_\gamma Q_{a} \Bigr) \\ 
& + i \theta (\partial _s A + i \partial _s B) + \theta _{ yy }(A + i B ) + 2 \theta _y (A_y + i B_y ) + \theta (A _{ yy } + i B _{ yy }) \\
& - i \frac{\lambda_s}{\lambda} \theta ( \Lambda A + i \Lambda B ) - \gamma_s (\theta (A + i B ) + f( W) 
\end{aligned} 
\end{equation*} 
Using now $a_s= m_a + \Omega $ and reordering 
\begin{equation*} 
\begin{aligned} 
& \mathcal E(W) = M_{ -b } \Bigl( - m_\gamma Q_{a} + (m_b - 2b m_\lambda ) \frac {y^2} {4} Q_{a} + i m_a \rho -i m_\lambda \Lambda Q_a \\
& -Q^5-5aQ^4\rho + a^2 \frac{y^2}{4} \rho + i \Omega \rho \Bigr) \\ 
& + i \theta (\partial _s A + i \partial _s B) + \theta _{ yy }(A + i B ) + 2 \theta _y (A_y + i B_y ) + \theta (A _{ yy } + i B _{ yy }) \\
& - i \frac{\lambda_s}{\lambda} \theta ( \Lambda A + i \Lambda B ) - \gamma_s (\theta (A + i B ) + f( W) 
\end{aligned} 
\end{equation*} 
Using $M _{ -b } ( - Q^5 -5aQ^4\rho + a^2 \frac{y^2}{4} \rho) = \mathcal R_4 - f(M _{ -b } Q_a)$, we find:
\begin{equation*} 
\begin{aligned} 
& \mathcal E(W) = M_{ -b } \Bigl( - m_\gamma Q_{a} + (m_b - 2b m_\lambda ) \frac {y^2} {4} Q_{a} + i m_a \rho -i m_\lambda \Lambda Q_a \Bigr) \\
& + i M_{ -b } \Omega \rho + \mathcal R_4 + f( W) - f(M _{ -b } Q_a) \\ 
& + i \theta (\partial _s A + i \partial _s B) + \theta _{ yy }(A + i B ) + 2 \theta _y (A_y + i B_y ) + \theta (A _{ yy } + i B _{ yy }) \\
& - i \frac{\lambda_s}{\lambda} \theta ( \Lambda A + i \Lambda B ) - \gamma_s \theta (A + i B ) 
\end{aligned} 
\end{equation*} 
Taking into account the definitions of $\mathcal S_0$, $\mathcal S_1$ and $\mathcal R_5$, this is equivalent to
\begin{equation} \label{fewv1} 
\begin{aligned} 
& \mathcal E(W) = \mathcal S_0 + \mathcal S_1 + \mathcal R_4 + \mathcal R_5 + i \theta \Omega \rho + f( W) - f(M _{ -b } Q_a) \\ 
& + i \theta (\partial _s A + i \partial _s B) + \theta (A _{ yy } + i B _{ yy }) - i \frac{\lambda_s}{\lambda} \theta ( \Lambda A + i \Lambda B ) - \gamma_s \theta (A + i B ) 
\end{aligned} 
\end{equation} 
Next, using the definitions of $L_+$ and $L_-$, $m_\gamma = \gamma _s -1$ and $m_\lambda -b= \frac {\lambda _s} {\lambda }$, we rewrite 
\begin{align*}
G & = - \partial_s B + A _{ yy } + 5 Q^4 A
-b y^2 Q^4 B- \gamma_s A + \frac {\lambda _s} {\lambda } \Lambda B , \\
H & = \partial_s A + B _{ yy } + Q^4 B -b y^2 Q^4 A- \gamma_s B - \frac {\lambda _s} {\lambda } \Lambda A\\
&\quad +\bigl(\alpha_1 b \lambda^\frac32 \cos \gamma + \alpha_2 b\lambda^\frac52\sin \gamma\bigr)\rho.
\end{align*}
It follows that
\begin{equation*} 
\begin{aligned} 
{\mathcal R}_3& = i \theta ( \partial _s A+ i \partial_s B) + \theta ( A _{ yy } +i B _{ yy }) - i \frac {\lambda _s} {\lambda } \theta (\Lambda A + i \Lambda B) - \gamma _s \theta (A+ i B) \\
& \quad+ \theta ( 5 Q^4 A -b y^2 Q^4 B ) + i\theta ( Q^4 B -b y^2 Q^4 A ) \\
& \quad+ i \theta \bigl(\alpha_1 b \lambda^\frac32 \cos \gamma + \alpha_2 b\lambda^\frac52\sin \gamma\bigr)\rho
\end{aligned} 
\end{equation*} 
Using the definition of ${\mathcal S}_2$ and $\Omega $, we deduce that
\begin{equation} \label{fewv2} 
\begin{aligned} 
  {\mathcal S}_2+ {\mathcal R}_3  
& = i \theta ( \partial _s A+ i \partial_s B) + \theta ( A _{ yy } +i B _{ yy })\\
&\quad  - i \frac {\lambda _s} {\lambda } \theta (\Lambda A + i \Lambda B) - \gamma _s \theta (A+ i B) \\
&\quad+ f(W)-f(M_{-b}Q_a) +i \theta \Omega \rho
\end{aligned} 
\end{equation} 
Identities~\eqref{fewv1} and~\eqref{fewv2} immediately yield~\eqref{eq:W}.

\end{document}